\numberwithin{equation}{section}
\newcounter{proofitem}
\newtheorem{thm}[subsubsection]{Theorem}
\newtheorem{cor}[subsubsection]{Corollary}
\newtheorem{lem}[subsubsection]{Lemma}
\newtheorem{lemdef}[subsubsection]{Lemma-Definition}
\newtheorem{prop}[subsubsection]{Proposition}
\newtheorem{case}[proofitem]{Case}
\theoremstyle{definition}
\newtheorem{defn}[subsubsection]{Definition}
\theoremstyle{remark}
\newtheorem{exs}[subsubsection]{Examples}
\newtheorem{rem}[subsubsection]{Remark}
\newcommand{\comment}[1]{}
\newcommand{\Ko}{K^\circ}
\newcommand{\Koo}{K^{\circ\circ}}
\newcommand{\Kt}{\widetilde K}
\newcommand{\Kalg}{K_{alg}}
\newcommand{\Kalgo}{K_{alg}^\circ}
\newcommand{\Kalgoo}{K_{alg}^{\circ\circ}}
\newcommand{\Kalgt}{\widetilde K_{alg}}
\newcommand{\cE}{\mathcal E}
\newcommand{\abs}[1]{\left\vert#1\right\vert}
\renewcommand{\epsilon}{\varepsilon}
\renewcommand{\phi}{\varphi}
\renewcommand{\emptyset}{\varnothing}
\let\Bbb\mathbb
\def\wedgem{\mathop{\wedge}}
\def\deg{{\rm deg}}
\def\ord{{\rm ord}}
\def\Hens{{\rm Hen}}
\newcommand{\Val}{\mathrm{Val}}
\let\cal\mathcal
\def\11{{\mathbf 1}}
\def\NN{{\mathbf N}}
\def\QQ{{\mathbf Q}}
\def\RR{{\mathbf R}}
\def\ZZ{{\mathbf Z}}
\def\cA{{\mathcal A}}
\def\cB{{\mathcal B}}
\def\cD{{\mathcal D}}
\def\cE{{\mathcal E}}
\def\cF{{\mathcal F}}
\def\cH{{\mathcal H}}
\def\cL{{\mathcal L}}
\def\cO{{\mathcal O}}
\def\cR{{\mathcal R}}
\def\cS{{\mathcal S}}
\def\cT{{\mathcal T}}
\def\cU{{\mathcal U}}
\def\cV{{\mathcal V}}
 \def\cU{{\cal U}}
 \def\cA{{\cal A}}
 \def\cB{{\cal B}}
 \def\cR{{\cal R}}
 \def\ra{\rangle}
 \def\la{\langle}
 \def\bF{\mathbb F}
 \def\bQ{\mathbb Q}
 \def\bZ{\mathbb Z}
 \def\bR{\mathbb R}
 \def\bC{\mathbb C}
 \def\bN{\mathbb N}
 \def\supp{\emph{supp}}
\def\Hens{{\rm Hen}}
\def\LHens{\cL_\Hens{ }}
\def\THens{\cT_\Hens{}}
\begin{document}

\author[Cluckers]{R.~Cluckers$^\mathrm{1}$}
\address{Katholieke Universiteit Leuven, Departement wiskunde,
Celestijnenlaan 200B, B-3001 Leu\-ven, Bel\-gium.}
\email{raf.cluckers@wis.kuleuven.be}
\urladdr{www.wis.kuleuven.be/algebra/Raf/}

\begin{abstract}
We present a unifying theory of fields with certain classes of
analytic functions, called fields with analytic structure. Both real
closed fields and Henselian valued fields are considered.
 For real
closed fields with analytic structure, $o$-minimality is shown. For
Henselian valued fields, both the model theory and the analytic
theory are developed. We give a list of examples that comprises, to
our knowledge, all principal, previously studied, analytic
structures on Henselian valued fields, as well as new ones. The
$b$-minimality is shown, as well as other properties useful for
motivic integration on valued fields. The paper is reminiscent of
[Denef, van den Dries, \textit{$p$-adic and real subanalytic sets.}
Ann.~of Math. (2) \textbf{128} (1988) 79--138], of [Cohen, Paul J.
\textit{Decision procedures for real and $p$-adic fields.}
Comm.~Pure Appl.~Math. \textbf{22} (1969) 131--151], and of
[Fresnel, van der Put, \textit{Rigid analytic geometry and its
applications.} Progress in Mathematics, \textbf{218} Birkh{\"a}user
(2004)], and unifies work by van den Dries, Haskell, Macintyre,
Macpherson, Marker, Robinson, and the authors.
\end{abstract}

\thanks{$^\mathrm{1}$ Supported as a postdoctoral fellow by the Fund for
Scientific Research - Flanders (Belgium) (F.W.O.) and partially by The
European Commission - Marie Curie European Individual Fellowship
with contract number HPMF CT 2005-007121 during the preparation of
this paper.}
\thanks{$^\mathrm{2}$ Supported in part by NSF grant DMS-0401175.}
\thanks{$^\mathrm{1,2}$ The authors acknowledge stimulating
conversations with Zach Robinson on some of the subject matter of
this paper
and thank the University of Leuven and Purdue University for support
and hospitality. The research has partially benefitted from the project ANR-06-BLAN-0183.}

\author[Lipshitz]{L.~Lipshitz$^\mathrm{2}$}
\address{Department of Mathematics, Purdue University, 150 North University Street, West Lafayette IN 47907-2067, USA}
\email{lipshitz@math.purdue.edu}
\urladdr{www.math.purdue.edu/$\thicksim$lipshitz/}

\subjclass[2000]{Primary 32P05, 32B05, 32B20, 03C10, 28B10, 03C64,
14P15; Secondary 32C30, 03C98, 03C60, 28E99}

\keywords{Henselian valued fields, o-minimality, b-minimality,
subanalytic functions, cell decomposition, analytic structure,
separated power series.}

 \title{Fields with Analytic Structure}
 \maketitle

 \section{Introduction}
We begin with some background. Let $A$ be a Noetherian ring, $t\in
A$ and assume that $A$ is $t$-adically complete. The notion of a
valued field with analytic $A$-structure was introduced by van den
Dries in \cite{vdD}. The principal example is $A=\bZ[[t]]$, and the
corresponding ``analytic functions'' are the strictly convergent
power series over $A$ (i.e. the elements of ~$T_n(\bZ[[t]])=\{\sum
a_\nu\xi^\nu\colon a_\nu\to 0$ $t$-adically as~$|\nu|\to\infty\}$.)
The natural homomorphisms $\bZ[[t]]\to\bQ_p ,t \mapsto p,$ and
$\bZ[[t]]\to\bF_p((t))$, $\bZ\to\bZ/p\bZ$ give homomorphisms of
$T_n(\bZ[[t]])$ to $T_n(\bQ_p)=\bQ_p\la\xi\ra$ and
$T_n(\bF_p((t)))=\bF_p((t))\la\xi\ra$, the rings of strictly
convergent power series over $\bQ_p$ and $\bF_p((t))$ respectively,
and thus the fields $\bQ_p$, $\bF_p((t))$ and ultraproducts of these
fields are all structures in a natural way for the valued field
language with function symbols for the elements of $\bigcup_m T_m
(\bZ[[t]])$. This formalism was used by van den Dries to establish
an analytic analogue of the algebraic quantifier elimination theorem
of \cite{Pas1} and also an analytic version of the Ax-Kochen
Principle (when $A$ is a discrete valuation ring), namely, if $\cU$
is a nonprincipal ultrafilter on the set of primes then
$(\Pi_p\bQ_p)/\cU\equiv (\Pi_p\bF_p((t)))/\cU$, in this analytic
language.

 In \cite{DHM} results about $p$-adic subanalytic sets were established by studying nonstandard models of the theory of $\bQ_p$ in the language with symbols for the elements of $\bigcup_n T_n (\bQ_p)$, the strictly convergent power series over $\bQ_p$.
 This can be thought of as an investigation of fields with analytic $\bZ_p$-structure, i.e. taking $A=\bZ_p$, $t=p$.

In \cite{DMM1} it was observed that if $K$ is a maximally complete,
non-archimedean real closed field with divisible value group, and if
$f$ is an element of $\bR[[\xi]]$ with radius of convergence $>1$,
then $f$ extends naturally to an ``analytic'' function $I^n\to K$,
where $I=\{x\in K\colon -1\leq x\leq 1\}$. Hence if $\cA$ is the
ring of real power series with radius of convergence $>1$ then $K$
has analytic $\cA$-structure i.e.~this extension preserves all the
algebraic properties of the ring $\cA$. The real quantifier
elimination of \cite{DD} works in this context implying that $K$ is
elementarily equivalent to the subanalytic structure on $\RR$ and
hence $o$-minimal.
 See \cite{DMM2} and \cite{DMM3} for extensions.

Some of the results of \cite{LR1} also fit into this context. If
$K\subset K'$ are complete (rank one) valued fields,  then the
separated power series $S_{m,n}(E,K)$ define analytic functions on
$(K'^{\circ})^m\times(K'^{\circ\circ})^n$ and $(K_{alg}'^{\circ})^m
\times (K_{alg}'^{\circ\circ})^n$,  where $K'_{alg}$ is the
algebraic closure of $K'$, and furthermore $K'_{alg}$ has quantifier
elimination in the corresponding separated $K$-analytic language. In
\cite{LR2} similar results were established for  certain subrings of
the $S_{m,n}$, namely the elements of $S_{m,n}(E,K)$ that are
existentially definable over $T_{m+n}(K)$, and this was used to
establish quantifier simplification for $K'_{alg}$ in the
corresponding strictly convergent (or affinoid) analytic language
(i.e.~the language with function symbols for the elements of
$\bigcup_m T_m (K)$).

In \cite{LR3} the notion of analytic $\bZ[[t]]$-structure of
\cite{vdD} was extended to the separated context and this was used
to establish various uniformity results on quantifier elimination,
smooth stratification, {\L}ojasiewicz inequalities and topological
closure for rigid semianalytic and subanalytic sets for
algebraically closed valued fields. The notion of analytic
$\bigcup_{m,n}S_{m,n}(E,K)$-structure was also extended to
nonstandard models.

In \cite{CLR1}, sections 1 and 2, an, in some ways, more general
framework than that of \cite{vdD} for studying Henselian valued
fields with analytic structure in analytic Denef-Pas languages was
given, and used to prove a fairly general Cell Decomposition Theorem
which was applied to certain questions in motivic integration. It
required an analysis of terms and definable functions in one
variable.  This framework for Henselian valued fields with analytic structure has been
further developed and applied in \cite{Ce1}, \cite{Ce2} and \cite{Ce3}.

In \cite{LR4}  quantifier elimination and $o$-minimality for the
field $K=\bigcup_n\bR ((t^{1/n}))$ of real Puiseux series (or its
completion $\widehat{K}$) in an analytic language with function
symbols for $t$-adically ``overconvergent'' power series (for
example $\Sigma_n(n+1)! t^nx^n$ which converges on the disc $\{x
\colon \|x\| \leq \|t^{-\frac{1}{2}}\|\}$, and hence is $t$-adically
overconvergent on $K^\circ$) was established. These power series
define analytic functions $I^n\to K$ but do not live in any
$o$-minimal expansion of $\bR$ (see \cite{HP}). In \cite{CLR2} these
results were extended to larger classes of ``analytic'' functions on
more nonstandard real closed fields. These results also required
analyses of terms and definable functions in one variable.

All the above mentioned work involved the study of (usually)
nonstandard fields (either Henselian or real closed) in a language
with function symbols for a (frequently standard, but sometimes
nonstandard) family of analytic functions. Results are established
for the nonstandard fields using algebraic properties of the ring of
``functions'' in the language (for example parametrized Weierstrass
Preparation and Division, a Strong Noetherian Property and sometimes
compactness.) In this paper we endeavor to give a sufficiently
general framework for \emph{fields with analytic structure} to unify
the above-mentioned work and hopefully to facilitate further
applications.

In Section \ref{maxcom} we briefly review some facts about maximally
complete valued fields. In Section \ref{realAS} we give the basic
definitions and properties of (ordered) fields with real analytic
structure. In Section \ref{sepanstruct} we give the definitions and
basic properties of Henselian fields with separated analytic
structure.  Basic for model-theoretic results (such as
cell-decomposition, $o$-minimality) is an understanding of functions
of one variable defined in an arbitrary model. The basic results on
functions of one variable in the separated case are presented in
Section \ref{annuli}. These results also extend the classical
affinoid results when K is algebraically closed and complete (and of
rank 1) to the non-algebraically closed and the quasi-affinoid
cases.  The corresponding results on functions of one variable in
the real closed case are in \cite{CLR2}, section 3.  In Section 6 we
generalize the cell decompositions of \cite{CLR1} to Henselian
fields with analytic structure and also establish ``preservation of
balls'' and a Jacobian property for these structures, which are
useful for change of variables formulas for integrals.

Another axiomatic approach to analytic structures, close to the one
in \cite{DHM}, is given by Scanlon in \cite{ScanICM} where he also
studies liftings of the Frobenius.

\subsection*{Notation}
For $K$ a valued field, write $\Ko$ for its valuation ring with
maximal ideal $\Koo$, $\Kt$ its residue field and $\Kalg$ for its
algebraic closure. Usually $K$ is Henselian and then $\Kalg$ carries
a unique valuation extending that of $K$.  We denote the (multiplicative) norm on $K$ by $|\cdot|$.

 \section{Maximally Complete Fields}\label{maxcom}

 In this section we recall some of the properties of maximally complete valued fields, also called Malcev-Neumann fields.
 See \cite{Kap},  \cite{Po} and the references therein.
In the equicharacteristic case, the maximally complete field $K$
with residue field $\widetilde K$ and value group $\Gamma$ (most
often written additively, sometimes multiplicatively)
is
 \[
 \widetilde K((\Gamma))=\{\sum_{g\in I} a_g t^g\colon a_g\in\widetilde K,\ I\subset\Gamma\text{ well
 ordered}\}
 \]
In the mixed characteristic case (see \cite{Po}) we let $k$ be a
perfect field of characteristic $p$, $R$ a complete discrete
valuation ring of characteristic zero with $\widetilde R=k$ and let
$E\subset R$ be a set of multiplicative representatives of $k$ in
$R$ (i.e.~$E$ contains $0$ and one element from each nonzero
equivalence class of $\widetilde R = R/R^\circ$ where $R^\circ$ is the maximal ideal
of $R$). Choose an embedding of $\bZ\hookrightarrow\Gamma$. The
maximally complete field of mixed characteristic is constructed in
\cite{Po}. It is
\[
R((\Gamma))=\{\sum_{g\in I} a_g t^g\colon a_g\in E,\
I\subset\Gamma\text{ well ordered}\}.
\]
(The ``coefficients'' $a_g$ are added and multiplied according the
addition and multiplication in $R$, using the fact that each element
of $R$ has a unique representation of the form $\sum_{g\in\bN}{ a_g
t^g}$, $a_g\in E$.)

In both the equicharacteristic and the mixed characteristic cases we
can write an arbitrary element of $\widetilde K((\Gamma))$ or
$R((\Gamma))$ as $a=\Sigma_{g\in\Gamma} a_g t^g$ where the sum is
over all of $\Gamma$, but we require that $I_a=\{g\colon a_g\neq
0\}$ be a well ordered subset of $\Gamma$.
 We call $I_a$ the support of $a$, denoted $\supp(a)$.

The (multiplicative) norm $|\cdot|$ on $R((\Gamma))$ is
$|\sum_{g\in I}{a_gt^g}| = t^{g_0}$ where $g_0$ is the smallest $g
\in I$ with $a_g \neq 0$.

 \section{Real closed fields with analytic structure}\label{realAS}

As we mentioned in the introduction, non-archimedean real closed
fields with various analytic structures were considered in
\cite{DMM1},\cite{DMM2},\cite{DMM3}, \cite{LR4} and \cite{CLR2}. In
this section we establish a general framework for fields with real
analytic structure.

\subsection{Definitions}Let $A_{n,\alpha}, n\in\bN$, $\alpha\in\bR$, $\alpha>0$ be the ring
of real power series in $(\xi_1,\ldots,\xi_n)$ (i.e.~elements of
$\bR[[\xi_1,\ldots,\xi_n]]$) with radius of convergence $>\alpha$, and let $\Gamma$ be an ordered abelian group.

In analogy with the notation of Section \ref{maxcom} we define
$$
A_{n,\alpha}((\Gamma)):= \Big\{\sum_{g \in I}{f_gt^g}\colon f_g \in
A_{n,\alpha} \text{ and } I \subset \Gamma \text{ well
ordered}\Big\}.
$$
Clearly $A_{0,\alpha}((\Gamma)) = \bR((\Gamma)) \subset
A_{n,\alpha}((\Gamma))$.
Note that $\bR((\Gamma))$ inherits a natural order from the order on
$\bR$ and on $\Gamma$. If $\Gamma$ is divisible, then
$\bR((\Gamma))$ is real closed.
We shall denote the absolute value arising from this order by $|\cdot|$, and denote
the norm  on $\bR((\Gamma))$ by $\|\cdot\|$ to distinguish it from $|\cdot|$.  This norm extends to the
\emph{gauss-norm} on $A_{n,\alpha}((\Gamma))$, also denoted
$\|\cdot\|$, defined by $\|\sum_{g\in I}{f_gt^g}\| = t^{g_0}$ where
$g_0$ is the smallest $g \in I$ with $f_g \neq 0$.  We call
$f_{g_0}$ the \emph{top slice} of $f$, and call $f$ \emph{regular in
$\xi_n$ of degree $s$} at a point $c$ in $[-\alpha, \alpha]^n$ if $f_0$ is regular in
the classical sense in $\xi_n$ of degree $s$ at the point $c^\circ$, the closest element of
$\bR^n$ to $c$.

\begin{defn}[Real
Weierstrass system]\label{realWS} Let $\cB = \{B_{n,\alpha}\colon
n\in\bN,\ \alpha\in\bR_+\}$ be a family of $\bR$-algebras satisfying
$$
\bR[\xi_1,\dots,\xi_n] \subset B_{n,\alpha} \subset
A_{n,\alpha}((\Gamma))
$$
for each $n$ and $\alpha$. We call the family ${\cB}$ a \emph{real
Weierstrass system} if the following conditions (a) and (b) are
satisfied:

\item[(a)]
\begin{itemize}
\item[(i)] If $m\leq m'$ and $\alpha'\leq\alpha$ then $B_{m,\alpha}\subset
B_{m',\alpha'}$ and $B_{0,\alpha} = B_{0,\alpha'}$.  (We allow the
possibility that $B_0:= B_{0,\alpha}$ is a proper $\bR$-subalgebra
of $\bR((\Gamma))$.)

\item[(ii)] If $f\in B_{m+n,\alpha}$ and $f=\sum_{\mu} {\overline{f}_\mu
(\xi_1,\ldots,\xi_m)\eta^\mu}$ where
$\eta=(\xi_{m+1},\ldots,\xi_{m+n})$, then the $\overline{f}_\mu\in
B_{m,\alpha}$.\

\item[(iii)] If $f\in B_{m,\alpha}$, $a\in (-\alpha,\alpha)^n\cap \bR^n$, and
$r\in\bR_+$, then $f(r\cdot(\xi+a))$ belongs to $B_{m,\delta}$, with
$\delta:= \min((\alpha-a)/r,(\alpha+a)/r)$, and where
$f(r\cdot(\xi+a))$ is considered naturally as an element of
$A_{n,\delta}((\Gamma))$.

\item[(iv)] If $f\in B_{n,\alpha}$ then $af \in B_{n,\alpha}$ for some $a$ in
$B_0$ satisfying $\|af\| = 1$ with $\|\cdot\|$ the
gauss-norm on $A_{n,\alpha}((\Gamma))$.\
\end{itemize}

 \item[(b)]\emph{Weierstrass Division}:
\label{WPT} If $f \in B_{n,\alpha}$ with $\|f\|=1$ is regular in
$\xi_n$ of degree $s$ at 0, there is a $\delta > 0, \delta \in \bR$
such that  if $g\in B_{n,\alpha}$, then there
are unique $Q\in B_{n,\delta}$ and
$R_0(\xi'),\ldots,R_{s-1}(\xi')\in B_{n-1,\delta}$, with $\|Q\|,
\|R_i\| \leq \|g\|$, such
that
$$
g=Qf+R_0(\xi')+ R_1(\xi')\xi_n+\ldots+ R_{s-1}(\xi')\xi^{s-1}_n.
$$
\end{defn}

\begin{defn}\label{realstrong}
If a real Weierstrass system $\cB:=\{B_{n,\alpha}\colon n\in\bN,\
\alpha\in\bR_+\}$ satisfies in addition the following condition (c),
then call $\cB$ a \emph{strong}, real Weierstrass system. \
\item[(c)]If $f(\xi,\eta_1,\eta_2) \in B_{n+2,\alpha}$ there are
  $f_1(\xi,\eta_1,\eta_3), f_2(\xi,\eta_2,\eta_3)$ and
  $Q(\xi,\eta_1,\eta_2,\eta_3) \in B_{n+3,\alpha}$ such that
  $$
  f(\xi,\eta_1,\eta_2) = f_1(\xi,\eta_1,\eta_3) +\eta_2f_2(\xi,\eta_2,\eta_3) + Q\cdot(\eta_1\eta_2 - \eta_3).
$$
\end{defn}

\begin{rem}[Weierstrass Preparation]\label{realWP}Suppose that the
family $\cB = \{B_{n,\alpha}\}$ is a real Weierstrass system. If $f
\in B_{n,\alpha}$ with $\|f\|=1$ is regular in $\xi_n$ of degree $s$
at 0, then there is a $\delta \in \bR$, $\delta > 0$, such that we
can write uniquely
$$
f=[\xi_n^s+A_1(\xi')\xi_n^{s-1}+\ldots+A_s(\xi')]U(\xi)
$$
where:
\begin{eqnarray*}
&&A_1,\ldots,A_s,\in B_{n-1,\delta}, \text{ and } U\in B_{n,\delta} \text{ is a unit},\\
&&\|A_1\|,\ldots,\|A_s\|,\|U\|\leq 1 \text{ and}\\
&&\|A_1(0)\|,\ldots,\|A_s(0)\|<1 \text{ and } \|{U}(0)\|=1.\
\end{eqnarray*}
This can be seen by taking $g=\xi_n^s$ in axiom (b) and using the case $s=0$ to see that if $\|Q\| = 1$ and $Q$ is regular of degree $0$ at $0$, then $Q$ is a unit. (The special case $n=s=0$ shows that if
$a \in B_0$, $\|a\| = 1$, then $a$ is a unit in $B_0$, and hence, using (a)(iv), that $B_0$ is a subfield of $\bR((\Gamma))$.)
\end{rem}

\begin{rem} \label{realcomp}

(i) Condition (c) of Definition \ref{realstrong} is used in the
proof of $o$-minimality (see Theorem \ref{realqe}.) This condition
does not follow from the other conditions, as can be seen in Example
\ref{realex}(7) below. However, every real Weierstrass system can be
extended to a strong real Weierstrass system since the Weierstrass
system $\{A_{n,\alpha}((\Gamma))\}$ is strong, see Example
\ref{realex} (6). To prove $o$--minimality for real closed fields
with (not necessarily strong) analytic structure it is sufficient to
prove $o$--minimality for real closed fields with strong analytic
structure. The proof of $o$-minimality for real closed fields with
strong analytic structure is given in \cite{CLR2}, Section 3, where
Condition (c) is used (implicitly), roughly speaking to write $f(x,
{\frac{1}{x}}) = g(x) + {\frac{1}{x}}h({\frac{1}{x}})$.

(ii) Weierstrass Division guarantees that if $f \in
B_{n,\alpha}$ and $\xi_i$ divides $f$ as an element of $A((\Gamma))$
(i.e. $f(\xi_1,\dots,\xi_{i-1},0,\xi_{i+1}, \dots ,\xi_n) = 0$) then
there is a $\delta >0$ and an element $g \in B_{n,\delta}$ such that
$f = \xi_ig$.   It is not the case that if $f \in A_{n,\alpha}$ has
no zero in the (real) polydisc $[-\alpha,\alpha]^n$, then $f$ is a
unit in $B_{n,\alpha}$.  To see this consider $f = 1 + x^2 \in B_{1,1}$.  The
units in $B_{n,\alpha}$ are the functions that have no zeros in the
\emph{complex} polydisc $\{x \in \bC \colon |x| \leq \beta \}^n$,
for some $\alpha < \beta \in \bR$. However, if $f \in B_{n,\alpha}$
and $f_0$, the top slice of $f$ has no zero in $[-\alpha,\alpha]^n$
then $[-\alpha,\alpha]^n$ can be covered by finitely many (smaller)
polydiscs on each of which $f_0$ is a unit.  It then  follows from
conditions (a) and (b) of Definition \ref{realWS} that $f$ is a unit
in the rings $B_{n,\delta}$ on each of these smaller polydiscs. If
$f \in B_{n,\alpha}$ and $f_0$ is not $0$ at $0$, then $f$ is
regular of degree $0$ at $0$, and hence by Weierstrass Division, a
unit.  In other words, if  $f \in B_{n,\alpha}$ satisfies $f_0(0)
\neq 0$ then $f$ is a unit in $B_{n,\delta}$ for some $\delta \in
\bR_+$.

\item[(iii)] It follows from Condition (a)(ii) of Definition \ref{realWS} that if
$f\in B_{m+n,\alpha}$ and $f=\sum_{\mu} {\overline{f}_\mu
(\xi_1,\ldots,\xi_m)\eta^\mu}$, then
$$
\sum_{\substack {\mu_i>d \text{ for}\\ i=1,\cdots,n}}{\overline{f}_\mu(\xi_1,\ldots,\xi_m)\eta^\mu} \in B_{m+n,\alpha}.
$$
\item[(iv)] The proofs in \cite{CLR2}, Section 2 (in particular of
Theorem 2.5), with very minor modifications, show that the rings
$B_{n,\alpha} = A_{n,\alpha}((\Gamma))$ satisfy conditions (a) and
(b) of Definition \ref{realWS}.  Condition (c) is immediate. We
shall refer to this Weierstrass System as $\cA((\Gamma))$ and call
it the \emph{full Weierstrass system based on $\Gamma$.}  It is the
largest Weierstrass system corresponding to $\Gamma$.  The axioms
allow us to ``evaluate" elements of $A_{n,\alpha}((\Gamma))$ at
arguments $a \in B_0^n \cap [-\alpha, \alpha]^n$.  By axiom (a)(iii)
it is sufficient to consider the case $a^\circ = 0$, i.e. $a=
(a_1,\cdots,a_n)$ with the $a_i$ all infinitesimal (i.e. $\|a_i\| <
1$.)  Then, by Weierstrass Division,
$$
f = b + \sum_iQ_i \cdot (\xi_i - a_i)
$$
for a unique $b \in \bR((\Gamma))$, since $\xi_i - a_i$ is regular of degree one in $\xi_i$ at $0$.
 Let $J\subset \Gamma$ be well ordered and satisfy $J+J = J$
 and $\supp(f), \supp(a_i) \subset J$.  Then an
 induction on $J$ shows that $b = \sum_{g \in I}{f_g(a)t^g}$,
 where $f_g(\xi) = \sum_\nu a_{g\nu}\xi^\nu$ and
 $f_g(a) = \sum_\nu a_{g\nu}a^\nu$.  A similar argument
 allows us to ``compose" elements of $A((\Gamma))$.
More precisely, if $f\in
B_{m,\alpha}$ and $g_1,\ldots,g_m\in B_{n,\gamma}$ with
$(g_i)_0(0)=a_i$, where $(g_i)_0$ is the top slice of $g_i$, $|a_i|<\alpha$, and $\|g_i\|\leq 1$ for
$i=1,\ldots,m,$ then there is a $\beta\in\bR$, $\beta>0$ such that
$f(g_1(\xi),\ldots,g_m(\xi)) $ is in $B_{m,\beta}$.

\item[(v)] By (iii)  $A((\Gamma))$ is also closed under the taking of
natural derivatives in $A((\Gamma))$. More precisely, taking $m=1$
for notational convenience, composing $f\in B_{1,\alpha}$ with
$x\mapsto x+y$, writing $f(x+y)=f_0(x)+f_1(x)y+\ldots$ and defining
$f'$ as $f_1$, by construction one sees that $f'$ is in
$B_{1,\delta}$, for some $0 < \delta \leq \alpha$, giving a
derivation $B_{1,\delta}\to B_{1,\delta}$ which extends the natural
derivation on $A_{1,\delta}$.

\item[(vi)] Since all the above mentioned data are unique in
$\cA((\Gamma))$, and by the axioms exist in any Weierstrass system
$\cB$ contained in $\cA((\Gamma))$, they are also unique in $\cB$.
\end{rem}

\subsection{The Strong Noetherian Property}We prove a Strong Noetherian Property (Theorem \ref{SNP}) for real Weierstrass
systems, closely following \cite{CLR2} Lemma 2.9 and Theorem 2.10.
Since this is a fundamental property, and the ideas of the proof are
used again in section \ref{SNPsection}, we give complete proofs. We
write $B_{m,\alpha}^\circ := \{f \in B_{m,\alpha} \colon \|f\| \leq
1\}$.

The following Lemma is used to prove Theorem \ref{SNP}.
\begin{lem}\label{lemforSNP}
Let $\cB = \{B_{m,\alpha}\}$ be a real Weierstrass system and let
$f(\xi,\eta) =\sum_{\mu}\overline{f}_{\mu}(\xi)\eta^{\mu}\in
B_{m+n,\alpha}$. Then the $\overline{f}_{\mu} \in B_{m,\alpha}$,  by
Definition \ref{realWS}(a)(ii). There is an integer  $d \in \bN$,  a
constant $\beta\in\bR$, $0 < \beta \leq \alpha$,  and $g_\mu \in
B^\circ_{m+n,\beta}$ for $|\mu|<d$, such that, considering $f \in
B_{m+n,\beta}$,
$$
f = \sum_{|\mu| < d} {\overline{f}_\mu(\xi) g_\mu(\xi,\eta)}.
$$
\end{lem}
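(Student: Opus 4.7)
The plan is to follow \cite{CLR2} Lemma 2.9, combining Weierstrass Preparation with an induction on $n$. First I normalize by axiom (a)(iv) of Definition \ref{realWS}: choose $a\in B_0$ with $\|af\|=1$, and observe that any decomposition of $af$ as a finite $B^\circ$-combination of the $a\overline f_\mu$'s immediately yields one of $f$ in terms of the $\overline f_\mu$'s. So assume $\|f\|=1$, making the top slice $f_0\in A_{m+n,\alpha}$ nonzero. By a generic linear change of variables in $\eta$ (permitted by axiom (a)(iii)), arrange that $f_0$ is regular in $\eta_n$ of some integer degree $s\ge 0$ at $0$.

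I proceed by induction on $n$. The case $n=0$ is trivial ($d=1$, $g_0=1$). For $n\ge 1$ with $s=0$, one has $f_0(0)\ne 0$, so $f$ and $\overline f_0$ are both units in some $B_{m+n,\delta}$ by Remark \ref{realcomp}(ii), giving $f=\overline f_0\cdot(f/\overline f_0)$ with $f/\overline f_0\in B^\circ$. When $s\ge 1$, Weierstrass Preparation (Remark \ref{realWP}) provides $\delta>0$ and
\[
f=P\cdot U,\qquad P=\eta_n^s+A_1\eta_n^{s-1}+\cdots+A_s,
\]
with $A_j\in B_{m+n-1,\delta}$, $\|A_j(0)\|<1$, and $U\in B_{m+n,\delta}$ a unit with $\|U\|=1$ (so $U^{-1}\in B^\circ$). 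Then $f=\eta_n^sU+\sum_{j=1}^sA_j\eta_n^{s-j}U$, and it suffices to place each piece in the desired form.

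To handle the $A_j$'s, write $U=\sum_kU_k(\xi,\eta')\eta_n^k$ and $f^{(l)}:=\sum_{\mu_n=l}\overline f_\mu\eta'^{\mu'}$. The coefficient identities from $f=PU$ read $f^{(l)}=A_{s-l}U_0+A_{s-l+1}U_1+\cdots+A_sU_l$ for $0\le l<s$. Since $U_0=U|_{\eta_n=0}$ has $\|U_0(0)\|=1$, $U_0$ is a unit in $B_{m+n-1,\delta}$ with $U_0^{-1}\in B^\circ$, and the system is triangularly solvable for $A_s,A_{s-1},\ldots,A_1$. Applying the inductive hypothesis on $n$ to each $f^{(k)}\in B_{m+n-1,\alpha}$ (whose coefficients are precisely $\overline f_{(\mu',k)}$, $\mu'\in\bN^{n-1}$) yields finite decompositions $f^{(k)}=\sum_{|\mu'|<d_k}\overline f_{(\mu',k)}h_{k,\mu'}$ with $h_{k,\mu'}\in B^\circ$; substituting places each $A_j$, hence $\sum_{j=1}^sA_j\eta_n^{s-j}U$, in the $B^\circ_{m+n,\beta}$-ideal generated by finitely many $\overline f_\mu$ with $\mu_n<s$.

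Finally, the residual $\eta_n^sU$ is absorbed by the key observation that $\overline f_{(\vec 0,s)}$ is itself a unit. Indeed, its top slice $(\overline f_{(\vec 0,s)})_0(\xi)$ equals the coefficient of $\eta_n^s$ in $f_0(\xi,0,\eta_n)$, whose value at $\xi=0$ is the leading coefficient $c$ of the regular expansion $f_0(0,\ldots,0,\eta_n)=c\eta_n^s+O(\eta_n^{s+1})$, and so is nonzero. Hence $\overline f_{(\vec 0,s)}$ is a unit in some $B_{m,\delta'}$ by Remark \ref{realcomp}(ii), with $\overline f_{(\vec 0,s)}^{-1}\in B^\circ$, so $\eta_n^sU=\overline f_{(\vec 0,s)}\cdot(\eta_n^sU/\overline f_{(\vec 0,s)})$ and the quotient lies in $B^\circ_{m+n,\beta}$ because $\|\eta_n^sU\|=\|\overline f_{(\vec 0,s)}\|=1$. \emph{The main obstacle} is harmonizing the various polyradii that arise ($\delta$ from Weierstrass Preparation, $\delta'$ from unit inversions, the $\beta_k$'s from the $s$ uses of the inductive hypothesis) into one positive $\beta$ with uniform $B^\circ$-bounds throughout; only finitely many shrinkages are needed because $s$ is finite and the induction has depth at most $n$.
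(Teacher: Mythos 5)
Your overall architecture (Weierstrass Preparation in the last variable, a triangular system for the $A_j$, induction on $n$) would be fine \emph{if} your first step were available, but that step contains a genuine gap: a linear change of variables among the $\eta$'s alone cannot, in general, make the top slice $f_0$ regular in $\eta_n$ at $0$. Regularity at $0$ in $\eta_n$ of some finite degree requires $f_0(0,\ldots,0,\eta_n)\not\equiv 0$ after the substitution, and a substitution $\eta\mapsto L\eta$ preserves the subspace $\{\xi=0\}$, so it is powerless whenever $f_0$ vanishes identically on $\{\xi=0\}$. This already happens for $f=\xi_1$ or $f=\xi_1\eta_1$ (and, less trivially, for $f_0=\xi_1\eta_1+\xi_2\eta_2$, where no single $\xi_i$ can be factored out). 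You cannot repair this by mixing $\xi$- and $\eta$-variables in the coordinate change, because the decomposition you must produce has coefficients equal to the original $\overline f_\mu(\xi)$, functions of $\xi$ alone, and only changes among the $\xi$'s (or $\bR$-linear changes among the $\eta$'s) transform these coefficients harmlessly. Since both the unit claim for $\overline f_{(\vec 0,s)}$ and the triangular solution for the $A_j$ presuppose this regularity, the argument collapses exactly in the cases where it is needed. (A minor point: the linear substitutions you invoke are justified by the composition statement in Remark \ref{realcomp}(iv), not by axiom (a)(iii), which only covers translation and scaling.)

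The paper's proof is built precisely to avoid this obstruction: it never regularizes $f$ itself in an $\eta$-variable. Since $\|f\|=1$, some coefficient $\overline f_{\nu_0}(\xi)$ satisfies $\|\overline f_{\nu_0}\|=1$, i.e.\ has nonzero top slice as a series in $\xi$ alone, so a linear change among the $\xi$-variables makes $\overline f_{\nu_0}$ regular in $\xi_m$; one then Weierstrass-divides $f$ by $\overline f_{\nu_0}$, uses the coefficientwise identity $\overline f_\nu=\overline f_{\nu_0}\overline Q_\nu+\overline R_\nu$ to see that the remainder's coefficients are $B^\circ_{m,\beta}$-combinations of the $\overline f_\nu$, and concludes by a double induction on $m$ and on the degree $s$ of regularity. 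If you wish to keep your preparation-in-$\eta_n$ scheme, you must first dispose of the case $f_0|_{\xi=0}\equiv 0$, and there is no evident way to do so without essentially reproducing the paper's coefficient-division argument.
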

\begin{proof}
By Property (a)(iv) of Definition \ref{realWS}, we may assume that $\|f\| = 1$. Choose a $\nu_0$ such that
$\|\overline{f}_{\nu_0} \| = 1.$ Making an $\bR$-linear change of
variables (allowed by Remark
\ref{realcomp}), and shrinking $\alpha$ if necessary, we may assume
that $\overline{f}_{\nu_0}$ is regular in $\xi_m$ at $0$ of degree
$s$, say.  Write $\xi'$ for $(\xi_1,\dots,\xi_{m-1})$. By
Weierstrass Division  there is a $\beta > 0$ and there are
$Q(\xi,\eta) \in B_{m+n,\beta}$ and
$$
R(\xi,\eta) = R_0(\xi',\eta)+
\cdots + R_{s-1}(\xi',\eta)\xi_m^{s-1} \in B_{m+n-1,\beta}[\xi_m]
$$
such that
$$
f(\xi,\eta) = \overline{f}_{\nu_0}(\xi) Q(\xi,\eta) + R(\xi,\eta).
$$
By induction on m, we may write
$$
R_0 = \sum_{|\mu|<d}{\overline{R}_{0\mu}(\xi')
g_\mu(\xi',\eta)},
$$
for some $d \in \bN$, some $ \beta > 0$ and $g_\mu(\xi',\eta) \in
B_{m+n-1,\beta}^\circ.$ Writing $R = \sum_\nu{\overline
R_\nu(\xi)\eta^\nu}$, observe that each $\overline{R}_\nu$ is an
$B_{m,\beta}^\circ$-linear combination of the $\overline{f}_{\nu },$
since, taking the coefficient of $\eta^\nu$ on both sides of the
equation $f(\xi,\eta) = \overline{f}_{\nu_0}(\xi) Q(\xi,\eta) +
R(\xi,\eta),$ we have
$$
\overline{f}_\nu = \overline{f}_{\nu_0}\overline{Q}_\nu +
\overline{R}_\nu.
$$
Consider
\begin{eqnarray*}
f -  \overline{f}_{\nu_0} Q -
\sum_{|\mu|<d}{\overline{R}_{\mu}(\xi)g_\mu(\xi',\eta)}
&=:& S_1\xi_m + S_2{\xi_m}^2 + \cdots +S_{s-1} \xi_m^{s-1}\\
&=& \xi_m[S_1 + S_2\xi_m + \cdots + S_{s-1}\xi_m^{s-2}]\\
&=:& \xi_m \cdot S,\, \text{ say,}
\end{eqnarray*}
where the $S_i \in B_{m+n-1,\beta}^\circ.$ Again, observe that each
$\overline{S}_\nu$ is an $B_{m,\beta}^\circ-$linear combination of
the $\overline{f}_{\nu'}.$  Complete the proof by induction on $s$,
working with $S$ instead of $f$.
\end{proof}

\begin{thm}[Strong Noetherian Property]\label{SNP}
Let $\cB = \{B_{m,\alpha}\}$ be a real Weierstrass system and let
$f(\xi,\eta)
=\sum_{\mu}\overline{f}_{\mu}(\xi)\eta^{\mu}\in{B}_{m+n,\alpha}$.
Then the $\overline{f}_{\mu} \in {B}_{m,\alpha}$ and there is an
integer  $d \in \bN$,  a constant $\beta>0$, $ \beta\in\bR$ and
units $U_\mu(\xi,\eta) \in {B}^\circ_{m+n,\beta}$ for $|\mu|<d$,
such that, considering $f \in B_{m+n,\delta}$,
$$
f = \sum_{\mu\in J} {\overline{f}_\mu(\xi) \eta^\mu
U_\mu(\xi,\eta)},
$$
where $J$ is a subset of $\{0,1,\ldots,d\}^n$.
\end{thm}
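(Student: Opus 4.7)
The plan is to combine Lemma \ref{lemforSNP} with Weierstrass Preparation (Remark \ref{realWP}), proceeding by induction on $n$.

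After normalizing $\|f\|=1$ via axiom (a)(iv) of Definition \ref{realWS} (absorbing the inverse into the final units), I would apply Lemma \ref{lemforSNP} to write $f = \sum_{|\mu|<d_0}\overline{f}_\mu(\xi)\,g_\mu(\xi,\eta)$ with $g_\mu\in B^\circ_{m+n,\beta_0}$. This already expresses $f$ as a bounded linear combination, with the desired coefficients $\overline{f}_\mu$ in place, of finitely many $g_\mu$; the remaining task is to reshape each $g_\mu$ into the form ``sum of monomials $\eta^\nu$ times units'' with $\nu$ in a box $\{0,\ldots,d\}^n$.

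The induction on $n$ has trivial base case $n=0$ ($\eta$ is empty, and $f=\overline{f}\cdot 1\cdot 1$). For $n\geq 1$, I handle each $g_\mu$ with $\|g_\mu\|=1$ as follows. After an $\mathbf R$-linear change of variables in $\eta$ (allowed by axiom (a)(iii)), I may assume $g_\mu$ is regular in $\eta_n$ of some degree $s$ at $0$, and Weierstrass Preparation yields $g_\mu = P_\mu W_\mu$, where $W_\mu$ is a unit in $B^\circ_{m+n,\delta}$ and $P_\mu = \eta_n^s + A_1(\xi,\eta')\eta_n^{s-1}+\cdots+A_s(\xi,\eta')$ is a distinguished polynomial with $A_i\in B_{m+n-1,\delta}$. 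Applying the inductive hypothesis to each $A_i$, viewed as an element of $B_{m+n-1,\delta}$ with $m$ coefficient variables $\xi$ and $n-1$ parameter variables $\eta'$, expresses $A_i$ as a finite sum of monomial-times-unit terms, from which $g_\mu$ inherits a decomposition as a finite sum of monomials $\eta^\nu$ (with $\nu$ in a box) times units. (The $g_\mu$ with $\|g_\mu\|<1$ can be absorbed into the remaining terms after the reshaping, using that such factors are strictly sub-unital.)

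The main obstacle is that the coefficients emerging in this recursion are coefficients of $P_\mu$ (equivalently, of $g_\mu$), not of the original $f$. I resolve this by tracking the proof of Lemma \ref{lemforSNP}: the coefficients of $P_\mu = g_\mu W_\mu^{-1}$ are $B^\circ_{m+n-1,\delta}$-linear combinations of the coefficients of $g_\mu$, and, by the construction in that Lemma's proof (where each intermediate remainder's coefficients are shown to be $B^\circ_{m,\beta_0}$-linear combinations of the $\overline{f}_\mu$), the coefficients of $g_\mu$ are in turn such linear combinations of the $\overline{f}_\mu$ with $|\mu|<d_0$. Absorbing these norm-$\leq 1$ factors into the units (which remain units by Remark \ref{realcomp}(ii), as the absorption preserves the nonvanishing of the relevant top slices at a chosen point) and combining over all $\mu$ with $|\mu|<d_0$ yields the desired decomposition $f=\sum_{\mu\in J}\overline{f}_\mu(\xi)\eta^\mu U_\mu(\xi,\eta)$ with $J\subset\{0,\ldots,d\}^n$ for $d=\max(d_0,d')$. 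The hardest part will be the bookkeeping in this absorption step, since one must simultaneously keep track of the radii $\beta,\delta$ (possibly shrinking them finitely often), the degree bound $d$, and the property that each $U_\mu$ remains a unit in a common $B^\circ_{m+n,\gamma}$.
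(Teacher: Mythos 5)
Your overall strategy --- Lemma \ref{lemforSNP} followed by Weierstrass Preparation in the $\eta$-variables and an induction on $n$ --- diverges from the paper's argument at exactly the point where the real difficulty of Theorem \ref{SNP} sits, and the step you offer there does not work. The statement requires the coefficient attached to the monomial $\eta^\mu$ to be the \emph{original} Taylor coefficient $\overline{f}_\mu$ of $f$, with a \emph{unit} cofactor $U_\mu$. After you prepare each $g_\mu$ and recurse, what you obtain is an expression $\sum_\nu c_\nu(\xi)\eta^\nu V_\nu$ in which each $c_\nu$ is a $B^\circ_{m,\beta}$-linear combination $\sum_\lambda b_\lambda\overline{f}_\lambda$ of the original coefficients. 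Your proposed fix, ``absorbing these norm-$\leq 1$ factors into the units,'' fails twice over: the factors $b_\lambda$ are merely of gauss-norm $\leq 1$ and in general vanish somewhere, so $b_\lambda V_\nu$ is not a unit (Remark \ref{realcomp}(ii) gives no help here --- nonvanishing of the relevant top slice is exactly what is lost); and even granting units, a single term $\bigl(\sum_\lambda b_\lambda\overline{f}_\lambda\bigr)\eta^\nu V_\nu$ cannot be rewritten as a sum of terms of the required shape $\overline{f}_\lambda\eta^\lambda U_\lambda$ --- the monomial index no longer matches the coefficient index, and no amount of bookkeeping recovers that matching. A second, related problem is the $\bR$-linear change of variables among the $\eta$'s needed to make $g_\mu$ regular in $\eta_n$: the monomials produced by your recursion are then monomials in the \emph{new} variables, and transforming back turns them into polynomials, again destroying the form demanded by the theorem. (In Lemma \ref{lemforSNP} the change of variables is harmless because it is made among the $\xi$'s, whose monomial structure is irrelevant to the conclusion.)

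The paper's proof avoids both problems by never preparing or changing variables in $\eta$. It partitions the exponent set $\NN^n$: on the block $I_1=\{1,\dots,d\}^n\cup\{\mu\mid \mu_i\ge d \mbox{ for all } i\}$ it uses Lemma \ref{lemforSNP} together with a normalization of the $g_\mu$ (arranging that $(\bar g_{\mu})_\nu$ is $1$ for $\nu=\mu$ and $0$ for the other $\nu$ in the box) so that the corresponding sub-series $g_{\mu,I_1}$ is literally of the form $\eta^\mu(1+h_\mu)$ with $h_\mu\in(\eta)B^\circ_{m+n,\beta}$ --- the unit is $1+{}$small after shrinking $\beta$, never a product with a non-unit; the remaining pieces $f_{I_j}$, which by Remark \ref{realcomp}(iii) lie in the ring and factor as $\eta_i^\ell q(\xi,\eta')$, are then handled by induction on $n$ with the coefficient/monomial matching preserved. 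To salvage your plan you would need a mechanism that keeps the identity of the coefficients $\overline{f}_\mu$ and of the monomials $\eta^\mu$ intact through the recursion, which is precisely what the paper's partition-plus-normalization accomplishes and what Weierstrass Preparation in $\eta$ discards.
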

\begin{proof}
It is sufficient to show that there are an integer $d$, a set
$J\subset\{0,1,\ldots,d\}^n$, and $g_\mu \in {B}_{m+n,\beta}^\circ$
such that
 \begin{equation}\label{eqfg}
 f = \sum_{\mu\in J}
{\overline{f}_\mu(\xi) \eta^\mu g_\mu(\xi,\eta)},
\end{equation}
since then, rearranging the sum if necessary, we may assume that
each $g_\mu$ is of the form $1 + h_\mu$ where $h_\mu \in
(\eta){B}^\circ_{m+n,\beta}.$ Shrinking $\beta$ if necessary will
guarantee that the $g_\mu$ are units. But then it is in fact
sufficient to prove (\ref{eqfg}) for $f$ replaced by
$$
f_{I_i}:=\sum_{\mu\in I_i}\overline{f}_{\mu}(\xi)\eta^{\mu}
$$
for each $I_i$ in a finite partition $\{I_i\}$ of $\NN^n$ and to
show that $f_{I_i}$ is in ${B}_{m+n,\beta}$.

By Lemma \ref{lemforSNP} there is an integer  $d \in \bN$,  a
constant $\beta \in \bR$, $ 0 < \beta \leq \alpha$ and $g_\mu \in
B^\circ_{m+n,\beta}$ for $|\mu|\leq d$, such that,
$$
f = \sum_{|\mu| \leq d} {\overline{f}_\mu(\xi) g_\mu(\xi,\eta)}.
$$
Rearranging, we may assume for $\nu,\mu\in\{1,\ldots,d\}^n$ that
$(\bar g_{\mu})_\nu$ equals $1$ if $\mu=\nu$ and that it equals $0$
otherwise.

Focus on $f_{I_1}(\xi,\eta)$, defined as above by
$$
f_{I_1}(\xi,\eta) = \sum_{\mu\in
I_1}\overline{f}_{\mu}(\xi)\eta^{\mu}
$$
with
$$
I_1:=\{1,\ldots,d\}^n\cup\{\mu\mid \mu_i\geq d\mbox{ for all $i$}\}
$$
and note that
\begin{equation}\label{eqhatf}
f_{I_1}(\xi,\eta) = \sum_{|\mu| \leq d} {\overline{f}_\mu(\xi)
g_{\mu,I_1}(\xi,\eta)}
\end{equation}
with $g_{\mu, I_1}(\xi,\eta)\in B^\circ_{m+n,\beta}$ defined by the
corresponding sum
$$
g_{\mu,I_1}(\xi,\eta) = \sum_{\nu\in
I_1}\overline{g}_{\mu,\nu}(\xi)\eta^{\nu}.
$$
The $g_{\mu,I_1}$ and $f_{I_1}$ are in
$B^\circ_{m+n,\beta}$.  This follows from the axioms and induction on $n$. (See Remark \ref{realcomp}.)

It is now clear that $g_{\mu,I_1}$ is of the form
$\eta^{\mu}(1+h_\mu)$ where $h_\mu\in (\eta)B^\circ_{m+n,\beta}$.

One now completes the proof by noting that $f-f_{I_1}$ is a finite sum of terms
of the form $f_{I_j}$ for $j>0$ and $\{I_i\}_i$ a finite partition
of $\NN^n$  and where each $f_{I_j}$ is in $B^\circ_{m+n,\beta}$ is
of the form $\eta_i^\ell q(\xi,\eta')$ where $\eta'$ is
$(\eta_1,\ldots,\eta_{i-1},\eta_{i+1},\ldots,\eta_{n})$ and $q$ is
in $\cR_{m+n-1,\beta}^\circ$. These terms can be handled by
induction on $n$.
\end{proof}

\begin{defn}\label{realansys}
Let ${\cB}=\{B_{n,\alpha}\}$ be a real Weierstrass system. Let $K$
be an ordered field containing $B_0$ as an ordered subfield.  For
each $n\in\bN$, $\alpha\in\bR$, $\alpha>1$ let $\sigma_{n,\alpha}$
be an $\bR$-algebra homomorphism from $B_{n,\alpha}$ to the ring of
$K$ valued functions on $[-1,1]^n$, compatible with the inclusions
$B_{n,\alpha}\subset B_{n,\beta}$ for $\beta<\alpha$, and respecting
the translation conditions of Definition \ref{realWS}(a)(iii). Write
$\sigma_n$ for the induced homomorphism on $\bigcup_{\alpha>1}
B_{n,\alpha}$. Suppose that the maps $\sigma_n$ satisfy
\begin{itemize}
 \item[(i)] $\sigma_0$ is the inclusion $B_0 \subset K$
 \item[(ii)]$\sigma_{m}(\xi_i)$ is the $i$-th coordinate function
on $(\Ko)^m$, $i=1,\dots,m$, and
\item[(iii)]$\sigma_{m+1}$ extends $\sigma_{m}$, where we
identify in the obvious way functions on $(\Ko)^m$ with functions on
$(\Ko)^{m+1}$ that do not depend on the last coordinate.
\end{itemize}
Then we call the family $\sigma:=\{\sigma_{n,\alpha}\}$ a \emph{real
analytic ${\cB}$-structure on $K$}.
\end{defn}

\subsection{Examples of real analytic structures}\label{realex}
\noindent
(1) \ Take  $\Gamma = \{0\}$ and $B_{n,\alpha}=A_{n,\alpha}((\{0\})) = A_{n,\alpha}$.
Then by the observations in \cite{DMM1} every complete or maximally complete real closed valued field containing $\bR$ has analytic $\cB$-structure.  In particular, $\bR$ has analytic $\cB$--structure, so $\bR$ with the subanalytic structure studied in \cite{DD} is covered by our definitions.\\

(2) \ Take $\Gamma =\bQ$ and for all $\alpha > 0$ let $B_{n,\alpha}:=B_n:=$
\[
\big\{ \sum_{\gamma_i\in \bQ} t^{\gamma_i}  p_i(\xi)\colon
p_i(\xi)\in\bR[\xi] \text{ and }\gamma_i-\varepsilon
\text{deg}(p_i)\to\infty \text{ for some } \varepsilon>0 \big\}
\]
$\subsetneq A_{n,\alpha}((\bQ))$ (in other words, the $t$-adically
overconvergent power series, $K\langle\langle\xi\rangle\rangle$,
where $K$ is the completion of the field of real Puiseux series).
Then we are in the context of considering the field $K$ with the
rings of $t$-adically overconvergent power series. This is the case
considered in \cite{LR4}.  Observe that in this example
$\bR((\Gamma)) \not\subset K$.
\\

(3) \ If in the context of (2) we take $\cB' = \{B'_{n,\alpha} \}$, with $B'_{n,\alpha} := B'_n :=$
\[
\big\{ \sum_{\gamma_i\in \bZ[{\frac{1}{m}}]}{ t^{\gamma_i}
p_i(\xi)}\colon p_i(\xi)\in\bR[\xi] \text{ and }\gamma_i-\varepsilon
\text{deg}(p_i)\to\infty \text{ for some } \varepsilon>0, m \in \bN
\big\}
\]
we may consider the field of real Puiseux series (rather than its
completion) with real analytic $\cB'$-structure.  Hence this is an
example of a field with real analytic structure that is not
complete.
\\

(4) \ Take $\Gamma= \bQ$ and let
\[
B_{n,\alpha}=\big\{ \sum t^{\gamma_i} f_{\gamma_i} (\xi)\colon
f_{\gamma_i}\in A_{n,\alpha}, \bQ \ni \gamma_i \to \infty\big\}
\]
$\subsetneq A_{n,\alpha}((\bQ))$. Then we are in the context of
Section 2 of \cite{CLR2} where we considered the completion of the
field of Puiseux series with real analytic $\cB$-structure. If we
take
$$
B'_{n,\alpha}=\big\{ \sum_{\gamma_i\in \bZ[{1 \over m}]} t^{\gamma_i} f_{\gamma_i} (\xi)\colon
f_{\gamma_i}\in A_{n,\alpha}, \gamma_i \to \infty \text{, for some } m \in \bN \big\}
$$
and $\cB':=\{B'_{n,\alpha}\}$, then the field of real Puiseux series has analytic $\cB'$-structure.\\

(5) \ Take $\Gamma =\bQ^n$ with $\bQ^n$ ordered lexicographically,  and
\[
B_{n,\alpha}=\big\{\sum_{g\in I}t^g f_g(\xi)\colon f_g\in
A_{n,\alpha}, I \subset \bQ^n\text{ well ordered}\big\}.
\]
Then we are in the context of Section 4 of \cite{CLR2}. Note that even for $n=1$ there are more functions in this analytic structure than the one of example (4).\\

(6) \ (cf. Remark \ref{realcomp}.) Take $\Gamma$ arbitrary and $B_{n,\alpha}=A_{n,\alpha}((\Gamma))$.
Then $\cA((\Gamma)):=\{B_{n,\alpha}\}$ is a strong real Weierstrass
system. The proofs of \cite{CLR2} Section 2, with very minor
modifications, show that this family is a strong real Weierstrass
system and that $\bR((\Gamma))$ has real analytic
$\cA((\Gamma))$-structure. We call this the \emph{full} real
analytic
$\bR((\Gamma))$-structure.\\

(7) \ If we take $\Gamma = \{0\}$ and $B_{n,\alpha}$ the ring of \emph{algebraic} power series with radius of convergence $> \alpha$, then the $B_{n,\alpha}$ satisfy conditions (a) and (b) of Definition \ref{realWS}, but not condition (c).  Indeed, if $f = \sum a_{ij}\eta_1^i\eta_2^j$ and $f_1$ and $f_2$ are as in condition (c), then $f_1(0,\eta_3) = \sum a_{ii}\eta_3^i$ is the ``diagonal" of $f$.  Take
$$
f = (1-4\eta_1)^{- {1\over 2} }(1-4\eta_2)^{- {1\over 2} } \in A_{2,\alpha}
$$
for $\alpha < 1/4$.  Then $f$ is algebraic, but the diagonal of $f$ is $\sum_i \binom{2i}{ i}^2\eta_3^i$, which is the elliptic integral ${2 \over \pi} \int_{0}^\frac{\pi}{2} \frac{dt}{\sqrt{1-16\eta_3sin^2t}}$ and \emph{not} algebraic. (Cf. \cite{DL}.)\\

\subsection{Model theoretic results}
\begin{defn}\label{reallanguage}
Let $\cB:=\{B_{n,\alpha}\}$ be a real Weierstrass system. Define the
language $\cL_\cB$ to be the language of ordered fields, $\langle
+,-,\cdot,^{-1},<\rangle$, together with symbols for all elements of
$B_{n,\alpha}$ for all $n$ and $\alpha>1$.
\end{defn}

\begin{rem}\label{scale}
Let $\cB:=\{B_{n,\alpha}\}$ be a real Weierstrass system and
$\sigma$ a real analytic ${\cB}$-structure on a field $K$. Then for
each $\alpha$ with $1\geq \alpha>0$, and each $f\in B_{n,\alpha}$,
$f$ yields a unique definable function (given by a term) from, for
example, $[-\alpha/2,\alpha/2]$ to $K$, by rescaling as in axiom
(a).
\end{rem}

\begin{thm}[Quantifier Elimination and $o$-minimality]\label{realqe}
Let $K$ be a real closed field with a real analytic $\cB$-structure
for some real Weierstrass system ${\cB}$. Then $K$ has quantifier
elimination in the language $\cL_\cB$ (with the natural
interpretation) and is $o$-minimal in $\cL_\cB$.
\end{thm}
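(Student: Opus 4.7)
The plan is to follow the Denef--van den Dries strategy of \cite{DD}, adapted to the present abstract Weierstrass framework and carried out in detail for the real setting in \cite{CLR2}, Section~3. First I would reduce to the case of a \emph{strong} real Weierstrass system. By Remark \ref{realcomp}(i) and Example \ref{realex}(6), any real Weierstrass system $\cB$ sits inside the full strong system $\cA((\Gamma))$, and any real analytic $\cB$-structure on $K$ extends in the obvious way to an $\cA((\Gamma))$-structure. Since an expansion of an $o$-minimal structure with quantifier elimination in a smaller language certainly yields $o$-minimality and quantifier elimination after restriction to the smaller language, it suffices to prove the theorem under the additional assumption that $\cB$ is strong, so that Condition~(c) of Definition \ref{realstrong} is available.

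Next comes the analytic heart: a one-variable analysis of $\cL_\cB$-terms over an arbitrary real closed field $K$ with $\cB$-structure, including nonstandard $K$. I would show that for any $\cL_\cB$-term $t(y)$ with parameters from $K$, the interval $[-1,1] \subset K$ admits a finite partition into points and subintervals on each of which $t(y)$ is given by a unit times a polynomial in $y-c$ for some $c$, and on each of which either $t$ vanishes identically or has no zeros and is monotone. The tool is Weierstrass Preparation (Remark \ref{realWP}), applied to a distinguished variable after an $\bR$-linear (or $B_0$-linear) change of coordinates, combined with the Strong Noetherian Property (Theorem~\ref{SNP}) to replace a term of the shape $\sum_\mu \overline f_\mu(\xi)\eta^\mu$ by a finite sum $\sum_{\mu \in J} \overline f_\mu(\xi)\eta^\mu U_\mu$. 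To pass from a neighborhood of $0$ to an arbitrary bounded interval one translates (axiom (a)(iii)), and to reach unbounded arguments one applies the substitution $y \mapsto 1/y$; this is exactly where Condition (c) is needed, allowing one to rewrite $f(x, y, 1/y)$ in the form $g(x,y)+(1/y)h(x,1/y)$ and so reduce the analysis near $y=\infty$ to a Weierstrass analysis near a new origin. Iterating these steps yields, for each term $t(y)$, a finite \emph{cell-like} decomposition of $K$ on which $t$ has a simple monotone form with finitely many zeros.

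Given this one-variable preparation, quantifier elimination follows by the standard inductive argument: one proves that each existential formula $\exists y\, \varphi(x,y)$ is equivalent to a quantifier-free one, by first showing $\{y : \varphi(x,y)\}$ is, uniformly in $x$, a finite Boolean combination of intervals whose endpoints are given by terms in $x$. To do this one processes the finitely many terms $t_1(x,y), \ldots, t_k(x,y)$ occurring in $\varphi$ simultaneously, using the one-variable preparation applied to each $t_i(x,-)$ with parameters $x$ and keeping track of uniformity via Theorem~\ref{SNP}. Then $o$-minimality is immediate: every $\cL_\cB$-definable subset of $K$ is quantifier-free definable, hence a finite Boolean combination of sets of the form $\{y : t(y)\bowtie 0\}$ with $\bowtie \in \{<,=,>\}$, and by the one-variable analysis each such set is a finite union of points and open intervals.

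The main obstacle is the one-variable analysis described in the second paragraph. Weierstrass Preparation only produces a polynomial-in-$y$ normal form on a small disc around a chosen center, whereas a general element of a nonstandard $K$ may be infinite, infinitesimal, or very close to some $B_0$-parameter; covering all cases uniformly requires translation, rescaling, and inversion, and the inversion step genuinely uses the strong axiom~(c). The delicate point is to obtain this decomposition \emph{with parameters}, so that the resulting cells and their boundary functions are themselves $\cL_\cB$-definable over the same parameters, which is what allows the quantifier $\exists y$ to be eliminated from $\varphi(x,y)$. Once this uniform version is in hand, the remainder of the argument is a now-standard induction on formula complexity.
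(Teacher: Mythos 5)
There is a genuine gap, and it is in your very first reduction. You claim that ``any real analytic $\cB$-structure on $K$ extends in the obvious way to an $\cA((\Gamma))$-structure'' and that it therefore suffices to treat strong systems. Neither half of this is justified. First, Definition \ref{realansys} requires the field $K$ to contain $B_0$ as an ordered subfield with $\sigma_0$ the inclusion; for the full system $\cA((\Gamma))$ one has $B_0=\bR((\Gamma))$, and already in Example \ref{realex}(2) (and (3)) the paper points out that $\bR((\Gamma))\not\subset K$, so the field in question simply cannot carry an $\cA((\Gamma))$-structure. Even when $B_0'\subset K$, a general element of $A_{n,\alpha}((\Gamma))$ need not define any function on a given (e.g.\ non maximally complete) $K$, so there is no ``obvious'' extension of the homomorphisms $\sigma_{n,\alpha}$. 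Remark \ref{realcomp}(i) only says the Weierstrass \emph{system} embeds in a strong one, not that the analytic structure on an arbitrary model extends. Second, even if the extension existed, quantifier elimination does \emph{not} pass from $\cL_{\cB'}$ to the sublanguage $\cL_\cB$: a quantifier-free $\cL_{\cB'}$-formula equivalent to an $\cL_\cB$-formula may involve function symbols outside $\cL_\cB$. (O-minimality does pass to reducts, but QE does not.) Since your one-variable analysis uses Condition (c) in the inversion step $y\mapsto 1/y$, your argument as written only yields QE for strong systems, which is weaker than the stated theorem.

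The paper's route avoids both problems and is the one you should follow. Quantifier elimination in $\cL_\cB$ is proved \emph{directly} for the given, not necessarily strong, system, as a minor modification of the last section of \cite{DD}, using Weierstrass Division and the Strong Noetherian Property (Theorem \ref{SNP}); strongness is not needed there (cf.\ Remark \ref{realcomp}(i), which reserves (c) for the $o$-minimality analysis). QE then gives completeness of the theory of real closed fields with real analytic $\cB$-structure, and since $o$-minimality is preserved under elementary equivalence, it suffices to exhibit a single $o$-minimal model. For that one takes $K=\bR((G))$, $G$ the divisible hull of $\Gamma$, with the full strong system $\cB'=\{A_{n,\alpha}((G))\}$ of Example \ref{realex}(6); because $B_{n,\alpha}\subset B'_{n,\alpha}$, $o$-minimality in $\cL_{\cB'}$ implies it in $\cL_\cB$ for this model, and here the reduct direction is legitimate because only $o$-minimality, not QE, is being transferred. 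The one-variable analysis via annuli of \cite{CLR2}, Section 3, which is essentially the preparation you sketch (and which does use (c)), is then applied only to this one model, where the strong structure genuinely exists. So the analytic core of your plan is sound, but the logical scaffolding around it — extend the structure, then restrict the language — must be replaced by the QE--completeness--transfer argument.
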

\begin{proof}
The proof of the quantifier elimination for $\cL_\cB$ is a minor
modification of the last section of \cite{DD}; see \cite{DMM1},
\cite{CLR2}, \cite{LR4} for similar such modifications. By this
quantifier elimination, the natural theory of real closed fields
with real analytic $\cB$-structure is complete in the language
$\cL_\cB$. Hence, it is enough to prove $o$-minimality for an
expansion of one model. We prove $o$-minimality of $K:=\bR((G))$ in
the language $\cL_{\cB'}$ with $G$ the divisible closure of $\Gamma$
and where we take $B'_{n,\alpha}:=A_{n,\alpha}((G))$ to form our
real Weierstrass system $\cB'$, cf. Example \ref{realex}(6). Since
$B_{n,\alpha}\subset B'_{n,\alpha}$ for each $n,\alpha$, it is
enough to prove that $K$ is $o$-minimal in $\cL_{\cB'}$.
 Now one can analyze definable functions in one variable (using
annuli and the fact that $\cB'$ is strong) exactly as in
\cite{CLR2}, Section 3, yielding $o$-minimality as in \cite{CLR2}.
The strongness assumption (Definition \ref{realstrong}) is used
implicitly in \cite{CLR2} Section 3. (This analysis is similar to,
but simpler than the one given in section \ref{annuli} below for the
separated case, since one is much closer to the algebraically closed
case.)
\end{proof}

 \section{Henselian fields with analytic structure}\label{sepanstruct}

In this section we present a general theory of (Henselian) fields
with analytic structure, generalizing the presentations in
\cite{vdD}, \cite{DHM}, \cite{LR3} and \cite{CLR1}. We distinguish
between separated analytic structures and strictly convergent
analytic structures.  In section \ref{sepan} we present  the
definitions of \emph{separated Weierstrass systems} and
\emph{Henselian fields with separated analytic structure.}  In
section \ref{SNPsection} we discuss the \emph{strong noetherian
property for separated Weierstrass systems}, and in section
\ref{strictlyconv} we define \emph{strictly convergent Weierstrass
systems} and \emph{fields with strictly convergent analytic
structure}.  In section \ref{sepex} we give examples of Henselian
fields with analytic structure.  In section \ref{properties} we
develop properties of Henselian fields with analytic structure.   In
section \ref{annuli} we will analyze one variable terms (functions)
of the theory of Henselian fields with separated analytic structure.

In this whole section, let $A$ be a commutative ring with unit with
a fixed proper ideal $I $ of $A$, where proper means $I\not=A$.
(Hence, $I=0$ is allowed.) Write $A^\circ:=I$ and $\widetilde {A} :=
A/I$.

 \subsection{Henselian fields with separated analytic structure}\label{sepan}

We consider polynomial rings and power series rings in two kinds of
variables, written $\xi_i$ and $\rho_j$, which play  different
roles. Roughly speaking, we think of the $\xi_i$ as varying over the
valuation ring (or the closed unit disc) $\Ko$ of a valued field
$K$, and the variables $\rho_j$ as varying over  the maximal ideal
(or the open unit disc) $\Koo$ of $\Ko$. We use the terminology
``separated" in analogy to the rings of separated power series whose
theory was developed in \cite{LL1}, \cite{B}, \cite{LL2}, and
especially \cite{LR1}. Let $A$ and $I \varsubsetneq A$ be as at the
beginning of section \ref{sepanstruct}.

A first instance where these variables play a different roles is:

 \begin{defn}[Regular]
\label{regular} Let $f$ be a power series in
$A[[\xi_1,\ldots,\xi_m,\rho_1,\ldots,\rho_n]]$, and let $J$ be the
ideal
$$
J:=\{\sum_{\mu,\nu} a_{\mu,\nu} \xi^\mu\rho^\nu \in A[[\xi,\rho]] :
a_{\mu,\nu} \in I\}
$$
of $A[[\xi,\rho]]$, where $A$ and $I$ are as at the beginning of
section \ref{sepanstruct}.

 \item[(i)]
$f$ is called {\em regular in $\xi_m$ of degree $d$} when $f$ is
congruent in $A[[\xi,\rho]]$ to a monic polynomial in $\xi_m$ of
degree $d$, modulo the ideal $B_1 \subset A[[\xi,\rho]]$, with
$$
B_1:=J+(\rho)A[[\xi,\rho]].$$
 \item[(ii)]
$f$ is called {\em regular in $\rho_n$ of degree $d$} when $f$ is
congruent in $A[[\xi,\rho]]$ to $\rho_n^d$, modulo the ideal
$B_2 \subset A[[\xi,\rho]]$, with
$$
B_2 := J+(\rho_1,\ldots,\rho_{n-1},\rho_n^{d+1})A[[\xi,\rho]].
$$
\end{defn}

 \begin{defn}[$(A,I)$--System]\label{defAmn}
  Let $m \leq m'$ and $n \leq n'$ be natural numbers, and $\xi=(\xi_1,\ldots,\xi_m)$, $\xi'=(\xi_1,\ldots,\xi_{m'})$, $\xi''=(\xi_{m+1},\ldots,\xi_{m'})$, $\rho=(\rho_1,\ldots,\rho_{n})$,
   $\rho'=(\rho_1,\ldots,\rho_{n'})$, and
 $\rho''=(\rho_{n+1},\ldots,\rho_{n'})$ be variables.
A system $\cA=\{A_{m,n}\}_{m,n \in \bN}$ of $A$-algebras $A_{m,n}$,
satisfying,  for all $m \leq m'$ and $n \leq n'$,
 \begin{itemize}
\item[(i)]$ A_{0,0} = A,$\
 \item[(ii)]
$A_{m,n}\subset A[[\xi,\rho]],$\
 \item[(iii)]
$ A_{m,n} [\xi'', \rho''] \subset A_{m',n'},$\
 \item[(iv)]the image $(A_{m,n})\;\widetilde{}$
 of $A_{m,n}$ under the residue map $\, \widetilde{}:A[[\xi,\rho]] \to \widetilde {A}[[\xi,\rho]]$ is a subring of $
 \widetilde{A}[\xi][[\rho]]$, and\
\item[(v)] if
 $f \in A_{m',n'}$, say $f =
\sum_{\mu\nu}\overline{f}_{\mu\nu}(\xi,\rho)(\xi'')^\mu(\rho'')^\nu$,
then the $\overline{f}_{\mu\nu}$ are in $A_{m,n},$\
\end{itemize}
is called a \emph{separated
 $(A,I)$-system.}

\end{defn}

\begin{defn}[Pre-Weierstrass system]\label{good}Let $\cA=\{A_{m,n}\}_{m,n \in \bN}$ be a separated $(A,I)$-system.
Then $\cA$ is called a \emph{separated pre-Weierstrass system} when
the two usual Weierstrass Division Theorems hold in the $A_{m,n}$,
namely, for $f,g\in A_{m,n}$:

{\em (a)}  If $f$ is regular in $\xi_m$ of degree $d$, then there
exist uniquely determined elements $q\in A_{m,n}$ and $r\in
A_{m-1,n}[\xi_m]$ of degree at most $d-1$ such that $g=qf+r$.

{\em (b)} If $f$ is regular in $\rho_n$ of degree $d$, then there
exist uniquely determined elements $q\in A_{m,n}$ and $r\in
A_{m,n-1}[\rho_n]$ of degree at most $d-1$ such that $g=qf+r$.

Sometimes $\cA$ is said to be \emph{over $(A,I)$} to specify that
$\cA$ is a separated $(A,I)$-system.

\end{defn}

In fact, since we allow $A$ to be quite general, we need to be able to work locally, using rings of
fractions:

\begin{defn}[Rings of fractions]\label{frac}
Let $\cA=\{A_{m,n}\}_{m,n \in \bN}$ be a separated
 $(A,I)$-system. Inductively define the concept that an $A$-algebra $C$ is a \emph{ring of
$\cA$-fractions with proper ideal $C^\circ$} and with \emph{rings
$C_{m,n}$ of separated power series over $C$} by

\item[(i)] The ring $A$ is
a ring of $\cA$-fractions with ideal $A^\circ=I$ and with rings of
separated power series the $A_{m,n}$ from the system $\cA$.

\item[(ii)]
If $B$ is a  ring of $\cA$-fractions and $d$ in $B$ satisfies
$C^\circ \not=C$, with
$$C:= B/dB,$$
$$C^\circ := B^\circ/dB,$$
 then $C$ is a ring of $\cA$-fractions with proper ideal $C^\circ$ and
$C_{m,n}:= B_{m,n}/dB_{m,n}$.

\item[(iii)] If $B$ is a ring of $\cA$-fractions and $c,d$ in $B$
satisfy $C^\circ\not = C$ with
 $$C=B\langle {c \over d} \rangle := B{}_{1,0}/(d\xi_1 -
c),$$
 $$C^\circ:=(B^\circ)B\langle {c \over d}\rangle$$
 then $C$ is a ring of $\cA$-fractions with proper ideal
$C^\circ$ and $C_{m,n}:=
 B_{m+1,n}/(d\xi_1 - c)$.

\item[(iv)] If $B$ is a ring of $\cA$-fractions and $c,d$ in $B$ satisfy $C^\circ\not = C$, with $$C= B[[{c \over d}]]_s:=B_{0,1}/(d\rho_1 -
c),$$ $$C^\circ:=(B^\circ, \rho_1) B_{0,1}/(d\rho_1 - c),$$ and
$(B^\circ, \rho_1)$ the ideal generated by $B^\circ$ and $\rho_1$,
then $C$ is a ring of $\cA$-fractions with proper ideal $C^\circ$
and $C_{m,n}:=B_{m,n+1}/(d\rho_1 - c)$.

In all cases define $C^\circ_{m,n}$ as $(C^\circ, \rho)C_{m,n}$.
 \end{defn}

\begin{defn}[Weierstrass system]
\label{goodgood} Let $\cA$ be a separated pre-Weierstrass system.
 Call $\cA$ a \emph{separated Weierstrass system} if it satisfies (c)
  below for any ring $C$ of $\cA$-fractions.

\item[(c)] If $f = \sum_{\mu,\nu}\overline{c}_{\mu\nu}\xi^\mu\rho^\nu$
is in $C_{m,n}$
 with the $\overline{c}_{\mu\nu} \in C$, then there is a finite set
 $J \subset \bN^{m+n}$ and for each $(\mu,\nu) \in J$ there is a
 $g_{\mu\nu} \in C_{m,n}^\circ$ such that
 $$
 f = \sum_{(\mu,\nu) \in J} \overline{c}_{\mu\nu}\xi^\mu\rho^\nu(1+g_{\mu\nu}).
 $$

Sometimes $\cA$ is said to be \emph{over $(A,I)$} to specify that
$\cA$ is a separated $(A,I)$-system.
 \end{defn}

\begin{defn}[Analytic structure]
\label{sep} Let $\cA = \{A_{m,n}\}$ be  a separated Weierstrass
system, and let $K$ be a valued field. A \emph{separated analytic
$\cA$-structure on $K$} is a collection of homomorphisms
$\{\sigma_{m,n}\}_{m,n\in\NN}$, such that, for each $m,n\geq 0$,
$\sigma_{m,n}$ is a homomorphism from $A_{m,n}$ to the ring of
$K^\circ$ valued functions on $(K^\circ)^m\times (K^{\circ\circ})^n$
and such that:
 \begin{itemize}
 \item[(1)] $I\subset\sigma_{0,0}^{-1} (K^{\circ\circ})$,
 \item[(2)]$\sigma_{m,n}(\xi_i)=$ the $i$-th coordinate function
on $(\Ko)^m\times(\Koo)^n$, $i=1,\dots,m$, and
$\sigma_{m,n}(\rho_j)=$ the $(m+j)$-th coordinate function on
$(\Ko)^m\times(\Koo)^n$, $j=1,\dots,n$, and
 \item[(3)]$\sigma_{m,n+1}$ extends $\sigma_{m,n}$, where we
identify in the obvious way functions on $(\Ko)^m\times(\Koo)^n$
with functions on $(\Ko)^m\times(\Koo)^{n+1}$ that do not depend on
the last coordinate, and $\sigma_{m+1,n}$ extends $\sigma_{m,n}$
similarly.
 \end{itemize}
 \end{defn}

We have given the basic definitions of this section. A reader who
wants to skip the proofs and the analysis of analytic structures can
proceed directly with the examples in section \ref{sepex}, with
section \ref{properties}, and with the (model theoretic) results of
section \ref{seccell}, having as well a look at section
\ref{annuli}.
In section \ref{strictlyconv} we give the basic definitions of
strictly convergent analytic structures, which are simpler but less
powerful.

\begin{rem}[Units]\label{unit}
If $\cA=\{A_{m,n}\}_{m,n \in \bN}$ is a separated  pre-Weierstrass
system and $f = 1 + g \in A_{m,n}$, with $g \in A_{m,n}^\circ$, then
$f$ is regular of degree $0$ and, by Weierstrass Division, $f$ is a
unit in $A_{m,n}$. Moreover, $f\in A_{m,n}$ is a unit if and only if
$f$ is of the form $c+g$ for some unit $c\in A$ and some $g\in
A_{m,n}^\circ$. Indeed, since $f$ is a unit there exists $h$ in
$A_{m,n}$ such that $fh=1$, hence, $\widetilde f\widetilde h=1$ in
$\widetilde A$, with $\widetilde{}$ as in Definition \ref{defAmn}
(iv). Hence, $\widetilde f$ is a unit in $\widetilde A_{m,n}$, hence
$\widetilde f$ is a unit in $\widetilde A[x][[\rho]]$ and
$\widetilde f$ is in $ \widetilde A+\rho \widetilde A[x][[\rho]]$.
\end{rem}

\begin{rem}[Noetherianness]

(i) If $I$ is a finitely generated ideal, then, in Definition
\ref{regular}, one has that $B_1 = (I, \rho)A[[\xi,\rho]]$ and $B_2
= (I,\rho_1,\dots,\rho_{n-1}, \rho_n^{d+1})A[[\xi,\rho]]$.

(ii) While we did not require $A$ to be Noetherian, (see
\ref{sepex}(7) for an example) it follows from (c) of Definition
\ref{goodgood} that if $\cA$ is a separated Weierstrass system and
$f = \sum_{\mu,\nu}a_{\mu\nu}\xi^\mu\rho^\nu \in A_{m,n}$ then the
ideal
of $A $
generated by the $a_{\mu \nu}$ is
finitely generated.

(iii)We refer to condition (c) of Definition \ref{goodgood} as a
\emph{Strong Noetherian Property} as it implies that all the
coefficients of $f$ can be written as linear combinations of
finitely many, and if the coefficient is ``small", the corresponding
coefficients of the linear combination are also small.  Example
\ref{sepex} (7) below shows that this does not require the ring $A$
to be Noetherian.  Similarly, we refer to Theorem \ref{SepSNP} as a
Strong Noetherian Property, even though it does not imply that the
rings $A_{m,n}$ are Noetherian.
\end{rem}

As usual, Weierstrass preparation is a consequence of Weierstrass Division.

\begin{rem}[Weierstrass Preparation]\label{weierstrass_preparation}
Let the family $\{A_{m,n}\}$ be a separated Weierstrass system. With
the notation of Definition \ref{good}, we obtain the following for
$f$ in $A_{m,n}$:

{\em (i)} If $f$ is regular in $\xi_m$ of degree $d$, then there
exist: a unique unit $u$ of $A_{m,n}$ and a unique monic polynomial
$P\in A_{m-1,n}[\xi_m]$ of degree $d$ such that $f=u\cdot P$.

{\em (ii)} If $f$ is regular in $\rho_n$ of degree $d$, then there
exist: a unique unit $u$ of $A_{m,n}$ and a unique monic polynomial
$P\in A_{m,n-1}[\rho_n]$ of degree $d$ such that $f=u\cdot P$; in
addition, $P$ is regular in $\rho_n$ of degree $d$.

This can be seen, by dividing $\xi_m^d$ (respectively, $\rho_n^d$)
by $f\in A_{m,n}$, as in \cite{LR1}, Corollary~2.3.3.
\end{rem}

For $F$ a valued field and $f=\sum_{\mu,\nu}
c_{\mu\nu}\xi^\mu\rho^\nu$ in $F[[\xi,\rho]]$, the \emph{gauss-norm}
of $f$ is written $\|f\|$ and defined as $ \sup_{\mu\nu} |
c_{\mu\nu}| $ if this supremum
exists in $|F|$
and is not defined otherwise.

\begin{rem}[Gauss-norm]\label{Gauss-norm}
In the case that $A=F^\circ$ and $I = F^{\circ\circ}$ with $F$ a
valued field, condition (c) of Definition \ref{goodgood} guarantees
that the gauss-norm on $A_{m,n}$ is defined and moreover, if $f\in
A_{m,n}$ is nonzero then there is $c \in F$ such that $cf \in
A_{m,n}$ and $\|cf\| = 1$. Conversely, in this case ($A=F^\circ,
I=F^{\circ\circ}$) if the $A_{m,n}$ satisfy definitions \ref{defAmn}
and \ref{good}, and for every $0 \neq f \in A_{m,n}$ there is an
element $c \in F$ such that $cf \in A_{m,n}$ and $\|cf\| = 1$, then
condition (c) of Definition \ref{goodgood} follows from conditions
(a), (b) and Definition \ref{defAmn}.  This fact (which we do not
use) can be proved along the lines of the proof of Theorem
\ref{SepSNP}.
\end{rem}

\begin{rem}
(i) As in the real case, it follows by Weierstrass Division that if
$\xi_i$ (respectively $\rho_j$) divides $f \in A_{m,n}$ in
$A[[\xi,\rho]]$, then $\xi_i$ (respectively $\rho_j$) divides $f$ in
$A_{m,n}$.

(ii) As in the real case, if $f =
\sum_{\mu\nu}\overline{f}_{\mu\nu}(\xi,\rho)(\xi'')^\mu(\rho'')^\nu
\in A_{m',n'}$ and
$$
I_1:= \{(\mu,\nu) \colon \mu_i \geq d, \nu_j \geq d, \text{ for all
} i,j\},
$$
then
$$
f_1:= \sum_{(\mu,\nu) \in
I_1}\overline{f}_{\mu\nu}(\xi,\rho)(\xi'')^\mu(\rho'')^\nu
$$
is an element of $A_{m',n'}$.  Similarly, for each $i,j$ and $\ell$,
$$
\sum_{\mu,\nu \text{ with } \mu_i =
\ell}\overline{f}_{\mu\nu}(\xi,\rho)(\xi'')^\mu(\rho'')^\nu \quad
\text{  and  } \sum_{\mu,\nu \text{ with } \nu_j =
\ell}\overline{f}_{\mu\nu}(\xi,\rho)(\xi'')^\mu(\rho'')^\nu
$$
are elements of $A_{m',n'}$.  By part (i) above, we can divide
these series by $\xi_i^\ell$ (respectively, $\rho_j^\ell$.)

(iii) Weierstrass Division guarantees that the rings $A_{m,n}$ are
closed under Weierstrass changes of variables among the $\xi_i$ and
among the $\rho_j$, but not in general under changes of variables
that mix the $\xi_i$ and the $\rho_j$.
\end{rem}

In section \ref{Strongsystems} we will consider a variant of
separated Weierstrass systems, by requiring some additional
conditions. We will term these \emph{``strong separated Weierstrass
systems"}.

\begin{lem}\label{(vii)}
If $\cA$ is a (strong) separated Weierstrass system, and $C$ is a
ring of $\cA$-fractions, then the family $\{C_{m,n}\}$ is  a
separated $(C,C^\circ)$-system which is a (strong) separated
Weierstrass system (cf.~section \ref{Strongsystems} for ``strong").
\end{lem}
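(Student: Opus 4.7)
The plan is to induct on the construction of $C$ as an $\cA$-fraction given by Definition \ref{frac}. The base case $C = A$ is immediate: $\{C_{m,n}\} = \cA$ is a (strong) separated Weierstrass system by hypothesis. For the inductive step, suppose $B$ is a ring of $\cA$-fractions for which $\{B_{m,n}\}$ has been shown to be a (strong) separated Weierstrass system. Then $C$ arises from $B$ by exactly one of the three operations (ii), (iii), (iv) of Definition \ref{frac}. I would handle each case in parallel, verifying in turn the five axioms of Definition \ref{defAmn}, the two Weierstrass Division statements of Definition \ref{good}, and the Strong Noetherian Property of Definition \ref{goodgood}, together with the additional axioms Section \ref{Strongsystems} will impose in the strong case.

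Of the $(C,C^\circ)$-system axioms, (i), (iii), (iv), (v) of Definition \ref{defAmn} are bookkeeping, inherited from the corresponding properties of $B$ by passing to the quotient. The subtle point is (ii), that $C_{m,n}$ embeds as a subring of $C[[\xi,\rho]]$; this amounts to verifying the flatness-like equalities $dB[[\xi,\rho]] \cap B_{m,n} = dB_{m,n}$ in case (ii), and analogous statements involving the relation $d\xi_1 - c$ or $d\rho_1 - c$ in cases (iii) and (iv). These are proved by applying Weierstrass Division in $B$ to the relation itself: $d\xi_1 - c$ is regular in $\xi_1$ of degree one, and $d\rho_1 - c$ is regular in $\rho_1$ of degree one, so every element of $B_{m+1,n}$ (respectively $B_{m,n+1}$) has a unique expression $(d\xi_1 - c)q + r$ (respectively $(d\rho_1 - c)q + r$) with $r \in B_{m,n}$.

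For Weierstrass Division in $C$, the scheme is: given $f, g \in C_{m,n}$ with $f$ regular of degree $s$ in $\xi_m$ or $\rho_n$, choose lifts $\tilde f, \tilde g$ in the appropriate ring $B_{m,n}$, $B_{m+1,n}$, or $B_{m,n+1}$, with $\tilde f$ chosen to be regular of degree $s$ in the corresponding variable of $B$; apply Weierstrass Division in $B$ to get $\tilde g = \tilde q \tilde f + \tilde r$, then project to $C_{m,n}$. Uniqueness in $C$ follows because any alternative decomposition $g = q_1 f + r_1 = q_2 f + r_2$ pulls back to $(\tilde q_1 - \tilde q_2)\tilde f \equiv \tilde r_2 - \tilde r_1$ modulo the defining relation; dividing the error by $\tilde f$ in $B$ and applying uniqueness there forces $\tilde q_1 - \tilde q_2$ and $\tilde r_1 - \tilde r_2$ into the defining ideal, hence they vanish in $C$. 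The Strong Noetherian Property (c) is handled similarly: lift $f \in C_{m,n}$ to $\tilde f$ whose coefficients lift those of $f$ term by term, apply (c) to $\tilde f$ in $B$, and project. The ``strong'' conditions of Section \ref{Strongsystems} transfer by the same lift-and-project template.

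The main obstacle is producing a \emph{regular} lift: given $f \in C_{m,n}$ regular in $\xi_m$ (or $\rho_n$) of degree $s$, one must exhibit a lift $\tilde f$ that is itself regular of degree $s$ in the $B$-sense, meaning the non-leading coefficients lie in $B^\circ$ (modulo the $\rho$ variables, cf.\ Definition \ref{regular}). The point is that a non-leading coefficient of $f$ lies in $C^\circ$, which by construction is the image of $B^\circ$ modulo the defining relation; lifting such a coefficient naively yields an element of $B^\circ$ plus a multiple of the relation, and this relation-multiple piece can be absorbed by adjusting the lift, leaving a representative whose non-leading coefficients sit in $B^\circ$. Once regular lifts are in hand, all divisions and uniqueness statements descend cleanly from $B$ to $C$, completing the induction.
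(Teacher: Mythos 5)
Your overall architecture --- induct on the construction of $C$, lift a regular element of $C_{m,n}$ to a regular element one level up, divide there by Weierstrass Division, and project --- is the same as the paper's. But the two places where you supply an actual mechanism are exactly where the argument breaks. First, the claim that the defining relations are themselves regular (``$d\xi_1-c$ is regular in $\xi_1$ of degree one, and $d\rho_1-c$ is regular in $\rho_1$ of degree one'') is false in general: by Definition \ref{regular}, regularity of $d\xi_1-c$ in $\xi_1$ of degree one forces $d\equiv 1$ modulo $B^\circ$ (and for $d\rho_1-c$ one needs in addition $c\in B^\circ$), whereas $c,d$ are arbitrary elements of $B$; in the typical use of Definition \ref{frac} one even has $d\in B^\circ$, so the relation reduces to a constant modulo $(B^\circ,\rho)$. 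Hence Weierstrass division by the relation is unavailable, and the ``flatness-like equalities'' on which you base axiom (ii) of Definition \ref{defAmn} are left unproved. (Also note that in cases (iii) and (iv) of Definition \ref{frac}, $C^\circ$ is the \emph{ideal generated by} the image of $B^\circ$ --- together with $\rho_1$ in case (iv) --- not the image of $B^\circ$ itself, so a coefficient in $C^\circ$ does not naively lift to an element of $B^\circ$ plus a relation-multiple.)

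Second, and more centrally, the regular-lift step --- which you correctly identify as the crux --- is not established. Your plan is to adjust the lift coefficient by coefficient, absorbing a relation-multiple each time. But infinitely many coefficients must be adjusted, and the relation-multiples are chosen independently for each one; nothing guarantees that the total correction is an element of $B_{m+1,n}$ at all, let alone that it has the form $(d\xi_1-c)\cdot h$ with $h\in B_{m+1,n}$, which is what is needed for the corrected series to lie in the Weierstrass system \emph{and} still be a lift of $f$. This is the same missing assembly statement as in the first point, so it cannot be waved through. The paper closes the gap with a finiteness input you never invoke: by Definition \ref{defAmn}(iv) the residues modulo $I$ are \emph{polynomials} in the $\xi$- (and the new $\eta$-) variables, so after reducing modulo $(I,\rho)$ the failure of regularity of a lift $F$ is a polynomial identity, and a \emph{finite} correction $\sum_i (c-\eta d)\,b_i\,\xi_m^{s+i}$ suffices; this correction lies in $B_{m+1,n}$ by Definition \ref{defAmn}(iii) and is visibly in the ideal generated by the relation, so $G=F-\sum_i (c-\eta d) b_i \xi_m^{s+i}$ is a regular lift and the division and uniqueness then descend as you describe. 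A minor further point: condition (c) for $\{C_{m,n}\}$ is quantified over all rings of $\{C_{m,n}\}$-fractions, and the efficient observation (the paper's) is that these are again rings of $\cA$-fractions, so (c) is inherited verbatim via Definition \ref{defAmn}(v), rather than by a term-by-term lifting of coefficients.
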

\begin{proof}
Since axiom (c) for $\{C_{m,n}\}$ follows at once from (c) for $\cA$
and from (v) of Definition \ref{defAmn}, only axioms (a) and (b) for
the $C_{m,n}$ need proof, which we do by induction on the definition
of $C$. Suppose that $C$ is $A\langle {c \over d} \rangle$ and $f
\in C_{m,n}$ is regular in $\xi_m$ of degree $s$. Then
$$
f \equiv  \xi_m^s + F_1\xi_m^{s-1} + \cdots + F_s \text{ mod } (I,
\rho, c-\eta d).
$$
Let $F \in C_{m+1,n}$ satisfy $f \equiv F$ mod $(c-\eta d)$. It may
be that $F$ is not regular in $\xi_m$ of degree $s$.  However, using
the condition $(A_{m+1,n})\; \widetilde{} \subset
\widetilde{A}[\xi,\eta][[\rho]]$ of Definition \ref{defAmn}, we see
that there is a finite sum $\sum_{i=1}^\ell {(c-\eta d)b_{i}
\xi_m^{s+i}}$ such that, taking $G = F -  \sum_{i=1}^\ell {(c-\eta
d)b_{i}\xi_m^{s+i}}$, we have $f \equiv G$ mod $(I, \rho, c-\eta d)$
and $G$ regular in $\xi_m$ of degree $s$.  The other case is
similar.
\end{proof}

\subsection{The Strong Noetherian property for separated Weierstrass
systems}\label{SNPsection}

Axiom (c) of Definition \ref{goodgood} is a kind of Noetherian
property. In order to fully exploit it towards quantifier
elimination, we have to work locally using the rings of fractions,
defined in the previous subsection, and Laurent rings, defined
below. The aim of this subsection is Theorem \ref{SepSNP}, which
uses the full strength of the formalisms of section \ref{sepan} and
of this section.  (See also Remark \ref{remSepSNP}.)

 In this subsection, $\cA$ is a separated Weierstrass system, as
always over $(A,I)$.

First we elaborate some more on rings of $\cA$-fractions.

\begin{defn}[Defining formula]\label{defform}
Let $C$ be a ring of $\cA$-fractions. Call an expression
 a \emph{defining formula for $C$} when it can
inductively be obtained by the following steps

\item[(i)] The expression $(1= 1)$
is a defining formula for $A$.

\item[(ii)] In case (ii) of Definition \ref{frac}, if $\varphi_B$ is a defining
formula for $B$, then
$$\phi_{B} \wedge (d=0)$$
is a defining formula for $C$.

\item[(iii)] In case (iii) of Definition \ref{frac}, if
$\varphi_B$ is a defining formula for $B$,
then
$$
\phi_{B}\wedge (|c|\leq|d|) \wedge (d  \neq
 0)$$
is a defining formula for $C$.

\item[(iv)] In case (iv) of Definition \ref{frac},
if $\varphi_B$ is a defining formula for $B$, then
 $$
\phi_B \wedge (|c| < |d|)\wedge (c\neq0)
 $$
 is a defining formula for
$C$.
\end{defn}

\begin{defn}[System of rings of fractions]\label{sysfrac}
Let $\cA$ be a separated
 $(A,I)$-system. If $C$ is a ring of $\cA$-fractions and $c, d
\in C$, then let $\cD_{c,d}(C)$ be the set of rings of fractions
among $C/dC, C\langle {c \over d} \rangle, C[[{d \over c}]]_s $ if
this set is nonempty and let $\cD_{c,d}(C)$ be $\{C\}$ otherwise.
Define the concept of a \emph{system of rings of $\cA$-fractions}
inductively as follows. $\cF = \{A\}$ is a system of rings of
$\cA$-fractions. If $\cF_0$ is a system of rings of $\cA$-fractions
and $C \in \cF_0$, $ c, d \in C$, then
$$\cF:=(\cF_0 \setminus \{C\}) \cup \cD_{c,d}(C)$$ is a system of rings of
$\cA$-fractions.
 \end{defn}

 \begin{defn}\label{cV}
 Let $\cV$ be the theory of the valuation rings of valued fields in the language of valued rings. For
 any commutative
 ring $C$ with unit and with fixed proper ideal $C^\circ$ (that is, $C^\circ\not =C$),
 define
 $$
 \cV(C) := \cV \cup \{|a|\leq 1: a\in C\} \cup \{|b|<1: b \in C^\circ\}.
 $$
\end{defn}

\begin{lemdef}\label{lemCV}
 With the notation from Definition \ref{cV} and with a system $\cF$ of rings of $\cA$-fractions, it follows that
 $$
 \cV(A) \vdash \bigvee_{C \in \cF} \phi_C
 $$
 and
 $$
  \cV(A) \vdash \bigwedge_{C \neq C' \in \cF} \neg(\phi_C \wedge \phi_{C'}).
  $$
  If moreover $\cA$ is a separated Weierstrass system
   and $\sigma$ is a separated analytic $\cA$-structure on $K$, then there is exactly one $C \in \cF$ such that
  $\Ko \models \varphi_C$ under the interpretation provided by
  $\sigma$. We call such $\varphi_C$ \emph{compatible with
  $\sigma$}. We call a ring of $\cA$-fractions $C$ compatible with
  $\sigma $ when it has a defining formula that is compatible with
  $\sigma$.
\end{lemdef}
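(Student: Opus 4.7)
The plan is to establish the first two claims about $\cV(A)$ by induction on the number of replacement steps needed to build $\cF$ from the initial system $\{A\}$, and then to deduce the third claim from them. The base case $\cF = \{A\}$ is immediate: the defining formula is $(1=1)$, so the covering statement is a tautology and the exclusivity statement is vacuous.

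For the inductive step, I would assume the two assertions hold for $\cF_0$ and analyze the passage to $\cF = (\cF_0 \setminus \{C\}) \cup \cD_{c,d}(C)$. Every $C' \in \cF_0 \setminus \{C\}$ is retained, so it suffices to show that, modulo $\cV(A)$, the formula $\varphi_C$ is equivalent to the disjunction of the defining formulas of the elements of $\cD_{c,d}(C)$, and that these new formulas are pairwise mutually exclusive. The exclusivity part is automatic because the three new disjuncts augment $\varphi_C$ with the pairwise incompatible conditions $(d=0)$, $(|c|\leq|d|)\wedge(d\neq 0)$, and $(|d|<|c|)\wedge(d\neq 0)$; these conditions are also visibly exhaustive in any valued field, so when $\cD_{c,d}(C)$ contains all three candidate rings $C/dC$, $C\langle c/d\rangle$, $C[[d/c]]_s$, the desired equivalence is immediate.

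The hard part, which I expect to be the main obstacle, is the case where one or two of these candidates are excluded from $\cD_{c,d}(C)$ by the triviality condition $C'^\circ = C'$. For each excluded candidate I would have to verify that the corresponding augmented formula is already inconsistent with $\cV(A)$, so that dropping it from the disjunction preserves the equivalence. For instance, if $(C/dC)^\circ = C/dC$, unwinding Definition \ref{frac}(ii) yields $a \in C^\circ$ and $b \in C$ with $1 = a + db$; the axioms of $\cV(A)$ together with $\varphi_C$ force $|a|<1$, and combined with $d=0$ this gives $|1|<1$, a contradiction. The cases of $C\langle c/d\rangle$ and $C[[d/c]]_s$ are analogous but require a bit more unwinding, using that under $\varphi_{C'}$ the adjoined generator $\xi_1$ (respectively $\rho_1$) is forced to be interpreted by $c/d \in \Ko$ (respectively $d/c\in\Koo$), and then $1 \in (C')^\circ$ produces an element of $\Koo$ equal to $1$.

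The third claim then follows from the first two applied to $\Ko$, which is a model of $\cV(A)$ via $\sigma_{0,0}$: the covering statement gives at least one $C \in \cF$ with $\Ko \models \varphi_C$, and the exclusivity statement gives at most one. The separated Weierstrass hypothesis on $\cA$ enters only through the tacit fact that, whenever $\varphi_C$ holds in $\Ko$, the structure $\sigma$ extends canonically to an interpretation of the whole ring $C$ (and of the $C_{m,n}$), which is what makes the formula $\varphi_C$ meaningful as a condition on $\sigma$.
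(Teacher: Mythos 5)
Your proposal is correct and is in substance the same argument as the paper's, which disposes of the statement with the single line ``this holds by the definitions'': your induction over the construction of $\cF$, using that the three conditions $(d=0)$, $(|c|\leq|d|)\wedge(d\neq 0)$, $(|d|<|c|)\wedge(d\neq 0)$ are exhaustive and mutually exclusive, and that a candidate excluded because its ideal is improper has a defining formula inconsistent with $\cV(A)\wedge\varphi_C$ (under the canonical interpretation of the intermediate rings), is exactly the unwinding of those definitions. The caveat you note at the end -- that interpreting elements of the intermediate rings requires the (analytic) structure making $\varphi_C$ meaningful -- is the same implicit convention the paper itself relies on, so it is not a gap relative to the paper's own standard.
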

\begin{proof}
This holds by the definitions.
\end{proof}

Next we generalize the notion of rings of $\cA$-fractions to that of
Laurent rings. This notion will mainly be used for Theorem
\ref{SepSNP}.

 \begin{defn}[Laurent rings]\label{Laurentfrac}
Let $C$ be a ring of $\cA$-fractions. Inductively define the concept
that a $C_{m,n}$-algebra $C'$ is a \emph{Laurent ring over
$C_{m,n}$} with \emph{proper ideal $C'{}^\circ$}
as follows.

\item[(i)] $C_{m,n}$ is
a Laurent ring over $C_{m,n}$ with proper ideal $C^\circ_{m,n}$
(cf.~Definition \ref{frac}).

\item[(ii)]
Let $J$ be an ideal of $C_{m+M,n+N}$. If $B = C_{m+M,n+N}/J$ is a
Laurent ring over $C_{m,n}$, if $f\in C_{m+M,n+N}$, and if
$C'{}^\circ\not = C'$, with
 $$
C'= B\langle {1\over f} \rangle := C_{m+M+1,n+N}/(J, \xi_{m+M+1}f -
1),
$$
$$
C'{}^\circ:= (C^\circ + (\rho_1,\ldots,\rho_{n+N}))
C_{m+M+1,n+N}/(J, \xi_{m+M+1}f - 1),
$$
then $C'$ is a Laurent ring over $C_{m,n}$ with proper ideal
$C'{}^\circ$.

\item[(iii)] Let $J$ be an ideal of $C_{m+M,n+N}$.
If $B= C_{m+M,n+N}/J$ is a Laurent ring over $C_{m,n}$, if $f\in
C_{m+M,n+N}$, and if $C'{}^\circ\not = C'$, with
$$
C'=B[[f]]_s := C_{m+M,n+N+1}/(J, \rho_{n+N+1} - f),
$$
$$
C'{}^\circ := (C^\circ + (\rho_1,\ldots,\rho_{n+N+1}))
C_{m+M,n+N+1}/(J, \rho_{n+N+1} - f),
$$
then $C'$ is a Laurent ring over $C_{m,n}$ with proper ideal
$C'{}^\circ$.
\end{defn}
\begin{defn}\label{not} Let $C$ be a ring of $\cA$-fractions.
For any Laurent ring $C'$ over $C_{m,n}$ there exists an ideal $J$
of some $C_{M+m,N+n}$ such that $C' = C_{M+m,N+n}/J$. Define then
 $$C'_{m_1,n_1} = C_{M+m+m_1,
N+n+n_1}/JC_{M+m+m_1, N+n+n_1}
 $$
and
 $$(C'_{m_1,n_1})^\circ :=
(C^\circ , (\rho_1,\ldots,\rho_{N+n+n_1}))C'_{m_1,n_1}.
 $$
 \end{defn}

The following is the analogue for Laurent rings of Lemma
\ref{(vii)}. We will not need it.  The proof is similar to that of Lemma \ref{(vii)}.
\begin{lem}\label{(vii)bis}
If $\cA$ is a  (strong) separated Weierstrass system, and $B$ is a
Laurent ring over $C_{m,n}$ for some ring of $\cA$-fractions $C$,
then the family $\{B_{m,n}\}$ is a (strong) separated Weierstrass
system over $(B,B^\circ)$ (cf.~section \ref{Strongsystems} for the definition of
``strong").
\end{lem}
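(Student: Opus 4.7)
The plan is to mirror the structure of the proof of Lemma \ref{(vii)} but account for the two new construction steps of Definition \ref{Laurentfrac}. First I would verify the separated $(B,B^\circ)$-system axioms (i)--(v) of Definition \ref{defAmn} directly from Definition \ref{not}: conditions (i)--(iii) are immediate from the quotient presentation $B_{m_1,n_1}=C_{M+m+m_1,N+n+n_1}/JC_{M+m+m_1,N+n+n_1}$, while (iv) and (v) follow by composing the residue map for the $C_{\bullet,\bullet}$ system (which holds by Lemma \ref{(vii)}) with the canonical projection and noting that the ``coefficient extraction'' from Definition \ref{defAmn}(v) is compatible with taking quotients. Axiom (c) of Definition \ref{goodgood} then descends from axiom (c) for $\{C_{m,n}\}$ applied to any lift of $f\in B_{m_1,n_1}$, using that a relation of the form $\sum \overline{c}_{\mu\nu}\xi^\mu\rho^\nu(1+g_{\mu\nu})$ in $C_{m_1,n_1}$ specializes to the same shape of relation in $B_{m_1,n_1}$.

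The essential content is Weierstrass Division (a) and (b), which I would establish by induction on the construction of $B$ as a Laurent ring over $C_{m,n}$. The base case $B=C_{m,n}$ is exactly Lemma \ref{(vii)}. For step (ii), suppose $B=\overline{B}\langle 1/f\rangle = C_{M+m+1,N+n}/(J,\xi_{M+m+1}f-1)$ and let $g\in B_{m_1,n_1}$ be regular in $\xi_{m_1}$ of degree $d$. Lift $g$ to some $G\in C_{M+m+1+m_1,N+n+n_1}$; this $G$ need not be regular, but by the assumption on the residue of $B_{m_1,n_1}$ being a subring of $\widetilde A[\xi][[\rho]]$ together with the relation $\xi_{M+m+1}f\equiv 1$, I can, as in the proof of Lemma \ref{(vii)}, add a finite $C_{\bullet,\bullet}$-linear combination of multiples of $(\xi_{M+m+1}f-1)$ by monomials $\xi_{m_1}^{d+i}$ to obtain an element $\widetilde G$ that agrees with $G$ modulo $(J,\xi_{M+m+1}f-1)$ and is regular in $\xi_{m_1}$ of degree $d$. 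Lifting the divisor $h$ to $H$ and applying Weierstrass Division in $\{C_{m,n}\}$ (available by Lemma \ref{(vii)} for the ring of fractions $C$) gives $H=Q\widetilde G+R$ in the ambient ring; reducing modulo the Laurent ideal produces $h=qg+r$ in $B_{m_1,n_1}$ with $r$ of the required form. Case (iii), where $B=\overline B[[f]]_s=C_{M+m,N+n+1}/(J,\rho_{N+n+1}-f)$, is treated analogously, using the relation $\rho_{N+n+1}\equiv f$ and the same residue-class argument for $\xi_{m_1}$-regularity, or for regularity in $\rho_{n_1}$ in part (b).

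Uniqueness of $q$ and $r$ in $B_{m_1,n_1}$ follows from the same regularity argument: if $qg+r=0$ in $B_{m_1,n_1}$ with $r\in B_{m-1,n}[\xi_m]$ of degree $<d$ (respectively $r\in B_{m,n-1}[\rho_n]$ of degree $<d$), then lifting and subtracting a suitable combination of the relations produces a corresponding regular division equation in the ambient $C_{\bullet,\bullet}$ ring, where uniqueness from Lemma \ref{(vii)} forces $q$ and $r$ to lie in the Laurent ideal, i.e.\ to vanish in $B_{m_1,n_1}$. Finally, to handle the ``strong'' variant, I would lift the relevant additional datum from $B_{m_1,n_1}$ to $C_{M+m+m_1,N+n+n_1}$, apply the strong-system property there, and project the resulting identity back, observing that the conditions on the projected objects are inherited from the ambient ring.

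The main obstacle is the regularity-correction step: modifying a lift $G$ by multiples of $\xi_{M+m+1}f-1$ or $\rho_{N+n+1}-f$ so that it becomes regular in the desired variable, while keeping everything inside $C_{M+m+\bullet,N+n+\bullet}$. This uses Definition \ref{defAmn}(iv) in an essential way, exactly as in the proof of Lemma \ref{(vii)}, and is what guarantees that Weierstrass Division in $\cA$ can be transported through each Laurent construction to yield Weierstrass Division in $\{B_{m,n}\}$.
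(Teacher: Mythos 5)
Your proposal is correct and follows essentially the route the paper intends: the paper gives no separate argument for Lemma \ref{(vii)bis}, stating only that the proof is similar to that of Lemma \ref{(vii)}, and your argument is exactly that — axiom (c) and the system axioms descend through the quotient presentation of Definition \ref{not}, while Weierstrass Division is obtained by lifting, restoring regularity via a finite correction by multiples of the Laurent relations $\xi_{M+m+1}f-1$ or $\rho_{N+n+1}-f$ using the residue condition of Definition \ref{defAmn}(iv), dividing in the ambient $C_{\bullet,\bullet}$, and projecting back, with uniqueness handled by the same lifting argument. No substantive deviation from, or gap relative to, the paper's intended proof.
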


 \begin{defn}[Defining formula]\label{Laurentfracdefform} Let $C$ be a ring of $\cA$-fractions.
Call an expression a \emph{defining formula for $B$}, with $B$ a
Laurent ring over $C_{m,n}$, when it can inductively be obtained by
the following steps

\item[(i)] the expression $(1=1)$
is a defining formula for $C_{m,n}$.

\item[(ii)] In case (ii) of Definition \ref{Laurentfrac}, if
$\phi_{C'}$ is a defining formula of $C'$, then
$$
\phi_{C'} \wedge (|f| \geq 1)
$$
is a defining formula for $B:=C'\langle{1 \over f} \rangle$.

\item[(iii)]
In case (iii) of Definition \ref{Laurentfrac}, if $\phi_{C'}$ is a
defining formula of $C'$, then
$$
\phi_{C'} \wedge (|f| < 1)
$$
is a defining formula for $B:=C'[[f]]_s$.

 \end{defn}

 \begin{defn}[Covering family]\label{coveringf} Let $C$ be a ring of $\cA$-fractions.
 Note that the theory $\cV(C)$ is well defined
by Definition \ref{cV} and Lemma \ref{(vii)}. We call a finite
family $\cF$ of Laurent rings over $C_{m,n}$ a \emph{covering
family} if
 $$
\cV(C) \vdash \bigvee_{B\in \cF}\varphi_{B}.
 $$
 The family is a \emph{disjointly
covering family} if in addition $\neg(\phi_{B} \wedge \phi_{B'})$ is
a theorem of $\cV(C)$ for all $B\neq B' \in \cF$.
 \end{defn}

By construction, we have the following:

\begin{lem}
Let $\sigma$ be a separated analytic $\cA$--structure on $K$ and let $C$ be
a ring of $\cA$-fractions compatible with $\sigma$. Any defining formula
$\varphi_B$ of any Laurent ring $B$ over $C_{m,n}$ defines in a
natural way a subset $X_{\varphi_B}$ of $(\Ko)^{m}\times(\Koo)^{n}$,
which may be empty.
 Moreover, for any  covering family $\cF$ of Laurent rings $B$ over
$C_{m,n}$ with formulas $\varphi_B$, the union of the sets
$X_{\varphi_B}$ equals $(\Ko)^{m}\times(\Koo)^{n}$.
\end{lem}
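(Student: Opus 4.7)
The plan is to prove both assertions by simultaneous induction on the construction of the Laurent ring $B$ over $C_{m,n}$, following Definitions \ref{Laurentfrac} and \ref{Laurentfracdefform}. In parallel with $X_{\varphi_B}$ one maintains an interpretation sending each element of the ambient ring $C_{m+M,n+N}$ used to present $B$ to a $\Ko$-valued function on $X_{\varphi_B}$. In the base case $B = C_{m,n}$ with $\varphi_B = (1=1)$, set $X_{\varphi_B} := (\Ko)^m \times (\Koo)^n$ and use the interpretation coming from $\sigma_{m,n}$ together with the canonical map $A_{m,n} \to C_{m,n}$, which is well defined because $C$ itself is compatible with $\sigma$ by Lemma-Definition \ref{lemCV}. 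For the inductive step, suppose $B = C'\langle 1/f \rangle$ with $\varphi_B = \varphi_{C'} \wedge (|f| \geq 1)$, or $B = C'[[f]]_s$ with $\varphi_B = \varphi_{C'} \wedge (|f| < 1)$, where $C'$ is a Laurent ring over $C_{m,n}$ and $f \in C_{m+M,n+N}$. By the inductive hypothesis $f$ already denotes a $\Ko$-valued function on $X_{\varphi_{C'}}$, so one may set
\[
X_{\varphi_B} := \{\, p \in X_{\varphi_{C'}} : |f(p)| \geq 1 \,\}
\]
in the first case and analogously in the second, extending the interpretation by sending $\xi_{m+M+1}$ to the $\Ko$-valued function $1/f$ (respectively $\rho_{n+N+1}$ to the $\Koo$-valued function $f$); the added inequality guarantees that the image lies in $\Ko$ (respectively $\Koo$) as required.

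For the covering claim, fix a point $p \in (\Ko)^m \times (\Koo)^n$ and equip $\Ko$ with its valued ring structure inherited from $K$, interpreting each constant $a \in C$ by $\sigma(a) \in \Ko$ and each $\xi_i$, $\rho_j$ by the corresponding coordinate of $p$. This makes $\Ko$ into a model of $\cV(C)$, and moreover, via the inductive interpretation of the previous paragraph, each auxiliary $f \in C_{m+M,n+N}$ receives a well-defined value at $p$. The assumption $\cV(C) \vdash \bigvee_{B \in \cF} \varphi_B$ then forces $\varphi_B$ to hold at $p$ for at least one $B \in \cF$, which is precisely the statement that $p \in X_{\varphi_B}$. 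Hence the sets $X_{\varphi_B}$ with $B \in \cF$ cover $(\Ko)^m \times (\Koo)^n$.

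The main subtlety is this inductive bookkeeping of interpretations: one must systematically track how the auxiliary variables $\xi_{m+M+1}, \ldots$ and $\rho_{n+N+1}, \ldots$ introduced in the construction of $B$ become definable $\Ko$- or $\Koo$-valued functions of the original $m+n$ coordinates, so that inequalities like $|f| \geq 1$ for $f \in C_{m+M,n+N}$ translate to genuine pointwise conditions on $(\Ko)^m \times (\Koo)^n$. Once this setup is in place, both assertions follow directly from the definitions of Laurent rings and their defining formulas.
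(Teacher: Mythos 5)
Your proof is correct and takes the same route the paper intends: the paper states this lemma without argument (``by construction''), and your inductive bookkeeping --- interpreting each new variable $\xi_{m+M+1}$ as $1/f$ and $\rho_{n+N+1}$ as $f$, so that the added inequalities $|f|\geq 1$ resp.\ $|f|<1$ become pointwise conditions cutting out $X_{\varphi_B}\subset(\Ko)^m\times(\Koo)^n$ --- is precisely the construction being invoked. The covering statement then follows, as you note, from $\cV(C)\vdash\bigvee_{B\in\cF}\varphi_B$ together with the compatibility of $C$ with $\sigma$, which is exactly the paper's implicit argument.
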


 \begin{defn}[Units]\label{strongu}
Let $C$ be a Laurent ring over $A_{m,n}$. (Any Laurent ring over a
ring of $\cA$-fractions is a Laurent ring over some $A_{m,n}$.) We
call $f \in C$ a \emph{$C$-unit} if
$$
\cV(C), \varphi_C \vdash |f| \geq 1.
$$
 \end{defn}

We recall the following definition from \cite{LL2} section 3.12,
which is used in several proofs.

\begin{defn}[Preregular]\label{prereg}
Let $C$ be a Laurent ring over $A_{m,n}$ and,
using the notation of Definition \ref{defAmn}, let $f =
\sum_{\mu\nu}{c_{\mu\nu}(\xi'')^\mu(\rho'')^\nu} \in C_{m'-m,n'-n}$
where the $c_{\mu\nu} \in C$.  We call $f$ \emph{preregular in
$(\xi'',\rho'')$ of degree $(\mu_0,\nu_0)$} if $c_{\mu_0 \nu_0} = 1$
and $c_{\mu \nu} \in (C_{m'-m,n'-n})^\circ$ for all $\nu$
lexicographically $< \nu_0$ and for all $(\mu,\nu_0)$ with $\mu$
lexicographically $> \mu_0$.
\end{defn}

\begin{rem}
If $f$ is preregular of degree $(\mu_0,0)$ then a Weierstrass change
of variables among the $\xi_i, i=m+1,\dots,m'$ will make $f$ regular in $\xi_{m'}$.
Similarly, if $f$ is preregular of degree $(0, \nu_0)$ then a
Weierstrass change of variables among the $\rho_j, j=n+1,\dots,n'$ will make $f$
regular in $\rho_{n'}$.
\end{rem}

The following lemma is needed in the proof of Theorem \ref{SepSNP}.
The proofs of this lemma and Theorem \ref{SepSNP} are similar to the
proofs of Lemma \ref{lemforSNP} and Theorem \ref{SNP}, with some
additional complications.

\begin{lem}\label{lemSepSNP} Let $\{A_{m,n}\}$ be a separated Weierstrass system and,
using the notation of Definition \ref{defAmn}, let
$$
f = \sum_{\mu,\nu}\overline{f}_{\mu\nu}(\xi,\rho)(\xi'')^\mu
(\rho'')^\nu \in A_{m',n'}
$$
where the $\overline{f}_{\mu\nu}(\xi,\rho) \in A_{m,n}$. There is a
system $\cF$ of rings of $\cA$-fractions, and for each $A' \in \cF$
there is a finite, disjointly covering family of Laurent rings $C$
over $A'_{m,n}$, such that for each $C$ there is a finite set $J_C$,
and $C$-units $u_{C\mu\nu} $  and functions $g_{C\mu\nu} \in
C_{m'-m,n'-n}$ for $(\mu,\nu) \in J_C$ such that
$$
f = \sum_{(\mu,\nu) \in J_C} \overline f_{\mu\nu}
u_{C\mu\nu}g_{C\mu\nu}
$$
as an element of $C_{m'-m,n'-n}$.

In the case that $A$ is a valuation ring with maximal ideal $A^\circ$ we can take $\cF = \{A\}$.
In the case that $m=0$ or $n=0$ we can take the family of Laurent rings corresponding
to $A' \in \cF$ to be just $\{A'_{m,n}\}$.
\end{lem}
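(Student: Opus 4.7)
The plan is to follow the strategy of the real case (Lemma \ref{lemforSNP}), inducting on $(m'-m)+(n'-n)$, but with two essential adaptations for the separated setting: the gauss-norm arguments are replaced by localization via rings of $\cA$-fractions (Definition \ref{frac}) and Laurent rings (Definition \ref{Laurentfrac}), and both branches of Weierstrass division (Definition \ref{good}(a) and (b)) must be handled symmetrically for the $\xi$- and $\rho$-type variables. The base case $m=m',\ n=n'$ is trivial: take $\cF=\{A\}$, the Laurent ring $A_{m,n}$, $J_C=\{(0,0)\}$, and $u=g=1$.

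For the inductive step, assume without loss of generality that $m' > m$ (the case $n' > n$ is symmetric). First I would apply condition (c) of Definition \ref{goodgood} to $f$ viewed as an element of $A_{m',n'}$ with $A$-coefficients. This produces a finite set $J_0 \subset \bN^{m'+n'}$ of ``dominant'' monomial indices whose $A$-coefficients carry all the mass of $f$ modulo $A_{m',n'}^\circ$. Let $I_0 \subset \bN^{(m'-m)+(n'-n)}$ be the projection of $J_0$ onto the $(\xi'',\rho'')$-components; this will become the finite index set $J_C$. Next, I would build a finite system $\cF$ of rings of $\cA$-fractions by comparing the sizes of the $A$-coefficients indexed by $J_0$, inserting $a/a'$ or $a'/a$ as in Definition \ref{frac}(iii),(iv) for each ordered pair. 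By Lemma-Definition \ref{lemCV} this $\cF$ is a disjoint covering, and in each $A' \in \cF$ a single index $(\alpha_0,\beta_0,\mu_0,\nu_0) \in J_0$ becomes ``dominant''; in particular $\overline{f}_{\mu_0,\nu_0}$ carries a single $A'$-unit monomial that dominates every monomial appearing in the remaining $\overline{f}_{\mu,\nu}$, $(\mu,\nu) \in I_0$.

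The third step is to pass to a Laurent ring $C$ over $A'_{m,n}$ that witnesses preregularity of $f$ in $\xi''$ of degree $(\mu_0,0)$ (Definition \ref{prereg}), and then perform a Weierstrass change of variables among $\xi_{m+1},\ldots,\xi_{m'}$ to make $f$ regular in $\xi_{m'}$ of some degree $d$ over $C_{m'-m-1,n'-n}$, with leading term a $C$-unit multiple of $\overline{f}_{\mu_0,\nu_0}$. Weierstrass Division (Definition \ref{good}(a)), valid in the Weierstrass system provided by Lemma \ref{(vii)bis}, now yields
$$
f \;=\; \overline{f}_{\mu_0,\nu_0}\cdot u \cdot Q \;+\; R,
$$
with $u$ a $C$-unit, $Q \in C_{m'-m,n'-n}$, and $R \in C_{m'-m-1,n'-n}[\xi_{m'}]$ of degree $<d$. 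As in the real proof, reading off coefficients in the $(\xi'',\rho'')$-expansion shows that each coefficient of $R$ is a $C$-linear combination of the original $\overline{f}_{\mu,\nu}$'s. Applying the inductive hypothesis to these coefficients (the parameter $m'-m$ has dropped by one) and refining the Laurent-ring covering to accommodate the resulting sub-decompositions finishes the inductive step.

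The main obstacle is the bookkeeping: ensuring the refinements of rings of fractions and Laurent rings remain finite throughout the induction, and that at each stage the coefficients in the output decomposition are genuine $C$-linear combinations of the original $\overline{f}_{\mu,\nu}$'s rather than of derived series. This is the direct analogue of the identity ``each $\overline{R}_\nu$ is a $B$-linear combination of the $\overline{f}_{\nu'}$'' in Lemma \ref{lemforSNP}, but here it must survive passage to the Laurent rings and be carried through both the $\xi_{m'}$- and $\rho_{n'}$-division branches. The two concluding assertions are corollaries of this argument: when $A$ is a valuation ring with maximal ideal $A^\circ$, the sizes of dominant coefficients can already be compared in $A$, so no further splitting into several $A' \in \cF$ is needed; and when $m=0$ or $n=0$, only one family of variables is present, removing the mixed $(\xi,\rho)$ interaction that would otherwise force the Laurent-ring refinement, so the Laurent family collapses to $\{A'_{m,n}\}$.
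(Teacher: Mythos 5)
There is a genuine gap at the heart of your inductive step. After applying axiom (c) and splitting via rings of $\cA$-fractions, the dominant coefficient $\overline{f}_{\mu_0\nu_0}(\xi,\rho)\in A'_{m,n}$ is only \emph{preregular} in the coefficient variables $(\xi,\rho)$ (its $(\mu_0'',\nu_0'')$-coefficient is $1$); it is not a unit. Your step 3 asks for a Laurent ring over $A'_{m,n}$ and a Weierstrass change of variables among $\xi_{m+1},\dots,\xi_{m'}$ making $f$ regular in the \emph{outer} variable $\xi_{m'}$ with leading term a $C$-unit multiple of $\overline{f}_{\mu_0\nu_0}$. But regularity in $\xi_{m'}$ (Definition \ref{regular}) forces the leading coefficient to be congruent to $1$ modulo $J+(\rho)$, and no change of variables among the outer $\xi''$ can repair an obstruction that sits in the coefficient variables. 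Localizing can make $\overline{f}_{\mu_0\nu_0}$ (or its leading $\xi$-part $\overline{f}_{\mu_0\nu_0\nu_0''}$) invertible only on the Laurent piece where $|\overline{f}_{\mu_0\nu_0\nu_0''}|\geq 1$; on the complementary piece $|\overline{f}_{\mu_0\nu_0\nu_0''}|<1$, which a covering family must also contain, your scheme has no move at all, since there the ``dominant'' coefficient is small and $f$ cannot be made regular in any outer variable. This is exactly where the paper's proof diverges from yours: on that piece it introduces a new $\rho$-variable $\lambda$ with $\overline{f}_{\mu_0\nu_0\nu_0''}-\lambda=0$, makes this series regular in the \emph{coefficient} variable $\xi_m$, and divides $f$ by it, trading $(m,n)$ for $(m-1,n+1)$; on the other piece it inverts via $\eta\overline{f}_{\mu_0\nu_0\nu_0''}-1=0$, makes $\eta\overline{f}_{\mu_0\nu_0}$ regular in $\rho_n$, and divides, reducing $(m,n)$ to $(m,n-1)$.

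Consequently your induction is set up on the wrong parameter: the available reduction decreases the coefficient-ring indices $(m,n)$ (ordered lexicographically, together with the degree of regularity $s$, resp.\ $s_1s_2$), not the number $m'-m+n'-n$ of outer variables, and the linear-combination bookkeeping of Lemma \ref{lemforSNP} is carried out for divisions performed in the coefficient variables, not for a division by a Weierstrass polynomial in $\xi_{m'}$. This also corrects your reading of the two final assertions: when $m=0$ or $n=0$ it is the \emph{coefficient} ring that has only one kind of variable, so $\overline{f}_{\mu_0\nu_0}$, being preregular in $\xi$ alone (or $\rho$ alone), can be made genuinely regular in $\xi_m$ (or $\rho_n$) by a Weierstrass change of variables and divided by directly, with no Laurent splitting — the outer variables may still be of both kinds. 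Your explanation for the valuation-ring case (no splitting needed to isolate the dominant $\overline{a}_{\mu_0'\nu_0'}$) is correct.
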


\begin{proof}  Let $f$ be as above, say  (using the notation of Definition \ref{defAmn})
$$
f=
\sum_{\mu'\nu'}{\overline{a}_{\mu'\nu'}(\xi')^{\mu'}(\rho')^{\nu'}}
$$
 with the $\overline{a}_{\mu'\nu'}\in A$.  Then by condition (c) (Definition \ref{goodgood})  we have that
 $$
 f = \sum_{(\mu',\nu') \in J }{\overline{a}_{\mu'\nu'}(\xi')^{\mu'}(\rho')^{\nu'}(1 + g_{\mu',\nu'})}
 $$
 with the $ g_{\mu',\nu'} \in A_{m',n'}^\circ$ for some finite $J \subset \bN^{m'+n'}$.
 Hence, splitting into finitely many cases corresponding to a system $\cF$ of rings of $\cA$-fractions, and considering each $A' \in \cF$ separately, we may assume that $\overline{a}_{\mu_0'\nu_0'} = 1$ and that $f$ is preregular in $(\xi',\rho')$ of degree $(\mu_0',\nu_0')$.
( In the case that $A$ is a valuation ring, there is no need to split up into cases to find, and ``factor out", the ``dominant'' coefficient $\overline{a}_{\mu_0'\nu_0'}$.)
 Let $\mu_0' = (\mu_0'',\mu_0)$ and
 $\nu_0' = (\nu_0'',\nu_0)$ and write
 $$
 f= \sum_{(\mu,\nu)}{\overline{f}_{\mu\nu}(\xi,\rho)(\xi'')^{\mu}(\rho'')^{\nu}}
 $$
 with the $\overline{f}_{\mu,\nu} \in A_{m,n}$.  Then $\overline{f}_{\mu_0\nu_0}$ is preregular in $(\xi,\rho)$ of degree $(\mu_0'',\nu_0'')$ and, writing
 $$
\overline{f}_{\mu_0\nu_0}=
\sum_{\nu''}{\overline{f}_{\mu_0\nu_0\nu''}(\xi)(\rho)^{\nu''}},
$$
we see that $\overline{f}_{\mu_0\nu_0\nu_0''}(\xi)$ is preregular in
$\xi$ of degree $\mu_0''$.

If $m=0$ or $n=0$, a Weierstrass change of variables among the $\rho_j$
(respectively the $\xi_i$) will make $\overline{f}_{\mu_0\nu_0}$ regular in $\rho_n$
(respectively $\xi_m$) and no ``splitting up'' into Laurent rings is needed before doing Weierstrass division by $\overline{f}_{\mu_0\nu_0}$ (below).
In the general case, consider the two Laurent rings $C_1$ and $C_2$ defined by the
conditions $|\overline{f}_{\mu_0\nu_0\nu''}| < 1$ and
$|\overline{f}_{\mu_0\nu_0\nu''}| \geq 1$, respectively.

On $C_1$, using $\lambda$ to denote the new variable (of the second kind, i.e. a ``$\rho$" variable)
$\overline{f}_{\mu_0\nu_0\nu''} - \lambda = 0$.  After a Weierstrass
change of variables among $\xi_1,\cdots,\xi_m$, we may assume that
$\overline{f}_{\mu_0\nu_0\nu''} - \lambda$ is regular in $\xi_m$
of degree $s$, say.  Then, in $(C_1)_{m'-m,n'-n}$ we have that
$$
f = R_0(\hat{\xi},\xi'',\rho,\rho'',\lambda) +
\xi_mR_1(\hat{\xi},\xi'',\rho,\rho'',\lambda) + \cdots +
\xi_m^{s-1}R_{s-1}(\hat{\xi},\xi'',\rho,\rho'',\lambda),
$$
where $\hat{\xi} := (\xi_1,\cdots, \xi_{m-1})$.  We now complete the
proof in this case by induction on $s$ and induction on $(m,n)$,
ordered lexicographically, as in the proof of Lemma \ref{lemforSNP}.
($(m,n)$ has been reduced to $(m-1,n+1)$.)

On $C_2$, using $\eta$ for the new variable,
$\eta\overline{f}_{\mu_0\nu_0\nu''} - 1 = 0$. After a Weierstrass
change of variables among $\xi_1,\cdots,\xi_m,\eta$, we may assume
that $\eta\overline{f}_{\mu_0\nu_0\nu''} - 1$ is regular in $\eta$
of degree $s_1$, say.   Considering $\eta f$ and $\eta f_{\mu_0
\nu_0}$, after replacing the coefficient $\eta \overline{f}_{\mu_0
\nu_0 \nu_0''}$ of $\rho^{\nu_0''}$ by $1$, $\eta
\overline{f}_{\mu_0\nu_0}$ is preregular in $\rho$ of degree
$\nu_0''$. After a Weierstrass change of variables among $\rho_1,
\cdots, \rho_m$, we may assume that $\eta \overline{f}_{\mu_0
\nu_0}$ is regular in $\rho_n$, of degree $s_2$, say. Doing
Weierstrass division twice, once by
$\eta\overline{f}_{\mu_0\nu_0\nu''} - 1$ and once by $\eta
\overline{f}_{\mu_0\nu_0}$, we have in $(C_2)_{m'-m,n'-n}$ that
$$
\eta f = Q\cdot \eta \overline{f}_{\mu_0\nu_0} +
\sum_{i<s_1,j<s_2}{R_{ij}(\xi,\xi'',\hat\rho,\rho'')\eta^i\rho_n^j},
$$
where $\hat\rho = (\rho_1, \cdots, \rho_{n-1})$.  One  again
completes the proof by induction on $s_1s_2$ and on $(m,n)$, ordered
lexicographically, as in the proof of Lemma \ref{lemforSNP}.
($(m,n)$ has been reduced to $(m,n-1)$.)
\end{proof}

We continue to use the notation of Definition \ref{defAmn}.

\begin{thm}[The Strong Noetherian Property for separated Weierstrass systems]\label{SepSNP}
Let $\{A_{m,n}\}$ be a separated Weierstrass system and let
$$
f = \sum_{\mu,\nu}\overline{f}_{\mu\nu}(\xi,\rho)(\xi'')^\mu
(\rho'')^\nu \in A_{m',n'}
$$
where the $\overline{f}_{\mu\nu}(\xi,\rho) \in A_{m,n}$. There is a
finite system $\cF$ of rings of $A$-fractions, and for each $A' \in
\cF$ there is a finite, disjointly covering family of Laurent rings
$C$ over $A'_{m,n}$, for each $C$ there is a finite set $J_C$ and
$C$-units $u_{C\mu\nu} \in C$  and functions $h_{C\mu\nu} \in
C_{m'-m,n'-n}^\circ$ for $(\mu,\nu) \in J_C$ such that
$$
f = \sum_{(\mu,\nu) \in J_C} \overline f_{\mu\nu}(\xi,\rho)
(\xi'')^\mu(\rho'')^\nu u_{C\mu\nu}(1+h_{C\mu\nu})
$$
as an element of $C_{m'-m,n'-n}$.

When $A$ is a valuation ring we can take the system $\cF = \{A\}$.
If  $m=0$ or $n=0$ we can take the family of Laurent rings
corresponding to $A' \in \cF$ to be just $\{A'_{m,n}\}$. Hence, in
the case that $A$ is a valuation ring and $mn=0$ no (nontrivial)
rings of $A$-fractions or Laurent rings are needed.
\end{thm}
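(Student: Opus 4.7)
The plan is to mirror the argument for Theorem \ref{SNP}, with Lemma \ref{lemSepSNP} in place of Lemma \ref{lemforSNP}, absorbing the extra bookkeeping that the Laurent ring framework and the two kinds of variables $\xi''$, $\rho''$ impose. First I apply Lemma \ref{lemSepSNP} to $f$ to produce the finite system $\cF$ of rings of $\cA$-fractions and, for each $A'\in\cF$, the finite disjointly covering family of Laurent rings $C$ over $A'_{m,n}$ together with the identity
$$
f \;=\; \sum_{(\mu,\nu)\in J_C}\overline f_{\mu\nu}\,u_{C\mu\nu}\,g_{C\mu\nu}
$$
on each $C$, with $C$-units $u_{C\mu\nu}$ and $g_{C\mu\nu}\in C_{m'-m,n'-n}$. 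What remains is to reshape each $g_{C\mu\nu}$ into the form $(\xi'')^{\mu}(\rho'')^{\nu}(1+h_{C\mu\nu})$ with $h_{C\mu\nu}\in C^\circ_{m'-m,n'-n}$, after possibly refining the system of Laurent rings.

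Fix a Laurent ring $C$ and drop the subscript. Following Theorem \ref{SNP}, I first perform an elementary linear manipulation on the finitely many $g_{\mu\nu}$ so that for $(\mu,\nu),(\mu',\nu')\in J$ the coefficient of $(\xi'')^{\mu'}(\rho'')^{\nu'}$ in $g_{\mu\nu}$ is $1$ if $(\mu',\nu')=(\mu,\nu)$ and $0$ otherwise. I then partition $\bN^{m'-m}\times\bN^{n'-n}$ into a central block
$$
I_1 \;:=\; \{0,\dots,d\}^{m'-m}\times\{0,\dots,d\}^{n'-n}\;\cup\;\{(\mu,\nu):\mu_i\geq d,\ \nu_j\geq d\text{ for all }i,j\}
$$
and finitely many boundary blocks $I_2,\dots,I_r$, each of which fixes at least one coordinate strictly below $d$. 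Truncating $f$ and each $g_{\mu\nu}$ to $I_1$ -- which stays within $C_{m'-m,n'-n}$ by the closure condition following Definition \ref{defAmn} -- produces
$$
f_{I_1} \;=\; \sum_{(\mu,\nu)\in J}\overline f_{\mu\nu}\,u_{\mu\nu}\,g_{\mu\nu,I_1},
$$
and by the normalization above, each $g_{\mu\nu,I_1}$ is already of the required form $(\xi'')^{\mu}(\rho'')^{\nu}(1+h_{\mu\nu})$ with $h_{\mu\nu}\in C^\circ_{m'-m,n'-n}$.

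For each boundary piece $f_{I_j}$, some coordinate of the multi-index is pinned at a value $\ell<d$, so $f_{I_j}$ factors as $(\xi_i'')^\ell\cdot q$ or $(\rho_j'')^\ell\cdot q$, with $q$ depending on strictly fewer $\xi''$- or $\rho''$-variables; since by Lemma \ref{(vii)} the system $\{C_{m,n}\}$ is again a separated Weierstrass system, induction on $(m'-m)+(n'-n)$ delivers the desired decomposition for $q$. Reassembling the central and boundary contributions then gives the theorem on $C$. The two special cases -- $A$ a valuation ring, and $mn=0$ -- propagate without modification, since Lemma \ref{lemSepSNP} already avoids rings of $\cA$-fractions in the first case and non-trivial Laurent rings in the second, and the partition step introduces neither.

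The main obstacle is the bookkeeping: each time one extracts a boundary monomial or performs Weierstrass preparation to isolate a regular direction, one may have to refine the Laurent covering so that the leading coefficient becomes a genuine $C$-unit, and one has to confirm that after these refinements $u_{C\mu\nu}$ still satisfies Definition \ref{strongu} rather than being merely a unit of $C_{m'-m,n'-n}$. Axiom (c) of Definition \ref{goodgood} together with Lemma \ref{(vii)} keeps each refinement finite and disjointly covering, so the induction terminates and yields the finite data required by the statement.
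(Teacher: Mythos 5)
Your route is the paper's: invoke Lemma \ref{lemSepSNP} to produce the system $\cF$ and the disjointly covering Laurent families together with $f=\sum\overline f_{\mu\nu}u_{C\mu\nu}g_{C\mu\nu}$, then transplant the $I_1$-truncation argument of Theorem \ref{SNP} and finish by induction on $m'-m+n'-n$, the two special cases being inherited from the lemma. There is, however, one genuine gap, and it sits exactly at the point where the separated case needs more than the real-case template. After normalizing and truncating to $I_1$ you assert that each $g_{\mu\nu,I_1}$ is \emph{already} of the form $(\xi'')^{\mu}(\rho'')^{\nu}(1+h_{\mu\nu})$ with $h_{\mu\nu}\in C^\circ_{m'-m,n'-n}$ ``by the normalization above''. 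The factorization is fine (every index in the all-large part of $I_1$ dominates $(\mu,\nu)$), but the membership $h_{\mu\nu}\in C^\circ_{m'-m,n'-n}$ is not automatic: since $C^\circ_{m,n}=(C^\circ,\rho)C_{m,n}$, a leftover term carrying a $\rho''$-factor is small, but leftover terms carrying only powers of $\xi''$ with unit coefficients are not (and when $n'-n=0$ the entire tail is of this kind). In Theorem \ref{SNP} the analogous difficulty was removed by shrinking the radius $\beta$, for which there is no separated analogue, so without a further argument $1+h_{\mu\nu}$ need not be a unit and the decomposition does not have the required shape. The paper closes this by observing that the leftover factor lies in $(\xi'',C^\circ_{m',n'})C_{m',n'}$ and that, by Definition \ref{defAmn}(iv), its residue modulo $(C_{m',n'})^\circ$ is a \emph{polynomial} in $\xi''$; increasing $d$ beyond the $\xi''$-degrees of these finitely many residue polynomials forces the leftover factor into $(C_{m',n'})^\circ$, and only then does the central block have the stated form.

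A lesser point: the obstacle you flag at the end (refining the Laurent covering and preserving the $C$-unit property of Definition \ref{strongu} for the $u_{C\mu\nu}$) is not where the difficulty lies; in the paper the splitting into rings of fractions and Laurent rings is produced once, by Lemma \ref{lemSepSNP}, and the truncation and the induction on the number of new variables are carried out inside each fixed $C$ using Definition \ref{defAmn}(v), the divisibility remarks, and Lemma \ref{(vii)}. The step that actually requires care is the smallness of the $\xi''$-tail described above.
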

\begin{proof}  By Lemma \ref{lemSepSNP} we may assume we have written
$$
f = \sum_{(\mu,\nu) \in J_C} \overline f_{\mu\nu}
u_{C\mu\nu}h_{C\mu\nu}
$$
with the $h_{C\mu\nu} \in C_{m',n'}$. As in the proof of Theorem
\ref{SNP}, choosing $d$ large enough, writing $k$ for $m'-m+n'-n$, and taking
$$
I_1:=\{1,\ldots,d\}^k\cup\{(\mu,\nu)\mid \mu_i\geq d, \nu_j\geq d \mbox{ for all $i,j$}\},
$$
we have that
\begin{eqnarray*}
f_{I_1} &:=& \sum_{(\mu,\nu)\in I_1}{\overline f_{\mu\nu}(\xi,\rho)(\xi'')^\mu(\rho'')^\nu}\\
& =& \sum_{(\mu,\nu) \in \{1,\ldots,d\}^k} \overline f_{\mu\nu}
(\xi'')^\mu(\rho'')^\nu u_{C\mu\nu}(1+g_{C\mu\nu}),
\end{eqnarray*}
where the $g_{C\mu\nu} \in (\xi'',C_{m',n'}^\circ)C_{m',n'}$.  Since
each $g_{C\mu\nu}  \text{ mod } (C_{m',n'})^\circ$ is a polynomial
in $\xi''$ (cf. Definition \ref{defAmn}), further increasing $d$, we
may assume that each $g_{C\mu\nu} \in (C_{m',n'})^\circ$.  The proof
is now completed exactly as in the proof of Theorem \ref{SNP}, by induction
on $m'-m + n'-n$.
\end{proof}

\begin{rem}\label{remSepSNP}  In many examples, for example when $A$ is Noetherian and complete in its $I$-adic topology and $A_{m,n} := A\langle\xi\rangle[[\rho]]$ (cf. \cite{CLR1}, section 2), or $A_{m,n} = S_{m,n}^\circ(E,K)$ (cf. \cite{LR1}) we do not have to break up into pieces using rings of $\cA$-fractions and Laurent rings, and the following stronger statement is true (using the notation of Definition \ref{defAmn}):
\begin{itemize}
\item[] Let $f\in A_{m',n'}$ and  write $ f=\sum_{\mu,\nu}
\overline{f}_{\mu\nu} (\xi,\rho)(\xi'')^\mu(\rho'')^\nu$ , where the
$\overline{f}_{\mu\nu}\in A_{m,n}$.  There is a finite set
$J\subset\bN^{m'-m+n'-n}$ and units of the form $1+g_{\mu\nu}$ with
$g_{\mu\nu} \in A_{m',n'}^\circ$, such that
 \begin{eqnarray}
 f=\sum_{(\mu,\nu)\in J} \overline{f}_{\mu\nu}(\xi,\rho)(\xi'')^\mu(\rho'')^\nu(1+g_{\mu\nu}). \label{SSNP}
 \end{eqnarray}
  \end{itemize}
 The Strong Noetherian Properties of Definition \ref{goodgood} and Theorem \ref{SepSNP} follow immediately from this property.
Our treatment
would be a less general but also simpler were we to take this
condition as an axiom replacing the weaker
axiom (c) of Definition \ref{goodgood}. We would then not have to prove Theorem \ref{SepSNP}.
 \end{rem}

\subsection{Strictly convergent analytic structures}\label{strictlyconv}

We consider polynomial rings and power series rings in one kind of
variables, written $\xi_i$, usually variants of Tate rings, hence
the terminology ``strictly convergent''.  In the separated case of the previous
 two subsections the $\rho$ variables (varying over $\Koo$) were used to witness strict inequalities.  Furthermore,
the second  Weierstrass division axiom (Definition \ref{good}(b))
enforced some additional completeness on the ring $A$, on rings of
$\cA$-fractions, and on fields $K$ with analytic $\cA$-structure. In
the strictly convergent case we will allow two possibilities which
we distinguish by use of a designated element $\pi$ of $A$:

(i) $\pi \not = 1$ and in the interpretations given by fields $K$
with analytic $\cA$-structure, $\pi$ is interpreted as a prime
element of $\Ko$, i.e. $\sigma(\pi)$ is an element of smallest
positive order (see Definition \ref{seps}(i)), and (ii) that the
strictly convergent analytic structure is the strictly convergent
part of a separated analytic structure and $\pi = 1$. Except for
this complication we follow the development of the previous two
sections fairly closely.

Case (ii) is treated in Definition \ref{seps}(ii).  We first focus
on case (i). Let $A$ and $I$ be as in the beginning of section
\ref{sepanstruct}. Let $\pi$ be a fixed element of $A$.

 \begin{defn}[System]\label{defAmns}
 A system $\cA=\{A_{m}\}_{m\in \bN}$ of $A$-algebras $A_{m}$,
satisfying, for all $m \leq m'$:
\begin{itemize}
\item[(i)]
$ A_{0} = A,$
\item[(ii)]
$A_{m}\subset A[[\xi_1,\ldots,\xi_m]],$
\item[(iii)]
$A_{m} [\xi_{m+1},\ldots,\xi_{m'}] \subset A_{m'}$,
\item[(iv)]
the image $(A_{m})\;\widetilde{}$ of $A_{m}$ under the residue map
$\; \widetilde{}:A[[\xi_1,\ldots,\xi_m]] \to \widetilde
{A}[[\xi_1,\ldots,\xi_m]]$ is $
 \widetilde{A}[\xi]$, and
 \item[(v)]If  $f \in A_{m'}$, say $f =
\sum_{\mu}\overline{f}_{\mu}(\xi)(\xi'')^\mu$, then the
$\overline{f}_{\mu}$  are in $A_{m}$,
\end{itemize}
is called a \emph{strictly convergent
 $(A,I)$-system.}
\end{defn}

 \begin{defn}[Regular]
\label{regulars} Let $\cA=\{A_{m}\}_{m \in \bN}$  be a strictly
convergent $(A,I)$ system. A power series $f$ in $A_{m}$ is called
{\em regular in $\xi_m$ of degree $d$} when $f$ is congruent in
$A[[\xi]]$, modulo the ideal $\{\sum_{\mu} a_{\mu} \xi^\mu : a_{\mu}
\in I\}$, to a monic polynomial in $\xi_m$ of degree $d$.
\end{defn}

\begin{defn}[Pre-Weierstrass system]\label{goods}
Let $\cA=\{A_{m}\}_{m \in \bN}$ be a strictly convergent
$(A,I)$-system. Then $\cA$ is called a \emph{strictly convergent
pre-Weierstrass system} when the usual Weierstrass Division Theorem
holds in the $A_{m}$, namely, for $f,g\in A_{m}$:

(a) if $f$ is regular in $\xi_m$ of degree $d$, then there exist
uniquely determined elements $q\in A_{m}$ and $r\in A_{m-1}[\xi_m]$
of degree at most $d-1$ such that $g=qf+r$.
\end{defn}

In fact, as in the separated case, we need to be able to work locally, using rings of
fractions:

\begin{defn}[Rings of fractions]\label{fracsshort}
Let $\cA=\{A_{m}\}_{m \in \bN}$ be a strictly convergent
$(A,I)$-system. Inductively define the concept that \emph{$C$ is a
ring of $\cA$-fractions} with ideal $C^\circ$ and rings of strictly
convergent functions $C_m$, $m\geq 0$ as follows:

\item[(i)] The ring $A$ is
 a ring of $\cA$-fractions, with ideal $I = A^\circ$ and rings of strictly convergent functions the $A_m$.

\item[(ii)]
If $B$ is a  ring of $\cA$-fractions and $d\in B$  satisfies
$C^\circ\not =C$, with
$$C:= B/dB,$$
$$C^\circ := B^\circ \cdot C,$$
then $C$ is a ring of $\cA$-fractions, with ideal $C^\circ$ and
strictly convergent functions $C_m=B_m/dB_m$.

\item[(iii)] If $B$ is a  ring of $\cA$-fractions and $c,d\in B$ satisfy $C^\circ\not =C$, with
$$C:=   B\langle {c \over d} \rangle := B{}_{1}/(d\xi_1 - c),$$
$$C^\circ := B^\circ \cdot C,$$
then $C$  is a ring of $\cA$-fractions with ideal $C^\circ$ and
strictly convergent functions $C_m= B{}_{m+1}/(d\xi_1 - c)$.

\item[(iv)] If $B$ is a  ring of $\cA$-fractions  and $c,d\in B$ satisfy $C^\circ\not =C$, with
$$C:=   B\langle{c \over \pi \cdot d} \rangle := B{}_{1}/(d\cdot \pi \cdot \xi_1 - c),$$
$$C^\circ := B^\circ \cdot C,$$
then $C$ is a ring of $\cA$-fractions with ideal $C^\circ$ and
strictly convergent functions $C_m= B{}_{m+1}/(\pi \cdot d \cdot
\xi_1 - c)$.
 \end{defn}

Note that part (iv) above differs from the definition we made in the
separated case; the notation $B\langle{c \over \pi \cdot d} \rangle$
for the ring is reminiscent of the inequality $c < d$ which in our
case (namely $\pi\not=1$) is equivalent to $c \leq \pi \cdot d$.

\begin{defn}[Weierstrass system]
\label{goodgoods} Let $\cA$ be a strictly convergent pre-Weierstrass
system.
 Call $\cA$ a \emph{strictly convergent Weierstrass system} if it satisfies  (c)
below for every ring $C$ of $\cA$-fractions.
\item[(c)] If $f = \sum_{\mu}\overline{c}_{\mu}\xi^\mu$
is in $C_m$
 with the $\overline{c}_{\mu} \in C$, then there is a finite set
 $J \subset \bN^{m}$ and for $\mu \in J$ there is
 $g_{\mu} \in C^\circ_{m}$ such that
 $$
 f = \sum_{\mu \in J} \overline{c}_{\mu}\xi^\mu(1+g_{\mu}).
 $$
\end{defn}

\begin{defn}[Analytic structure]
\label{seps}

\item[(i)] ($\pi \neq 1$) Let $\cA = \{A_{m}\}$ be a strictly
convergent Weierstrass system, and let $K$ be a valued field. A
\emph{strictly convergent analytic $\cA$-structure on $K$} is a
collection of homomorphisms $\{\sigma_{m}\}_{m\in\NN}$, such that,
for each $m\geq 0$, $\sigma_{m}$ is a homomorphism from $A_{m}$ to
the ring of $K^\circ$-valued functions on $(K^\circ)^m$ satisfying:
 \begin{itemize}
 \item[(1)] $I\subset\sigma_{0}^{-1} (K^{\circ\circ})$ and $\sigma_0(\pi)$ is a prime element of $\Ko$,
 \item[(2)]$\sigma_{m}(\xi_i)=$ the $i$-th coordinate function
on $(\Ko)^m$, $i=1,\dots,m$,  and
 \item[(3)] $\sigma_{m+1}$ extends $\sigma_{m}$ where we
identify in the obvious way functions on $(\Ko)^m$ with functions on
$(\Ko)^{m+1}$ that do not depend on the last coordinate.

 \end{itemize}

\item[(ii)] ($\pi = 1$) Let $\cA = \{A_{m,n}\}$ be a separated
Weierstrass system and $K$ a field with separated analytic
$\cA$-structure $\sigma = \{\sigma_{m,n}\}$. We will define the
\emph{strictly convergent analytic structure $\bar \sigma$ on $K$
associated to $\sigma$}. Note that $\sigma$ induces morphisms
$\sigma_{C_{m,0}}$ from $C_{m,0}$ to the ring of $K^\circ$ valued
functions on $(K^\circ)^m$, for $C$ any ring of $\cA$-fractions
which is compatible with $\sigma$ (cf.~Definitions \ref{frac},
\ref{lemCV}). Then $\bar \sigma$ is defined as the collection of
homomorphisms $\sigma_{C_{m,0}}$ for all $m\geq 0$ and all rings $C$
of $\cA$-fractions which are compatible with $\sigma$.

  \end{defn}

In (ii) of Definition \ref{seps}, the analytic structure does not
come from a strictly convergent $(A,I)$-system, but from a richer
system of the rings $C_{m,0}$ for many $C$. However, for the
separated analytic structure $\sigma'$ with $\sigma'$ as in
Definition \ref{expand}, $\bar \sigma'$ contains no more data than
the collection of the maps $\sigma'_{m,0}$. With a slight abuse of
terminology we will also denote in this case the collection of the
maps $\sigma'_{m,0}$ by $\bar\sigma'$ and call $\bar\sigma'$
\emph{strictly convergent analytic structure on $K$ associated to
$\sigma'$}.

In this paper we focus on case (i), ($\pi\not =1$), of Definition
\ref{seps}; studying case (ii) is much more subtle and will require
new techniques.

\begin{defn}[Defining formula]\label{defforms}
Let $C$ be a ring of $\cA$-fractions (Definition \ref{fracsshort}). Call an expression
 a \emph{defining formula for $C$} when it can
inductively be obtained by the following steps

\item[(i)] The expression $(1= 1)$
is a defining formula for $A$.

\item[(ii)] In case (ii) of Definition \ref{fracsshort}, if $\varphi_B$ is a defining
formula for $B$, then
$$\phi_{B} \wedge (d=0)$$
is a defining formula for $C$.

\item[(iii)] In case (iii) of Definition \ref{fracsshort}, if
$\varphi_B$ is a defining formula for $B$,
then
$$
\phi_{B}\wedge (|c|\leq|d|) \wedge (d  \neq
 0)$$
is a defining formula for $C$.

\item[(iv)] In case (iv) of Definition \ref{fracsshort},
if $\varphi_B$ is a defining formula for $B$, then
 $$
\phi_B \wedge (|c| \leq |\pi \cdot d|) \wedge (c\neq0)
 $$
 is a defining formula for
$C$.
\end{defn}

 \begin{defn}[System of rings of fractions]\label{sysfracs}
Let $\cA=\{A_{m}\}_{m \in \bN}$ be a strictly convergent
$(A,I)$-system. If $C$ is a ring of $\cA$-fractions and $c, d
\in C$, then let $\cD_{c,d}(C)$ be the set of rings of fractions
among $C/dC, C\langle {c \over d} \rangle, C\langle{d \over \pi \cdot c}\rangle $ if
this set is nonempty and let $\cD_{c,d}(C)$ be $\{C\}$ otherwise.
Define the concept of a \emph{system of rings of
$\cA$-fractions} inductively as follows. $\cF = \{A\}$ is a system
of rings of $\cA$-fractions.  If $\cF_0$ is a system of rings of
$\cA$-fractions, and $C \in \cF_0$, $0 \neq c, 0 \neq d \in C$, then
$$
\cF:=(\cF_0 \setminus \{C\}) \cup  \cD_{c,d}(C)
$$
is a system of rings of $\cA$-fractions.
 \end{defn}

 \begin{rem} Let $\cV'$ be the theory of the valuation rings of valued fields with prime element $\pi$, and let $\cA$ be a
 strictly convergent $(A,I)$-system and $\cF$ a system of rings of $\cA$-fractions. Let
 $$
 \cV'(A) := \cV' \cup \{|a|\leq 1: a\in A\} \cup \{|b| \leq |\pi|: b \in A^\circ\}.
 $$
 Then
 $$
 \cV'(A) \vdash \bigvee_{C \in \cF} \phi_C
 $$
 and
 $$
  \cV'(A) \vdash \bigwedge_{C \neq C' \in \cF} \neg(\phi_C \wedge \phi_{C'}).
  $$
Hence, if $\sigma$ is a strictly convergent analytic $\cA$-structure
on the field $K$, there will be exactly one $C \in \cF$ such that
$\Ko \models \phi_C$ under the interpretation provided by
  $\sigma$. As in the separated case, we call such $C$ \emph{compatible with $\sigma$}.
 \end{rem}

In the strictly convergent case $\pi \neq 1$ we do not need Laurent rings in
the formulation of the Strong Noetherian Property.

\begin{thm}[The Strictly Convergent Strong Noetherian Property, $\pi \neq 1$]\label{SepSNPs}
Let $\{A_{m}\}$ be a strictly convergent Weierstrass system, let $m
\leq m'$, $\xi = (\xi_1, \dots , \xi_m), \; \xi' = (\xi_1, \dots ,
\xi_{m'}), \;  \xi'' = (\xi_{m+1}, \dots , \xi_{m'})$ and
$$
f = \sum_{\mu}\overline{f}_{\mu}(\xi)(\xi'')^\mu  \in A_{m'}
$$
where the $\overline{f}_{\mu}(\xi) \in A_{m}$. There is a (finite)
system $\cF$ of rings of $A$-fractions,  such for each $C \in \cF$
there is a finite set $J_C$   and functions $h_{\mu} \in
C_{m}^\circ$ for $\mu \in J_C$ such that
$$
f = \sum_{\mu \in J_C} \overline f_{\mu}(\xi) (\xi'')^\mu(1+h_{\mu})
$$
as an element of $C_{m'}$.
\end{thm}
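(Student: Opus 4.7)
The plan is to follow the structure of the proof of Theorem \ref{SepSNP}, noting that the argument simplifies considerably because there are no $\rho$-variables and only one Weierstrass Division Theorem (Definition \ref{goods}(a)) is available; in particular, no Laurent rings are needed. First I would prove an analogue of Lemma \ref{lemSepSNP} giving a representation with units $u_{C\mu}$ and factors $g_{C\mu} \in C_{m'}$, and then pass to the stated form by the same partitioning device used at the end of the proof of Theorem \ref{SNP}.

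To prove the lemma analogue, write $f = \sum_{\mu'} \overline a_{\mu'}(\xi')^{\mu'}$ with $\overline a_{\mu'} \in A$ and apply axiom (c) of Definition \ref{goodgoods} to obtain a finite $J \subset \bN^{m'}$ and $g_{\mu'} \in A_{m'}^\circ$ with
$$
f = \sum_{\mu' \in J}\overline a_{\mu'}(\xi')^{\mu'}(1+g_{\mu'}).
$$
Then split into finitely many cases by comparing the $|\overline a_{\mu'}|$ for $\mu' \in J$: using the three constructions $C/dC$, $C\langle c/d \rangle$ and $C\langle c/(\pi d)\rangle$ of Definition \ref{fracsshort} with $c = \overline a_{\mu'}$ and $d = \overline a_{\mu_0'}$ for various pairs, one produces a finite system $\cF$ of rings of $\cA$-fractions such that in each $A' \in \cF$ there is a dominant index $\mu_0'$ with $\overline a_{\mu_0'}$ a unit of $A'$; after scaling we may assume $\overline a_{\mu_0'} = 1$ and that $f$ is preregular in $\xi'$ of degree $\mu_0'$ (the obvious strictly convergent analogue of Definition \ref{prereg}).

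Splitting $\mu_0' = (\mu_0'',\mu_0)$ with $\mu_0 \in \bN^m$, the coefficient $\overline f_{\mu_0}(\xi) \in A'_m$ is then preregular in $\xi$ of degree $\mu_0''$; a Weierstrass change of variables among $\xi_1,\dots,\xi_m$ (permitted by Weierstrass Division in $A'$) makes it regular in $\xi_m$ of some degree $s$, and dividing $f$ by $\overline f_{\mu_0}$ in $(A')_{m'}$ yields
$$
f = Q\cdot \overline f_{\mu_0}(\xi) + \sum_{j<s} R_j(\hat\xi,\xi'')\xi_m^j, \qquad \hat\xi := (\xi_1,\dots,\xi_{m-1}).
$$
Lexicographic induction on $m$ applied to the $R_j$ terms (which, as in the proof of Lemma \ref{lemSepSNP}, are $A'_m$-linear combinations of the $\overline f_\mu$) produces the desired representation
$$
f = \sum_{(\mu)\in J_C}\overline f_\mu\, u_{C\mu}\, g_{C\mu}
$$
in $C_{m'}$, with $u_{C\mu}$ $C$-units and $g_{C\mu} \in C_{m'}$.

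To finish, I mimic the last paragraph of the proof of Theorem \ref{SNP}: for $d$ large enough, partition $\bN^{m'-m}$ into the box $\{1,\dots,d\}^{m'-m}$ together with finitely many pieces in which one coordinate is pinned, and argue by induction on the number of coordinates. Increasing $d$ if necessary ensures each remainder $g_{C\mu}$ lies in $(C_{m'})^\circ$, so $1+h_\mu := g_{C\mu}$ is a genuine unit by Remark \ref{unit} (strictly convergent analogue). The main technical obstacle is the case split into rings of $\cA$-fractions needed to isolate a dominant coefficient; in the valuation-ring case this splitting collapses (one can take $\cF = \{A\}$), which is why the statement allows, but does not require, multiple members in $\cF$.
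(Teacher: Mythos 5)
Your proposal is correct and follows essentially the same route as the paper, whose entire proof of this theorem is the remark that it is proved like the special case $n=0$ of Theorem \ref{SepSNP}. Your write-up is precisely that adaptation: axiom (c) of Definition \ref{goodgoods} plus a system of rings of $\cA$-fractions to isolate and factor out a dominant coefficient, a Weierstrass change of variables among the $\xi_i$ (so no Laurent rings arise), Weierstrass division with induction as in Lemma \ref{lemSepSNP}, and the final partition-and-unit-extraction device from the proofs of Theorems \ref{SNP} and \ref{SepSNP}.
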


\begin{proof}
This is proved similarly to the special case $n = 0$ of Theorem
\ref{SepSNP}.
\end{proof}

\subsection{Examples of analytic structures}

\label{sepex}
 \noindent
(1) \ Definition \ref{sep} generalizes the notions of analytic
structures in \cite{vdD} and \cite{CLR1}. Even more, if $A$ is
noetherian, complete and separated in its $I$-adic topology, and we
take $A_{m,n}:=A\la\xi\ra [[\rho]]$, then the conditions (a)-(c) of
Definition \ref{good} are satisfied; (c) is immediate and (a) and
(b) are shown in \cite{CLR1}. (Only the sub-case $I=0$ is not taken
care of in \cite{CLR1}, but can be treated similarly). (In
\cite{CLR1} only the case $C=A$ is treated, but the general case
also follows easily from the Strong Noetherian Property of
\cite{CLR1}.  Indeed, the stronger property of Remark
\ref{remSepSNP} is satisfied in this example.)

(2) \ Definitions \ref{sep} and \ref{seps} generalize the notion of analytic
structure in \cite{DHM}, since that is a special instance of the
analytic structure of \cite{vdD} and \cite{CLR1}. To recall, take
$F=\bQ_p$, $T_{m}^\circ=\bZ_p\la\xi\ra$ and $\cS=\{T_{m}^\circ\}$,
or equivalently take $A= \bZ_p, I = p\bZ_p$, $t=p$ and
$\cA=\{A_{m,0}\}$, where $A_{m,0}:= \bZ_p\langle \xi \rangle$. In
\cite{DHM}, $p$-adically closed fields with strictly convergent
$\cS$-structure are studied, in particular, these fields turn out to
be elementarily equivalent to $\QQ_p$ with subanalytic structure.

(3) \ Definitions \ref{sep} and \ref{seps} generalize the notion of
analytic structure (implicit) in \cite{LR1} - let $K$ be a complete,
rank one valued field and take $S^\circ_{m,n}=S^\circ_{m,n}(E,K)$,
$\cS =\{S_{m,n}^\circ \}$  as defined in \cite{LR1}, Definition
2.1.1. If $K'$ is any complete field containing $K$ (or an algebraic
extension of such a field) then $K'$ has separated analytic
$\cS$-structure, with $\sigma$ defined naturally by the inclusion $K
\subset K'$. If $K^*$ is a (non-standard) model of the theory of
$K'$ in the valued field language with function symbols for the
elements of $\cS$ then $K^*$ has separated analytic $\cS$-structure,
as shown in~\cite{LR3}. We also showed in \cite{LR3} that if $K'$ is
a maximally complete field extending $K$ then $K'$ has a separated
analytic $\cS$-structure. (Similar statements hold for strictly
convergent analytic $\cS$-structure).

In \cite{LR2} we established that certain subrings $\cE_{m,n}^\circ$ of
the $S_{m,n}^\circ$, namely the elements $f$ of $S_{m,n}^\circ(E,K)$ such that
$f$ and all its (Hasse) derivatives are existentially definable over
$T_{m+n}(K)$, form a separated Weierstrass system.  This was used in
that paper to prove a quantifier simplification theorem (\cite{LR2} Corollary 4.5) for
$K'_{alg}$ in the language with function symbols for the elements of
\;$\bigcup_nT_n(K)$.  That quantifier simplification theorem (and extensions)
thus follow from Theorem \ref{QES} below.

(4) \ We give another example which does not fall under the scope of
previous papers. Let $F$ be a fixed maximally complete field  with
value group $\Gamma$ (see Section \ref{maxcom} above.)  In the
equicharacteristic case let $E$ be a copy of the residue field in
$F$ and in the mixed characteristic case let $E$ be a set of
multiplicative representatives of $\widetilde F$ as in Section
\ref{maxcom}. For a ring $B\subset F^\circ$ let $B_0=B\cap E$, and define
 $$
 B\la\xi\ra :=\big\{\sum_{g\in I}{ t^g p_g(\xi)}\colon t^g\in B, p_g(\xi)\in B_0[\xi] \text{ and } I\subset\Gamma \text{ is well ordered }\big\}.
 $$
Let $\cB$ be the family of subrings $B$ of $F^\circ$ satisfying
$E\subset B$ and $\supp(B)$ well ordered. Take
$S^\circ_{m,n}(\cB)=\bigcup_{B\in \cB}B\la\xi\ra[[\rho]]$ and
$S_{m,n}(\cB):= F\otimes_{F^\circ}S^\circ_{m,n}(\cB).$ Then we call
$S_{m,n}(\cB)$ the \emph{full ring of separated power series over
$F$} and $T_m({\cB}) := S_{m,0}(\cB)$ the \emph{full ring of
strictly convergent power series over $F$}.  The elements of
$S_{m,n}(\cB)$ naturally define functions $(F_{alg}^\circ)^m \times
(F_{alg}^{\circ\circ})^n \to F_{alg}$, where $F_{alg}$ is the
algebraic closure of $F$, or its maximal completion, cf. \cite{LR3},
section 5. The family $\cS:=\{S^\circ_{m,n}(\cB)\}$ is a strong
Weierstrass system, yielding a separated analytic $\cS$-structure on
$F$, or on $F_{alg}$. Indeed, most of the structure theorems that
hold for the Tate rings in the classical affinoid case (\cite{BGR})
or the rings of separated power series in the quasi-affinoid case
(\cite{LR1}) will also hold for these rings, providing a basis for
affinoid or quasi-affinoid algebra and geometry over maximally
complete fields.

(5) \ See Theorem \ref{constants} and Theorem \ref{algextn}
below for further natural examples obtained by extending analytic
structures by putting in constants from a model, resp.~by going to
algebraic extensions of a model. (``Model'' meaning here field with
analytic structure.)

(6) \ The case of trivially valued field $F$ does not fall under the
scope of the previously mentioned papers (as far as we know), but
might be interesting for the study of tame analytic integrals
(generalizing \cite{D3}, \cite{Pas1}). When $F$ is a trivially
valued field, the family $\{B_{m,n}\}$ with $B_{m,n}=F[\xi][[\rho]]$
is a strong separated Weierstrass system, since $F[\xi][[\rho]]$
equals $F[[\rho]]\langle\xi\rangle$, the $\rho$-adically strictly
convergent power series in $\xi$ over $F[[\rho]]$, cf.~\cite{LR1} or
\cite{CLR1}.  The conditions of Definition \ref{defAmn} are
immediate, as are the Weierstrass Division Theorems, (a) and (b)
(see~\cite{LR1}.)  $B_{m,n}$ is Noetherian since a power series ring
over a Noetherian ring is Noetherian. Property (c) is satisfied
since $F[\xi][[\rho]]$ is the ``full'' power series ring.  A field
$K$ with analytic $\{B_{m,n}\}$-structure need not be trivially
valued.

(7) \ (An example where $A$ is neither Noetherian, nor a valuation
ring.) Let $L$ be a field and $A_i:=L[x_1, \cdots , x_i]$, $I_i :=
(x_1, \cdots , x_i)A_i$ and $A:=\bigcup_i A_i$, $I:= \bigcup_i I_i$,
Then $A_{m,n} := \bigcup_i A_i\langle\xi\rangle[[\rho]]$ is a (strong)
separated Weierstrass system, though $A$ is neither noetherian, nor
a valuation ring.

(8) \ A simple compactness type argument shows that there is a strong
Weierstrass system $\cA' = \{A'_{m,n}\}$ with all the $A'_{m,n}
\subset \bZ[[t]]\langle\xi\rangle[[\rho]]$ \emph{countable}.  This
example is not covered by the treatments of the previous papers.

(9) \ We construct a separated Weierstrass system which is not
strong (i.e.~not satisfying Definition \ref{stronggood}) (Cf.
Example \ref{realex}(7)). Let $B_{m,n}'$ be the algebraic closure of
$\bQ[\xi,\rho]$ in $\bQ[\xi][[\rho]]$  (cf. Example (6).) Then the
family $\{B_{m,n}'\}$ is a separated Weierstrass system. Namely, the
conditions of Definition \ref{defAmn} are easy, Weierstrass Division
(axioms (a) and (b)) can be done with algebraic data, and condition
(c) follows from axioms (a) and (b) in this case with $F=\bQ$ by a
proof similar to the proof of Theorem \ref{SepSNP}. However,
the family $\{B_{m,n}'\}$ does not satisfy condition
\ref{stronggood}(v), as can be seen as follows. Let $f(\xi_1,\xi_2)$
be an algebraic power series whose diagonal is not algebraic (see
for example  \ref{realex}(7)). If
$$
f(\rho\xi_1,\rho\xi_2)=f_1(\rho,\xi_1)+ f_2(\rho,\xi_2) \bmod
(\xi_1\xi_2-1),
$$
then
$$f_1(0,\rho)+f_2(0,\rho)=g(\rho^2),
$$
where $g$ is the diagonal of $f$, and where $\rho$ is a single
variable. But $g(\rho^2)$ is not algebraic.

(10) \ Let $K$ be a complete valued field, and let
$K\langle\langle\xi\rangle\rangle$ be the ring of overconvergent
power series over $K$ (i.e. the elements of $K\langle\xi\rangle$
with radius of convergence $>1$.) Let $\cB$ be a family of quasi-noetherian subrings of $\Ko$ as in
Definition 2.1.1 of \cite{LR1}.
For $\gamma > 1$ let
\begin{eqnarray*}
B\langle \xi \rangle [[\rho]]^{(\gamma)} &:=& \Big\{\sum_{\mu\nu}{a_{\mu\nu}(\xi)^\mu(\rho)^\nu} \colon \exists k \in \bN( |\mu| > k \rightarrow  \\
& & \qquad \qquad \qquad \qquad |a_{\mu\nu}| < \gamma ^{-|\mu|} \text{ or }
|\nu| > (\gamma - 1)|\mu|) \Big\}\\
& \subset& B\langle \xi \rangle[[\rho]]
\end{eqnarray*}
be the subring of  \emph{ $\gamma$-overconvergent power series with
coefficients from $B$}. Define
$$
S_{m,n}^{\circ}(E,K)^{over} := \bigcup_
{\substack{ B \in \cB  \\ 1 < \gamma }} {B \langle \xi \rangle [[\rho]]}^{(\gamma)}.
$$
Then  $\cS^{over} := \{S_{m,n}^{ \circ}(E,K)^{over}\}$ is a
separated Weierstrass system, and $K$ has separated analytic
$\cS^{over}$-structure and strictly convergent analytic
$\{K\langle\langle\xi\rangle\rangle\}$-structure (Definition
\ref{seps}(ii)). Hence some of the results of \cite{S}  fit into our
context.  We leave the verification that  $\cS^{over} $ is a
(strong) separated Weierstrass system to the reader. This
construction can be extended in various nonstandard directions, for
example, to maximally complete fields (cf. example (4) above.)

(11) \ If $\cA$ is a (strong) Weierstrass system, and $J \subset A$
is an ideal (with $(I,J) \subset A$ proper), then $\cA/J :=
\{A_{m,n}/JA_{m,n}\}$ is also a (strong) Weierstrass system, with
$(A/J)^\circ = (I,J)/J$.  The conditions in Definition \ref{defAmn}
and Definition \ref{goodgood} are immediate, as are the conditions
of Definition \ref{stronggood} if $\cA$ is strong.  The conditions
of \ref{good} (Weierstrass Division) are not quite immediate as we
may have $f \in A_{m,n}/JA_{m,n}$ regular in $\xi_m$ (respectively
$\rho_n$) of degree $s$, and $f \equiv F$ mod $JA_{m,n}$, $F \in
A_{m,n}$ without $F$ being regular in $\xi_m$ (respectively
$\rho_n$) of degree $s$.  However,  as in the proof of Lemma
\ref{(vii)}, we can modify $F$ to find a $G \in A_{m,n}$ with $f
\equiv G$ and $G$ regular in $\xi_m$ (respectively $\rho_n$) of
degree $s$.

(12) \ Further important examples are provided by Definition
\ref{expand} (see Theorem \ref{constants}) and Theorem
\ref{algextn}.

(13) \ Every Henselian valued field carries an (algebraic) analytic
structure all the functions of which are definable, see section \ref{algebraic}.
Hence the (algebraic) theory of Henselian valued fields is included in our
formalism of (Henselian) fields with analytic structure.\\

In examples (1) and (2) of strictly convergent analytic structures
$(\pi\not=1)$, the strictly convergent analytic structure gives the
same family of analytic functions as a natural separated analytic
structure. Hence, all results on separated analytic structures
apply. We don't know any example where this is not the case. We will
for this reason from section \ref{annuli} on focus on separated
analytic structures.

\subsection{Properties of analytic structures} \label{properties} In this subsection we develop some of the basic properties of separated and strictly convergent analytic structures.

\begin{rem}
If $K$ is trivially valued (i.e. $K^{\circ\circ} = (0)$) then $I
\subset \ker\sigma_{0}$ and the analytic structure collapses to the
usual algebraic structure given by polynomials.  See Example
\ref{sepex}(11). The domain of the $\rho$ variables is $\{0\}$ if
$\Koo = (0)$.
\end{rem}

\begin{rem}\label{remcomp} Considering  $A_{m,n} \subset A[[\xi,\rho]]$,
we do not know a priori that ``composition" or ``substitution'' by
elements of $A_{M,N}$ for $\xi$-variables and elements of
$(A_{M,N})^\circ$ for $\rho$-variables in the rings $A_{m,n}$ makes
sense. However, Properties (a) and (b) (Weierstrass Division) of
Definition \ref{good}  (or Property (a) of Definition \ref{goods})
allow us to \emph{define} composition in these rings. For example,
dividing $f(\xi_1)$ by $\xi_1 - g(\xi_2)$ gives $f(\xi_1) = Q\cdot
(\xi_1 - g(\xi_2)) + h(\xi_2)$ and we can define $f(g(\xi_2))$ to be
$h(\xi_2)$. Definition \ref{good} (or Definition \ref{goods})
guarantees that this actually is composition on the ``top slice". In
all the standard examples (Examples \ref{sepex}) composition defined
in this way actually is power series composition.  In a field with
analytic structure (i.e. after applying $\sigma$) this ``defined"
composition becomes actual composition.  In the above example,
$\sigma(f(\xi_1)) = \sigma(Q)(\xi_1 - \sigma(g(\xi_2))) +
\sigma(h(\xi_2))$, so $\sigma(h) = \sigma(f)\circ \sigma(g)$.
\end{rem}

\begin{prop}
Analytic $\cA$-structures preserve composition.  More precisely, if
$\cA = \{A_{m,n}\}$ and $f\in A_{m,n}$, $\alpha_1,\dots,\alpha_m\in
A_{M,N}$, $\beta_1,\dots,\beta_n\in (A_{M,N})^{\circ}$,  then
$g:=f(\alpha,\beta)$ is in $A_{M,N}$ and
$\sigma(g)=(\sigma(f))(\sigma(\alpha),\sigma(\beta))$. \
\end{prop}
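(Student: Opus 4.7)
The plan is to construct $g := f(\alpha, \beta)$ via iterated Weierstrass division, implementing the recipe sketched in Remark \ref{remcomp}, and then verify that the analytic structure $\sigma$ respects this construction.

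First I would view $f$ as an element of $A_{m+M, n+N}$ via the inclusion Definition \ref{defAmn}(iii), writing the enlarged variable set as $(\xi_1, \ldots, \xi_m, \xi'_1, \ldots, \xi'_M, \rho_1, \ldots, \rho_n, \rho'_1, \ldots, \rho'_N)$. I would then divide $f$ successively by $\xi_1 - \alpha_1, \ldots, \xi_m - \alpha_m$ and then by $\rho_1 - \beta_1, \ldots, \rho_n - \beta_n$. The necessary regularity checks, in the sense of Definition \ref{regular}, are: \emph{(a)} $\xi_i - \alpha_i$ is regular in $\xi_i$ of degree $1$, because Definition \ref{defAmn}(iv) forces $\widetilde{\alpha_i} \in \widetilde{A}[\xi'][[\rho']]$, so the reduction of $\alpha_i$ modulo $J + (\rho, \rho')$ is a polynomial $Q_i \in A[\xi']$, giving $\xi_i - \alpha_i \equiv \xi_i - Q_i \pmod{J + (\rho,\rho')}$; \emph{(b)} $\rho_j - \beta_j$ is regular in $\rho_j$ of degree $1$, because $\beta_j \in A_{M,N}^\circ = (I, \rho') A_{M,N}$ lies in $J + (\rho')$, so $\rho_j - \beta_j \equiv \rho_j$ modulo the ideal generated by $J$, by $\rho_j^2$, and by all $\rho$'s distinct from $\rho_j$. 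After these $m + n$ divisions each successive remainder has degree strictly less than $1$ in the variable just eliminated, so the final remainder lies in $A_{M,N}$; call it $g$, and set $f(\alpha, \beta) := g$. The resulting identity is
\begin{equation*}
f = \sum_{i=1}^{m} Q_i^{(\xi)} (\xi_i - \alpha_i) + \sum_{j=1}^{n} Q_j^{(\rho)} (\rho_j - \beta_j) + g, \qquad g \in A_{M,N}.
\end{equation*}

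Now I would apply $\sigma_{m+M,n+N}$ to this identity and evaluate at any point of the form $(x, a, y, b)$ with $(a,b) \in (K^\circ)^M \times (K^{\circ\circ})^N$, $x_i := \sigma(\alpha_i)(a, b)$, and $y_j := \sigma(\beta_j)(a, b)$. Such a point lies in $(K^\circ)^{m+M} \times (K^{\circ\circ})^{n+N}$: $x_i \in K^\circ$ since $\sigma_{M,N}(\alpha_i)$ is $K^\circ$-valued, and $y_j \in K^{\circ\circ}$ since $\beta_j$ is an $A_{M,N}$-linear combination of elements of $I$ (whose $\sigma$-image is in $K^{\circ\circ}$ by Definition \ref{sep}(1)) and of the $\rho'_k$ (which $\sigma$ sends to $K^{\circ\circ}$-valued coordinate functions). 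At this particular point each factor $\sigma(\xi_i - \alpha_i) = x_i - \sigma(\alpha_i)(a,b)$ and $\sigma(\rho_j - \beta_j) = y_j - \sigma(\beta_j)(a,b)$ vanishes, while Definition \ref{sep}(3) gives $\sigma_{m+M,n+N}(f)(x,a,y,b) = \sigma_{m,n}(f)(x, y) = \sigma(f)(\sigma(\alpha)(a,b), \sigma(\beta)(a,b))$ and $\sigma_{m+M,n+N}(g)(x,a,y,b) = \sigma_{M,N}(g)(a,b)$. The identity therefore collapses to $\sigma(f)(\sigma(\alpha)(a,b), \sigma(\beta)(a,b)) = \sigma(g)(a,b)$, as required.

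The main obstacle is the regularity verification for the $m + n$ divisors; this crucially uses axiom (iv) of Definition \ref{defAmn} for the $\xi$-divisors and the definition of $A_{M,N}^\circ$ for the $\rho$-divisors. Without axiom (iv), the residue of $\alpha_i$ at $\rho' = 0$ could fail to be a polynomial and $\xi_i - \alpha_i$ would fail to be Weierstrass-regular of degree $1$. Everything else is bookkeeping using only the ring-homomorphism property of $\sigma$ and the coordinate-compatibility axioms (2), (3) of Definition \ref{sep}; in particular no use is made of the Strong Noetherian Property (axiom (c) of Definition \ref{goodgood}).
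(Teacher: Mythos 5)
Your proposal is correct and takes essentially the same route as the paper: the paper's proof simply invokes Weierstrass Division via the recipe of Remark \ref{remcomp} (citing Proposition 2.8 of \cite{CLR1}), which is exactly the iterated division by $\xi_i-\alpha_i$ and $\rho_j-\beta_j$ (regular of degree $1$ by Definition \ref{defAmn}(iv) and the definition of $(A_{M,N})^\circ$) followed by applying $\sigma$ and evaluating, as you carry out. Your write-up just makes explicit the regularity checks and the evaluation step that the paper leaves to the cited reference.
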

\begin{proof} As in \cite{CLR1}, Proposition 2.8, and the above
remark, this follows from Weierstrass Division.
\end{proof}

\begin{prop}\label{zero}
In a nontrivially valued field with analytic $\cA$-structure, the
image of a power series is the zero function if and only if the
image of each of its coefficients is zero.  More precisely, for $K$
a nontrivially valued field,

{\em (i)} Let $\sigma$ be a separated analytic $\cA$-structure on
$K$. Then
$$
\ker\sigma_{m,n}= \{\sum_{\mu,\nu} a_{\mu,\nu} \xi^\mu\rho^\nu \in
A_{m,n} : a_{\mu,\nu} \in \ker\sigma_{0,0} \}.
$$ Furthermore, with the notation of Definition
\ref{defAmn}(v), if
 $f(\xi',\rho')=\sum_{\mu,\nu}\overline{f}_{\mu\nu}(\xi,\rho)(\xi'')^\mu(\rho'')^\nu$
 and $a \in (\Ko)^m$, $b \in (\Koo)^n$, then we have that the function
 $$
(\Ko)^{m'-m}\times (\Koo)^{n'-n}\to \Ko:(c,d)\mapsto
\sigma_{m',n'}(f)(a,c,b,d)
 $$
is the zero function exactly when
$\sigma_{m,n}(\overline{f}_{\mu\nu})(a,b) = 0$ for all $\mu,\nu$.

{\em (ii) ($\pi\not=1$)} Let $\sigma$ be a strictly convergent
analytic $\cA$-structure on $K$. Then
$$
\ker\sigma_m= \{\sum_{\mu} a_{\mu} \xi^\mu \in A_m : a_{\mu} \in
\ker\sigma_{0}\}.
$$
Furthermore, with the notation of Definition \ref{defAmns}(v), if
 $f(\xi')=\sum_{\mu}\overline{f}_{\mu}(\xi)(\xi'')^\mu$
 and $a \in (\Ko)^m$, then the function
 $$(\Ko)^{m'-m}\to\Ko: c\mapsto \sigma_{m'}(f)(a,c)$$
is the zero function exactly when $\sigma_{m}(\overline{f}_{\mu})(a)
= 0$ for all $\mu$.
\end{prop}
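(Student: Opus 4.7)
I will prove part (i); part (ii) is entirely analogous, using Theorem \ref{SepSNPs} in place of Theorem \ref{SepSNP} and the analogous identity lemma for a single $\xi$-variable ranging over $\Ko$. The central auxiliary tool is the following ultrametric \emph{one-variable identity lemma}: if $\alpha_0,\ldots,\alpha_N\in\Ko$, if $H_0,\ldots,H_N$ are $\Ko$-valued functions satisfying $|H_i(z)|<1$, and if $\sum_{i=0}^N\alpha_i z^i(1+H_i(z))=0$ for all $z\in\Koo$, then every $\alpha_i$ vanishes. Indeed, setting $i_0=\min\{i:\alpha_i\neq 0\}$, for $|z|$ small enough the summand of index $i_0$ strictly dominates all others in absolute value, so the ultrametric triangle inequality forces the full sum to have absolute value $|\alpha_{i_0}||z|^{i_0}>0$, a contradiction. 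Nontriviality of the valuation on $K$ supplies such $z\in\Koo\setminus\{0\}$.

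For the $\Leftarrow$ direction, apply Theorem \ref{SepSNP} to $f$ at the splitting $(m,n)\mid(m',n')$, and select the unique $A'\in\cF$ compatible with $\sigma$ together with the unique Laurent ring $C$ in the resulting disjointly covering family whose defining formula $\varphi_C$ holds at $(a,b)$. The resulting identity
\[
f=\sum_{(\mu,\nu)\in J_C}\overline{f}_{\mu\nu}(\xi,\rho)(\xi'')^\mu(\rho'')^\nu u_{C\mu\nu}(1+h_{C\mu\nu})\qquad\text{in }C_{m'-m,n'-n},
\]
once evaluated under $\sigma$ at $(a,c,b,d)$, becomes a \emph{finite} sum each of whose summands has $\sigma(\overline{f}_{\mu\nu})(a,b)$ as a factor, so the value is zero at every $(c,d)$ as soon as all these factors vanish.

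For the $\Rightarrow$ direction I induct on $k:=(m'-m)+(n'-n)$. The case $k=0$ is immediate since then $f=\overline{f}_{00}$. For $k\geq 1$, assume without loss of generality $n'>n$, and decompose $f=\sum_{i\geq 0}f_i(\xi',\rho_{<n'})\rho_{n'}^i$ with $f_i\in A_{m',n'-1}$, noting by Definition \ref{defAmn}(v) that the coefficients $\overline{(f_i)}_{\mu,\nu_{<}}$ of $f_i$ coincide with $\overline{f}_{\mu,(\nu_{<},i)}$. The induction hypothesis applied to each $f_i$ will then finish the proof, provided I establish that $(c,d_{<})\mapsto\sigma(f_i)(a,c,b,d_{<})$ is identically zero on $(\Ko)^{m'-m}\times(\Koo)^{n'-n-1}$ for every $i$. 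To this end, apply Theorem \ref{SepSNP} to $f$ at the splitting $(m',n'-1)\mid(m',n')$: for any fixed $(c,d_{<})$ pick the Laurent ring $C$ in the cover whose formula holds at $(a,c,b,d_{<})$, yielding $f=\sum_{i\in J_C}f_i\rho_{n'}^i u_{Ci}(1+h_{Ci})$ in $C_{0,1}$. Evaluating under $\sigma$ and varying $d_{n'-n}\in\Koo$ reproduces the shape of the identity lemma, with $\alpha_i=\sigma(f_i)(a,c,b,d_{<})\sigma(u_{Ci})(a,c,b,d_{<})$; since $u_{Ci}$ is a $C$-unit, $\sigma(u_{Ci})(a,c,b,d_{<})$ is a unit of $\Ko$, so the lemma gives $\sigma(f_i)(a,c,b,d_{<})=0$ for each $i\in J_C$.

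The genuine obstacle, and the step requiring the most care, concerns the indices $i\notin J_C$, which do not appear explicitly in the finite SNP representation. To handle them, I will expand both sides of the above identity as power series in $\rho_{n'}$ (with coefficients in $C$) and equate the $\rho_{n'}^\ell$-coefficients. Writing $h_{Ci}=\sum_j h_{Ci,j}\rho_{n'}^j$ with $h_{Ci,j}\in C$, this yields, for each $\ell\notin J_C$, a relation expressing $f_\ell$ as a $C$-linear combination of $\{f_j:j\in J_C,\ j\le\ell\}$, with coefficients built from $u_{Cj}$ and the $h_{Cj,\ell-j}$. Applying $\sigma$ at $(a,c,b,d_{<})$ propagates the vanishing already established on $J_C$ to all $\ell$. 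Since $(c,d_{<})$ was arbitrary and the Laurent ring family disjointly covers $(\Ko)^{m'-m}\times(\Koo)^{n'-n-1}$, one obtains $\sigma(f_i)(a,c,b,d_{<})=0$ on the whole domain, closing the induction.
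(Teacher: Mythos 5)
Your proof is correct, and it takes a genuinely more self-contained route than the paper, which simply invokes the Strong Noetherian Property together with Weierstrass Preparation and defers the details to \cite{CLR1}, Proposition 2.10 (a setting where the stronger form of the property in Remark \ref{remSepSNP} holds and no rings of fractions or Laurent rings are needed). You instead work one outer variable at a time, apply the general Theorem \ref{SepSNP} at each one-variable splitting, and replace Weierstrass Preparation by the elementary ultrametric dominant-term estimate (choosing $z$ of small absolute value, which the nontrivial valuation provides); your final coefficient-comparison step, propagating the vanishing from the finite set $J_C$ to all indices, is exactly the mechanism the paper alludes to when it remarks that the Strong Noetherian Property writes every coefficient as a linear combination of finitely many, but you execute it at the level of Laurent rings, which is the adaptation the paper's citation glosses over. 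What the paper's route buys is brevity, given the machinery of \cite{CLR1}; what yours buys is an argument valid verbatim in the general axiomatic setting of Theorem \ref{SepSNP}. Two points deserve a word of care, though neither is a gap: the legitimacy of equating $\rho_{n'}$-coefficients in $C_{0,1}$ rests on the fact that the ideals defining the ring of $\cA$-fractions and the Laurent ring do not involve the outer variable, so the coefficientwise expansion (unique in $A'_{m',n'}$ by Lemma \ref{(vii)} and Definition \ref{defAmn}(v)) descends to the quotient --- this is in effect the portion of Lemma \ref{(vii)bis} you actually use, and it should be said explicitly since the paper states that lemma without proof; and ``without loss of generality $n'>n$'' is not a symmetry, but the case of splitting off a $\xi$-variable is handled by the identical estimate, since one may still take $z\in\Koo\subset\Ko$ small, as you indicate for part (ii).
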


\begin{proof}
This follows easily from the Strong Noetherian Property and
Weierstrass Preparation. Case (i) in the case that $K$ is
nontrivially valued is given in detail in \cite{CLR1} Proposition
2.10.
\end{proof}

\begin{rem}
Proposition \ref{zero} does not always hold when $K$ is trivially
valued. Namely, in case (i), the function $\sigma(\rho_1)$ is the
zero function. In case (ii), if $K$ is a finite field there is a
nonzero monic polynomial $p(\xi_1) \in A_{1}$ such that $\sigma(p)$
is the zero function. There is no problem in the construction of
Remark \ref{constantsbis} however.
\end{rem}

When considering a particular field $K$ with analytic
$\cA$-structure, it is no loss of generality to assume that
$\ker\sigma_{0,0} = (0)$ (See Example \ref{sepex}(11)).  Indeed we
can replace $A$ by $A/\ker\sigma_{0,0}$  to get an equivalent
analytic structure on $K$ with this property.  In the case that
$\ker\sigma_{0,0} = (0)$ we may consider $A_{0,0}= A$ to be a
subring of $K^\circ$. It is convenient to extend the Weierstrass
system by suitably adjoining the elements of $K$ to $A$:

\begin{defn}[Extension of parameters]\label{expand}
(i) Let $\cA=\{A_{m,n}\}$ be a separated Weierstrass system and let
$K$ be a nontrivially valued valued field with analytic
$\cA$-structure $\{\sigma_{m,n}\}$. Assume that $\ker\sigma_{0,0} =
(0)$, so we may consider $A$ as a subring of $K^\circ$ and $A_{m,n}$
as a subring of $\Ko[[\xi,\rho]]$. With the notation of Definition
\ref{defAmn}(v) and with $M=m'-m$, $N=n'-n$, if $f =
\sum_{\mu,\nu}{\overline{f}_{\mu\nu}(\xi,\rho)(\xi'')^\mu(\rho'')^\nu}$,
$a \in (K^\circ)^m$ and $b \in (K^{\circ\circ})^n$,  we write
$f(a,\xi'',b,\rho'')$ for the power series
$$\sum_{\mu,\nu}{\sigma_{m,n}(\overline{f}_{\mu\nu})(a,b)(\xi'')^\mu(\rho'')^\nu}\
\mbox{ in } K^\circ[[\xi'',\rho'']].$$  Then by Proposition
\ref{zero}
the function
$$(\Ko)^{M}\times (\Koo)^{N}\to \Ko:(c,d)\mapsto (\sigma f)(a,c,b,d)
$$
only depends on the power series $f(a,\xi'',b,\rho'')$ and we denote
this function by
$$
\sigma'_{M,N}(f(a,\xi'',b,\rho''))
$$
Define the subring $A_{M,N}(K)$ of $K^\circ[[\xi'',\rho'']]$ by
\begin{eqnarray*}
A_{M,N}(K)&:=& \bigcup_{m,n \in \bN}\{f(a,\xi'',b,\rho'')\colon f
\in A_{m',n'},\ a\in (K^\circ)^m,\ b\in (K^{\circ\circ})^n\},
\end{eqnarray*}
and define
$$\cA(K):=\{A_{M,N}(K)\}_{M,N}.$$ Then,
$\sigma'_{M,N}$ is a homomorphism from $A_{M,N}(K)$ to the ring of
functions $(K^\circ)^M \times (K^{\circ\circ})^N \to K^\circ$, for
each $M,N$.

(ii)($\pi\not=1$) Let $\cA=\{A_{m}\}$ be a strictly convergent
Weierstrass system and  assume that $K$ has analytic $\cA$-structure
$\{\sigma_{m}\}$ and that $\ker\sigma_{0} = (0)$ so $A = A_{0}
\subset K^\circ$. With the notation of Definition \ref{defAmns} (v) and
with $M=m'-m$, if $f =
\sum_{\mu}{\overline{f}_{\mu}(\xi)(\xi'')^\mu}$ and $a \in
(K^\circ)^m$, define the power series $f(a,\xi'')$ as
$\sum_{\mu}{\sigma_{m}(\overline{f}_{\mu})(a)(\xi'')^\mu} \in
K^\circ[[\xi'']]$.  Then by Proposition \ref{zero} the function $ \sigma'_{M}(f(a,\xi'')) $
defined by
$$
(\Ko)^{M}\to\Ko: c\mapsto \sigma_{m'}(f)(a,c)
$$
only depends on the power series $f(a,\xi'')$. Define
\begin{eqnarray*}
A_{M}(K)&:=& \bigcup_{m \in \bN}\{f(a,\xi'')\colon f \in A_{m+M},\ a\in (K^\circ)^m\}\\
&\subset& K^\circ[[\xi'']]\\
\cA(K)&:=&\{A_{M}(K)\}.
\end{eqnarray*}
Then $\sigma'_{M}$ is a homomorphism from $A_{M}(K)$ to the ring of
functions $(K^\circ)^M  \to K^\circ$.
\end{defn}

We have

\begin{thm} \label{constants}(i) Let $K$ and $\cA$ be as in Definition \ref{expand}(i).
Then $\cA(K)$ is a separated Weierstrass system over $(\Ko,\Koo)$
and $K$ has separated analytic $\cA(K)$-structure via the
homomorphisms $\{\sigma'_{M,N}\}$.

 The family $\{A_{M,0}(K)\}$ is a strictly convergent Weierstrass system and $\{\sigma'_{M,0}\}$ provides K with
a strictly convergent analytic structure as described in Definition
\ref{seps}(ii). (No rings of $\Ko$-fractions are needed as $\Ko$ is
a valuation ring.)
 If $\cA$ is a strong  Weierstrass system, so is $\cA(K)$.

 (ii) Let $K$ and $\cA$ be as in Definition \ref{expand}(ii).
Then $\cA(K)$ is a strictly convergent Weierstrass system and $K$
has strictly convergent analytic $\cA(K)$-structure via the
homomorphisms $\{\sigma'_{M}\}$. If $\cA$ is a strong  Weierstrass
system, so is $\cA(K)$.
\end{thm}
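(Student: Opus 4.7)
The plan is to reduce each conclusion about $\cA(K)$ to the corresponding property of $\cA$ by \emph{lifting and specializing}: every element of $A_{M,N}(K)$ is, by definition, of the form $F(a,\xi'',b,\rho'')$ for some $F\in A_{m+M,n+N}$, $a\in(\Ko)^m$, $b\in(\Koo)^n$, so any identity or operation needed in $A_{M,N}(K)$ can be performed first in $\cA$ on a common lift $F$ and then specialized at the parameters. Proposition \ref{zero} ensures that the rings $A_{M,N}(K)$ and maps $\sigma'_{M,N}$ are well-defined, since any two lifts of the same element differ by a series whose coefficients vanish at $(a,b)$ and hence induce the zero function.

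First I would verify that $\cA(K)$ is a separated $(\Ko,\Koo)$-system (Definition \ref{defAmn}). Since under our standing assumption $\ker\sigma_{0,0}=(0)$ we have $A\subset\Ko$, the tautology $\sigma(\xi_1)(k)=k$ for $k\in\Ko$ gives $A_{0,0}(K)=\Ko$. The containments (ii), (iii), and the finiteness condition (v) follow from the corresponding properties for $\cA$ by enlarging parameter tuples so that any finite collection of given elements of $\cA(K)$ has a common lift. Condition (iv)---that the image of $A_{M,N}(K)$ modulo $\Koo$ sits in $\widetilde\Ko[\xi''][[\rho'']]$---follows because the reduction of any $F\in A_{m+M,n+N}$ already lies in $\widetilde A[\xi,\xi''][[\rho,\rho'']]$ by (iv) for $\cA$, and specializing the $\xi$-variables at $a$ preserves polynomiality in $\xi''$.

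The main step is Weierstrass Division (Definition \ref{good}(a), (b)). Suppose $f=F(a,\xi'',b,\rho'')$ is regular in $\xi_M$ of degree $d$ and $g=G(a,\xi'',b,\rho'')$ is arbitrary in $A_{M,N}(K)$ (using common lifts). The regularity of $f$ over $(\Ko,\Koo)$ is a condition on finitely many coefficients of $F$ evaluated at $(a,b)$: the coefficient of $\xi_M^d$ reduces to $1$ modulo $\Koo$, and certain other specific coefficients lie in $\Koo$. These norm conditions single out a unique Laurent ring $C$ over $A_{m,n}$ (as in Definition \ref{Laurentfrac}) with $(a,b)\models\varphi_C$ and in which the corresponding coefficients of $F$ become congruent to $1$ modulo $C^\circ$, respectively lie in $C^\circ$; after a Weierstrass rearrangement as in the proof of Lemma \ref{(vii)}, $F$ is then regular in $\xi_M$ of degree $d$ as an element of $C_{M,N}$. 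By Lemma \ref{(vii)bis}, $\{C_{m',n'}\}$ is itself a separated Weierstrass system, so Weierstrass division yields unique $Q\in C_{M,N}$ and $R\in C_{M-1,N}[\xi_M]$ of $\xi_M$-degree $<d$ with $G=QF+R$. Specializing at $(a,b)$---which factors through the canonical map $C\to\Ko$ determined by $\varphi_C$---gives $g=qf+r$ in $A_{M,N}(K)$ of the required form. Uniqueness of $(q,r)$ in $A_{M,N}(K)$ follows from uniqueness in $C_{M,N}$ together with Proposition \ref{zero}. The case (b) (regularity in $\rho_n$) is analogous. The main obstacle throughout is precisely this translation of specialized regularity back into regularity over an appropriate Laurent ring; once this is in hand the rest is bookkeeping.

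The remaining items follow by the same template. For the Strong Noetherian Property (Definition \ref{goodgood}(c)) in $\cA(K)$, apply Theorem \ref{SepSNP} to a lift $F$ of any given $f$: the parameters $(a,b)$ select one $A'\in\cF$ and one Laurent ring $C$, and specializing the resulting decomposition through $C\to\Ko$ yields the required decomposition of $f$, with the $C$-units specializing to elements of norm $1$ in $\Ko$ and hence to units of $A_{M,N}(K)$ by Remark \ref{unit}. That $\sigma'=\{\sigma'_{M,N}\}$ gives an analytic structure (Definition \ref{sep}) is immediate from the construction. If $\cA$ is strong (cf.~the discussion preceding Lemma \ref{(vii)}), strongness transfers to $\cA(K)$ by lifting the two extra variables occurring in the strongness identity and applying strongness of $\cA$ before specializing. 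Finally, case (ii) and the last assertion of (i) proceed identically using Theorem \ref{SepSNPs} in place of Theorem \ref{SepSNP}; because $\Ko$ is a valuation ring, the ``$mn=0$'' clauses of those theorems show that no nontrivial rings of $\cA(K)$-fractions or Laurent rings survive after specialization, so $\{A_{M,0}(K)\}$ by itself gives a strictly convergent Weierstrass system with $K$ carrying the corresponding strictly convergent analytic structure as in Definition \ref{seps}(ii).
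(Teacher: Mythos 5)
Your overall route is the one the paper intends: its own proof is a one-line reduction to Weierstrass Division and the Strong Noetherian Property for $\cA$ (citing the special case in \cite{CLR1}), and your implementation --- well-definedness via Proposition \ref{zero}, division by lifting regularity into a Laurent ring selected by the parameter point and invoking Lemma \ref{(vii)bis}, the Strong Noetherian Property for $\cA(K)$ by specializing Theorem \ref{SepSNP}, and strongness by lifting the identities --- is in that spirit. Two points are glossed but repairable. First, regularity of $f=F(a,\xi'',b,\rho'')$ imposes conditions on \emph{all} coefficients $\sigma(\overline{F}_{\mu 0})(a,b)$ with $(\xi'')^\mu$ not a power $\xi_M^{j}$, $j\leq d$, not just finitely many; that only finitely many need to be recorded in the Laurent ring $B$, and that the remaining ones automatically land in $B^\circ$, uses Definition \ref{defAmn}(iv) (the reduction of $F$ is polynomial in $\xi''$) together with axiom (c) (so that an element of $A_{m,n}$ whose $\rho$-free part has coefficients in $I$ lies in $(I,\rho)A_{m,n}$); the ``rearrangement as in Lemma \ref{(vii)}'' does not supply this. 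Second, axiom (c) for $\cA(K)$ quantifies over all rings of $\cA(K)$-fractions, whose defining data come from arbitrary elements of $\Ko$ rather than of $A$; you verify it only for the base ring $\Ko$.

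The genuine gap is uniqueness in Weierstrass Division for $\cA(K)$. Uniqueness in $C_{M,N}$ plus Proposition \ref{zero} does not yield it: if $0=qf+r$ in $A_{M,N}(K)$, common lifts give $G=QF+R$ upstairs, and upstairs uniqueness only identifies $(Q,R)$ as the division data of $G$, which is merely \emph{some} lift of $0$; since specialization is far from injective, this never conflicts with the trivial division of the lift $0$, so no contradiction with $(q,r)\neq(0,0)$ is obtained. Nor is uniqueness a formal triviality at the level of power series: if $\Ko$ has rank $2$ and $c,\pi\in\Koo$ satisfy $|\pi|<|c|^{k}$ for all $k$, then $q:=\sum_{i\geq 0}\pi c^{-i}\xi_1^{i}\in\Ko[[\xi_1]]$ satisfies $q\cdot(\xi_1-c)+\pi c=0$ although $\xi_1-c$ is regular in $\xi_1$ of degree $1$; so any proof must exploit membership in $A_{M,N}(K)$. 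A correct argument: establish the Strong Noetherian Property for $\cA(K)$ first (your lifting argument does this without using division), use it to normalize $\max(\|q\|,\|r\|)=1$, and reduce modulo $\Koo$, where by Definition \ref{defAmn}(iv) the reductions lie in $\Kt[\xi''][[\rho'']]$ and $\widetilde{f}$ is, modulo $(\rho'')$, monic of degree $d$ in $\xi_M$; if $\|q\|=1$, comparing $\xi_M$-leading terms in the lowest total $\rho''$-degree where $\widetilde{q}\neq 0$ contradicts $\deg_{\xi_M}\widetilde{r}<d$, and if $\|q\|<1=\|r\|$, then $\|qf\|\leq\|q\|<1$ forces $\widetilde{r}=0$, again a contradiction; hence $q=r=0$.
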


\begin{proof}
The various properties for $\cA(K)$ follow using the Weierstrass
Division Theorem and the Strong Noetherian Property and the
corresponding properties for $\cA$.  (The case when $A_{m,n} =
A\langle\xi\rangle[[\rho]]$, with $A$ noetherian and complete in its
$I$-adic topology is given in \cite{CLR1}, Lemma-Definition 2.12.)
\end{proof}

\begin{rem}[More general constants]\label{constantsbis}
Let $K$ and $\cA$ be as in Definition \ref{expand}(i), and use its
notation. For $K'$ any subfield of $K$, one can define
$$
A_{M,N}(K') := \bigcup_{m,n \in \bN}\{f(a,\xi'',b,\rho'')\colon f
\in A_{m',n'},\ a\in (K'{}^\circ)^m,\ b\in (K'^{\circ\circ})^n\}.
$$
Similarly as in Theorem \ref{constants}, if $K$ has analytic
$\cA$-structure, then it has analytic $\cA(K')$ structure.
\end{rem}

We restate the special case $m=0$ or $n=0$ of the Strong Noetherian Property (Theorem \ref{SepSNP})
for $\cA(K)$, that we will need in Section \ref{annuli}.

\begin{cor}\label{VSSNP} Let $\cA = \{A_{m,n}\}$ be a separated Weierstrass system and let
nontrivially valued field $K$ have separated analytic
$\cA$-structure $\sigma$ with $\ker(\sigma_{0,0}) = (0)$. If $f \in
A_{m+M,n+N}(K)$ and $m=0$ or $n=0$, we can write
$$
 f=\sum_{(\mu,\nu)\in J} \overline{f}_{\mu\nu}(\xi,\rho)(\xi'')^\mu(\rho'')^\nu(1+g_{\mu\nu})
$$
where the $g_{\mu\nu} \in A_{m+M,n+N}^\circ$  and $J\subset\bN^{M+N}$ is a finite set.
\end{cor}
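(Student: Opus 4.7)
The proof plan is to derive the corollary as a direct specialization of Theorem \ref{SepSNP} applied to the Weierstrass system $\cA(K)$. By Theorem \ref{constants}(i), $\cA(K) = \{A_{m,n}(K)\}$ is itself a separated Weierstrass system over $(\Ko,\Koo)$, where the base ring $\Ko$ is a valuation ring; hence the Strong Noetherian Property of Theorem \ref{SepSNP} is available for $\cA(K)$. Applying it to $f \in A_{m+M,n+N}(K)$ yields a system of rings of $\cA(K)$-fractions together with, for each such ring, a finite covering family of Laurent rings on which $f$ admits the claimed type of decomposition.

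The two key simplifications available in the present setting are exactly those recorded in the last sentences of Theorem \ref{SepSNP}. First, since $\Ko$ is a valuation ring, we may take the system $\cF$ of rings of fractions to be $\{\Ko\}$. Second, because we are in the case $m=0$ or $n=0$, the associated covering family of Laurent rings may be taken to consist of the single ring $A_{m,n}(K)$ itself. Consequently no case-splitting into rings of fractions or Laurent rings is needed, and the decomposition produced by Theorem \ref{SepSNP} takes place in $A_{m+M,n+N}(K)$ directly, with the Taylor coefficients $\overline{f}_{\mu\nu}(\xi,\rho) \in A_{m,n}(K)$ of Definition \ref{defAmn}(v) appearing unchanged.

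The only point requiring brief comment --- and the main (rather minor) obstacle --- is the presence of the $C$-unit factor $u_{C\mu\nu}$ in the formulation of Theorem \ref{SepSNP}. Tracing through the proofs of Lemma \ref{lemSepSNP} and Theorem \ref{SepSNP}, these units are introduced only during the case-splitting used to normalize a dominant coefficient $\overline{a}_{\mu_0'\nu_0'}$ to $1$; the parenthetical remark in the proof of Lemma \ref{lemSepSNP} records that this step is unnecessary when $A$ is a valuation ring. In our setting the $u_{C\mu\nu}$ are therefore trivial, and the conclusion of Theorem \ref{SepSNP} specializes to the form $f = \sum_{(\mu,\nu)\in J} \overline{f}_{\mu\nu}(\xi,\rho)(\xi'')^\mu(\rho'')^\nu(1+g_{\mu\nu})$ with $g_{\mu\nu} \in A_{m+M,n+N}(K)^\circ$ and $J$ finite, which is exactly the statement of the corollary.
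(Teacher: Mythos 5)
Your proposal is correct and is essentially the paper's own route: the paper offers no separate argument but simply presents the corollary as the specialization of Theorem \ref{SepSNP} to the system $\cA(K)$ (legitimate by Theorem \ref{constants}(i)), invoking exactly the two clauses you cite --- $\Ko$ is a valuation ring, so $\cF=\{\Ko\}$, and $mn=0$, so no Laurent rings are needed. Your added gloss on why the $C$-units $u_{C\mu\nu}$ disappear is a reasonable supplement (in the general proof they stem both from the rings-of-fractions normalization and from the Laurent-ring splittings, and both sources are absent here), and it does not change the fact that your argument coincides with the intended one.
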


We restate some results from \cite{CLR1} whose proofs extend without difficulty to our current more general setting.

\begin{prop} (Proposition 2.17 of \cite{CLR1})
\label{Henselian} \item[(i)]  Let $\cA$ be a separated Weierstrass
system and let $K$ be a valued field with separated analytic
${\cA}$-structure; then $\Ko$ is a Henselian valuation ring.

\item[(ii)($\pi\not=1$)] Let $\cA$ be a strictly convergent Weierstrass
system and let $K$ be a valued field with strictly convergent
analytic ${\cA}$-structure.
Then $\Ko$ is a Henselian valuation ring.

\end{prop}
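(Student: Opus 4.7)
The plan is to deduce Hensel's lemma from Weierstrass Preparation in exactly the way this is done in \cite{CLR1}, Proposition~2.17. It suffices to show that for every polynomial $P(\xi)\in\Ko[\xi]$ and every $a_0\in\Ko$ such that the reduction $\tilde P\in\Kt[\xi]$ has $\tilde a_0$ as a simple root, there exists a (unique) $b\in\Ko$ with $P(b)=0$ and $\tilde b=\tilde a_0$.

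First I would pass to the extended Weierstrass system $\cA(K)$ of Definition~\ref{expand}, applying Theorem~\ref{constants}. This is harmless and lets me freely use $a_0$ and the coefficients of $P$ as constants, so that $P(\xi+a_0)\in\Ko[\xi]$ may be regarded as an element of $A_{0,1}(K)[\xi]$ by substituting $\xi=\rho_1$. Treat the separated case first. Set
\[
Q(\rho_1)\ :=\ P(\rho_1+a_0)\ \in\ A_{0,1}(K).
\]
Since $\tilde P(\tilde a_0)=0$ we have $Q(0)=P(a_0)\in\Koo$, and since $\tilde a_0$ is simple we have $Q'(0)=P'(a_0)\in\Ko\setminus\Koo$, hence $Q'(0)$ is a unit in $\Ko$. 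Multiplying by $Q'(0)^{-1}$ gives
\[
R(\rho_1)\ :=\ Q'(0)^{-1}Q(\rho_1)\ \equiv\ \rho_1 \pmod{J+(\rho_1^{2})},
\]
where $J\subset\Ko[[\rho_1]]$ is the ideal of series with coefficients in $\Koo$, so $R$ is regular in $\rho_1$ of degree $1$ in the sense of Definition~\ref{regular}(ii).

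Now I invoke Weierstrass Preparation (Remark~\ref{weierstrass_preparation}(ii)) to write $R(\rho_1)=u(\rho_1)\cdot(\rho_1+A_1)$, where $u$ is a unit in $A_{0,1}(K)$ and $A_1\in A_{0,0}(K)=\Ko$. The regularity condition on the monic factor forces $A_1\in\Koo$. Evaluating the identity $R=u\cdot(\rho_1+A_1)$ at $\rho_1=-A_1\in\Koo$ via the homomorphism $\sigma'_{0,1}$ yields $R(-A_1)=0$, hence $P(a_0-A_1)=0$. So $b:=a_0-A_1$ is a root in $\Ko$ reducing to $\tilde a_0$. Uniqueness of $b$ follows from the uniqueness of the Weierstrass factorization (or directly, by running the argument for any other candidate root).

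For the strictly convergent case $(\pi\neq 1)$, the same argument works after substituting $\rho_1=\pi\eta$ to produce an element of $A_1(K)$: $\pi^{-1}P(a_0+\pi\eta)= \pi^{-1}P(a_0)+P'(a_0)\eta+\pi\eta^2(\cdots)$, and $\pi^{-1}P(a_0)\in\Ko$ because $P(a_0)\in\Koo=\pi\Ko$. Multiplying by $P'(a_0)^{-1}\in\Ko^\times$ yields an element regular in $\eta$ of degree $1$ in the sense of Definition~\ref{regulars}, and the strictly convergent Weierstrass Preparation (a consequence of Definition~\ref{goods}(a)) furnishes $b\in\Ko$ with $P(b)=0$ and $\tilde b=\tilde a_0$. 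The main thing to check carefully is that, after multiplication by $Q'(0)^{-1}$ (respectively $P'(a_0)^{-1}$), the resulting element really satisfies the formal regularity conditions of Definition~\ref{regular} (respectively Definition~\ref{regulars}); once that is in place, Weierstrass Preparation delivers the root essentially for free.
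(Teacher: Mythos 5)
Your proof is correct and follows essentially the same route as the paper, which simply defers to Proposition 2.17 of \cite{CLR1}: there, too, Henselianity is obtained by viewing $P(a_0+\rho_1)$ (resp.\ $\pi^{-1}P(a_0+\pi\eta)$ in the strictly convergent case) as an element of $A_{0,1}(K)$ (resp.\ $A_1(K)$), checking regularity of degree one after dividing by the unit $P'(a_0)$, and reading off the root from Weierstrass Preparation. No gaps to report.
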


The following theorem permits us to work over any algebraic
extension of the domain of an analytic $\cA$-structure. Its proof
is the same as that of Theorem 2.18 of \cite{CLR1}.

\begin{thm}\label{algextn}
Let $K$ be a valued field and let $K'$ be an algebraic extension of
$K$.\\
\indent (i) Let $\cA$ be a separated Weierstrass system. Suppose that $K$ has a separated analytic ${\cA}$-structure
$\sigma =\{\sigma_{m,n}\}$. Then there is a unique extension of
$\sigma$ to a separated analytic $\cA$-structure $\tau$ on $K'$.

(ii) ($\pi\not=1$)] Let $\cA$ be a strictly convergent
Weierstrass system. Suppose that $K$ has a strictly convergent
analytic $\cA$-structure and that $\sigma(\pi)$ remains prime in
$(K')^\circ$. Then there is a unique extension of $\sigma$ to a
strictly convergent analytic $\cA$-structure $\tau$ on $K'$.

\end{thm}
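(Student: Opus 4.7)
The plan is to follow the pattern of Theorem 2.18 of \cite{CLR1}, generalised to the Weierstrass framework of this paper. First I would reduce to the case of a finite simple extension: any tuple $(a,b) \in (K'^\circ)^m \times (K'^{\circ\circ})^n$ lies in some finite subextension of $K'$, so by a direct-limit argument it suffices to treat finite extensions, and induction on $[K':K]$ then reduces to the case $K' = K(\alpha)$ of a single algebraic element. I would also invoke Theorem \ref{constants} to pass to $\cA(K)$, so that every element of $K^\circ$ is available as a ``constant'' of the Weierstrass system; this is a convenience which lets one express the problem purely in terms of Weierstrass Division.

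The key observation is that the minimal polynomial $p(X) = X^d + c_{d-1}X^{d-1} + \cdots + c_0$ of $\alpha$ over $K$ automatically lies in $K^\circ[X]$: since $\Ko$ is Henselian by Proposition \ref{Henselian}, the valuation extends uniquely to $K'$, hence all conjugates of $\alpha$ have the same norm, forcing the elementary symmetric functions $c_i$ to lie in $\Ko$. Moreover one of two forms of regularity holds automatically: if $|\alpha|<1$ then all $c_i$ lie in $\Koo$, so $p(\rho_0)$, viewing $\alpha$ via a new $\rho$-variable, is regular in $\rho_0$ of degree $d$ in the sense of Definition \ref{regular}(ii); if $|\alpha|=1$ then $p(\xi_0)$, viewing $\alpha$ via a new $\xi$-variable, is trivially regular in $\xi_0$ of degree $d$ in the sense of Definition \ref{regular}(i).

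The definition of $\tau_{m,n}(f)$ at a point with coordinates in $K(\alpha)$ would then proceed via Weierstrass Division by $p$. After expanding each $K(\alpha)$-valued argument in the form $\sum_{j} a_{ij}\alpha^j$ with $a_{ij}\in K$, the evaluation of $f$ is captured by a power series in one new variable (representing $\alpha$) with coefficients in $\cA(K)$. Weierstrass Division by $p$ rewrites this uniquely as a polynomial $R_0 + R_1\alpha + \cdots + R_{d-1}\alpha^{d-1}$ of degree $<d$, whose coefficients $R_i$ may be evaluated in $K$ using $\sigma$ (and its extension $\sigma'$ of Theorem \ref{constants}). Declaring $\tau_{m,n}(f)$ at the point to be the resulting $K(\alpha)$-valued sum gives the sought extension. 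Uniqueness is immediate: any extension $\tau$ must agree with $\sigma$ on $K$, send the variables to coordinate projections, preserve the ring structure, and satisfy Weierstrass Division in $A_{m,n}$; uniqueness of the Weierstrass quotient and remainder then forces $\tau$ to take exactly the value just described.

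The main obstacle will be verifying that this recipe actually yields a well-defined family $\{\tau_{m,n}\}$ satisfying all the axioms of Definition \ref{sep}: independence of the order in which coordinates are evaluated, independence of the choice of primitive element $\alpha$ when the point lies in several simple subextensions, independence of the sub-case ($\xi_0$ vs.\ $\rho_0$) at the interface $|\alpha|=1$, compatibility with the inclusions $A_{m,n}\subset A_{m',n'}$, and the fact that each $\tau_{m,n}$ is a ring homomorphism. All of these reduce to uniqueness of Weierstrass quotient and remainder together with preservation of composition in analytic structures. Part (ii) is handled by the same strategy with Theorem \ref{SepSNPs} replacing Theorem \ref{SepSNP}; the extra hypothesis that $\sigma(\pi)$ remain prime in $(K')^\circ$ is needed precisely because the rings of $\cA$-fractions of type (iv) in Definition \ref{fracsshort} use $\pi$ to witness the strict inequality $|c|<|d|$, so a non-prime image of $\pi$ in $(K')^\circ$ would make the defining formulas of those rings fail to describe the intended subsets of $K'$.
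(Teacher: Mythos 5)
Your overall mechanism is the right one (Henselianity puts the relevant minimal polynomials in $\Ko[X]$ with the correct regularity, Weierstrass Division forces the value of $\tau(f)$, and uniqueness of the division data gives uniqueness of $\tau$), but the step where you reduce to a single primitive element $\alpha$ and expand every coordinate as $\sum_j a_{ij}\alpha^j$ with $a_{ij}\in K$ has a genuine gap. To substitute $\sum_j a_{ij}Z^j$ for a $\xi$-variable (or a $\rho$-variable) inside the Weierstrass system $\cA(K)$ you need the substituted series to be power bounded, i.e.\ to have coefficients in $\Ko$ (and, for $\rho$-slots, to lie in the appropriate ideal); but the coordinates of an expansion in powers of $\alpha$ need not be integral. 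Concretely, take $K=\bQ_p$ and $K'=\bQ_p(\zeta)$ unramified quadratic, presented as $K(\alpha)$ with $\alpha=p\zeta$: then $\zeta\in K'^\circ$ but $\zeta=p^{-1}\alpha$, so your recipe would have to compose $f$ with the series $p^{-1}Z$, which is not a legitimate element of the system. Nor can this always be repaired by choosing a better generator: one would need all the given coordinates to lie in $\Ko[\alpha]$, and since the residue image of $\Ko[\alpha]$ is the monogenic ring $\Kt[\tilde\alpha]$, this is impossible whenever the residues of the coordinates generate a non-simple extension of $\Kt$ (e.g.\ residue extensions of type $(p,p)$, which occur for fields carrying separated analytic structure, such as $\Kt((\Gamma))$ with $\Kt$ non-perfect); defect extensions cause similar trouble. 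So the claimed ``power series in one new variable with coefficients in $\cA(K)$'' need not exist, and the well-definedness problems you list at the end cannot all be resolved by uniqueness of Weierstrass data alone.

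The paper's proof (which is that of Theorem 2.18 of \cite{CLR1}) avoids the primitive element entirely and works one coordinate at a time: after passing to $\cA(K)$ as you do, each $a_i\in K'^\circ$ is a root of a monic $P_i\in\Ko[\xi_i]$ (integral closure, since $\Ko$ is Henselian), hence $P_i$ is regular in $\xi_i$, and each $b_j\in K'^{\circ\circ}$ is a root of a monic $Q_j$ whose non-leading coefficients lie in $\Koo$ (all conjugates have norm $<1$), hence $Q_j$ is regular in $\rho_j$; successive Weierstrass division of $f$ by the $P_i(\xi_i)$ and $Q_j(\rho_j)$ leaves a remainder which is a polynomial over $\Ko$ of bounded degrees, and $\tau(f)(a,b)$ is defined (and, as in your uniqueness argument, forced) to be its value at $(a,b)$. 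No expansion of the coordinates, no choice of generator, and no case split on $|\alpha|<1$ versus $|\alpha|=1$ is needed; the two division axioms of Definition \ref{good} take care of the $\xi$- and $\rho$-coordinates separately, and well-definedness and the homomorphism property again follow from uniqueness of the division data. Your treatment of (ii) and your explanation of the hypothesis on $\sigma(\pi)$ are fine, modulo the same correction.
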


\begin{rem}
(i) Let $C$ be a ring of $\cA$-fractions. Let  $\sigma$ be a separated
analytic $\cA$--structure on $K$ and let $B$ be a Laurent ring over
$C_{m,n}$ with defining formula $\varphi_B$. Then the formula
$\varphi_B$ defines in a natural way a subset $\cU_{\varphi_B}$ of
$(\Kalgo)^{m}\times(\Kalgoo)^{n}$ by~Theorem \ref{algextn}. If the
set $\cU_{\varphi_B}$ is nonempty, then it is called a \emph{Laurent
subdomain} in the terminology of \cite{BGR}, Definition 7.2.3.2, and
it is called a \emph{$\Kalgo$-subdomain} in the terminology of
\cite{LR1}, Definition 5.3.3. Moreover, $B$ defines in a natural way
a ring of (analytic) functions on $U_{\phi_{B}}$, via $\sigma$. We
will not study these rings of functions in detail.

(ii) Let  $\sigma$ be a separated analytic $\cA$--structure on $K$ and $C$
a ring of $\cA$-fractions which is compatible with $\sigma$. Let $B$
be a Laurent ring over $C_{m,n}$ with a defining formula
$\varphi_B$. Then $\cU_{\varphi_B}$ as in (i) is nonempty if and
only if $\varphi_B$ can be obtained inductively as in Definition
\ref{Laurentfracdefform} such that for each step of type (ii) there
exists $x$ in $(\Kalgo)^{m+M}\times(\Kalgoo)^{n+N}$ with
$|\sigma(f)(x)|\geq 1$ and for each step of type (iii) there exists
$x$ in $(\Kalgo)^{m+M}\times(\Kalgoo)^{n+N}$ with $|\sigma(f)(x)|<
1$. We will not use this property.
\end{rem}

\begin{rem} \label{nonHenselian} The conclusion of Proposition \ref{Henselian}(ii), and hence also the conclusions of Theorem \ref{algextn}(ii), can be false without the assumption that $\ord(\Koo)$ has  minimal element $\ord(\sigma_0(b))$.  To see this consider $A_m = T^\circ_m(\bC_p)$, the strictly convergent power series of gauss-norm $\leq 1$ over $\bC_p$, the completion of the algebraic closure of $\bQ_p$.  Let $\bC_p^*$ be a nonprincipal ultrapower of $\bC_p$, and let $b \in \bC_p^*$ satisfy $1-
 {1 \over n} < |b| < 1$ for all $n \in \bN$.  Let $K$ be the smallest substructure of
$\bC_p^*$ containing $b$ and closed under $+,\cdot,(\cdot)^{-1}$ and
all the functions of $\bigcup_m A_m$.  Then, by definition, $K$ has
analytic $\{A_m\}$-structure in the sense that the elements of $A_m$
define functions on $K$ in a natural way. Let $p^{1 \over \infty}
\subsetneq \Koo$ be the ideal in $\Ko$ generated by $\{p^{1 \over
n}: n\in \bN\}$.  One can see by induction on terms that $\Ko/p^{1
\over \infty}\Ko = \{{f(b) \over g(b)} : f,g \in \overline \bF_p[b]
\text{ and } g(0) \neq 0 \}$. Here $\overline \bF_p$ is the
algebraic closure of the $p$-element field. $\Ko$ is not Henselian -
consider $f(x) = 1 - x - bx^2$, which is regular in $x$ of degree
$1$ but has no zero in $\Ko$.  Indeed it has no zero in $\Ko/p^{1
\over \infty}\Ko$.
\end{rem}

\begin{defn}
Let $\cA = \{A_{m,n}\}$ be a separated or strictly convergent
Weierstrass system.  Let $\cL = \langle0,1,+,\cdot,^{-1},
\overline{\cdot}, |\cdot|\rangle$ be the language of valued fields
(the symbol $\overline{\cdot}$ denotes multiplication on the value
group.) Let $\cL_{\cA}$ be $\cL$ augmented with function symbols for
the elements of  $\bigcup_{m,n} A_{m,n}$.  We call $\cL_{\cA}$ the
\emph{language of valued fields with analytic $\cA$-structure}. Let
$K$ be a valued field with analytic $\cA$-structure.  Interpret $f
\in A_{m,n}$ as $\sigma(f) : (\Ko)^m \times(\Koo)^n \mapsto \Ko$,
extended by zero outside its domain, so $K$ becomes an
$\cL_{\cA}$-structure.
\end{defn}

The following theorem extends \cite{LL2} Theorem 3.8.2, \cite{LR2} Theorem 4.2 and \cite{LR3} Theorem 4.2.

 \begin{thm}(Quantifier Elimination.)\label{QES}  Let $K$ be an algebraically closed valued field with
separated analytic $\cA$-structure. Let $\cL_{\cA}$ be as above.
Then $K$ admits elimination of quantifiers in $\cL_{\cA}$.
\end{thm}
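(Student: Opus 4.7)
I would follow the pattern of the earlier quantifier-elimination results cited (\cite{LL2} Theorem 3.8.2, \cite{LR2} Theorem 4.2, \cite{LR3} Theorem 4.2), namely, reduce QE to a careful analysis of terms in one variable plus the classical QE for algebraically closed valued fields. To begin, by the standard model-theoretic test it suffices to show that for every quantifier-free $\phi(x,\bar y)$ in $\cL_\cA$ the formula $\exists x\,\phi(x,\bar y)$ is equivalent to a quantifier-free $\cL_\cA$-formula in $\bar y$. Fixing $\bar y=\bar a\in K$, Definition \ref{expand} together with Theorem \ref{constants} lets me absorb $\bar a$ into an enriched Weierstrass system $\cA(K)$ with the same separated analytic structure, so the problem becomes: eliminate one existential quantifier from a sentence whose parameters sit in the language.

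Second, on $\cA(K)$ I would apply the one-variable form of the Strong Noetherian Property (Theorem \ref{SepSNP}, or Corollary \ref{VSSNP} since $m=0$ or $n=0$) to each term occurring in $\phi$. This yields a finite, disjointly covering family of Laurent rings $\{C_i\}$ over $\cA(K)_{1,0}$ or $\cA(K)_{0,1}$ on each of which every relevant term has the shape $\sum_{(\mu,\nu)\in J}\bar f_{\mu\nu}\,x^{\mu}\,u_{\mu\nu}(1+h_{\mu\nu})$ with $u_{\mu\nu}$ a $C_i$-unit and $h_{\mu\nu}\in C_i^\circ$. The defining formulas of the $C_i$ are quantifier-free in $\cL_\cA$ (Definition \ref{Laurentfracdefform}), so the elimination problem splits into one such problem on each piece. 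On each piece, Weierstrass Preparation (Remark \ref{weierstrass_preparation}) converts each analytic factor into a monic polynomial times a unit, while units of norm $1$ contribute only trivially to the valuation atoms $|f|\le|g|$ and to the equality atoms $f=0$. The analysis of terms in one variable carried out in Section \ref{annuli} supplies the precise description of how these polynomial factors behave on the annuli cut out by the Laurent-ring decomposition, including zeros and valuation comparisons.

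Third, once each piece has been reduced to polynomial data over the base ring, I would invoke Robinson's classical quantifier elimination for algebraically closed valued fields: existential statements about polynomials in one variable have quantifier-free equivalents in the language of valued fields. Henselianity of $\Ko$ (Proposition \ref{Henselian}) and uniqueness of the analytic structure on algebraic extensions (Theorem \ref{algextn}) together ensure that the roots produced by the polynomial QE live in $K=\Kalg$ and interact with the analytic functions exactly via the Weierstrass-system data, so the resulting quantifier-free formula is itself in $\cL_\cA$. Reassembling the finitely many pieces using the disjointly covering property of $\{C_i\}$ yields a single quantifier-free $\cL_\cA$-formula equivalent to $\exists x\,\phi$.

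The main obstacle is the one-variable analysis on each Laurent ring. Unlike the strictly convergent (affinoid) case, separated Weierstrass systems allow Weierstrass preparation only within the $\xi$-block or within the $\rho$-block separately, not for mixed monomials; consequently substituting a one-variable term into a $\xi$-position versus a $\rho$-position requires genuinely different treatment, and the covering family must be refined so that the substituted term is unambiguously of one type or the other (this is exactly what the rings $B\langle c/d\rangle$ versus $B[[c/d]]_s$ accomplish). Making this refinement effective and compatible with Weierstrass preparation is the heart of Section \ref{annuli}; once that one-variable theory is in hand, the reduction above proceeds routinely as in \cite{LL2}, \cite{LR2}, \cite{LR3}.
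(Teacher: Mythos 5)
Your proposal is essentially the paper's own approach: the paper proves Theorem \ref{QES} simply by invoking the proofs of \cite{LL2} Theorem 3.8.2, \cite{LR2} Theorem 4.2 and \cite{LR3} Theorem 4.2, which rest exactly on the ingredients you describe — absorbing parameters via $\cA(K)$, the Strong Noetherian Property (Theorem \ref{SepSNP}/Corollary \ref{VSSNP}) together with Weierstrass Preparation and Division to reduce the quantified variable to polynomial-times-unit data on a quantifier-free partition, and then Robinson's quantifier elimination for algebraically closed valued fields. One caution: drop the appeal to Section \ref{annuli} — it is not needed for this reduction, and within the paper it would be circular, since the supremum-norm arguments of that section (Remark on norms, used in Lemmas \ref{thin} and \ref{almostthin}) themselves presuppose quantifier elimination for $\Kalg$ in the analytic language.
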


\begin{proof}  The proofs of the above cited theorems, which are based on Weierstrass Division and the Strong Noetherian Property, work with very minor modifications.
\end{proof}

Note that Theorem \ref{QES} does not impose any condition on the
characteristic of $K$. (In fact, everything up to the end of section
\ref{annuli} is for all characteristics.) In section \ref{seccell},
Theorem \ref{mt}, we will give a quantifier elimination statement in
the case that $K$ has characteristic zero but is not necessarily
algebraically closed.

\subsection{The algebraic case}\label{algebraic}
In this subsection we show that every Henselian field carries a
\emph{definable} (algebraic) analytic structure, so our theory of
Henselian fields with analytic structure contains the first order
theory of Henselian fields as a special case.  Along the way we
establish some results about algebraic strictly convergent and
separated power series over a discretely valued Henselian field that
may be of independent interest.  The results of this section are not
used in the rest of the paper.

Let $R$ be a field or an excellent Henselian discrete valuation
ring, with prime $\frak p$. Let $\xi = (\xi_1, \dots , \xi_m)$.  Let
$\widehat R$ denote the $\frak p$--adic completion of $R$ and
$\widehat R \langle \xi \rangle$ the ring of strictly convergent
power series in $\xi$ with coefficients from $\widehat R$.  Let
$R\langle \xi \rangle_{alg}$ denote the algebraic closure of
$R[\xi]$ in $\widehat R \langle \xi \rangle$, i.e. all the elements
of  $\widehat R \langle \xi \rangle$ that are algebraic over
$R[\xi]$.  (The condition of excellence for $R$ is the same as
assuming that $\widehat R$ is separable over $R$, i.e. if $ y_1,
\dots , y_n \in \widehat R$ then the field extension $Q(R[y_1, \dots
, y_n])$ over $Q(R)$ has a  separating transcendence base.  Here
$Q(A)$ denotes the quotient field of $A$.)

\begin{thm}(Artin Approximation for $R \langle \xi \rangle_{alg}$)\label{affapprox}
Let $\eta = (\eta_1, \dots , \eta_N)$, and let $F_i(\xi,\eta) \in R[\xi,\eta]$ for $i=1, \dots, d$
be a finite set of polynomials, and let $c \in \bN$ and
$\overline \eta(\xi) \in (\widehat R\langle\xi\rangle)^M$ satisfy $F_i(\xi, \overline \eta(\xi)) = 0$
for $i=1,\dots,d$.  Then there exist $\widetilde \eta(\xi) \in (R\langle\xi\rangle_{alg})^M$ satisfying
 $F_i(\xi, \widetilde \eta(\xi)) = 0$ for $i=1,\dots,d$ and
 $\widetilde \eta(\xi) \equiv \overline \eta (\xi) \mod \frak p^c$.
 Moreover, there are $\widetilde \zeta_1, \dots, \widetilde \zeta_n \in R\langle\xi\rangle_{alg}$ such that
 $R[\xi, \widetilde \eta, \widetilde \zeta_1, \dots, \widetilde \zeta_n]$ is a Henselian extension of $R[\xi]$, i.e. the Jacobian of this extension is a unit -- there are $G_j \in R[\xi,\eta,\zeta]$, $j=1,\dots, N+n$ such that
 $\big({\partial G \over \partial \eta \partial \zeta}\big)(\widetilde \eta, \widetilde \zeta)$ is a unit.
\end{thm}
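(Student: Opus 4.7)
The plan is to identify $A := R\langle\xi\rangle_{alg}$ with the Henselization of the pair $(R[\xi],\,\mathfrak{p} R[\xi])$ (or, when $R$ is a field, of $(R[\xi],\,(\xi))$), and then invoke Artin approximation in the form of N\'eron--Popescu desingularization.

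First I would verify that $(A,\mathfrak{p} A)$ is a Henselian pair whose $\mathfrak{p}$-adic completion is $\widehat R\langle\xi\rangle$. Since the latter is $\mathfrak{p}$-adically complete, any $f\in A[T]$ with a simple residual root admits, by Hensel's lemma, a lift to a root in $\widehat R\langle\xi\rangle$; being algebraic over $R[\xi]$, this lift must itself lie in $A$. The identification of $A$ with the Henselization of $R[\xi]$ along $\mathfrak{p}$ is then the standard characterization of that Henselization as the direct limit of finite \'etale extensions of $R[\xi]$ that split residually, equivalently as the algebraic closure of $R[\xi]$ inside its $\mathfrak{p}$-adic completion.

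Next, the excellence hypothesis on $R$ passes to $R[\xi]$ and thence to $A$, and is precisely the input that guarantees that the completion map $A \to \widehat A = \widehat R\langle\xi\rangle$ is regular (the hypothesis that $\widehat R/R$ be separable is used here). Popescu's theorem then writes $\widehat A$ as a filtered colimit of smooth $A$-algebras, from which Artin approximation follows: the formal solution $\overline\eta \in \widehat A^{\,N}$ of the system $F_1=\cdots=F_d=0$ is approximated, to any prescribed order $c$ in the $\mathfrak{p}$-adic topology, by an algebraic solution $\widetilde\eta \in A^{N}$ with $\widetilde\eta \equiv \overline\eta \pmod{\mathfrak{p}^c}$.

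For the \emph{moreover} clause I would exploit the description of $A$ as the filtered colimit of finite \'etale $R[\xi]$-algebras. The finite tuple $\widetilde\eta$ is contained in some finite \'etale subalgebra $B \subseteq A$, and by the standard local structure theorem for \'etale morphisms $B$ admits a presentation $R[\xi,\eta,\zeta_1,\dots,\zeta_n]/(G_1,\dots,G_{N+n})$ in which $\det(\partial G/\partial(\eta,\zeta))$ is a unit in $B$. Taking $\widetilde\zeta_j$ to be the image of $\zeta_j$ under the structural map $B \to A$ then produces the desired Henselian extension of $R[\xi]$. The main obstacle is the first step: identifying $R\langle\xi\rangle_{alg}$ with a Henselization and establishing the regularity of its completion map. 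The field case is classical (going back to Artin), but the DVR case deals with a non-local Henselian pair, and the excellence of $R$ enters essentially, being the input required to bring Popescu's theorem (equivalently, Artin approximation for excellent Henselian pairs) to bear.
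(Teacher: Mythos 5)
Your strategy can be made to work, but it is a genuinely different and much heavier route than the paper's. The paper simply reruns Artin's original induction from \cite{Ar}, replacing local Weierstrass Preparation and Division by their strictly convergent versions, with the N\'eron $\mathfrak p$-desingularization input taken from the elementary exposition in \cite{DL2}, Section 3 (alternatively \cite{Bo}); this choice is deliberate, since the same Weierstrass-division template is what gets extended to the separated case in Theorem \ref{quasiaffapprox} (where the new work --- preregularity, splitting into algebraic Laurent domains --- has no analogue in a Popescu-style argument), and the explicit unit-Jacobian presentation is exactly what Remark \ref{apprem} exploits. Your route instead buys the strictly convergent case from general machinery: Popescu's theorem, plus the facts that the Henselization of a Noetherian pair is Noetherian and that the G-ring property (coming from excellence of $R[\xi]$) passes to that Henselization, give approximation for the Henselian pair, and the ``moreover'' clause then comes from the filtered-colimit-of-\'etale-algebras description, absorbing the localizations occurring in standard \'etale presentations into the extra variables $\zeta_j$. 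What this buys is brevity and generality; what it costs is reliance on full Popescu desingularization where the paper needs only N\'eron's $\mathfrak p$-desingularization over a DVR.

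One step of your write-up is, however, ordered backwards. The identification of $R\langle\xi\rangle_{alg}$ with the Henselization of the pair $(R[\xi],\mathfrak p R[\xi])$ is not an off-the-shelf ``standard characterization'': only the containment of the Henselization inside $R\langle\xi\rangle_{alg}$ is easy (elements of \'etale $R[\xi]$-algebras are algebraic over $R[\xi]$, and the Henselization embeds in the completion), while the reverse containment is delicate, especially in positive residue characteristic, and is most naturally deduced \emph{from} approximation rather than used as its first step. The clean fix is to run your entire argument inside the Henselization of the pair --- it is Noetherian and Henselian, its $\mathfrak p$-adic completion is again $\widehat R\langle\xi\rangle$, and it is a G-ring --- and then note that the approximants, lying in \'etale $R[\xi]$-algebras, automatically land in $R\langle\xi\rangle_{alg}$ and carry the required unit-Jacobian presentation; the equality you wanted to start from falls out afterwards by approximating a root of the minimal polynomial of a given algebraic element to an order exceeding the pairwise separation of its finitely many roots in $\widehat R\langle\xi\rangle$. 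As written, working directly with $A=R\langle\xi\rangle_{alg}$ leaves the Noetherianness of $A$ and the regularity of $A\to\widehat A$ unsupported. (Minor point: your parenthetical for the field case, the pair $(R[\xi],(\xi))$, is off --- that pair concerns $R[[\xi]]$; with $\mathfrak p=(0)$ the strictly convergent statement is degenerate, and the field case only acquires content in the separated version, Theorem \ref{quasiaffapprox}, where the relevant ideal is $(\rho)$.)
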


\begin{proof}  The proof is the same as that of \cite{Ar}
in the local case, except that one uses the strictly convergent
Weierstrass Preparation and Division Theorems instead of the local
versions.  For an elementary exposition of the N\'eron
desingularization see \cite{DL2} section 3. (Alternatively, one
could follow the slightly different proof given in \cite{Bo}.)
\end{proof}

The quasi-affinoid or separated case is more complicated.  First we introduce some notation.
Let $\xi = (\xi_1,\dots,\xi_m)$, $\eta =(\eta_1,\dots,\eta_M)$, $\rho = (\rho_1,\dots,\rho_n)$ and
$\lambda = (\lambda_1,\dots,\lambda_N)$.
$$
S_{m,n}^\circ(\widehat R) := \widehat R \langle\xi\rangle[[\rho]]
$$
and $S_{m,n}^\circ(R)_{alg}$ is the algebraic closure of $R[\xi,\rho]$ in $S_{m,n}^\circ(\widehat R)$.

\begin{defn}An \emph{algebraic Laurent domain} (in $(F_{alg}^\circ)^m \times  (F_{alg}^{\circ\circ})^n$, where
$F$ is the quotient field of $R$ or $\widehat R$) is a domain defined inductively by a sequence
of inequalities of the form $|p_i(\xi)| \Box_i 1$ where each $p_i$ is a polynomial
with $\|p_i\| = 1$ and each $\Box_i \in \{ \geq, <\}$.  For algebraic Laurent domains $\cU$ we define the ring of $\widehat R$--analytic functions $\cO_{\widehat R}(\cU)$ on $\cU$ inductively as follows.  If
$\cO_{\widehat R}(\cU) = S_{m',n'}^\circ(\widehat R)/I$ and $p(\xi)$ is a polynomial with $\|p\| = 1$ and
\begin{eqnarray*}
\cU'&:=& \cU \cap \{(\xi,\rho): |p(\xi)| \geq 1 \}\\
\cU''&:=& \cU \cap \{(\xi,\rho): |p(\xi)| < 1 \}
\end{eqnarray*}
then
\begin{eqnarray*}
\cO_{\widehat R}(\cU') &:=& S^\circ_{m'+1,n'}(\widehat R)/ (I, p(\xi)\xi_{m'+1} - 1)\\
\cO_{\widehat R}^\circ(\cU') &:=& (\frak p, \rho_1, \dots , \rho_{n'})\cO_{\widehat R}(\cU')\\
\cO_{\widehat R}(\cU'') &:=& S^\circ_{m',n'+1}(\widehat R)/ (I, p(\xi)- \rho_{n'+1})\\
\cO_{\widehat R}^\circ(\cU'') &:=& (\frak p, \rho_1, \dots , \rho_{n'+1})\cO_{\widehat R}(\cU'').\\
\end{eqnarray*}
For $\cU$ an algebraic Laurent domain we define the ring of \emph{algebraic} functions $\cO_R(\cU)_{alg}$ to be the algebraic closure of $R[\xi,\rho]$ in $\cO_{\widehat R}(\cU)$, and
$$
\cO_R^\circ(\cU)_{alg} := \cO_R(\cU)_{alg} \cap \cO_{\widehat R}^\circ(\cU).
$$
A finite family $\{\cU_i\}$ of algebraic Laurent domains is called \emph{covering} if
$\bigcup_i\cU_i = (F_{alg}^\circ)^m \times  (F_{alg}^{\circ\circ})^n$.
\end{defn}

\begin{thm}(Artin approximation for $S_{m,n}^\circ(R)_{alg}$)\label{quasiaffapprox}
Let  $F_i(\xi,\eta,\rho,\lambda) \in R[\xi,\eta,\rho,\lambda]$ for $i = 1, \dots , d$, let $c \in \bN$ and suppose that
$\overline \eta(\xi,\rho) \in (S_{m,n}^\circ(\widehat R) )^M$,
$\overline \lambda(\xi,\rho) \in (\frak p, \rho)(S_{m,n}^\circ(\widehat R) )^N$ satisfy
$$
F_i(\xi,\rho, \overline \eta(\xi,\rho), \overline \lambda(\xi,\rho)) = 0
$$
for $i=1,\dots,d$.  Then there is a finite covering family $\{\cU_i\}$, $i=1,\dots,e$ of algebraic Laurent domains and for each $i$ there are
\begin{eqnarray*}
\widetilde \eta^{(i)}(\xi,\rho) &\in& (\cO_R(\cU_i)_{alg})^M\\
\widetilde \lambda^{(i)}(\xi,\rho) &\in& (\cO_R^\circ(\cU_i)_{alg})^N
\end{eqnarray*}
such that
\begin{eqnarray*}
\widetilde \eta^{(i)}(\xi,\rho) &\equiv& \overline \eta^{(i)}(\xi,\rho) \mod (\frak p, \rho)^c \\
\widetilde \lambda^{(i)}(\xi,\rho) &\equiv& \overline \lambda^{(i)}(\xi,\rho)  \mod (\frak p, \rho)^c \text{ and }\\
F_j(\xi,\rho,\widetilde \eta^{(i)}, \widetilde \lambda^{(i)}) &=& 0 \quad\quad \text{ for }  j=1,\dots,d.
\end{eqnarray*}

Moreover, for each $i$ there are $\widetilde \zeta^{(i)} =
(\widetilde \zeta_1^{(i)},\dots,\widetilde \zeta_L^{(i)}) \in
(\cO_R(\cU_i)_{alg})^L$ and $\widetilde \sigma^{(i)} = (\widetilde
\sigma_1^{(i)},\dots,\widetilde \sigma_{L'}^{(i)}) \in
(\cO^\circ_R(\cU_i)_{alg})^{L'}$ such that
$\cO_R(\cU_i)_{alg}[\widetilde \eta, \widetilde \lambda,\widetilde
\zeta, \widetilde \sigma]$ is a Henselian extension of
$\cO_R(\cU_i)_{alg}$, i.e. the Jacobian of the extension is a unit
--  there are $G_j \in R[\xi,\eta,\lambda,\zeta,\sigma]$,
$j=1,\dots, M+N+L+L'$ such that
 $\Big({\partial G \over \partial \eta \partial \lambda \partial \zeta \partial \sigma}\Big)(\widetilde \eta, \widetilde \zeta, \widetilde \lambda,\widetilde \sigma)$ is a unit.
\end{thm}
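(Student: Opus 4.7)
The plan is to reduce the quasi-affinoid case to the affinoid Artin approximation (Theorem~\ref{affapprox}) by induction on $n$, using Weierstrass preparation to eliminate one $\rho$-variable at each step and a cover by algebraic Laurent domains to accommodate the changes of variables that arise. The base case $n=0$ is precisely Theorem~\ref{affapprox} together with the trivial cover $\{(F_{alg}^\circ)^m\}$, since then $S_{m,0}^\circ(\widehat R)=\widehat R\langle\xi\rangle$ and $\cO_R((F_{alg}^\circ)^m)_{alg}=R\langle\xi\rangle_{alg}$; the auxiliary $\widetilde\zeta,\widetilde\sigma$ come directly from that theorem.

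For the inductive step with $n\ge 1$, I would first apply the Strong Noetherian Property (Theorem~\ref{SepSNP}) to the components of $\overline\eta,\overline\lambda$ and to the $(\eta,\lambda)$-expansion coefficients of the $F_i$, all viewed as elements of $S_{m,n}^\circ(\widehat R)$. This produces a finite, disjointly covering family of Laurent rings over $S_{m,n}^\circ(\widehat R)$, which geometrically is an algebraic Laurent cover $\{\cV_j\}$ of $(F_{alg}^\circ)^m\times(F_{alg}^{\circ\circ})^n$, on each piece of which the relevant series are displayed as finite combinations of monomials in $\rho_n$ with coefficients in $S_{m,n-1}^\circ(\widehat R)$, multiplied by units. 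On each such piece, after a Weierstrass change of variables in the $\rho$'s (and possibly the $\xi$'s), one dominant coefficient is regular in $\rho_n$ of some degree $d$; Weierstrass preparation (Remark~\ref{weierstrass_preparation}) then allows one to replace $\rho_n$ by a root of the resulting monic polynomial of degree $d$ over $S_{m,n-1}^\circ(\widehat R)$, at the cost of introducing new auxiliary unknowns $\eta',\lambda'$ that record this root and the associated quotient data. The resulting polynomial system over the smaller ring $S_{m',n-1}^\circ(\widehat R)$ is handled by the inductive hypothesis, producing approximate algebraic solutions on a refined cover. Substituting back through the Weierstrass data yields $\widetilde\eta^{(i)},\widetilde\lambda^{(i)}\in\cO_R(\cU_i)_{alg}$ on a cover $\{\cU_i\}$ of the original space, and the congruence modulo $(\frak p,\rho)^c$ is preserved because every reduction step is $(\frak p,\rho)$-adically continuous.

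The Henselian extension assertion, producing $\widetilde\zeta^{(i)},\widetilde\sigma^{(i)}$ with unit Jacobian, follows by N\'eron desingularization (cf.~the exposition in \cite{DL2}, section~3) applied on each $\cU_i$ individually to the algebraic equations satisfied by $\widetilde\eta^{(i)},\widetilde\lambda^{(i)}$, exactly as in the proof of Theorem~\ref{affapprox}. The main obstacle is organizational: the recursive Laurent refinements arising at the various inductive stages must amalgamate into a \emph{single} finite algebraic Laurent cover of $(F_{alg}^\circ)^m\times(F_{alg}^{\circ\circ})^n$ in the original $(\xi,\rho)$-coordinates, and the Weierstrass changes of variables must be invertible within each $\cO_R(\cU_i)_{alg}$ so that the final $\widetilde\eta^{(i)},\widetilde\lambda^{(i)}$ genuinely satisfy the original equations $F_j=0$. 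The Strong Noetherian Property is essential here, since it guarantees \emph{a priori} that only finitely many pieces and finitely many coefficients ever intervene, preventing the cover from refining indefinitely as the induction proceeds.
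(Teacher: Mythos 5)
Your overall instinct---descend in the number of variables via Weierstrass theory, splitting into algebraic Laurent domains when regularity fails---points in the right direction, but as written the argument has two genuine gaps. First, the descent mechanism is misplaced. In Artin's induction the step that reduces the number of variables is not a preparation of the given formal solutions $\overline\eta,\overline\lambda$ (nor of the coefficients of the polynomials $F_i$); it is a preparation of $\overline\delta=\delta(\xi,\rho,\overline\eta,\overline\lambda)$, the determinant of a suitable minor of the Jacobian matrix, after one has first used N\'eron $\mathfrak p$--desingularization to reduce to the situation $F(\xi,\rho,\overline\eta,\overline\lambda)=0$ and $\overline\delta\not\equiv 0\bmod \mathfrak p$. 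Only with $\overline\delta$ regular in the last variable can one use Weierstrass division to replace the unknowns by finitely many new unknowns over a ring with one variable fewer and then lift approximate solutions to exact ones. In your proposal the N\'eron step appears only at the very end (for the ``Henselian extension'' clause), the preparation is applied to $\overline\eta,\overline\lambda$, and ``replacing $\rho_n$ by a root of the monic polynomial'' is not a valid substitute for the division step; without the Jacobian reduction there is no mechanism by which solvability of the new system in fewer variables controls the original one, nor any reason the congruences modulo $(\mathfrak p,\rho)^c$ survive the lifting.

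Second, the induction cannot run on $n$ alone. In the separated setting Weierstrass changes of variables may not mix the $\xi$'s and the $\rho$'s, so when $\overline\delta$ is preregular of degree $(\mu_0,\nu_0)$ (Definition \ref{prereg}) with dominant coefficient $\overline\delta_{\nu_0}(\xi)$, regularity in $\rho_n$ can be arranged only on the Laurent piece where $|\overline\delta_{\nu_0}(\xi)|\ge 1$. On the complementary piece $|\overline\delta_{\nu_0}(\xi)|<1$ one is forced to introduce a new $\rho$-variable via $\overline\delta_{\nu_0}-\rho_{n+1}=0$ and obtains regularity in $\xi_m$ instead, reducing the problem to a finite algebra over $S^\circ_{m-1,n+1}(R)_{alg}$: the number of $\rho$-variables goes up. This is precisely why the paper's proof inducts on the pairs $(m,n)$ ordered lexicographically (with $m=0$ behaving like the affinoid case of Theorem \ref{affapprox}); your claim that every Laurent piece reduces to $S^\circ_{m',n-1}$ is exactly the step that fails, and the Strong Noetherian Property does not repair it---the relevant dichotomy and cover come from the preregularity of $\overline\delta$, not from Theorem \ref{SepSNP}.
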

\begin{proof}
The proof, including the N\'eron $\frak p$--desingularization, is the same as in the affinoid case, except for the following complication.  As in the affinoid case we reduce to the situation (using the notation of \cite{Ar}) that
$$
F(\xi,\rho,\overline \eta,\overline \lambda) = 0
$$
and
$$
\overline \delta = \delta(\xi,\rho,\overline \eta, \overline \lambda) \not\equiv0 \mod \frak p
$$
where $\delta$ is the determinant of a suitable minor of the Jacobian matrix.
It may not be possible to make $\overline \delta$ regular by permissible changes of variables among the $\xi_i$ and among the $\rho_j$. However, $\overline \delta$ will be \emph{preregular} (see Definition \ref{prereg}) of degree $(\mu_0,\nu_0)$, say.  Writing
$$
\overline \delta(\xi,\rho) = \sum_\nu{\overline \delta _\nu(\xi)\rho^\nu}
$$
we break up into two algebraic Laurent domains as follows:

(i) $\cU_1$ defined by $|\overline \delta_{\nu_0}(\xi)| \geq 1$ i.e. $\overline \delta_{\nu_0} \xi_{m+1} - 1 = 0$, and

(ii) $\cU_2$ defined by $|\overline \delta_{\nu_0}(\xi)| < 1$ i.e. $\overline \delta_{\nu_0}  - \rho_{n+1} = 0$.

Case (i).  After Weierstrass changes of variables among the $\xi_i$ and among the $\rho_j$, we may
assume that $\overline \delta_{\nu_0} \xi_{m+1} - 1$ is regular in $\xi_{m+1}$ of degree $s$ say, and that
$\xi_{m+1}\overline \delta$ is regular in $\rho_n$.  We are thus reduced to seeking solutions in
$\cO_R(\cU_1)_{alg}$ (which is a finite $S_{m,n}^\circ(R)_{alg}$--algebra) with $\overline \delta$ regular in
$\rho_n$ and we proceed as in the affinoid, or local cases by induction on $n$.  The reduction from a finite
$S_{m,n-1}^\circ(R)_{alg}$--algebra to $S_{m,n-1}^\circ(R)_{alg}$ is immediate as in \cite{Ar}.

Case (ii)  After a Weierstrass change of variables among the $\xi_i$, we may assume that
$\overline \delta_{\nu_0}  - \rho_{n+1}$ is regular in $\xi_m$.  Hence  we are reduced to the case of a finite algebra over $S^\circ_{m-1,n+1}(R)_{alg}$, and hence to the case of $S^\circ_{m-1,n+1}(R)_{alg}$, and the proof follows by induction on the pairs $(m,n)$ ordered lexicographically.  The case $m=0$ is like the affinoid case -- $\overline \delta$ can be made regular in $\rho_n$ by a Weierstrass change of variables among the $\rho_j$.
\end{proof}

\begin{rem}\label{apprem} The condition on the Jacobian in Theorem \ref{quasiaffapprox} (and Theorem \ref{affapprox}) shows that all the algebraic power series (i.e. power series in $\cO_R^\circ(\cU)_{alg}$ for $\cU$ a Laurent domain) define functions on any Henselian field containing $R$ that are existentially definable in the language of Henselian fields.
\end{rem}

\begin{thm}\label{AWP}$S_{m,n}^\circ(R)_{alg}$ satisfies the two Weierstrass Preparation and Division Theorems (Definition \ref{good}).  In other words, if $f, g \in S_{m,n}^\circ(R)_{alg}$ and $f$ is regular (in $\xi_m$ or $\rho_n$), then the associated Weierstrass data are algebraic.
\end{thm}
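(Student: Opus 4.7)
The plan is first to establish Weierstrass preparation in $S_{m,n}^\circ(R)_{alg}$ and then to deduce Weierstrass division. The central ingredient is a \emph{substitution principle}: if $\alpha$ is algebraic over $R[\xi',\rho]$ (with $\xi' = (\xi_1,\ldots,\xi_{m-1})$) and lies in the closed unit disc of an integral extension of $S_{m-1,n}^\circ(\widehat R)$, and if $g \in S_{m,n}^\circ(R)_{alg}$ satisfies $F(\xi,\rho,g) = 0$ for some $F \in R[\xi,\rho,T]$ chosen so that $F(\xi',\alpha,\rho,T) \not\equiv 0$ as a polynomial in $T$ (arranged by dividing out the largest power of $T$ in $F$), then substituting $\xi_m = \alpha$ yields $F(\xi',\alpha,\rho, g(\xi',\alpha,\rho)) = 0$, so that $g(\xi',\alpha,\rho)$ is algebraic over $R[\xi',\rho,\alpha]$ and hence over $R[\xi',\rho]$. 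If additionally $g(\xi',\alpha,\rho) \in S_{m-1,n}^\circ(\widehat R)$, then it belongs to $S_{m-1,n}^\circ(R)_{alg}$ by definition.

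For Weierstrass preparation in the case $f$ regular in $\xi_m$ of degree $d$: the classical Weierstrass preparation in the completion $S_{m,n}^\circ(\widehat R)$ yields $f = U \cdot P$ with $P = \xi_m^d + A_{d-1}\xi_m^{d-1} + \cdots + A_0 \in S_{m-1,n}^\circ(\widehat R)[\xi_m]$ monic and $U$ a unit. Since $P$ is monic with bounded coefficients, its roots $\alpha_1,\ldots,\alpha_d$ in an algebraic closure of $Q(S_{m-1,n}^\circ(\widehat R))$ lie in the closed unit disc. Substituting $\xi_m = \alpha_i$ in a defining polynomial $F$ for $f$ and using $f(\xi',\alpha_i,\rho) = U(\xi',\alpha_i,\rho)\cdot P(\alpha_i) = 0$, the substitution principle shows each $\alpha_i$ is algebraic over $R[\xi',\rho]$. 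The coefficients $A_j$, being elementary symmetric polynomials in the $\alpha_i$, are then algebraic over $R[\xi',\rho]$; combined with $A_j \in S_{m-1,n}^\circ(\widehat R)$, this forces $A_j \in S_{m-1,n}^\circ(R)_{alg}$ by definition. Hence $P \in S_{m,n}^\circ(R)_{alg}$ and $U = f/P$ is a quotient of algebraic elements lying in $S_{m,n}^\circ(\widehat R)$, so $U \in S_{m,n}^\circ(R)_{alg}$.

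For division, use the preparation to rewrite $g = qf + r$ as $g = q'P + r$ with $q' = qU$ and $\deg_{\xi_m} r < d$. Since $P$ is monic of degree $d$ in $\xi_m$, the remainder $r = r_0 + r_1 \xi_m + \cdots + r_{d-1}\xi_m^{d-1}$ is determined by Hermite interpolation: matching the values, and the Hasse derivatives in case of repeated roots, of $g$ at each $\alpha_i$. By the substitution principle, each $g(\xi',\alpha_i,\rho)$ and each Hasse derivative thereof is algebraic over $R[\xi',\rho]$; the Hermite interpolation formula expresses the coefficients $r_j$ as polynomial combinations of these data and the $\alpha_i$, so each $r_j$ is algebraic over $R[\xi',\rho]$. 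Since also $r_j \in S_{m-1,n}^\circ(\widehat R)$, this yields $r_j \in S_{m-1,n}^\circ(R)_{alg}$, and then $q = (g-r)/f \in S_{m,n}^\circ(R)_{alg}$. The case where $f$ is regular in $\rho_n$ of degree $d$ is handled by the symmetric argument, factoring $f$ as a polynomial in $\rho_n$ over $S_{m,n-1}^\circ(\widehat R)$ and substituting the roots of the distinguished polynomial in $\rho_n$ in place of $\xi_m$.

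The main technical obstacles are verifying convergence of the root substitutions in the mixed-variable setting (which follows from the gauss-norm decay of the $\xi_m$-coefficients of $g$ together with the closed-unit-disc bound on the roots) and correctly handling repeated or inseparable roots uniformly in all characteristics by using Hasse derivatives in place of ordinary derivatives in the Hermite interpolation step.
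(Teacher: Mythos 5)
Your preparation argument is essentially the paper's: the roots of the distinguished polynomial satisfy $F(\xi',\overline\xi_{m,j},\rho,0)=0$ with $F(\xi,\rho,0)\neq 0$, hence are algebraic over $R[\xi',\rho]$, the coefficients $A_j$ are symmetric functions of them, and the unit is a quotient of algebraic elements; that half is fine. The division step is where you diverge from the paper, and it is also where your argument has a genuine gap. Your Hermite interpolation needs, for each Hasse derivative $g^{[j]}$ of $g$ with respect to $\xi_m$, that $g^{[j]}$ again lies in $S^\circ_{m,n}(R)_{alg}$ — otherwise there is no annihilating polynomial to which your substitution principle can be applied, and no way to conclude that the value $g^{[j]}(\xi',\alpha_i,\rho)$ is algebraic. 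In characteristic zero, or when the Weierstrass polynomial $W$ is separable, this is harmless, but then you do not need Hasse derivatives at all: the roots are distinct and the plain Vandermonde system suffices, which is exactly what the paper does for separable irreducible factors. The only case in which your Hasse-derivative machinery is genuinely invoked is the inseparable case in characteristic $p$, and precisely there the closure of $S^\circ_{m,n}(R)_{alg}$ under Hasse differentiation is a nontrivial unproved assertion (it amounts to a separability/\'etaleness statement for algebraic elements of $\widehat R\langle\xi\rangle[[\rho]]$ over $Q(R[\xi,\rho])$, of the same flavor as the excellence hypothesis on $R$). The paper sidesteps this entirely: it divides successively by the irreducible factors of $W$; for a separable factor it evaluates at the distinct roots and inverts a Vandermonde matrix; for an inseparable irreducible factor it writes $W(\xi',\xi_m,\rho)=W_1(\xi',\xi_m^p,\rho)$, decomposes $g=\sum_{i<p}g_i(\xi',\xi_m^p,\rho)\xi_m^i$, and inducts on the degree, so that no derivatives appear anywhere.

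There is also a smaller slip in the substitution principle itself: dividing out the largest power of $T$ only guarantees $F(\xi,\rho,0)\neq 0$ as a polynomial; it does not prevent \emph{all} the $T$-coefficients $F_j(\xi',\xi_m,\rho)$ from vanishing at $\xi_m=\alpha$ (they could all be divisible by the minimal polynomial of $\alpha$ over $Q(R[\xi',\rho])$), in which case $F(\xi',\alpha,\rho,T)\equiv 0$ and nothing follows about $g(\xi',\alpha,\rho)$. The repair is easy — remove from $F$ every irreducible factor not involving $T$; the quotient still annihilates $g$ since $S^\circ_{m,n}(\widehat R)$ is a domain — but note that for the roots themselves, where one uses $f(\xi',\alpha_i,\rho)=0$, your normalization is the correct one and coincides with the paper's. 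With that repair, and granting that the confluent Vandermonde determinant with divided derivatives is $\prod_{i<j}(\alpha_i-\alpha_j)^{\mu_i\mu_j}\neq 0$ in every characteristic, your plan would go through; but the algebraicity of the Hasse derivatives is a real missing ingredient rather than a routine verification, and it sits exactly at the point the theorem is designed to get past.
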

\begin{proof}
First we prove that the data associated with Weierstrass Preparation applied to a regular algebraic power series are algebraic.  Let $f(\xi,\rho) \in S_{m,n}^\circ(R)_{alg}$ be regular in $\xi_m$ of degree $s$ say.  (The case that $f$ is regular in $\rho_n$ is similar.)  Let $\xi' = (\xi_1,\dots,\xi_{m-1})$.  Then there is an
$F(\xi,\rho,Y) \in R[\xi,\rho,Y]\setminus \{0\}$ such that $F(\xi,\rho,f(\xi,\rho))=0$.  We may assume that
$F(\xi,\rho,0) \neq 0$.  Let
$$
f(\xi,\rho) = U(\xi,\rho)[\xi_m^s + A_{s-1}(\xi',\rho)\xi_m^{s-1} + \cdots + A_0(\xi',\rho)]
$$
with the $A_j \in S^\circ_{m-1,n}(\widehat R)$ and $U \in S^\circ_{m,n}(\widehat R)$.  Let the $\overline \xi_{m,j}$ be the zeros of $\xi_m^s + A_{s-1}(\xi',\rho)\xi^{s-1} + \cdots + A_0(\xi',\rho)$ in the algebraic closure of the quotient field of $S^\circ_{m,n}(\widehat R)$.  Then the $\overline \xi_{m,j}$ all satisfy
$F(\xi', \overline \xi_{m,j}, \rho,0) = 0$, and hence are algebraic.  The $A_i(\xi',\rho)$ are symmetric functions of the $\overline \xi_{m,j}$ and hence also are algebraic, i.e. in $S_{m,n}^\circ(R)_{alg}$.  It follows immediately that $U(\xi,\rho)$ is also algebraic.

Next we show that the Weierstrass data associated with Weierstrass Division of an algebraic power series by a Weierstrass polynomial in $\xi_m$ are algebraic.  Let $g \in S_{m,n}^\circ(R)_{alg}$, and
$f= \xi_m^s + A_{s-1}(\xi',\rho)\xi^{s-1} + \cdots + A_0(\xi',\rho) =:W$
with the $A_j \in S^\circ_{m-1,n}(R)_{alg}$.  We have that
$$
g(\xi,\rho) = Q(\xi,\rho)\cdot W + \sum_{i=0}^{s-1}{r_i(\xi',\rho) \xi_m^i}
$$
with the $r_i(\xi',\rho) \in S^\circ_{m-1,n}(\widehat R)$.

Assume first that $W$ is irreducible and separable and let $\overline \xi_{m,j}$, $j=1,\dots,s$ be the zeros of $W$ in the algebraic closure of the quotient field of $S^\circ_{m,n}(\widehat R)$.
Since $g(\xi,\rho)$ is algebraic, so is $g(\xi', \overline \xi_{m,j}, \rho)$ for each $j$.
Then we have for $j=1,\dots,s$ that
$$
g(\xi',\overline \xi_{m,j},\rho) = \sum_{i=0}^{s-1}{r_i(\xi',\rho)\overline \xi_{m,j}^i}.
$$
Considering these as equations in the unknowns $r_i$, we see that the coefficient matrix is nonsingular, and hence that the $r_i$ are algebraic.

In the non-irreducible case, we do Weierstrass Division successively by the irreducible factors of $W$, observing that they will also be regular in $\xi_m$.

Finally, we must consider the case that $W$ is irreducible, but not separable, i.e. that $W$ is a polynomial in $\xi_m^p$, where $p$ is the characteristic of $R$, say $W(\xi', \xi_m,\rho) = W_1((\xi', \xi_m^p,\rho)$.  We proceed by induction on the degree of $W$.  $W_1$ has degree ${s \over p} = s' < s$ and is regular of degree $s'$.  Write
$$
g = \sum_{i=0}^{p-1}{g_i(\xi',\xi_m^p,\rho)\xi_m^i}.
$$
The $g_i \in S^\circ_{m,n}(R)_{alg}$ and hence by induction there are $r_{i,j} \in S^\circ_{m-1,n}(R)_{alg}$
such that
$$
g_i(\xi',\eta,\rho) = Q_i(\xi',\eta,\rho)W_1(\xi',\eta,\rho) + \sum_{j=0}^{s'-1}{r_{i,j}(\xi',\rho) \eta^j}.
$$
Replacing $\eta$ by $\xi_m^p$ we see that
\begin{eqnarray*}
g(\xi',\xi_m,\rho) &=& \sum_{i=0}^{p-1}{g_i(\xi',\xi_m^p,\rho)\xi_m^i}\\
&=& \big(\sum_{i=0}^{p-1}{Q_i(\xi',\xi_m^p,\rho)\xi_m^i\Big) \cdot W} + \sum_{i=0}^{p-1}\sum_{j=1}^{s'-1}{r_{i,j}(\xi',\rho)\xi_m^{p\cdot j +i}}.
\end{eqnarray*}
\end{proof}

From Theorem \ref{AWP} and Remark \ref{apprem} we have

\begin{cor}If $K$ is Henselian and $R \subset \Ko$ is either a field or an excellent discrete valuation ring, then
\item[(i)] $\{S^\circ_{m,n}(R)_{alg}\}_{m,n}$ is a separated Weierstrass system.
\item[(ii)] The elements of $S^\circ_{m,n}(R)_{alg}$ define analytic functions
$(\Ko)^m\times(\Koo)^n \to \Ko$. Indeed, these functions are existentially definable uniformly in $K$.  Hence
$K$ has a natural analytic $\{S^\circ_{m,n}(R)_{alg}\}_{m,n}$--structure.
\item[(iii)] Taking $R= \bQ$,  $\bZ_p^{alg}$ the algebraic $p$--adic integers, and $\bF_p$ the $p$--element field, gives prime (i.e. smallest) analytic structures on all Henselian fields, depending only on the characteristic and residue characteristic of the field.
\end{cor}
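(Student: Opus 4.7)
The plan is to verify the three parts of the corollary in sequence, leveraging Theorems \ref{AWP} and \ref{quasiaffapprox} together with the machinery of Sections \ref{sepan} and \ref{SNPsection}.

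For (i), I would first verify the closure conditions of Definition \ref{defAmn}: items (i)--(iii) follow since $S_{0,0}^\circ(R)_{alg} = R$ by construction, the inclusion $S_{m,n}^\circ(R)_{alg} \subset R[[\xi,\rho]]$ is inherited from $\widehat R\langle\xi\rangle[[\rho]]$, and algebraicity is preserved under adjunction of new polynomial variables; items (iv)--(v) hold because the reduction modulo $\frak p$ lies in $\widetilde R[\xi]$ and the individual coefficient series $\overline{f}_{\mu\nu}$ of an algebraic power series are themselves algebraic over $R[\xi,\rho]$. Axioms (a) and (b) of Definition \ref{good} (Weierstrass Division) are precisely the content of Theorem \ref{AWP}. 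The delicate axiom is (c) of Definition \ref{goodgood}, the Strong Noetherian Property, which must be verified for every ring $C$ of $\cA$-fractions. The strategy is to rerun the proof of Theorem \ref{SepSNP} (cf.\ also Example \ref{sepex}(9) for the analogous verification over $\bQ$) entirely within the algebraic subring: each ingredient of that proof is a Weierstrass preparation, division, or change of variables, all of which preserve algebraicity by Theorem \ref{AWP}, and the fractional-ring constructions of Definition \ref{frac} stay in the algebraic category since they only adjoin algebraic relations. This last verification is the main obstacle.

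For (ii), the fact that the elements of $S_{m,n}^\circ(R)_{alg}$ define functions $(\Ko)^m\times(\Koo)^n \to \Ko$ that are uniformly existentially definable in the Henselian field $K$ is precisely the content of Remark \ref{apprem}: for $f\in S_{m,n}^\circ(R)_{alg}$ one attaches, via the N\'eron-desingularised system supplied by Theorem \ref{quasiaffapprox}, auxiliary algebraic power series and polynomial equations $G_j = 0$ whose Jacobian determinant is a unit, and Hensel's Lemma in $K$ then produces a unique solution at any specialization $(a,b)\in (\Ko)^m\times (\Koo)^n$, whose designated coordinate is the value $\sigma_{m,n}(f)(a,b)$. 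The uniqueness of the Hensel lift transports the algebraic identities, so the maps $\sigma_{m,n}$ are genuine ring homomorphisms, and conditions (1)--(3) of Definition \ref{sep} are then immediate from the construction.

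For (iii), I would observe that every Henselian valued field $K$ contains a canonical prime subring depending only on its (characteristic, residue characteristic) pair: $\bQ \subset K$ in equicharacteristic zero, $\bF_p \subset \Ko$ in equicharacteristic $p$, and an isomorphic image of $\bZ_p^{alg}$ inside $\Ko$ in mixed characteristic (obtained from the Henselian embedding $\bZ_p \hookrightarrow \Ko$ supplied by Hensel's Lemma, together with the uniqueness statement of Theorem \ref{algextn} to pass to the algebraic closure in $\widehat \bZ_p = \bZ_p$). In each case this prime ring is either a field or an excellent Henselian DVR, so parts (i) and (ii) apply with this choice of $R$, endowing $K$ with a canonical analytic $\{S_{m,n}^\circ(R)_{alg}\}$-structure. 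That the resulting structure is prime (i.e.\ smallest) is immediate: any separated analytic structure on $K$ must a fortiori interpret every algebraic power series over its ring of coefficients, so the structure built from the prime ring embeds into every other analytic structure on $K$ compatible with the given characteristic data.
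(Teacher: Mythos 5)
Your proposal is correct and follows essentially the paper's own route: the corollary is derived directly from Theorem \ref{AWP} (Weierstrass Division with algebraic data) together with Remark \ref{apprem} (existential definability via the Jacobian-unit condition coming from the N\'eron-desingularized system of Theorem \ref{quasiaffapprox}), with the Strong Noetherian Property (c) handled, as you indicate, by rerunning the proof of Theorem \ref{SepSNP} with algebraic data (cf.\ Example \ref{sepex}(9)). One small correction to your part (iii): in mixed characteristic the prime coefficient ring is obtained from the canonical map of the Henselization of $\bZ_{(p)}$ (whose image is a copy of $\bZ_p^{alg}$) into $\Ko$, not from an embedding $\bZ_p\hookrightarrow\Ko$, which need not exist (e.g.\ when $\Ko$ is countable).
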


\section{Subdomains of $K^\circ_{alg}$ and their rings of separated analytic functions}\label{annuli}

In this section, we develop the basis of a theory of analytic
functions on a $K$-annulus (an irreducible $K$-domain in $\Kalgo$
 in the terminology of
\cite{LR1}), when $K$ carries a separated analytic
$\cA(K)$-structure. Here $\cA=\{A_{m,n}\}$ is a (fixed) separated
Weierstrass system and $K$ is a field with separated analytic
$\cA$-structure. We assume in this section that $\ker(\sigma_{0,0})
= (0)$ and hence that $K$ has analytic $\cA(K)$-structure,
cf.~Definition \ref{expand}, Theorem \ref{constants}. Our main
result is that any analytic function on any $K$-annulus equals a
unit times a rational function, cf.~Theorem \ref{ML}. From this it
follows that if $\tau$ is a term in the language of valued fields
with analytic $\cA(K)$--structure, then there is a cover of
$K^\circ_{alg}$ by finitely many $K$--annuli on each of which $\tau$
is a rational function times a strong unit, cf. Theorem \ref{terms}.
This is what is needed for most model-theoretic applications.  In
section \ref{Strongsystems} we prove some stronger results under
stronger assumptions.

The results in this section extend some the results of \cite{CLR1},
Section 3, as well as some of those of \cite{DHM}. They also extend
the classical results of \cite{FP}, Section 2.2, to the
quasi-affinoid and the non-algebraically closed cases. An
alternative definition of analytic structures is given in
\cite{ScanICM} where a statement similar to Theorem \ref{terms} is
an axiom (more or less instead of our axiom (c) which seems  to us
more readily verifiable for the examples in section \ref{sepex}).

We present a more complete theory than the minimum needed for the
above-mentioned result about terms. The reader interested only in
that application should consult Remark \ref{minimum} for the
shortest path to that result.  The complications in the proofs are a
consequence of the lack of completeness -- completeness is a key
ingredient of the proofs in the classical (affinoid) algebraically
closed case. It is also a key ingredient in the development of
affinoid algebra and geometry (\cite{BGR}) and quasi-affinoid
algebra and geometry (\cite{LR1}).  In section \ref{Strongsystems}
we assume a weak consequence of completeness (that holds in most of
the standard examples)  give somewhat easier proofs, and also
establish a second ``Mittag-Leffler'' type theorem (Theorem
\ref{ML2}.)

In this whole section, $\cA$ is a separated Weierstrass system and
any separated analytic $\cA$-structure $\sigma$ is assumed to
satisfy $\ker(\sigma_{0,0}) = (0)$. Hence, if $K$ has analytic
$\cA$-structure, then it has analytic $\cA(K)$-structure,
cf.~Definition \ref{expand}, Theorem \ref{constants}.

With some care, all results and definitions  in this section and
section \ref{seccell} can be adapted to the case of strictly
convergent Weierstrass systems ($\pi\not=1$), where one uses the
maximal algebraic unramified extension in place of $\Kalg$, see
\cite{CLR1}.

\subsection{Definitions and Notation}

\begin{defn}[$K$-annulus]
\label{Rational definitionA} Let $K$ be a Henselian valued field.

(a) A {\em $K$-annulus formula} is a formula $\phi$ of the form
$$
|p_0(x)| \Box_0 \epsilon_0 \wedge \bigwedge_{i=1}^L \epsilon_i
\Box_i |p_i(x)|,
$$
where the $p_i \in K^\circ[x]$ are monic and irreducible, the
$\epsilon_i \in \sqrt{|K\setminus \{0\}|}$ and the $\Box_i \in
\{<,\le\}$. Define $\overline\Box_i$ by $\{\Box_i,
\overline\Box_i\}=\{<, \le\}$. We require further that the sets
$$\cH_i:=\{x\in \Kalg : |p_i(x)| \overline\Box_i \epsilon_i\},\ i=1,\dots,L,$$
be disjoint and contained in $\{x\in
\Kalg:|p_0(x)|\Box_0 \epsilon_0\}$.

(b) The corresponding {\em $K$-annulus} is
$$
\cU_\phi:= \{x \in \Kalg:\phi(x)\}
$$
(If $K_1 \supset \Kalg$ is a field then $\phi$ also defines a subset
of $K_1$. We shall also refer to this as $\cU_\phi$ and also call it
a $K$-annulus. No confusion will result.) The {\em $K$-holes of
$\phi$}, sometimes called the holes of $\cU_\phi$, are the sets $\cH_i$.
A $K$-annulus of the form $\{x\in \Kalg:|p_0(x)|\Box_0 \epsilon_0\}$
is called a {\em $K$--disc} or just a {\em disc}.

(c) A $K$-annulus formula $\phi$ and the $K$-annulus $\cU_\phi$ are
called {\em linear} if the $p_i$ are all linear and the
$\epsilon_i\in |K|\setminus \{0\}$.

(d) A $K$-annulus fomula $\phi$ and the $K$-annulus $\cU_\phi$ are
called \emph{closed} (resp.~\emph{open}) if all the $\Box_i$ are
$\le$ (resp.~$<$).

(e) A $K$-annulus formula is called \emph{good} if the $p_i$ are of
lowest possible degrees among all $K$-annulus formulas defining the
same $K$-annulus.

\end{defn}

Instead of requiring in a) that the $p_i$ be irreducible and
allowing the $\epsilon_i \in \sqrt{|K\setminus \{0\}|}$, we could
require that the $\epsilon_i \in |K \setminus \{0\}|$ and allow
the $p_i$ to be powers of irreducible monic polynomials.

We shall often let $\epsilon$ denote an element of
$\abs{K\setminus\{0\}}$ or an element of $K$ of that size, which
will be clear from the context.

\begin{lem}
\label{annulus_properties}Let $K$ be Henselian.
\begin{itemize}
\item[(i)]  Let $p\in K[x]$ be irreducible and
let $\Box\in\{<,\le\}$. Then for every
$\delta\in\sqrt{\abs{K\setminus\{0\}}}$ there is an
$\epsilon\in\sqrt{\abs{K\setminus\{0\}}}$ such that for every
$x\in\Kalg$, $\abs{p(x)}\Box\varepsilon$ if, and only if, for some
zero $\alpha$ of $p$, $\abs{x-\alpha}\Box\delta$.\
\item[(ii)] A $K$-annulus is a unique finite union of isomophic (and linear)
$\Kalg$-annuli. If $K=\Kalg$ then all $K$-annulus formulas are
linear. \
\item[(iii)] Any two $K$-discs (cf.~Definition~\ref{Rational
definitionA}(b)) $\cU_1$ and $\cU_2$ are either disjoint or one is
contained in the other.\
\item[(iv)] For any two $K$-annuli $\cU_1$ and $\cU_2$, if $\cU_1 \cap
\cU_2 \not=\emptyset$ then $\cU_1 \cap \cU_2$ is a $K$-annulus.\
\item[(v)] The complement of a $K$-annulus $\cU$ (i.e. $\Ko_{alg} \setminus \cU$) is a finite union of $K$-annuli.\
\item[(vi)] Every set of the form
$$
\cU=\left\{x\in\Kalgo: |p_0(x)|\, \Box_0\, \epsilon_0 \wedge
\bigwedge^s_{i=1}\,\epsilon_i\,\Box_i|p_i(x)|\right\}
$$
with the $p_i$ irreducible and the $\epsilon_i \in \sqrt{|K\setminus \{0\}|}$ is described by a $K$-annulus formula.\
\item[(vii)] Every $K$-annulus is described by a good $K$-annulus formula.\
\item[(viii)] If for a finite collection of $K$-annulus formulas $\varphi_i$ the $K$-annuli  $\cU_{\varphi_i}$ cover $\Ko_{alg}$,\
then for any $K' \supset K$ the corresponding $K'$-annuli cover
$K'^\circ$ (cf.~Definition~\ref{Rational definitionA}(b)).\
\end{itemize}
\end{lem}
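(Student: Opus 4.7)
My plan is to treat the items in the stated order, with (i) providing the technical workhorse that makes (ii)--(vii) essentially ultrametric bookkeeping.

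For (i), Henselianity of $K$ forces the valuation to extend uniquely to $\Kalg$, so the roots $\alpha_1,\dots,\alpha_d$ of the irreducible $p\in K^\circ[x]$ are Galois-conjugate and share a common absolute value. Writing $p(x)=\prod_j(x-\alpha_j)$ and applying the ultrametric inequality factor-by-factor --- so that $|x-\alpha_i|=\max(|x-\alpha_j|,|\alpha_i-\alpha_j|)$ whenever $j$ minimises $|x-\alpha_\bullet|$ --- I obtain a piecewise-monomial description of $|p(x)|$ as a strictly monotone function of $\min_j|x-\alpha_j|$, with breakpoints at the discriminantal values $|\alpha_i-\alpha_j|$. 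Inverting this description converts any bound $|p(x)|\,\Box\,\epsilon$ into a disjunction over $j$ of conditions $|x-\alpha_j|\,\Box\,\delta$ for a $\delta$ explicitly determined by $\epsilon$.

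Items (ii)--(vii) then follow by combinatorial manipulation. For (ii), applying (i) to each conjunct of a $K$-annulus formula expresses $\cU_\phi$ inside $\Kalg$ as a disjoint union of linear $\Kalg$-annuli; Galois action on the $\alpha_j$ partitions these into conjugacy orbits, yielding the stated decomposition into Galois-isomorphic irreducible pieces (and the collapse to linearity when $K=\Kalg$ is immediate). Statement (iii) is the usual ultrametric nesting-or-disjointness dichotomy for discs, a consequence of the strong triangle inequality together with (i). For (iv)--(vi) I descend to $\Kalg$ via (ii), carry out intersections, complements, and normalisations on the resulting linear $\Kalg$-discs using (iii), and regroup by Galois orbit to return to $K$-annulus form; in particular the content of (vi) is that any redundant or ill-placed $\cH_i$ can be absorbed into the outer constraint, discarded as vacuous, or split using (iii) until the required disjointness and containment hold. (vii) is obtained by choosing polynomial representatives of minimal degree: if some $p_i$ admits a proper irreducible factor $q$ whose zero set over $\Kalg$ picks out the same $K$-disc, Henselianity lets us replace $p_i$ by $q$, and finiteness of degree halts the process.

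The main obstacle is (viii), which is an absoluteness claim. The plan is to use (ii) to refine the cover $\{\cU_{\varphi_i}\}$ of $\Kalgo$ to a finite cover by linear $\Kalg$-discs with centres $\alpha_k\in\Kalg$ and radii $\delta_k\in\sqrt{|K|}$. Membership of an arbitrary $x\in K'^\circ$ in such a disc is determined entirely by the value $|x-\alpha_k|\in|K'_{alg}|$, which is intrinsically defined since $\alpha_k\in K\subset K'$. The finitely many test-discs partition both $\Kalgo$ and $K'^\circ$ into cells defined by a common Boolean ``pattern'' of yes/no answers to the conditions $|\,\cdot\,-\alpha_k|\,\Box_k\,\delta_k$; the key observation is that any pattern realised by some $x\in K'^\circ$ is already realised by some $y\in\Kalgo$, because divisibility of the value group of $\Kalg$ together with the ultrametric consistency relations ensures that any ``combinatorial leaf'' compatible with a point in some extension is populated in $\Kalgo$ itself. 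Since $\{\varphi_i\}$ cover every leaf in $\Kalg$, they cover the leaf of $x$ as well, placing $x$ in some $\cU_{\varphi_i}$.
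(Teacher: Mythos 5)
Your overall route --- use Henselianity (unique extension of the valuation to $\Kalg$, Galois conjugacy of the roots of an irreducible $p$) to convert each condition $\abs{p(x)}\,\Box\,\epsilon$ into a Galois-stable union of linear disc conditions, and then do the bookkeeping of (ii)--(vii) at the level of linear $\Kalg$-discs via the nesting-or-disjointness dichotomy --- is sound, and it is essentially the intended one: the paper gives no proof, referring to \cite{CLR1}, Lemma 3.2, which argues in the same spirit. Two small repairs: in (vii) an irreducible $p_i$ has no proper factor, so the correct move is simply to replace the datum $(p_i,\epsilon_i,\Box_i)$ by a lower-degree irreducible datum cutting out the same disc; existence of a good formula is then immediate by minimizing degrees. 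And in (iii)--(vi) you should make the Galois step explicit: a $K$-disc is a Galois orbit of linear $\Kalg$-discs, so if one conjugate disc of $\cU_2$ meets (hence is nested with) a conjugate disc of $\cU_1$, transitivity of the Galois action upgrades this to $\cU_2\subseteq\cU_1$ or $\cU_1\subseteq\cU_2$; the linear dichotomy alone does not literally cover non-linear $K$-discs.

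Part (viii) is where your write-up has concrete slips, although the underlying idea is right. First, you cannot ``refine the cover to a finite cover by linear $\Kalg$-discs'': a closed annulus such as $\{x:\epsilon\le\abs{x}\le 1\}$ is not a finite union of discs; fortunately your pattern argument only needs that each $\varphi_i$ is a Boolean combination of finitely many disc (equivalently, $\abs{p_j(x)}\,\Box_j\,\epsilon_j$) conditions. Second, the centres $\alpha_k$ are roots of irreducible polynomials over $K$ and lie in $\Kalg$, not in $K$, so ``intrinsically defined since $\alpha_k\in K\subset K'$'' is wrong; either run the pattern argument on the $K$-rational conditions $\abs{p_j(x)}\,\Box_j\,\epsilon_j$ themselves, which make sense in any valued extension $K'$, or fix an embedding $\Kalg\hookrightarrow K'_{alg}$ over $K$ and use that Henselianity of $K$ forces the restricted valuation to be the given one and that the truth of $K$-formulas is independent of the embedding. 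Third, the assertion that every pattern realized by some $x\in K'^\circ$ is already realized in $\Kalgo$ is the entire content of (viii) and needs an argument, not an appeal to ``ultrametric consistency''. It is true, and can be proved along your lines: letting $\alpha_1$ be a nearest root to $x$ among the roots of the finitely many $p_j$, one has $\abs{x-\alpha}=\max(\abs{x-\alpha_1},\abs{\alpha-\alpha_1})$ for all such roots $\alpha$, so the pattern depends only on the position of $\abs{x-\alpha_1}$ relative to finitely many critical values in $\sqrt{\abs{K\setminus\{0\}}}$; divisibility of $\abs{\Kalg\setminus\{0\}}$ and infiniteness of the residue field of $\Kalg$ then allow you to pick $y=\alpha_1+c\in\Kalgo$ realizing the same position without accidental cancellation. (Alternatively, quantifier elimination for algebraically closed valued fields gives the transfer from $K'_{alg}$ to $\Kalg$ at once.) As written, these points are gaps; with them filled, your proof goes through.
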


\begin{proof}
Exercise (or see \cite{CLR1}, Lemma 3.2.)
\end{proof}

\begin{defn}[Rings of analytic functions]\label{def:holo}Let $K$ have separated analytic $\cA$-structure, and let
$\phi$ be a $K$-annulus formula as in Definition \ref{Rational
definitionA} (a). Define the corresponding {\em generalized rings of
fractions} over $\Ko$, resp.~$K$, by
$$
\cO_K(\phi):= A_{m+1,n}(K)/ (p_0^{\ell_0}(x) - a_0z_0,
  p_1^{\ell_1}(x)z_1 -
a_1,\dots,p_L^{\ell_L}(x) z_L - a_L),
$$
and
$$
\cO^\dag_K(\phi) := K\otimes_{\Ko} \cO_K(\phi),
$$
where $a_i \in K^\circ$, $|a_i|= \epsilon_i^{\ell_i}$, $m+n=L+1$,
$\{z_0, \dots, z_L\}$  is the set $\{\xi_2, \dots, \xi_{m+1},
\rho_1, \dots, \rho_n\}$ and $x$ is $\xi_1$ and $z_i$ is a $\xi$ or
$\rho$ variable depending, respectively, on whether $\Box_i$ is
$\le$ or $<$. By Weierstrass Division, each $f\in \cO^\dag_K(\phi)$ defines a function
$\cU_\phi \to \Kalg$ via the analytic structure on $\Kalg$ given by
Theorem \ref{algextn}. Denote this function by $f^\sigma$. Let
$\cO^\sigma_K(\phi)$ be the image of $\cO^\dag_K(\phi)$ under
$f\mapsto f^\sigma$ and call $\cO^\sigma_K(\phi)$ the \em{ring of
analytic functions on $\cU_K(\phi)$}.
\end{defn}

Clearly, $f^\sigma$ maps $\cU_\varphi\cap K'$ into $K'$ for
any field $K\subset K'\subset \Kalg$, cf.~Theorem \ref{algextn}.

The mapping $f\mapsto f^\sigma$ is obviously a surjective $K$--algebra homomorphism.  We do not know if it is always injective, though we do prove it is for some annuli of particularly simple forms (Lemmas
 \ref{MLdisc} and \ref{thin} and Corollary \ref{inj}).  We show in section \ref{Strongsystems} (Theorem \ref{MLL})  that under the stronger assumption that $K$ has \emph{strong} separated analytic structure, this homomorphism is injective for all $K$--annuli.

\begin{defn}[Units]\label{strongunit}
Let $f^\sigma$ be a unit in $\cO^\sigma_K(\phi)$. Suppose that there
is some $\ell\in\mathbb{N}$ and $c\in K$ such that
$\abs{(f^\sigma(x))^\ell}=\abs c$ for all $x\in\cU_\phi$. Suppose also that
there exists a nonzero polynomial $P(\xi)\in\Kt[\xi]$ such that
$P\left(\left(\frac{1}{c}(f^\sigma(x))^\ell\right)^\sim\right)=0$ for all
$x\in\cU_\phi$, where $^\sim:\Kalgo\to \Kalgt$ is the natural
projection to the residue field. Then we call $f$ a \emph{strong
unit}. We call $f$ a \emph{very strong} unit if moreover
$\abs{f(x)}=1$ and $\left(f(x)\right)^\sim=1$ for all $x \in
\cU_{\phi}$.  We call $f \in \cO^\dag_K(\phi)$ a (very) strong unit
if $f^\sigma$ is.
\end{defn}

\begin{exs}
\item[(i)]
If $f \in \cO_K(\phi)$, $f = 1 + g$ with
$$
g \in (A_{m+1,n}(K))^\circ / (p_0^{\ell_0}(x) - a_0z_0,
  p_1^{\ell_1}(x)z_1 -
a_1,\dots,p_L^{\ell_L}(x) z_L - a_L),
$$
then $f$ is a unit by Remark \ref{unit} and since $K$ has
analytic $\cA(K)$-structure. Hence, in this case $f$ is a very strong unit.

\item[(ii)] Let $\varphi$ be the $K$-annulus formula $|x|<1$, and
$\cU_\varphi=\Kalgoo$ be the corresponding open disc. Then
$\cO_K(\varphi)= A_{0,1}$, as expected. Namely, by definition,
$\cO_K(\varphi)=A_{1,1}(K)/(x-\rho_1)$, with $x$ a $\xi$-variable.
But since $x-\rho_1$ is regular in $x$ of degree $1$ (cf.~Definition
\ref{regular}(i)), for any $g$ in $A_{1,1}$ there exist unique $q\in
A_{1,1}$ and $r\in A_{0,1}$ such that $g=q(x-\rho_1)+r$. This shows
that there is a well defined surjective map from $\cO_K(\varphi)$ to
$A_{0,1}$, namely sending the class of $g$ to the class of $r$, with
inverse just the projection from $A_{0,1}$ to $\cO_K(\varphi)$.

\item[(iii)] Let $\varphi$ be the $K$-annulus formula $|x|\leq 1$, and
$\cU_\varphi=\Kalgo$ be the corresponding closed disc. Then clearly
$\cO_K(\varphi)= A_{1,0}$.

\item[(iv)] An example of a strong unit which is not a constant
plus a small function goes as follows. Let $\widetilde K = \bR$. The annulus formula
$|x^2+1|<1$ gives a $K$-annulus $\cU$ on which the identity function
$x\mapsto x$ is a strong unit. The $K$--disc $\cU$ consists of two open $K_{alg}$--discs centered at $i$ and $-i$. For details, see \cite{CLR1}.
\end{exs}

\subsection*{Notation}
With the notation from Definition \ref{def:holo}, in particular,
$\rho=(\rho_1, \dots, \rho_n)$, we define
\begin{eqnarray*}
\cO_K(\phi)^\circ &:=& (\Koo,\rho)\cO_K(\phi)\\
\cO^\dag_K(\phi)^\circ &:=& \cO_K(\phi)\\
\cO^\dag_K(\phi)^{\circ\circ} &:=& \cO_K(\phi)^\circ.
\end{eqnarray*}
We will use the suggestive notation
$f=\sum_\nu{a_\nu(x)({{p_0(x)^{\ell_0}} \over a_0})^{\nu_0}({a_1
\over {p_1(x)^{\ell_1}}})^{\nu_1}\cdots}({a_n \over
{p_n(x)^{\ell_n}}})^{\nu_n}$ to denote the image of
$f=\sum_\nu{a_\nu(\xi_1)z_0^{\nu_0}z_0^{\nu_0}z_1^{\nu_1}\cdots}z_n^{\nu_n}
\in A_{m+1,n}$ in $\cO^\dag_K(\phi)$ or $\cO^\sigma_K(\phi).$

We need to study the rings of functions on annuli of some particularly simple forms.

\begin{defn}
\label{thin-Laurent}
\item[(i)] A linear $K$-annulus of the form
$$
\{x\in\Kalg:\abs{x-a_0}\le\epsilon\ \mathrm{and} \bigwedge_{ i=0}^n \abs{x-a_i}\ge\epsilon\}
$$
for some $\epsilon\in\sqrt{\abs{K\setminus\{0\}}}$, $\epsilon\le1$,
and $a_i\in\Ko$ is called {\em thin}. A general $K$-annulus $\cU$ is
called {\em thin} if each each of the linear $K_{alg}$-annuli
$\cU_i$ corresponding to $\cU$ as in Lemma~\ref{annulus_properties}
(ii) is a thin linear $\Kalg$-annulus.

\item[(ii)] A $K$-annulus $\cU$ of the form
$$
\{x\in\Kalg:\epsilon_1<\abs{p(x)}<\epsilon_0\},
$$
where $p\in K[x]$ is irreducible is called a {\em Laurent annulus}.
We call a Laurent $K$-annulus $\cU$  \emph{simple} if each of the
linear $K_{alg}$-annuli $\cU_i$ corresponding to $\cU$ as in
Lemma~\ref{annulus_properties} (ii) is a Laurent $K_{alg}$-annulus.
A linear Laurent annulus is thus necessarily simple.

\item[(iii)]  If the  linear $K$-annulus
$$
\{x\in\Kalg:\abs{x-a_0}\le\epsilon\ \mathrm{and} \bigwedge_{ i=0}^n \abs{x-a_i}\ge\epsilon\}
$$
is thin and $\epsilon' < \epsilon < \epsilon''$, and $\cU'$, $\cU''$  are defined by
\begin{eqnarray*}
\cU' &:=& \{x\in\Kalg:\abs{x-a_0}\le\epsilon\ \mathrm{and}\abs{x-a_0}\ge\epsilon'\}\ \mathrm{and} \bigwedge_{ i=1}^n\abs{x-a_i}\ge\epsilon\}\\
\cU'' &:=& \{x\in\Kalg:\abs{x-a_0}\le\epsilon''\ \mathrm{and} \bigwedge_{ i=0}^n \abs{x-a_i}\ge\epsilon\},
\end{eqnarray*}
we call $\cU'$, $\cU''$ {\em almost thin}.   A general $K$-annulus $\cU$ is
called {\em almost thin} if each each of the linear $K_{alg}$-annuli
$\cU_i$ corresponding to $\cU$ as in Lemma~\ref{annulus_properties}
(ii) is an almost thin linear $\Kalg$-annulus.  We denote the corresponding thin annuli (obtained by replacing $\epsilon'$ and $\epsilon''$ in the above definitions by $\epsilon$) by $\cU'_t$ and $\cU''_t$ respectively.

\item[(iv)] We call two $K$--annuli \emph{adjacent} if their union is a $K$--annulus.  We say that two $K$--annuli \emph{overlap} if their intersection is nonempty.

\item[(v)] We call a finite cover $\{\cU_i\}$ of $\cU$ by annuli $\cU_i$ \emph{rigid} if for each $i,j$ there is a finite sequence
$\cU_i = \cU_{i_1}, \cdots , \cU_{i_m} = \cU_j$ from the cover such that for each $\ell = 1, \cdots , m-1$ we have that $\cU_{i_\ell}$ and $\cU_{i_{\ell +1}}$ overlap.
\end{defn}

\begin{rem}\label{secondcase}\item[(i)]An open $K$-annulus with only one $K$-hole is Laurent, by
Lemma \ref{annulus_properties}(i).
\item[(ii)] An almost thin annulus is a thin annulus ($\cU$) that has been ``thickened'' either in its hole around $a_0$ ($\cU'$) or in its ``hole at $\infty$'' ($\cU''$).
\end{rem}

The two types of almost thin annuli are equivalent for many purposes. The proof of the following lemma is a simple calculation.

\begin{lem}\label{equiv} The two types of almost thin linear annuli ($\cU'$, $\cU''$) in the definition, and their rings of functions ($\cO(\cU'), \cO(\cU'')$), are related by the change of variables
$$
y = (x-a_1)^{-1}.
$$
\end{lem}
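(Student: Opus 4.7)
My plan is to verify the lemma by a direct computation based on the M\"obius transformation $y = (x - a_1)^{-1}$, with inverse $x = a_1 + y^{-1}$. This transformation interchanges $a_1 \leftrightarrow \infty$ as elements of $\mathbb{P}^1(\Kalg)$ and sends each remaining $a_i$ to $b_i := (a_i - a_1)^{-1}$, so it is the natural candidate for swapping a hole at a finite point with the hole at infinity. I would proceed in two steps: first show that the map biject $\cU'$ with an annulus of type $\cU''$ (in the new coordinate $y$), and then show that pullback along the substitution gives a $K$-algebra isomorphism of the corresponding rings of analytic functions.

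For the set-level bijection, I use the identity $x - a_i = (a_1 - a_i) + y^{-1}$ together with the ultrametric inequality. The condition $|x-a_1| \ge \epsilon$ becomes the outer bound $|y| \le \epsilon^{-1}$. For $i \ge 2$, since $|a_1 - a_i| \ge \epsilon$ (which holds because the underlying thin annulus $\cU$ is nonempty and has disjoint holes), the inequality $|x-a_i| \ge \epsilon$ translates into a hole condition of the form $|y - b_i| \ge \delta_i$ for an explicitly computable $\delta_i = \epsilon|b_i|^2$. Finally, the pair $\epsilon' \le |x-a_0| \le \epsilon$ (the thickening of the hole around $a_0$) becomes a pair of inequalities $\delta_0 \le |y - b_0| \le \delta_0''$ which, in the $y$-picture, constitutes a thickening of the ``outer disk'' containing $y = 0$ — that is, a thickening of the hole at infinity, giving an annulus of exactly the type $\cU''$. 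The inverse substitution carries out the reverse correspondence.

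For the ring isomorphism, the substitution $x = a_1 + y^{-1}$ induces a $K$-algebra homomorphism of generalized rings of fractions: each generator of $\cO_K(\cU')$ — namely a variable of the form $(x - a_i)^{-1}$ or a unit associated with the outer disk — is mapped to a rational expression in $y$ and the $(y - b_i)^{-1}$, and the translated inequalities above show that each such image lies in $\cO_K(\cU'')$. The Weierstrass relations defining $\cO_K(\cU')$ are polynomial identities that are preserved under substitution, so by the defining universal property of the generalized ring of fractions this map is well-defined; the inverse substitution $y = (x-a_1)^{-1}$ yields a two-sided inverse, giving the claimed isomorphism $\cO_K(\cU') \cong \cO_K(\cU'')$ compatible with the bijection of underlying annuli.

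The main obstacle is not conceptual but combinatorial: tracking the exact transformed radii $\delta_i, \delta_0''$ and verifying that the strict vs.\ non-strict inequalities match correctly between $\cU'$ and $\cU''$. Once the dictionary $(a_i,\epsilon,\epsilon') \leftrightarrow (b_i, \delta_i, \delta_0'')$ is recorded explicitly, the lemma reduces to routine ultrametric computation applied to each defining inequality.
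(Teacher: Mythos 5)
Your overall strategy (a M\"obius inversion plus ultrametric bookkeeping, then transport of the ring structure by substitution) is the right kind of ``simple calculation'', but the key step is carried out with the wrong inversion center, and the claimed dictionary is false as stated. Under $y=(x-a_1)^{-1}$ only the hole centered at the inversion point $a_1$ is exchanged with the hole at infinity. Apply this to $\cU'$ (thickened hole at $a_0$, with $a_0\neq a_1$, ordinary holes and outer radius equal to $\epsilon$): the condition $|x-a_1|\ge\epsilon$ indeed gives the outer bound $|y|\le \epsilon^{-1}$; but the outer condition $|x-a_0|\le\epsilon$ of $\cU'$ becomes $|y-b_0|\le|y|$, i.e.\ $|y|\ge\epsilon^{-1}$, an \emph{ordinary} hole at $y=0$ of radius $\epsilon^{-1}$ which your dictionary omits; and the thickening $|x-a_0|\ge\epsilon'$ becomes $|y-b_0|\ge \epsilon'/\epsilon^{2}$, a thickened hole at the \emph{finite} point $b_0=(a_0-a_1)^{-1}$ (note $\epsilon'/\epsilon^{2}<\epsilon^{-1}=|b_0|$), not a thickening of the hole at infinity. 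Hence the image of $\cU'$ is again an almost thin annulus of type $\cU'$ (outer radius $\epsilon^{-1}$, ordinary holes of radius $\epsilon^{-1}$ at $0,b_2,\dots,b_n$, thickened hole at $b_0$), so the ``routine ultrametric computation'' you defer would not produce your claimed pair $\delta_0\le|y-b_0|\le\delta_0''$ as a thickening at infinity, and the set-level bijection with a type-$\cU''$ annulus fails.

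The correct calculation centers the inversion so that the thickened hole sits either at the inversion point or at infinity: either apply $y=(x-a_1)^{-1}$ to $\cU''$ (all finite holes of radius $\epsilon$, thickened outer radius $\epsilon''$), in which case the hole at $a_1$ becomes the unthickened outer bound $|y|\le\epsilon^{-1}$, the outer boundary becomes the thickened hole $|y|\ge 1/\epsilon''$ at $0$, and the remaining holes become holes of radius $\epsilon^{-1}$ at the $b_i$, so the image is of type $\cU'$; or, equivalently, invert $\cU'$ about the center $a_0$ of its thickened hole, $y=(x-a_0)^{-1}$, which produces an annulus of type $\cU''$. Inverting about an ordinary hole center, as you propose, leaves the thickened hole at a finite location and does not interchange the two types. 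Once the inversion is centered correctly, your second step is fine in substance: the substitution sends each generator of one generalized ring of fractions to an element of the other, with inverse given by the reverse substitution (this is justified by Weierstrass division and the arguments of Proposition \ref{phipsi} and Corollary \ref{canon2}, rather than by a literal universal property), and the isomorphism $\cO(\cU')\cong\cO(\cU'')$ follows.
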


\begin{lem}
\label{cover1}
Let $R(x) \in K(x)$ and $\epsilon \in
\sqrt{|K|\setminus\{0\}}$, let $\Box \in \{<,\le\}$, and let
$$
\cU:=\{x\in\Kalgo : |R(x)|\, \Box \, \epsilon\}.
$$
There are finitely many $K$-annuli $\cU_i$, $i=1,\dots,L$ and a finite set $S \subset \Ko_{alg}$, such that
$\cU \setminus S = \big(\bigcup^L_{i=1}\cU_i \big) \setminus S$ and for each $i$ there is an $f_i \in  \cO^\dag_K(\cU_i)$ such that
$$
R(x)|_{\cU_i \setminus S} = f_i|_{\cU_i \setminus S}.
$$
\end{lem}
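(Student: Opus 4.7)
The plan is to reduce to the case where $R$ is everywhere analytic and then exploit the monomial behavior of $|R(x)|$ on finely chosen $K$-annuli. I would write $R(x)=P(x)/Q(x)$ in lowest terms with $P,Q\in K[x]$, take $S\subset\Kalgo$ to be the finite set of zeros of $Q$, and factor $Q=c\prod_j q_j(x)^{b_j}$ with monic irreducible $q_j\in K[x]$. The proof then has two stages: first, cut down to a $K$-annulus $\cV\supseteq\cU$ on which $R$ is analytic; second, decompose $\cV$ into finitely many $K$-annuli whose union, outside $S$, equals $\cU$.

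For the first stage, I would use that since $\gcd(P,Q)=1$, $R$ has a genuine pole of order $b_j$ at each zero $\beta$ of each $q_j$: near $\beta$ one has $|R(x)|\sim C_\beta|x-\beta|^{-b_j}\to\infty$. Hence the condition $|R(x)|\,\Box\,\epsilon$ with $\Box\in\{<,\le\}$ bounds $|x-\beta|$ away from zero; translating back to the polynomials $q_j$ themselves, I can choose $\delta_j\in\sqrt{|K\setminus\{0\}|}$ small enough so that $\{x\in\Kalgo:|q_j(x)|<\delta_j\}$ is a union of pairwise disjoint open $K$-discs around the zeros of $q_j$ and is entirely disjoint from $\cU$. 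Setting $\cV:=\bigcap_j\{x\in\Kalgo:|q_j(x)|\ge\delta_j\}$, we obtain $\cU\subseteq\cV$, and $\cV$ is described by a $K$-annulus formula in the sense of Definition \ref{Rational definitionA}(a). By the construction of the generalized ring of fractions in Definition \ref{def:holo}, each $q_j$ is a unit in $\cO^\dag_K(\cV)$, hence so is $Q$, and $R=P\cdot Q^{-1}\in\cO^\dag_K(\cV)$.

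For the second stage, I would decompose $\cV$ into $K$-annuli on each of which $|R(x)|$ has a monomial form. Let $\gamma_1,\dots,\gamma_N\in\Kalgo$ be the zeros of $PQ$ and let $\Delta\subset\sqrt{|K\setminus\{0\}|}$ be the finite set of pairwise distances $|\gamma_i-\gamma_{i'}|$ (together with the $|\gamma_i|$). Partition $\cV$ by specifying, for each $K$-irreducible factor $p$ of $PQ$, which threshold band from $\Delta$ contains $|p(x)|$; the consistent pieces are $K$-annuli $\cW_k$ by Lemma \ref{annulus_properties}(iv),(vi). By the ultrametric inequality, on each $\cW_k$ either all $|x-\gamma_i|$ are determined constants (so $|R(x)|$ is constant), or there is a unique ``nearest'' zero $\gamma_k$ of $PQ$ with $|x-\gamma_{i'}|=|\gamma_k-\gamma_{i'}|$ constant for every $i'\neq k$ and only $|x-\gamma_k|$ variable. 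In either case $|R(x)|=C_k|x-\gamma_k|^{n_k}$ for explicit $C_k\in|K\setminus\{0\}|$ and $n_k\in\bZ$ determined by the multiplicities. The inequality $|R(x)|\,\Box\,\epsilon$ then reduces to a single condition $|x-\gamma_k|\,\Box'\,\rho_k$, which by Lemma \ref{annulus_properties}(i) (applied to the minimal polynomial of $\gamma_k$ over $K$) is a $K$-annulus condition. Intersecting with $\cW_k$ yields the desired $K$-annulus $\cU_k\subseteq\cW_k\subseteq\cV$, and $R|_{\cU_k}$ is obtained by restricting the element $R\in\cO^\dag_K(\cV)$.

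The main technical obstacle is the monomialization in the second stage: verifying that finitely many threshold bands suffice to make $|R(x)|$ a single monomial on each piece. This reduces to a careful ultrametric case analysis classifying each $x$ by its nearest zero of $PQ$, together with the verification that the resulting partition is really described by conditions on $K$-irreducible polynomials (and not merely $\Kalg$-disc conditions), which is handled by grouping Galois conjugates. Once this partition is in place, the assembly of the finite cover and the production of the analytic representatives $f_i\in\cO^\dag_K(\cU_i)$ are formal consequences of the definitions and of the closure properties in Lemma \ref{annulus_properties}.
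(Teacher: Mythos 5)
Your argument is correct and follows essentially the same route as the proof the paper invokes (reduction to $R(x)=x^{n_0}\prod_i p_i(x)^{n_i}$ with $p_i$ monic irreducible and $n_i\in\ZZ$, then the ultrametric decomposition of $\Kalgo$ into finitely many $K$-annuli on which the inequality $|R(x)|\,\Box\,\epsilon$ becomes an annulus condition and $R$ lies in the generalized ring of fractions, as in \cite{CLR1}, Lemma 3.16). One point to calibrate: the thresholds must be the finitely many \emph{critical values} of the $|p_j(x)|$, i.e.\ the relevant products of distances between zeros of $PQ$, rather than just the pairwise distances in your $\Delta$ --- with only the latter a single band can contain two monomial regimes of $|R|$ (for instance $|p(x)|<\rho$ when $p$ has two roots at mutual distance $\rho$), and likewise when $|x-\gamma_k|$ exceeds some inter-root distances several distances co-vary rather than staying constant --- but the monomial conclusion on each refined band still holds, so this is a routine refinement that does not change the structure of your proof.
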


\begin{proof} We may assume that $R(x)=x^{n_0}\,\prod^s_{i=1}\, p_i(x)^{n_i}$, where the $p_i\in \Ko[x]$ are monic, irreducible and mutually
prime and the $n_i\in \ZZ$. The lemma is proved in \cite{CLR1}, Lemma 3.16.
\end{proof}

\subsection{Results from \cite{BGR}}\label{fromBGR}
In this subsection we summarize some of the results of \cite{BGR}
that apply without change in our context. Only fairly special cases of these results are actually used
(for example in the proof of Proposition \ref{phipsi}.)

Observe that a $K$-annulus is not always a $K_{alg}$-annulus.
However, by Lemma 5.2 (ii), a $K$-annulus is the disjoint union of a
finite set of $K_{alg}$-annuli (and the corresponding rings of
functions are related as in Proposition \ref{ext}.) These linear $K_{alg}$--annuli are isomorphic via automorphisms of $K_{alg}$ over $K$.  We are thus led
to consider more general domains.

The following is used in Definition \ref{domain} (iii) and
afterwards in Section \ref{gen2}.
\begin{defn}[Residue norm]\label{residuenorm}
With the notation from Definition \ref{def:holo}, for $f \in
\cO_K(\phi)$,
$$
\|f\| \leq 1
$$
means that there exists $g\in  A_{m+1,n}(K)$ whose class in
$\cO_K(\phi)$ is $f$ and such that $\|g\|\leq 1,$ with $\|\cdot\|$
the gauss-norm (see just above Remark \ref{Gauss-norm}).  This usage extends naturally to the case
(Definition \ref{domain} below) that $\varphi$ is a domain formula.
\end{defn}

\begin{defn}\label{domain}
We define the concepts of \emph{a domain formula $\varphi$}, the
corresponding \emph{domain} $\cal U_\varphi$,  \emph{generalized
ring of $\Ko$-fractions} $\cal O_K(\varphi)$,  \emph{generalized
ring of $K$-fractions} $\cal O^\dag_K(\varphi)$ and \emph{ring of
analytic functions } $\cO_K^\sigma(\cU)$ inductively as follows.
\item[(i)] The formula $\varphi\colon=\bigwedge^m_{i=1}(|\xi_i|\leq 1)\wedge
\bigwedge_{i=1}^n (|\rho_i| < 1)$ is a domain formula. The
corresponding domain is $(K^\circ_{alg})^m\times
(K^{\circ\circ}_{alg})^n$, its generalized ring of $\Ko$-fractions
is $A_{m,n}(K)$, its generalized ring of $K$-fractions is
$A^\dag_{m,n} (K):= K \otimes_{\Ko} A_{m,n} (K) $.
\item[(ii)] If $\varphi,\ \cal U_\varphi$,  $\cal O_K(\varphi)$ and $\cal O^\dag_K(\varphi)$
are a domain formula and the corresponding domain and generalized ring of
fractions, and $f_1,\ldots,f_M,\ g_1,\ldots,g_N,\ h\in\cal O^\dag_K(\varphi)$
generate the unit ideal, then
$$
\varphi' :=\varphi\wedge \bigwedge\limits_{i=1}^M (|f_i| \leq
|h|)\wedge \bigwedge\limits^N_{i=1} (|g_i|<|h|)
$$
is a domain formula. The corresponding domain is
$$
 \{x\in \cal U_\varphi \colon \bigwedge\limits_{i=1}^M
 (|f_i (x)|\leq |h(x)|)\text{ and }\bigwedge\limits_{i=1}^N (| g_i(x)| <
|h(x)|)\}
$$
and if $\cal O_K(\phi)= A_{m,n}(K)/I$ then
$$
\cal O_K(\phi')= A_{m+M,n+N}(K)/ I',
$$
where
$$
I'=(I, f_1-\eta_1 h,\ldots,f_M-\eta_M h,\ g_1-\tau_1
h,\ldots,g_N-\tau_N h).
$$
\item[(iii)]We define the corresponding generalized ring of ($K$-valued) fractions $\cO^\dag_K(\phi)$
on $\cal U_\varphi$ by
$$
\cO_K^\dag(\phi):= K\otimes_{\Ko}\cO_K(\phi).
$$
This is consistent with the notation $A_{m,n}^\dag(K) =
K\otimes_{\Ko}A_{m,n}(K)$ from (i). We define
$$
\cO_K^\dag(\phi)^\circ := \{f \in \cO_K(\phi): \|f\| \leq 1\},
$$
and
$$
\cO_K^\dag(\phi)^{\circ\circ} := (\Koo,\rho)\cO_K^\dag(\phi)^\circ,
$$
where $\cal O_K(\phi)= A_{m,n}(K)/ I$ and $\rho = (\rho_1,\cdots, \rho_n).$
\item[(iv)]The corresponding ring of functions $\cO_K^\sigma(\phi)$ is the image of  $\cO_K^\dag(\phi)$ in the ring of functions  $\cU_\phi \to K_{alg}$  under the mapping provided by the analytic $\cA(K_{alg})$-structure on $K_{alg}$.
\item[(v)]We call a domain \emph{closed} if all its defining inequalities are
weak (i.e.~$\leq$) and we call a domain \emph{open} if all its
defining inequalities are strict (i.e.~$<$). In the case that $\cal
U$ is closed its ring of $\Ko$-fractions is of the form $A_{m,0}/ I$
and in the case it is open its ring of $K$-fractions  is of the form
$A_{0,n}/ I$.
\end{defn}

The following definition generalizes the notion of domains, and will
only be used in the proofs later on in section \ref{gen2}.  No
confusion should result with the notation of Definition
\ref{domain}.
\begin{defn}\label{domain(vi)}
If $J$ is an ideal in $A^\dag_{m,n}(K),$ define
$$
V(J):= \{(a,b) \in (\Ko_{alg})^m\times(\Koo_{alg})^n \colon
f^\sigma(a,b) = 0, \text{ for all } f \in J\} \text{ and }
$$
$\cO_K^\dag(J) := A^\dag_{m,n}(K)/J$. The elements of $\cO^\dag(J)$
define functions on $V(J) \to K_{alg}$ via $\sigma$ in a natural
way, and we denote this ring of functions by $\cO_K^\sigma(J)$.
\end{defn}

\begin{rem}(i) If $\varphi$ is a $K$-annulus formula and $F$ is an algebraic extension of $K$, then $\varphi$ is a $F$-domain formula, even though $\varphi$ may not be an $F$--annulus formula.

(ii) Many of the results  for domains, of \cite{BGR} in the affinoid
case, and of \cite{LR1} in the quasi-affinoid case, hold also in the
more general setting of Henselian fields with analytic structure,
often with (slightly) modified proofs. To check exactly how proofs
must be modified would be a cumbersome task. For example Proposition
\ref{ext} (ii) below would follow from the analogue of \cite{BGR}
Lemma 7.2.2.8 in the affinoid case. But the proof  of that Lemma in
\cite{BGR} uses much of that book. We will give a self-contained
treatment, though we will quote some theorems from \cite{BGR} when
the proofs apply without modification.

(iii) From the Strong Noetherian Property we see that $A^\dag_{m,n}(K)^\circ = A_{m,n}(K)$ and
$A^\dag_{m,n}(K)^{\circ\circ} = A_{m,n}(K)^\circ.$  It is easy to see that for $f \in A^\dag_{m,n}(K)$ we have
$f \in A^\dag_{m,n}(K)^\circ$ if and only if $|f^\sigma(a,b)| \leq 1$ for all $(a,b) \in (\Ko_{alg})^m\times(\Koo_{alg})^n,$ which is the case if, and only if, $\|f\| \leq 1,$
and that $f \in A^\dag_{m,n}(K)^{\circ\circ}$ if, and only if, $|f^\sigma(a,b)| < 1$ for all
$(a,b) \in (\Ko_{alg})^m\times(\Koo_{alg})^n,$ which is the case if, and only if,
$f \in (\Koo,\rho)A_{m,n}(K)^\circ$.
\end{rem}

From Theorem \ref{constants} we know that the rings $A_{m,0}(K)$ and
$A_{0,n}(K)$ have Weierstrass Preparation, are closed under
Weierstrass changes of variables, and (from the Strong Noetherian
Property) have the property that for any $0 \neq f \in A_{m,0}(K)$
(or $A_{0,n}(K)$), there is an $a \in K$ such that $af \in
A_{m,0}(K)$ (or $A_{0,n}(K)$) satisfies $\|af\| = 1$ and is
preregular (and hence after a Weierstrass change of variables,
regular) of some degree.

\begin{prop} \label{Ruck}The R\"uckert Theory of \cite{BGR} sections 5.2.5 and 5.2.6 applies to the rings $A_{m,0}^\dag(K)$ and to the rings $A_{0,n}^\dag(K)$.
Hence these rings are Noetherian, factorial and normal (integrally
closed in their fields of fractions.)
\end{prop}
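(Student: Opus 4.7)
The plan is to verify that the three essential hypotheses needed for the Rückert theory of \cite{BGR}, Sections 5.2.5--5.2.6, hold for the rings $A_{m,0}^\dag(K)$ and $A_{0,n}^\dag(K)$: (i) Weierstrass Division with respect to the last variable, (ii) closure under a sufficient family of Weierstrass changes of variables, and (iii) the fact that every nonzero element, after multiplication by a suitable element of $K$ and a Weierstrass change of variables, becomes regular in the last variable of some degree. Once these are in place, the R\"uckert proofs of Noetherianness, factoriality and normality carry over verbatim from \cite{BGR}, since those proofs use only the algebraic Weierstrass machinery and not completeness of the base field.

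For $A_{m,0}^\dag(K)$, hypothesis (i) is immediate from Theorem \ref{constants}, which tells us that $\cA(K)$ is a separated Weierstrass system over $(\Ko,\Koo)$, and hence Definition \ref{good}(a) gives Weierstrass Division in $\xi_m$. Hypothesis (ii) follows from the usual polynomial changes of variables $\xi_i \mapsto \xi_i + \xi_m^{N_i}$ which preserve the ring by Definition \ref{defAmn}(iii) combined with closure under composition (cf.\ Remark \ref{remcomp}). For hypothesis (iii), given $0 \neq f \in A_{m,0}^\dag(K)$, apply the Strong Noetherian Property (specifically the $n=0$ case of Theorem \ref{SepSNP}, which requires no rings of $A$-fractions or Laurent rings since $\Ko$ is a valuation ring) to obtain $c \in K$ with $cf \in A_{m,0}(K)$, $\|cf\| = 1$, and $cf$ preregular of some degree $\mu_0$ in $\xi$. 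A suitable Weierstrass change of variables then makes $cf$ regular in $\xi_m$ of degree $|\mu_0|$. For $A_{0,n}^\dag(K)$ the argument is entirely parallel, using Definition \ref{good}(b) for Weierstrass Division in $\rho_n$ and Weierstrass changes of variables among the $\rho_j$.

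With hypotheses (i)--(iii) established, the BGR arguments proceed by induction on $m$ (respectively $n$), the base case $m=0$ (respectively $n=0$) being the field $K$. For Noetherianness, given a nonzero ideal $I \subset A_{m,0}^\dag(K)$, pick $0 \neq f \in I$, normalize and change variables to make $f$ regular in $\xi_m$ of degree $s$, apply Weierstrass Preparation (Remark \ref{weierstrass_preparation}) to write $f = u \cdot P$ with $P \in A_{m-1,0}^\dag(K)[\xi_m]$ monic of degree $s$, and use Weierstrass Division to reduce mod $P$, so that $I$ is generated by $P$ together with $I \cap A_{m-1,0}^\dag(K)[\xi_m]$, the latter being finitely generated by the Hilbert Basis Theorem and the inductive hypothesis. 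Factoriality uses the same reduction: irreducibles in $A_{m,0}^\dag(K)$ correspond (up to units) via Weierstrass Preparation to irreducible monic polynomials in $A_{m-1,0}^\dag(K)[\xi_m]$, and unique factorization then descends from the polynomial ring by Gauss's Lemma, applied inductively. Normality follows from factoriality by the standard argument that any UFD is integrally closed in its field of fractions.

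The potential obstacle is that \cite{BGR} works throughout with complete rank-one valued base fields, whereas here $K$ is only required to be Henselian with an analytic $\cA$-structure. However, a careful inspection of \cite{BGR} Sections 5.2.5--5.2.6 shows that completeness is used only through the Weierstrass Preparation and Division Theorems and the normalization statement (iii) above; none of the ring-theoretic arguments invoke convergence estimates or completeness directly. Since we have substituted Theorem \ref{constants} and the Strong Noetherian Property (Theorem \ref{SepSNP}) for these analytic ingredients, the proofs transfer without essential modification.
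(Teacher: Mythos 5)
Your proposal is correct and follows essentially the same route as the paper: the paper notes (in the paragraph preceding the proposition, via Theorem \ref{constants} and the Strong Noetherian Property) exactly the Weierstrass Division/Preparation, closure under Weierstrass changes of variables, and the normalize-to-$\|af\|=1$-and-preregular property that you list as hypotheses (i)--(iii), and then simply asserts that the proofs of \cite{BGR} apply. Your write-up just makes explicit the inductive Noetherian/factorial/normal arguments that the paper leaves to the citation.
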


\begin{proof}The proofs of \cite{BGR} apply.
\end{proof}

From the strong Noetherian Property and Weierstrass Preparation we
have (cf. \cite{BGR} Corollary 6.1.2.2)

\begin{prop}(Normalization)  \label{norm}(i)\ Let $J$ be an ideal in $A^\dag_{m,0}$.
There is a nonnegative integer $d$ and a finite monomorphism
$$
A^\dag_{d,0}(K)\hookrightarrow A^\dag_{m,0}(K)/ J.
$$
(ii)\ Let $J$ be an ideal in $A^\dag_{0,n}$. There is a nonnegative
integer $d$ and a finite monomorphism
$$
A^\dag_{0,d}(K)\hookrightarrow A^\dag_{0,n}(K)/ J.
$$
\end{prop}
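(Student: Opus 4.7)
The plan is to prove both parts by induction on $m$ (resp.\ $n$), following the classical Noether normalization strategy familiar from affinoid algebra (\cite{BGR}, Corollary 6.1.2.2), adapted to our setting via the results already established. The case $J = (0)$ is trivial: take $d = m$ (resp.\ $d = n$) and use the identity map. So I assume $J \neq (0)$ and pick a nonzero $f \in J$.

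First I would apply the Strong Noetherian Property in the form of Corollary \ref{VSSNP} (which applies because we are in the pure-$\xi$ case $n=0$, resp.\ pure-$\rho$ case $m=0$), together with Remark \ref{Gauss-norm}, to rescale $f$ by a nonzero element of $K$ so that $f \in A^\dag_{m,0}(K)$ has $\|f\| = 1$ and is preregular of some degree in $(\xi_1,\dots,\xi_m)$ (resp.\ $(\rho_1,\dots,\rho_n)$). A Weierstrass change of variables among the $\xi_i$ alone (resp.\ the $\rho_j$ alone) then makes $f$ regular in $\xi_m$ (resp.\ $\rho_n$) of some degree $s$; crucially, such a change of variables preserves $A^\dag_{m,0}(K)$ (resp.\ $A^\dag_{0,n}(K)$), whereas mixing the two kinds of variables would not be allowed.

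By Weierstrass Preparation (Remark \ref{weierstrass_preparation}), we may write $f = u \cdot P$ with $u \in A^\dag_{m,0}(K)$ a unit and $P \in A^\dag_{m-1,0}(K)[\xi_m]$ a monic polynomial of degree $s$ (and analogously in case (ii)). Weierstrass Division (Definition \ref{good}(a), resp.\ (b)) then shows that $A^\dag_{m,0}(K)/(f) = A^\dag_{m,0}(K)/(P)$ is a finite free $A^\dag_{m-1,0}(K)$-module, with basis $1,\xi_m,\dots,\xi_m^{s-1}$. Consequently the composite
\[
A^\dag_{m-1,0}(K) \;\longrightarrow\; A^\dag_{m,0}(K) \;\twoheadrightarrow\; A^\dag_{m,0}(K)/J
\]
makes $A^\dag_{m,0}(K)/J$ into a finitely generated $A^\dag_{m-1,0}(K)$-module. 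Letting $J' := \ker$ of this composite (which is an ideal of $A^\dag_{m-1,0}(K)$), we obtain a finite monomorphism
\[
A^\dag_{m-1,0}(K)/J' \;\hookrightarrow\; A^\dag_{m,0}(K)/J.
\]

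Finally, by the inductive hypothesis applied to $J' \subset A^\dag_{m-1,0}(K)$, there is a $d \leq m-1$ and a finite monomorphism $A^\dag_{d,0}(K) \hookrightarrow A^\dag_{m-1,0}(K)/J'$; composing with the monomorphism above yields the required finite monomorphism $A^\dag_{d,0}(K) \hookrightarrow A^\dag_{m,0}(K)/J$. Part (ii) is proved identically, using the second Weierstrass Division (Definition \ref{good}(b)) and Weierstrass changes of variables among the $\rho_j$. The only point requiring genuine care—and hence the main potential obstacle—is to ensure that all reductions (preregularity, change of variables, preparation, division) can be carried out while remaining inside the pure-$\xi$ system $\{A^\dag_{m,0}(K)\}$ or the pure-$\rho$ system $\{A^\dag_{0,n}(K)\}$; this is exactly why the statement splits into two separate cases and why only a single Weierstrass Division axiom is invoked in each.
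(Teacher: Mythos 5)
Your proof is correct and is essentially the paper's own argument: the paper simply cites \cite{BGR}, Corollary 6.1.2.2, noting (in the paragraph preceding the proposition) that the Strong Noetherian Property gives rescaling to gauss-norm $1$ and preregularity, and that Weierstrass changes of variables and Weierstrass Preparation/Division are available within the pure-$\xi$ (resp.\ pure-$\rho$) systems, which is exactly the induction you have written out. No gaps; your emphasis on staying inside $\{A^\dag_{m,0}(K)\}$ (resp.\ $\{A^\dag_{0,n}(K)\}$) without mixing variable types is precisely the point the paper relies on.
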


As in \cite{BGR} section 7.1.2 we obtain

\begin{prop}\label{Null}The rings $A^\dag_{m,0}(K)$ and $A^\dag_{0,n}(K)$ satisfy the Nullstellensatz.
There is a one-to-one correspondence between the maximal
ideals of $A^\dag_{m,0}$ (resp.~$A^\dag_{0,n})$ and the orbits of
$(K^\circ_{alg})^m$ (resp.~$(K^{\circ\circ}_{alg})^n$) under the
Galois group of $K_{alg}$ over $K$.
\end{prop}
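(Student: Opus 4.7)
The plan is to follow the strategy of \cite{BGR}, Section 7.1.2, combining Noether Normalization (Proposition \ref{norm}), Noetherianness (Proposition \ref{Ruck}), and the Henselianness of $\Ko$ (Proposition \ref{Henselian}). I describe the argument for $A^\dag_{m,0}(K)$; the case of $A^\dag_{0,n}(K)$ is parallel, except that each $\rho_j \in A^\dag_{0,n}(K)^{\circ\circ}$ forces the image of $\rho_j$ under any evaluation to land in $\Kalgoo$.

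First I would match maximal ideals to Galois orbits. Let $\mathfrak{m} \subset A^\dag_{m,0}(K)$ be maximal. Proposition \ref{norm} produces a finite injection $A^\dag_{d,0}(K) \hookrightarrow L := A^\dag_{m,0}(K)/\mathfrak{m}$. Since an integral extension of a domain is a field exactly when the base is a field, and $A^\dag_{d,0}(K)$ is not a field when $d \geq 1$ (for then $\xi_1$ is a non-unit), we must have $d = 0$, so $L$ is a finite extension of $K$. By Proposition \ref{Henselian} the valuation of $K$ extends uniquely to $L$, and Theorem \ref{algextn} endows $L$ with an analytic $\cA$-structure extending $\sigma$. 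Any $K$-embedding $L \hookrightarrow \Kalg$ then sends $\xi_i$ to an element $a_i$ with $|a_i| \leq 1$ (since $\xi_i \in A^\dag_{m,0}(K)^\circ$), giving a point $a \in (\Kalgo)^m$ whose evaluation kernel is $\mathfrak{m}$. Different embeddings differ by an element of $\mathrm{Gal}(\Kalg/K)$, so $\mathfrak{m}$ canonically determines a single Galois orbit.

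Conversely, evaluation at any $a \in (\Kalgo)^m$ is a $K$-algebra homomorphism $A^\dag_{m,0}(K) \to \Kalg$ whose image lies in the finite extension $K(a_1,\dots,a_m)$ and so is a field; its kernel is therefore a maximal ideal. The uniqueness clause in Theorem \ref{algextn} forces every $\tau \in \mathrm{Gal}(\Kalg/K)$ to preserve the analytic structure on $\Kalg$, so $f^\sigma(\tau a) = \tau(f^\sigma(a))$; hence Galois-conjugate points share the same maximal ideal, while distinct orbits give distinct maximal ideals because a common evaluation kernel yields an explicit $K$-algebra isomorphism $K(a) \cong K(a')$ sending $a_i \mapsto a'_i$, which extends to a Galois automorphism of $\Kalg$.

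For the Nullstellensatz equality $\sqrt{I} = \bigcap_{\mathfrak{m} \supset I}\mathfrak{m}$, Noetherianness reduces the task to showing that every prime $\mathfrak{p}$ is an intersection of maximal ideals. One applies Proposition \ref{norm} once more, getting a finite injection $A^\dag_{d,0}(K) \hookrightarrow R := A^\dag_{m,0}(K)/\mathfrak{p}$ into a domain; for any nonzero $\bar f \in R$, an integral equation of minimal degree yields a nonzero constant term $s_0 \in A^\dag_{d,0}(K)$, and by the point description just established combined with Proposition \ref{zero}, $s_0$ is missed by some maximal ideal of $A^\dag_{d,0}(K)$, which then lifts by going-up to a maximal ideal of $R$ not containing $\bar f$. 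The main technical point is verifying $|a_i| \leq 1$ in the first step: this is built into the definition of analytic structure (Definition \ref{sep}(2)) through the fact that $\xi_i$ acts on $(\Ko)^m$ by the $i$-th coordinate function, together with the unique extension of valuation from $K$ to its finite extensions guaranteed by Proposition \ref{Henselian}.
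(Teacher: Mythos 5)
Your overall route --- Noether normalization to show that residue fields of maximal ideals are finite over $K$, Henselianness for the unique extension of the valuation, Galois orbits of points, and the Jacobson-type argument for the Nullstellensatz via a minimal integral equation and going-up --- is exactly the \cite{BGR}-style argument that the paper itself invokes (its proof is just the citation to Section 7.1.2), and most of your steps are sound, including the Galois equivariance via uniqueness in Theorem \ref{algextn} and the use of Proposition \ref{zero} to find a maximal ideal missing the constant term $s_0$.

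There is, however, a genuine gap at the step you yourself call the main technical point: proving $\abs{a_i}\le 1$ for the images $a_i$ of the $\xi_i$ in $L=A^\dag_{m,0}(K)/\mathfrak{m}$. Definition \ref{sep}(2) only says that $\sigma$ interprets $\xi_i$ as the $i$-th coordinate function on $(\Ko)^m$; it places no constraint on where an abstract $K$-algebra quotient map sends $\xi_i$, and the unique extension of the valuation (Proposition \ref{Henselian}) gives no bound by itself. This is precisely the point where \cite{BGR} uses completeness (equivalence of the residue norm with the valuation on the finite-dimensional $K$-vector space $L$, plus power-boundedness of $\xi_i$), an argument unavailable here; nor can one appeal to Proposition \ref{int}, since the sup-norm on $L$ is defined via $V(\mathfrak{m})$, whose nonemptiness is what is being proved. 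A correct replacement is short: if $\abs{a_i}>1$, let $g(X)=X^e+b_1X^{e-1}+\cdots+b_e$ be the minimal polynomial of $a_i^{-1}$ over $K$; since the valuation extends uniquely to $\Kalg$, all conjugates of $a_i^{-1}$ have absolute value $<1$, so all $b_j\in\Koo$; then $1+b_1\xi_i+\cdots+b_e\xi_i^e$ is a unit of $A_{m,0}(K)$ by Remark \ref{unit}, yet its image in $L$ is $a_i^{e}g(a_i^{-1})=0$, a contradiction. (Alternatively, an induction on $m$ using Weierstrass Preparation, keeping track that all division data stay in the Gauss-norm-$\le 1$ part, also works.) A related step you assert without proof is that the quotient map coincides with evaluation at $a$, i.e. that $\mathfrak{m}=\ker(\mathrm{ev}_a)$: in \cite{BGR} this uses density of the polynomials and continuity of the quotient map; here it should be done by Weierstrass division by the minimal polynomials $P_i(\xi_i)$ of the $a_i$ over $\Ko$ (which lie in $\mathfrak{m}$ because $P_i(a_i)=0$), in the spirit of Remark \ref{remcomp} and Proposition \ref{zeros} --- and this again requires $\abs{a_i}\le 1$ so that the $P_i$ have coefficients in $\Ko$ and are regular. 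With these two points repaired, the rest of your argument goes through.
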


\begin{rem}[Norms] Let $J$ be an ideal in $A^\dag_{m,n}(K)$.
Then the elements of $A^\dag_{m,n}/ J$ define functions on
$V(J)=\{x\in (K^\circ_{alg})^m\times (K^{\circ\circ}_{alg})^n\colon
f(x)=0\ \text{ for all } \ f\in J\}$. Since the proof of \cite{LL2}
or \cite{LR2} (see also \cite{LR3}, Theorem 5.2) shows that
$K_{alg}$ admits quantifier elimination in the language with
function symbols for the elements of $\cS=\bigcup_{m,n}
A^\dag_{m,n}$, we see that the supremum seminorm on $V(J)$,
$\|\cdot\|_{\sup}$ is well defined and takes values in $|K_{alg}|$
(where $\|f\|_{\sup}$ for $f\in A^\dag_{m,n}/ J$ is defined as the
supremum over all $x$ in $V(J)$ of the $|f(x)|$). If
$A^\dag_{m,n}/J$ is reduced then $\|\cdot\|_{sup}$ is a norm. The
residue norm $\|\cdot\|_J$ on $A^\dag_{m,n}/ J$ is well defined in
the standard examples (where $\|f\|_J$ is defined as the infimum of
the gauss-norms of all representatives of $f$ in $A^\dag_{m,n}$). We
don't know if our assumption on $\cS$ are sufficient to guarantee
that residue norms are always defined and, if $J$ is reduced,
equivalent to the supremum norm. However, the notation of Definition
\ref{residuenorm} always makes sense, even if the residue norm is
not defined. Clearly, for $f\in A^\dag_{m,n}$, the supremum norm of
$f$ equals the gauss-norm of $f$.
\end{rem}

\begin{rem} Normalization fails in general for ideals $J \subset
A^\dag_{m,n}(K)$ when $m,n > 0$ (cf. \cite{LR1} Example 2.3.5.)  It
is likely that the standard properties (the Nullstellensatz, unique
factorization, etc.) could be established for these rings by
adapting the proofs of \cite{LR1}. This could be quite nontrivial to
carry out and is not needed for the results of this paper.
\end{rem}

\begin{defn}With the notation from Definition
\ref{domain(vi)}, we shall call $f\in\cal O^\dag(J) (=
A^\dag_{m,n}/J$) \emph{power bounded} if $|f^\sigma(x)|\leq 1$ for
all $x\in V(J)$ and we shall call $f$ \emph{topologically nilpotent}
if $|f^\sigma(x)|<1$ for all $x\in V(J)$. We shall say that
$f\in\cal O^\dag(V(J))$ is \emph {strongly power bounded} if $f$
satisfies
$$
f^n=A_1 f^{n-1}+A_2 f^{n-2}+\ldots+A_n
$$
where for each $i$ there is an $a_i\in A_{m,n}(K)$ such that $a_i\in
A_i+J$, with $A_i$ in $A^\dag_{m,n} (K) $. We say that $f$ is \emph{
strongly topologically nilpotent} if the $a_i\in A_{m,n}(K)^{\circ}
= (K^{\circ\circ},\rho)A_{m,n}(K)$. We call $f^\sigma$ (strongly)
power bounded or (strongly) topologically nilpotent if $f$ is.
\end{defn}

\begin{rem}Observe that $f\in A^\dag_{m,n}(K)$ is power bounded if and only if $f\in A_{m,n}
(K)$ and $f$ is topologically nilpotent if and only if $f\in
A_{m,n}(K)^{\circ}$.  Since we allow $K$ to have rank $>1$, it
is possible that there are $\alpha, \beta \in K$ with $1 < |\alpha|$
and $|\alpha^n| < |\beta|$ for all $n \in \bN.$  By our definition
$\alpha$ is not power bounded.
\end{rem}

We have

\begin{prop}\label{int}(\cite{BGR} Proposition 3.8.1.7) Let $\Phi\colon B\to A$ be an integral torsion free
$K$-algebra monomorphism between two $K$-algebras $A$ and $B$, where
$B$ is an integrally closed integral domain. Then one has
\item[(a)]$|f|_{\sup}=\max\limits_{1\leq i\leq n} |b_i|^{1/i}_{\sup}$ for $f\in A$ where
$$
f^n+\Phi(b_1) f^{n-1}+\ldots+\Phi(b_n)=0
$$
is the (unique) integral equation of minimal degree for $f$ over
$\Phi(B)$.

\item[(b)]In the case that $B=A^\dag_{m,0}$, or $B = A^\dag_{0,n}$, if $f$ is power bounded, then $f$ is strongly power bounded and if $f$ is topologically nilpotent, then $f$ is strongly topologically nilpotent.
\end{prop}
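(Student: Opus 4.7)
The statement is precisely the adaptation of \cite{BGR} Proposition~3.8.1.7 to the present setting; the plan is to transcribe its proof, using the Nullstellensatz and normalisation for the ``pure'' rings $A^\dag_{m,0}(K)$ and $A^\dag_{0,n}(K)$ (Propositions~\ref{norm} and~\ref{Null}) together with the fact, noted in the text just after Proposition~\ref{Null}, that for these two families of rings the supremum norm on $\mathrm{Max}(B)$ coincides with the gauss norm on $B$.

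For (a), the easy inequality $|f|_{\sup} \leq \max_{1\leq i \leq n}|b_i|_{\sup}^{1/i}$ is a direct consequence of the ultrametric inequality applied to the minimal equation: at any $x \in V(J)$, with $A = A^\dag_{m,n}(K)/J$, were $|f(x)|$ to strictly exceed every $|b_i(\pi(x))|^{1/i}$, the term $f(x)^n$ in $f(x)^n + \sum_i \Phi(b_i)(x) f(x)^{n-i} = 0$ would dominate the rest, a contradiction; taking the sup over $x$ gives the bound. For the reverse inequality, I factor $T^n + b_1 T^{n-1} + \cdots + b_n = \prod_j(T - f_j)$ over an algebraic closure of $\mathrm{Frac}(B)$, with $f = f_1$ and $f_2,\ldots,f_n$ its Galois conjugates, so that the $b_i$ are, up to sign, the elementary symmetric functions of the $f_j$. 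For each $\mathfrak{m} \in \mathrm{Max}(B)$, by Proposition~\ref{Null} $\mathfrak{m}$ corresponds to a Galois orbit of a point $y$ of the geometric domain of $B$; because $\Phi$ is a finite, integral, torsion-free extension of the normal domain $B$, each conjugate $f_j$ specialises at $\mathfrak{m}$ to a value $f(x_j) \in \Kalg$ for some $x_j \in V(J)$ mapping to an orbit representative of $y$. The existence of the $x_j$ rests on Henselianness of $\Kalgo$ (Proposition~\ref{Henselian}), which permits lifting roots of the reduced minimal polynomial in the residue field to actual points of $V(J)$. Hence $|f_j(\mathfrak{m})| \leq |f|_{\sup}$, and since $b_i = \pm e_i(f_1,\ldots,f_n)$ we obtain $|b_i(\mathfrak{m})| \leq |f|_{\sup}^i$ for every $\mathfrak{m}$; taking the sup over $\mathfrak{m}$ and using that $|b_i|_{\sup}$ is attained on $\mathrm{Max}(B)$ closes the argument.

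For (b) I invoke (a): $|f|_{\sup} \leq 1$ gives $|b_i|_{\sup} \leq 1$ for each $i$, and since on $B = A^\dag_{m,0}(K)$ or $A^\dag_{0,n}(K)$ the supremum norm equals the gauss norm, each $b_i$ lies in $A_{m,0}(K)$, respectively $A_{0,n}(K)$, and hence in $A_{m,n}(K)$ via the canonical inclusion. Setting $a_i := (-1)^i b_i$ viewed in $A_{m,n}(K)$ provides a representative of $A_i$ modulo $J$ with gauss norm $\leq 1$, establishing strong power boundedness. The topologically nilpotent case is identical with strict inequalities, placing $a_i \in A_{m,n}(K)^\circ = (K^{\circ\circ},\rho)A_{m,n}(K)$.

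The main obstacle is the going-up step in the reverse inequality of~(a): realising each Galois conjugate of $f$ over $\mathrm{Frac}(B)$ as the value of $f$ at a concrete point of $V(J)$. In the classical affinoid case this is immediate from a Nullstellensatz for $A$ itself, but here a Nullstellensatz for a general $A = A^\dag_{m,n}(K)/J$ is unavailable; the substitute is to argue geometrically from the finiteness of $\Phi$, from the concrete description of $\mathrm{Max}(B)$ provided by Proposition~\ref{Null}, and from the Henselianness of $\Kalgo$, so that each abstract conjugate is produced as a genuine specialisation in $\Kalg$.
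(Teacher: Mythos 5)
The paper offers no proof of Proposition~\ref{int} beyond the citation to \cite{BGR}, Proposition~3.8.1.7, so the comparison is with that argument, whose skeleton you reproduce correctly: ultrametric domination of the leading term for $|f|_{\sup}\leq\max_i|b_i|_{\sup}^{1/i}$, and, for the reverse inequality, normality of $B$ to place the coefficients of the minimal polynomial over $\mathrm{Frac}(B)$ in $B$ and the expression of the $b_i$ as symmetric functions of the conjugates of $f$. The genuine gap is exactly the step you flag at the end: realizing each root of the specialized minimal polynomial $q_y$ as a value $f(x)$ at an actual point $x\in V(J)$ lying over $y$. Your appeal to Henselianness of $\Kalgo$ (Proposition~\ref{Henselian}) does not address this: $\Kalg$ is algebraically closed, so producing the roots of $q_y$ in $\Kalg$ is never the issue; the issue is producing a point of $V(J)$ at which $f$ takes that root as its value. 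What the \cite{BGR} argument really uses is, first, that torsion-freeness together with normality force $B[f]\cong B[T]/(q)$ (every polynomial relation for $f$ over $B$ is a multiple of the minimal polynomial $q$ --- this is where the torsion-free hypothesis enters, and you never use it), so that the fiber of $B[f]$ over $y$ is $k(y)[T]/(q_y)$ and its points correspond exactly to the roots of $q_y$; and, second, a lying-over/Nullstellensatz statement allowing these abstract points to be realized by genuine points of the ambient space --- in the affinoid case the Nullstellensatz for $A$ itself. In the present setting no Nullstellensatz is available for general quotients $A^\dag_{m,n}(K)/J$ (the paper says so explicitly in the remark following Proposition~\ref{Null}), so this realization must either be built into the hypotheses or verified in the concrete cases where Proposition~\ref{int} is applied (hypersurfaces finite over the pure rings, as in Proposition~\ref{phipsi}); your ``argue geometrically from the finiteness of $\Phi$'' restates the problem rather than solving it.

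A second, independent gap is in part (b), topologically nilpotent case. Saying it is ``identical with strict inequalities'' does not work: topological nilpotence is the pointwise condition $|f(x)|<1$ for all $x\in V(J)$ and does \emph{not} imply $|f|_{\sup}<1$ (already $f=\rho_1$ has every value of norm $<1$ but supremum norm $1$), so the sup-norm identity of (a) only yields $|b_i|_{\sup}\leq 1$, i.e.\ strong power boundedness. The correct route is the fiberwise form of the hard inequality: for each point $y$ of the polydisc, $b_i(y)$ is an elementary symmetric function of values of $f$ at points of $V(J)$ over $y$, all of norm $<1$, hence $|b_i(y)|<1$ for every $y$; one then concludes with the paper's observation that an element of $A^\dag_{m,0}(K)$, resp.\ $A^\dag_{0,n}(K)$, all of whose values have norm $<1$ lies in $(K^{\circ\circ},\rho)A_{m,0}(K)$, resp.\ $(K^{\circ\circ},\rho)A_{0,n}(K)$, as required for strong topological nilpotence. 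Note that this fiberwise argument again rests on the realization step discussed above, so repairing that step is what the whole proof hinges on.
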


\begin{prop}\label{stronglybounded}Let $\phi$ be a domain formula and let $f_1,\ldots,f_M,g_1,\ldots,g_N,\ h\in\cal O^\dag_K(\varphi)$ and suppose that $h$ is a unit in $\cal O^\dag_K(\varphi)$, each ${f_i\over h}$ is strongly power bounded and each ${g_i\over h}$ is strongly topologically nilpotent.
Let $\psi=\varphi\wedge\bigwedge^M_{i=1} |f_i|\leq
|h|\wedge\bigwedge^N_{j=1} |g_j| < |h|$. Then $\cal O^\dag_K(\psi)=\cal
O^\dag_K(\varphi)$.
\end{prop}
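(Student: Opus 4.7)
The plan is to exhibit an inverse to the natural restriction $\Phi\colon R\to\cO^\dag_K(\psi)$, where I write $R:=\cO^\dag_K(\varphi)=A^\dag_{m,n}(K)/I$. By Definition~\ref{domain},
\[
\cO^\dag_K(\psi)\;=\;A^\dag_{m+M,n+N}(K)\big/\bigl(I,\,f_i-\eta_ih,\,g_j-\tau_jh\bigr),
\]
with the $\eta_i$ new $\xi$-variables and the $\tau_j$ new $\rho$-variables, and $\Phi$ is induced by the inclusion $A^\dag_{m,n}(K)\hookrightarrow A^\dag_{m+M,n+N}(K)$. The inverse $\Psi\colon\cO^\dag_K(\psi)\to R$ will be given by the substitution $\eta_i\mapsto u_i:=f_i/h$ and $\tau_j\mapsto v_j:=g_j/h$; these lie in $R$ because $h$ is a unit. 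Once $\Psi$ is shown to be a well-defined ring homomorphism on $A^\dag_{m+M,n+N}(K)$, it visibly kills the ideal $(I,\,f_i-\eta_ih,\,g_j-\tau_jh)$ and so descends, and both $\Phi\circ\Psi$ and $\Psi\circ\Phi$ are identities on generators. All the work is thus in the well-definedness of $\Psi$.

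To define $\Psi$, I would first record two consequences of the hypotheses. Strong power-boundedness gives an integral relation $u_i^{n_i}=\sum_{k<n_i}A_{i,k}u_i^{k}$ with $A_{i,k}\in R^\circ$; iterating, every $u^\alpha$ rewrites as an $R^\circ$-linear combination of the finitely many ``residue'' monomials $u^{\alpha_0}$ with $\alpha_0<(n_1,\ldots,n_M)$, coefficients uniformly in $R^\circ$. Strong topological nilpotence gives $v_j^{m_j}=\sum_{k<m_j}B_{j,k}v_j^{k}$ with $B_{j,k}\in R^{\circ\circ}=(\Koo,\rho)R^\circ$; iterating, every $v^\beta$ with $|\beta|$ large rewrites analogously, but now with coefficients lying in an arbitrarily high power of $(\Koo,\rho)R^\circ$.

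Now given $F=\sum c_{\alpha\beta}(\xi,\rho)\eta^\alpha\tau^\beta\in A^\dag_{m+M,n+N}(K)$, I would invoke the Strong Noetherian Property (Theorem~\ref{SepSNP}; in the standard examples of Remark~\ref{remSepSNP} and whenever $mn=0$, no Laurent splitting is needed and Corollary~\ref{VSSNP} suffices) to organize $F$ into a finite sum of standard terms $\overline f_{\alpha\beta}(\xi,\rho)\eta^\alpha\tau^\beta$, each corrected by a unit and a small factor. Substituting $\eta\mapsto u$, $\tau\mapsto v$ and applying the reductions above, the result, viewed as indexed by the finite set of residue monomials $u^{\alpha_0}v^{\beta_0}$, has coefficients that are Cauchy in $R^\circ$ in the $(\Koo,\rho)$-adic sense: strong topological nilpotence drives the large-$|\beta|$ tail into arbitrarily high powers of $(\Koo,\rho)$, while strong power-boundedness keeps the $u^\alpha$-contributions uniformly in $R^\circ$. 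These Cauchy sums then define $\Psi(F)\in R$.

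The hardest step will be this convergence/independence assertion: Theorem~\ref{SepSNP} only gives local (over Laurent rings) finite representations of $F$, and I will need to verify that these local expressions assemble, after substitution, into the \emph{same} element of $R$; the most natural route is to identify $\Psi(F)$ with the value of $F$ obtained by composition through the analytic $\cA(\Kalg)$-structure on $\Kalg$, where uniqueness is clear, and then to recover that this value actually lies in $R$. With $\Psi$ so defined, the checks that $\Psi$ descends to $\cO^\dag_K(\psi)$ and that $\Phi\circ\Psi=\mathrm{id}$, $\Psi\circ\Phi=\mathrm{id}$ are routine on generators, completing the proof.
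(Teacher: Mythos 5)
Your overall strategy (invert the restriction map by substituting $\eta_i\mapsto f_i/h$, $\tau_j\mapsto g_j/h$) is reasonable, but the step you yourself single out as hardest is a genuine gap, and it fails for exactly the reason this paper keeps emphasizing: the rings here are not complete. After rewriting monomials via the integral relations you claim the coefficient sums attached to the finitely many residue monomials are ``Cauchy in the $(\Koo,\rho)$-adic sense'' and therefore ``define $\Psi(F)\in R$''. Two things go wrong. First, the Cauchy claim itself is unjustified: strong power-boundedness only keeps the $u^\alpha$-contributions \emph{bounded} in $R^\circ$, it gives no smallness in the $\alpha$-direction, and a general separated Weierstrass system (overconvergent series, the countable system, algebraic power series of section \ref{sepex}) imposes no coefficient decay on $c_{\alpha\beta}$ as $|\alpha|\to\infty$; moreover the Strong Noetherian Property does not reduce you to finitely many substitutions, since the units $1+g_{\mu\nu}$ of Theorem \ref{SepSNP} are again full power series in $\eta,\tau$ into which you still must substitute. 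Second, even a genuinely Cauchy family need not converge: neither $A_{m,n}(K)$ nor $R=\cO^\dag_K(\varphi)$ is complete in any relevant topology, and the paper explicitly attributes the complications of Section \ref{annuli} to this lack of completeness. Your fallback—identify $\Psi(F)$ with the value of $F$ through the analytic $\cA(\Kalg)$-structure and then ``recover that this value lies in $R$''—is also blocked: the paper states it does not know whether $f\mapsto f^\sigma$ is injective for general domains, so functions cannot certify equalities in $R$, and showing the value lies in $R$ is precisely the unproved point.

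The completeness-free mechanism is to use the integral equations not for term-by-term rewriting but as \emph{divisors} for Weierstrass division, i.e.\ the composition device of Remark \ref{remcomp}. Strong power-boundedness gives monic relations $P_i(\eta_i)=\eta_i^{n_i}-\sum_k A_{i,k}\eta_i^{k}$ with the $A_{i,k}$ liftable to $A_{m,n}(K)$, so $P_i$ is regular in the $\xi$-variable $\eta_i$ of degree $n_i$; strong topological nilpotence puts the $B_{j,k}$ in $A_{m,n}(K)^\circ=(\Koo,\rho)A_{m,n}(K)$, which is exactly what makes $Q_j(\tau_j)=\tau_j^{m_j}-\sum_k B_{j,k}\tau_j^{k}$ regular in the $\rho$-variable $\tau_j$ in the sense of Definition \ref{regular}(ii) — this is why the proposition needs the ``strong'' hypotheses rather than mere power-boundedness. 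Finitely many Weierstrass divisions of any $F\in A^\dag_{m+M,n+N}(K)$ by the $P_i$ and $Q_j$ leave a polynomial remainder in $(\eta,\tau)$ of bounded degrees with coefficients in $A^\dag_{m,n}(K)$, into which $u,v$ are substituted purely algebraically, with no limits; since $P_i(\eta_i)$ and $Q_j(\tau_j)$ lie in the ideal defining $\cO^\dag_K(\psi)$, this already gives surjectivity of $\cO^\dag_K(\varphi)\to\cO^\dag_K(\psi)$, and the remaining well-definedness and two-sided-inverse bookkeeping is the universal-property argument for these generalized rings of fractions. The paper itself offers no new argument at this point: its proof is a citation of \cite{BGR}, Proposition 6.1.4.3, and \cite{LR1}, Proposition 5.3.2, whose proofs transfer precisely because the ``strong'' hypotheses replace completeness by integrality plus Weierstrass division.
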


\begin{proof}\cite{BGR} Proposition 6.1.4.3 or \cite{LR1} Proposition 5.3.2.
\end{proof}

From Propositions \ref{Ruck}, \ref{norm}, \ref{int} and
\ref{stronglybounded} we have the following Corollary.  In Corollary \ref{canon} we prove a similar result for all $K$--annuli.

\begin{cor}\label{canon}If $\varphi$ is an open or a closed domain formula then $\cal O_K(\varphi)=\cal O_K(\cal U_\varphi)$ -- i.e.~$\cal O_K(\varphi)$ depends only on the domain $\cal U_\varphi$ and not on the particular description $\varphi$.
\end{cor}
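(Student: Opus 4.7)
The plan is to prove this by reducing to repeated application of Proposition \ref{stronglybounded}. Let $\varphi_1$ and $\varphi_2$ be two closed (resp.\ both open) domain formulas with $\cU_{\varphi_1}=\cU_{\varphi_2}=:\cU$; the goal is to show $\cO_K(\varphi_1)=\cO_K(\varphi_2)$, which by symmetry reduces to showing $\cO_K(\varphi_1)=\cO_K(\varphi_1\wedge\varphi_2)$.

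The formula $\varphi_1\wedge\varphi_2$ is obtained from $\varphi_1$ by adjoining, one atom at a time, the inequalities that make up $\varphi_2$. In the closed case these atoms have the form $|f|\leq|h|$; in the open case, $|g|<|h|$. By Proposition \ref{stronglybounded}, each such adjunction leaves the ring unchanged provided $h$ is a unit in the current ring $\cO^\dag_K$ and $f/h$ (resp.\ $g/h$) is strongly power bounded (resp.\ strongly topologically nilpotent). Thus the proof reduces to verifying these hypotheses at each step.

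Since $\cU_{\varphi_1}=\cU_{\varphi_2}$, every inequality coming from $\varphi_2$ is automatically satisfied on all of $\cU_{\varphi_1}$. In particular $h$ has no zero on $\cU$, so by the Nullstellensatz (Proposition \ref{Null}) applied to $\cO^\dag_K(\varphi_1)=A^\dag_{m,0}(K)/J$ in the closed case, or to $A^\dag_{0,n}(K)/J$ in the open case, the element $h$ is a unit. The inequalities $|f(x)|\leq|h(x)|$ and $|g(x)|<|h(x)|$ on $\cU$ then say precisely that $f/h$ is power bounded and that $g/h$ is topologically nilpotent.

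The main obstacle is to upgrade these to the \emph{strong} versions. For this I would invoke Normalization (Proposition \ref{norm}), which furnishes a finite monomorphism $A^\dag_{d,0}(K)\hookrightarrow\cO^\dag_K(\varphi_1)$ in the closed case, and an analogous $A^\dag_{0,d}(K)\hookrightarrow\cO^\dag_K(\varphi_1)$ in the open case, and then appeal to Proposition \ref{int}(b). The residual subtlety is the torsion-freeness hypothesis of Proposition \ref{int}: when $\cO^\dag_K(\varphi_1)$ is not an integral domain, decompose $J$ into its minimal primes $\mathfrak{p}_1,\ldots,\mathfrak{p}_r$ (available by the Noetherianity of Proposition \ref{Ruck}), apply Proposition \ref{int}(b) in each integral domain quotient $A^\dag/\mathfrak{p}_i$, and multiply the resulting monic integral equations to obtain a single monic relation valid modulo the nilradical of $\cO^\dag_K(\varphi_1)$. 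By the Nullstellensatz the nilradical vanishes on $V(J)=\cU$, so this relation delivers exactly the ``strong'' property that Proposition \ref{stronglybounded} requires, concluding the proof.
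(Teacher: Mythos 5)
Your overall route is exactly the one the paper intends: the Corollary is stated there with no written proof beyond the citation of Propositions \ref{Ruck}, \ref{norm}, \ref{int} and \ref{stronglybounded}, and your atom-by-atom application of Proposition \ref{stronglybounded}, the Nullstellensatz argument that $h$ is a unit, and the use of Normalization plus Proposition \ref{int}(b) to upgrade ``power bounded'' to ``strongly power bounded'' is precisely that skeleton, fleshed out. Your observation that the torsion-freeness hypothesis of Proposition \ref{int} is not automatic for $A^\dag_{m,0}(K)/J$ when $J$ is not prime is a genuine point the paper glosses over, and passing to the minimal primes $\mathfrak{p}_1,\ldots,\mathfrak{p}_r$ (where torsion-freeness of the normalization is automatic because the quotient is a domain) is the right way to address it.

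The one step that does not work as written is the last one. Multiplying the monic equations obtained over the quotients $A^\dag_{m,0}(K)/\mathfrak{p}_i$ annihilates $f/h$ only modulo $\bigcap_i\mathfrak{p}_i=\sqrt{J}$, i.e.\ modulo the nilradical of $\cO^\dag_K(\varphi_1)$; but ``strongly power bounded'' and ``strongly topologically nilpotent'' are by definition ring-theoretic conditions in $A^\dag_{m,0}(K)/J$ itself -- an exact monic relation with coefficients admitting representatives in $A_{m,0}(K)$ (resp.\ in $A_{m,0}(K)^\circ$) -- and that is also what the proof of Proposition \ref{stronglybounded} consumes. The remark that the nilradical vanishes on $V(J)=\cU$ does not bridge this, since vanishing as functions says nothing about the identity holding in the (possibly non-reduced) ring. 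The repair is short: form each $P_i(T)$ using the gauss-norm $\le 1$ representatives $a_{i,j}\in A_{m,0}(K)$ of the coefficients, so that $P_i(f/h)\in\mathfrak{p}_i/J$ and hence $\prod_iP_i(f/h)$ lies in the nilradical; by Noetherianity (Proposition \ref{Ruck}) choose $k$ with $(\sqrt{J})^k\subseteq J$ and replace the product by its $k$-th power. This is monic, kills $f/h$ exactly in $\cO^\dag_K(\varphi_1)$, and its non-leading coefficients are sums of products of the $a_{i,j}$, hence again in $A_{m,0}(K)$ (and in the topologically nilpotent case they stay in the relevant ideal, since non-leading coefficients of products of monic polynomials with non-leading coefficients in an ideal remain in that ideal). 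With this emendation your argument is complete; the same applies verbatim with $A^\dag_{0,n}(K)$ in the open case.
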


\begin{rem}\label{bigdisc}  In case 2 of the proof of Proposition \ref{phipsi} we shall
use annuli or discs of radius $>1$ and their associated rings. To
give an example of such a disc, if $a \in K$, $|a|>1$, the annulus
formula $\phi := (|x|<|a|)\wedge(\epsilon_1<|p_1(x)|)$ defines the
annulus $\cU_\phi = \{x\in K_{alg}\colon \phi(x)\}$ and generalized
ring of fractions
$$
\cO_K^\dag(\phi):= \Big \{f\Big({\rho_1 \over a}, {a_1 \over p_1 ^{\ell_1}(\rho_1)}\Big)\colon f \in A_{0,2}\Big \}.
$$
Here $|a_1| = \epsilon_1^{\ell_1} \in |K|.$ It is clear that
$$
\cO_K^\dag(\phi) \hookrightarrow \cO_K^\dag(\epsilon_1 < |p_1(x)|) = \cO_K^\dag(|x| \leq 1 \wedge \epsilon_1 < |p_1(x)|)
$$
by $\rho_1 \mapsto \xi_1$. In this example $\phi$ is an open annulus
formula, $\cU_\phi$ is open and Propositions \ref{Ruck}, \ref{norm},
\ref{Null}, \ref{int} and Corollary \ref{canon} apply to the rings
$\cO_K^\dag(\phi)$ and  $\cO_K^\sigma(\phi)$.
\end{rem}

\subsection{General $K$-annuli, part 1}\label{general}

In this subsection we prove some preliminary results about $K$--annuli.

\begin{prop}\label{zeros}  Let
$$
\varphi := |p_0(x)| \Box_0 \epsilon_0 \wedge \bigwedge_{i=1}^L \epsilon_i
\Box_i |p_i(x)|,
$$
be a good annulus formula and suppose that $P(x) \in \Ko[x]$ is monic and irreducible and that $\alpha \in \cU_K(\varphi)$ is a zero of $P(x)$.  Let $f \in \cO_K^\dag(\varphi)$ and suppose that $f^\sigma(\alpha) = 0$.  Then there is a $g \in \cO_K^\dag(\varphi)$ such that $f(x) = P(x)\cdot g(x)$, i.e. $P(x)$ divides $f(x)$ in
$\cO_K^\dag(\varphi)$.
\end{prop}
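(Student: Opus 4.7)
The plan is to first establish divisibility over $\Kalg$ by treating each linear factor of $P$, and then descend back to $K$ via Galois invariance.

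For the linear case, assume momentarily that $K = \Kalg$ and $P(x) = x - \alpha$. By Lemma~\ref{annulus_properties}(ii) and the corresponding product decomposition of $\cO_K^\dag(\varphi)$ across its linear $K$-annulus components, I restrict to the single component $\cU'$ containing $\alpha$. After the Weierstrass change of variables $y := x - \alpha$, I may assume $\alpha = 0$ and $P = y$. Lift $f$ to some $F \in A^\dag_{m+1,n}(K)$ with $y = \xi_{m+1}$ (after a harmless permutation of $\xi$-variables), and Weierstrass-divide $F$ by $y$ to write $F = yQ + R$ with $R \in A^\dag_{m,n}(K)$. The defining ideal $I$ of $\cO_K^\dag(\cU')$ is generated by relations of the form $\tilde p_0(y)^{\ell_0} - a_0 z_0$ and $\tilde p_i(y)^{\ell_i}z_i - a_i$, where $\tilde p_i(y) := p_i(y+\alpha)$; specializing at $y = 0$ gives an ideal $J \subset A^\dag_{m,n}(K)$ whose generators are linear in the $z_i$ and determine each $z_i$ to equal an explicit constant $c_i \in K$. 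This uses $p_i(\alpha) \neq 0$, which holds because $\alpha$ avoids every hole of $\cU_\varphi$. Iterated Weierstrass division by these linear relations, respecting whether each $z_i$ is a $\xi$- or $\rho$-variable, yields $A^\dag_{m,n}(K)/J = K$.

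The hypothesis $f^\sigma(\alpha) = 0$ then translates into $R^\sigma(c_0, \ldots, c_L) = 0$, so $R \in J$. Writing $R = \sum_i h_i g_i^{(0)}$, where the $g_i^{(0)}$ are the specializations at $y=0$ of the generators of $I$, and lifting by restoring $y$ in each generator gives $\tilde L := \sum_i h_i g_i(y) \in I$ with $\tilde L|_{y=0} = R$. Hence $\tilde L - R \in (y)A^\dag_{m+1,n}(K)$, and we can write $\tilde L = R + yL_1$ for some $L_1 \in A^\dag_{m+1,n}(K)$. Projecting modulo $I$ yields $\bar R = -y \bar L_1$, and therefore $f = y(\bar Q - \bar L_1)$, showing $y \mid f$ in $\cO_K^\dag(\cU')$, and consequently in $\cO_K^\dag(\varphi)$.

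For the general case, the Galois conjugates $\alpha_1, \ldots, \alpha_s$ of $\alpha$ over $K$ all lie in $\cU_\varphi$ by Galois invariance of the $K$-definable set $\cU_\varphi$. Applying the linear case over $\Kalg$ to each $\alpha_i$ shows that each $x - \alpha_i$ divides $f$ in $\cO_{\Kalg}^\dag(\varphi)$; since the $\alpha_i$ are distinct and $\cO_{\Kalg}^\dag(\varphi)$ is a finite product of integral domains (one factor per linear $\Kalg$-component of $\cU_\varphi$), their product $P = \prod_i(x - \alpha_i)$ divides $f$ there, giving $g := f/P \in \cO_{\Kalg}^\dag(\varphi)$. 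The quotient $g$ is fixed by $\Gal(\Kalg/K)$ because $f$ and $P$ have coefficients in $K$, so Galois descent via Proposition~\ref{ext} places $g$ in $\cO_K^\dag(\varphi)$, yielding $P \mid f$ in $\cO_K^\dag(\varphi)$.

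The main obstacle is the local computation $A^\dag_{m,n}(K)/J = K$ via iterated Weierstrass division across mixed $\xi$/$\rho$ variables, where one must carefully track that each constant $c_i$ has the right absolute value relative to its open/closed condition $\Box_i$; combined with rigorously invoking both the product decomposition across $\Kalg$-components and the Galois descent step, both of which rely on Proposition~\ref{ext} (appearing later in the paper).
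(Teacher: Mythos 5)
Your local argument for a linear factor (translate $\alpha$ to $0$, Weierstrass-divide a lift of $f$ by $y$, observe that the generators of the defining ideal specialize at $y=0$ to an ideal $J$ with $A^\dag_{m,n}(K)/J=K$, conclude $R\in J$ and lift back) is sound, and it is genuinely different from the paper's proof, which never leaves $K$: there one Weierstrass-divides $f$ by the minimal polynomials $Q_i\in\Ko[x]$ of the values $p_0^{\ell_0}(\alpha)/a_0$ and $a_i/p_i^{\ell_i}(\alpha)$ (each regular in its $\xi$- or $\rho$-variable), notes that each $Q_i$ evaluated at the corresponding fraction is a polynomial in $x$ vanishing at $\alpha$ and hence divisible by $P$ already in $K[x]$, and is left with a remainder $r(x)\in K[x]$ of degree $<\deg P$ with $r(\alpha)=0$, so $r=0$. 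That argument is characteristic-free and requires no base change, no decomposition into linear components, and no descent.

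The gaps in your route are real. First, you factor $P=\prod_i(x-\alpha_i)$ into \emph{distinct} linear factors indexed by Galois conjugates and recover $\cO_K^\dag(\varphi)$ as the Galois-fixed part of the extended ring; both steps fail when $P$ is irreducible but inseparable, and the paper explicitly asserts that everything through Section \ref{annuli} holds in all characteristics. Over a non-perfect $K$ of characteristic $p$ the fixed field of $\mathrm{Gal}(\Kalg/K)$ is a purely inseparable extension of $K$, not $K$, and knowing $(x-\alpha_i)\mid f$ gives no control of the multiplicity $p^e$ with which $x-\alpha_i$ occurs in $P$. Second, your recombination and descent lean on facts not available at this point of the paper: you call $\cO^\dag_{\Kalg}(\varphi)$ a finite product of integral domains and treat $g=f/P$ as unique (hence Galois-invariant), but the authors state explicitly that they do not know whether $f\mapsto f^\sigma$ is injective for general annuli, and the domain/principal-ideal statements only appear later under the extra ``strong'' hypotheses (Theorem \ref{MLL}, Corollary \ref{MLLCor}); moreover Proposition \ref{ext} identifies $\cO_K^\dag(\varphi)\otimes_K F$ for a splitting field $F$, not $\cO^\dag_{\Kalg}(\varphi)$, with the direct sum over linear components, so descent should be run over a finite Galois $F$ and not over $\Kalg$. (These two points are repairable in the separable case: distinct linear factors are comaximal because their difference is a nonzero constant of $K^\times$, and descent follows from flatness of $K\to F$ applied to the image of multiplication by $P$, with no uniqueness of $g$ needed.) Third, you pass from divisibility on the one component containing $\alpha$ to divisibility in $\cO_K^\dag(\varphi)$ without checking that $x-\alpha$ is a unit on the remaining components; this is true (on such a component $x-\alpha$ is, up to a constant or the invertible factor $x-\beta_i$, of the form $1+h$ with $h$ in the ideal $(\Koo,\rho)$, hence a unit by Remark \ref{unit}), but it must be argued. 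Even after the repairable points are fixed, the inseparable case remains unaddressed, so as written the proposal does not establish the proposition in the stated generality.
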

\begin{proof}  We may suppose that
$$
f=\sum_\nu{b_\nu(x)\Big({{p_0(x)^{\ell_0}} \over a_0}\Big)^{\nu_0}\Big({a_1
\over {p_1(x)^{\ell_1}}}\Big)^{\nu_1}\cdots}\Big({a_n \over
{p_n(x)^{\ell_n}}}\Big)^{\nu_n}
$$
where the $b_\nu(x)$ are of degrees $< \ell_0 \deg(p_0)$.  Let $Q_0 \in\Ko[x]$ be the minimal polynomial of
${p_0^{\ell_0}(\alpha) \over a_0}$, and let $Q_i$ be the minimal polynomial of ${a_i \over p_i^{\ell_i}(\alpha)}$, for $i = 1, \cdots, n$.
Then each $Q_i$ is regular in the appropriate sense for a variable corresponding to the inequality $\Box_i$, and by Weierstrass Division
$$
f(x) = R(x) + g_0 \cdot Q_0\Big({{p_0(x)^{\ell_0}} \over a_0}\Big) + \sum_{i=1}^n{g_i \cdot Q_i\Big({a_i \over p_i(x)^{\ell_i}}\Big)}
$$
where $R(x)$ is a rational function in $K(x)$ whose denominator is a product of powers of the $p_i(x), i = 1,\cdots, n$.  Let $R(x) \equiv r(x) \mod P(x)$ where $r(x) \in K[x]$ has degree $< \deg(P)$ and note that
$$
Q_0\Big({{p_0(x)^{\ell_0}} \over a_0}\Big) \equiv 0 \mod P(x)
$$
and, for $i=1, \cdots,n$,  that
$$
Q_i\Big({a_i \over p_i(x)^{\ell_i}}\Big) \equiv 0 \mod P(x)
$$
in $\cO^\dag_K(\varphi)$.  Thus we have written
$$
f(x) = r(x) + g(x)\cdot P(x)
$$
where $g(x) \in \cO^\dag_K(\varphi)$ and $\deg(r) < \deg(P)$.  Then $r(\alpha) = 0$ and hence $r(x) \equiv 0$.
\end{proof}

\begin{prop}\label{phipsi}
If $\phi$ and $\psi$ are $K$-annulus formulas and $\cal
U_\phi\subseteq\cal U_\psi$ then $\cal O_K^\dag(\psi)\subseteq\cal
O_K^\dag(\phi)$.
\end{prop}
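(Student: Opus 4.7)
The plan is to construct a natural $K$-algebra homomorphism $\Phi\colon\cO^\dag_K(\psi)\to\cO^\dag_K(\phi)$ that realizes the restriction of analytic functions from $\cU_\psi$ to $\cU_\phi$, giving the desired inclusion. Writing $\psi$ as
\[
|p_0(x)|\Box_0\epsilon_0 \;\wedge\; \bigwedge_{i=1}^L\epsilon_i\Box_i|p_i(x)|,
\]
the ring $\cO^\dag_K(\psi)$ is presented by generators $x,z_0,\ldots,z_L$ and relations $p_0^{\ell_0}(x)=a_0z_0$ and $p_i^{\ell_i}(x)z_i=a_i$, together with the norm conditions $|z_i|\Box_i'1$ coming from the $\xi$- or $\rho$-type of each auxiliary variable. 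The plan is to define $\Phi$ on generators by
\[
x\mapsto x,\qquad z_0\mapsto p_0^{\ell_0}(x)/a_0,\qquad z_i\mapsto a_i/p_i^{\ell_i}(x)\text{ for }i\ge 1,
\]
the right-hand sides being computed inside $\cO^\dag_K(\phi)$. The hypothesis $\cU_\phi\subseteq\cU_\psi$ forces every defining inequality of $\psi$ to hold on $\cU_\phi$, which will immediately supply the correct norm conditions on the images of the $z_i$; moreover $p_0^{\ell_0}(x)/a_0$ already lies in $\cO^\dag_K(\phi)$ via $K[x]\subset\cO^\dag_K(\phi)$.

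The principal technical step is to show, for each $i\ge 1$, that $p_i^{\ell_i}(x)$ is a unit in $\cO^\dag_K(\phi)$ so that $a_i/p_i^{\ell_i}(x)$ is a genuine element there. The inclusion $\cU_\phi\subseteq\cU_\psi$ gives $|p_i(x)|\ge\epsilon_i>0$ on $\cU_\phi$, so $p_i$ is nowhere zero on $\cU_\phi$. To convert this geometric fact into the algebraic statement I would form the conjoined formula $\phi\wedge\psi$ and add the inequalities of $\psi$ to $\phi$ one at a time, each time invoking Proposition~\ref{stronglybounded}: at every step the added inequality already holds on $\cU_\phi$, so once the relevant rational function is shown to be strongly power bounded (for a $\le$ inequality) or strongly topologically nilpotent (for a $<$ inequality) over $\cO^\dag_K(\phi)$, the generalized ring of fractions is unchanged. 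The upshot is the identification $\cO^\dag_K(\phi\wedge\psi)=\cO^\dag_K(\phi)$, and composing with the tautological map $\cO^\dag_K(\psi)\to\cO^\dag_K(\phi\wedge\psi)$ coming from the presentations of the two rings yields $\Phi$.

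The strongness verifications would go through Proposition~\ref{int} on integral dependence, combined with Noether normalization (Proposition~\ref{norm}) to realize $\cO^\dag_K(\phi)$ as a finite torsion-free extension of some $A^\dag_{d,0}(K)$ or $A^\dag_{0,d}(K)$, so that the integrality criterion of Proposition~\ref{int}(b) applies and power boundedness promotes automatically to strong power boundedness. The main obstacle I anticipate is that the rings $A_{m,n}(K)$ mix $\xi$- and $\rho$-variables and are not closed under changes of variables crossing this distinction, so the argument will naturally split into cases according to the shape of $\cU_\phi$ (whether $\phi$ is of closed, open, or mixed type), with a separate case—foreshadowed in Remark~\ref{bigdisc}—when $\cU_\phi$ contains a disc of radius strictly greater than $1$, to be handled by a preliminary substitution $x=a\rho$ with $|a|>1$ that reduces matters to the standard setting inside $\Kalgo$.
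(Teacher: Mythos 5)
Your overall strategy is the one the paper uses: conjoin the two formulas, invoke Proposition~\ref{stronglybounded} to get $\cO^\dag_K(\phi\wedge\psi)=\cO^\dag_K(\phi)$ once the fractions attached to the inequalities of $\psi$ (writing $\psi$'s data with primes and $\phi$'s without, these are $p_0'^{\ell_0'}/a_0'$ and the $a_i'/p_i'^{\ell_i'}$) are shown to be strongly power bounded, resp.\ strongly topologically nilpotent, over $\cO^\dag_K(\phi)$, then compose with the tautological map $\cO^\dag_K(\psi)\to\cO^\dag_K(\phi\wedge\psi)$, and obtain strongness from plain boundedness via Proposition~\ref{int}. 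So the skeleton is sound and matches the paper.

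The step that fails as you set it up is the strongness (and unit) verification inside $\cO^\dag_K(\phi)$ itself. You propose to realize $\cO^\dag_K(\phi)$ as a finite torsion-free extension of some $A^\dag_{d,0}(K)$ or $A^\dag_{0,d}(K)$ via Proposition~\ref{norm}; but that proposition is proved only for quotients of the pure rings $A^\dag_{m,0}(K)$ and $A^\dag_{0,n}(K)$, and the paper explicitly remarks that normalization fails in general for ideals of the mixed rings $A^\dag_{m,n}(K)$ with $m,n>0$. A typical $K$-annulus formula has both weak and strict inequalities, so $\cO^\dag_K(\phi)$ is exactly such a mixed quotient, and likewise the Nullstellensatz you would need to see that $p_i'^{\ell_i'}$ is a unit in $\cO^\dag_K(\phi)$ is only available in the pure cases; splitting into cases by the shape of $\phi$ does not remove this, since the mixed shape is the generic one. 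The paper's proof never applies this machinery to $\cO^\dag_K(\phi)$: each fraction is handled in an auxiliary ring of \emph{pure} type attached to a single inequality of $\phi$ (hence containing $\cU_\phi$ and mapping canonically to $\cO^\dag_K(\phi)$). For the outer condition one uses $\cO^\dag_K(|p_0(x)|\le\epsilon_0)=A^\dag_{2,0}(K)/(p_0^{\ell_0}(\xi_1)-a_0\eta_1)$, which is finite over $A^\dag_{1,0}(K)$ in $\eta_1$ because $p_0$ is monic, so Proposition~\ref{int}(b) applies directly; for a hole one uses the purely open formula $(|x|<|a|)\wedge(\epsilon_1<|p_1(x)|)$ with $|a|>1$ from Remark~\ref{bigdisc}, whose ring is a quotient of $A^\dag_{0,2}(K)$, where Proposition~\ref{Null} shows $p_1'^{\ell_1'}/a_1'$ is a unit and Proposition~\ref{int} upgrades topological nilpotence of its inverse to strong topological nilpotence. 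Since strongness is witnessed by an integral equation with coefficients in $A_{m,n}(K)$, resp.\ in $(\Koo,\rho)A_{m,n}(K)$, it is preserved under the canonical homomorphisms into $\cO^\dag_K(\phi)$, which is all Proposition~\ref{stronglybounded} needs. Note finally that the radius-$|a|>1$ disc is not a special case occurring ``when $\cU_\phi$ contains a disc of radius $>1$'' (it never does: the $a_i$ lie in $\Ko$, so $\cU_\phi\subseteq\Kalgo$); it is an auxiliary enlargement introduced for every strict-inequality hole precisely so that the auxiliary formula is purely open.
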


\begin{proof}Let
\begin{eqnarray*}
\varphi&=&p_0 (x)\square_0\epsilon_0\wedge\bigwedge\limits^n_{i=1}\epsilon_i\square_i p_i(x)\\
\psi&=&p'_0(x)\square'_0\epsilon'_0\wedge\bigwedge\limits^{n'}_{i=1}\epsilon'_i\square'_i p'_i (x).\\
\end{eqnarray*}
By Corollary \ref{stronglybounded} it is enough to show:
if $\square_0'$ is $\leq$ then ${p_0'^{\ell_0'} \over a_0'}$ is strongly power bounded in
$\cO_K^\dag(\phi)$;  if $\square_0'$ is $<$ then ${p_0'^{\ell_0'} \over a_0'}$ is strongly topologically nilpotent in
$\cO_K^\dag(\phi)$;
if $\square_i'$ is $\leq$ then
$\frac{a_i'} {p_i'^{\ell_i'}}$ is strongly power bounded in
$\cO_K^\dag(\phi)$;  and if $\square_i'$ is $<$ then
$\frac{a_i'}{p_i'^{\ell_i'}}$ is strongly topologically nilpotent in
$\cO_K^\dag(\phi)$.  We will check some of these cases and leave the rest
to the reader.

\begin{case}$\square_0$ is $\leq$.
\emph{Consider $\varphi_0 := |p_0 (x)|\leq\epsilon_0$. Then $\cal
O^\dag_K(\varphi_0)=A^\dag_{2,0}/(p_0^{\ell_0} (\xi_1)-a_0 \eta_1)$.  (Here
$|a_0^{\ell_0}|=\epsilon_0$.) Since $p_0(\xi_i)\in K^\circ[x]$ is
monic and $|a_0| \leq 1$, $\cal O^\dag_K(\varphi_0)$ is a finite extension of $A^\dag_{1,0}$ (in
the variable $\eta_1$) and hence  $p'_0(\xi_1)$ is integral over $A^\dag_{1,0}$. If $\square'_0$ is $\leq$ then ${(p'_0)^{\ell'_0}\over a'_0}$ is power bounded and hence, by
Proposition \ref{int}, strongly
power bounded in $\cal O^\dag_K(\varphi_0)\subset\cal O^\dag_K(\varphi)$; and if $\square'_0$ is $<$ then ${(p'_0)^{\ell'_0}\over a'_0}$ is topologically nilpotent and hence, by
Proposition \ref{int}, strongly
topologically nilpotent in $\cal O^\dag_K(\varphi_0)\subset\cal O^\dag_K(\varphi)$.
The result follows from Corollary \ref{stronglybounded}.}
\end{case}

\begin{case}$\square_1$ is $<$ and the ``$K$-hole"
$\cH_1' = \{x \colon |p_1'(x) \overline{\square}_1' \epsilon_1'\}$
is contained in the ``$K$-hole" $\cH_1 = \{x \colon |p_1(x) \leq
\epsilon_1'\}$.
 \emph{ Let $a \in K$, $1 < |a|$ and consider the annulus formula (cf. Remark \ref{bigdisc})
$\phi_1 := (|x| < |a|) \wedge (\epsilon_1 < |p_1 (x)|)$. Then, using
that Remark, $\cO_K^\dag(\phi_1) \subset \cO_K^\dag(\phi)$ and
$\phi_1$ is open. Hence by Proposition \ref{Null} $\frac{p_1'^{\ell_1'}}{a_1'}$
is a unit in $\cO_K^\dag(\phi_1)$ and thus also in
$\cO_K^\dag(\phi)$. Thus $\frac{a_1'}{p_1'^{\ell_1'}} \in
\cO_K^\dag(\phi_1)$, which is topologically nilpotent, is strongly
topologically nilpotent by Proposition \ref{int}.}
\end{case}
The other cases are similar.
\end{proof}

From Proposition \ref{phipsi}  we have

\begin{cor}\label{canon2}If $\varphi$ is a $K$-annulus formula then $\cal O_K^\dag(\varphi)$ depends only on the underlying $K$-annulus $\cal U_\varphi$ and not on the particular presentation.
If $\varphi$, $\psi$ are two $K$-annulus formulas and $\cal
U_\varphi\subseteq\cal U_\psi$ then $\cal O_K^\dag(\cal
U_\psi)\subseteq\cal O_K^\dag(\cal U_\varphi)$.
\end{cor}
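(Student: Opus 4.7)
The corollary is essentially a repackaging of Proposition \ref{phipsi}, so my plan is to reduce everything to that Proposition and then check the compatibility needed to legitimately speak of $\cO_K^\dag(\cU)$ as an invariant of the annulus $\cU$ rather than the formula.

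For the first statement, I would take two $K$-annulus formulas $\varphi_1$ and $\varphi_2$ with $\cU_{\varphi_1}=\cU_{\varphi_2}$. Applying Proposition \ref{phipsi} once with $\varphi=\varphi_1$, $\psi=\varphi_2$ (using $\cU_{\varphi_1}\subseteq\cU_{\varphi_2}$) and a second time with the roles swapped (using $\cU_{\varphi_2}\subseteq\cU_{\varphi_1}$) produces natural inclusions $\cO_K^\dag(\varphi_2)\subseteq\cO_K^\dag(\varphi_1)$ and $\cO_K^\dag(\varphi_1)\subseteq\cO_K^\dag(\varphi_2)$. To see that these are mutually inverse (so as to get a genuine equality and not just an isomorphism), I would unwind the construction in the proof of Proposition \ref{phipsi}: each inclusion comes from Corollary \ref{stronglybounded}, realized concretely by identifying the generators $p_0^{\ell_0}/a_0,\; a_i/p_i^{\ell_i}$ (and analogous generators for the other formula) as rational functions of the single variable $x$ within a common extension. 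Since both inclusions act by the identity on the subalgebra $K[x]$ of rational functions and both target rings are generated over $K[x]$ by these rational expressions, the two composites fix a generating set, hence are the identity. This legitimizes the notation $\cO_K^\dag(\cU_\varphi)$.

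For the second statement, once the notation $\cO_K^\dag(\cU)$ is seen to be well defined, the conclusion $\cO_K^\dag(\cU_\psi)\subseteq\cO_K^\dag(\cU_\varphi)$ whenever $\cU_\varphi\subseteq\cU_\psi$ is simply Proposition \ref{phipsi} restated in the new notation.

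The only real point requiring care is the mutual-inverseness check in the first paragraph; that is the ``main obstacle,'' though it is rather mild. It would also be reasonable, if one prefers, to bypass it by composing with the natural quotient $\cO_K^\dag(\cdot)\twoheadrightarrow\cO_K^\sigma(\cdot)$ to the ring of actual functions on the annulus — where the two presentations manifestly yield the same object — but since the target statement is about $\cO_K^\dag$ itself, the cleanest route is the direct generator-by-generator verification sketched above.
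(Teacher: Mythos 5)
Your proposal is correct and follows the paper's route exactly: the corollary is deduced from Proposition \ref{phipsi} by applying it to the mutual containments $\cU_{\varphi_1}\subseteq\cU_{\varphi_2}$ and $\cU_{\varphi_2}\subseteq\cU_{\varphi_1}$ (the paper records this as an immediate consequence, without spelling out the mild mutual-inverseness check you include). Your aside about instead passing to $\cO_K^\sigma$ is rightly set aside, since injectivity of $f\mapsto f^\sigma$ is not known in general at this point of the paper.
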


The following Proposition is a special case of Corollary
\ref{canon2}.  The proof in the linear case is direct and does not
use Proposition \ref{phipsi}, so we include it as a concrete
example.

\begin{prop}\label{linearcanon}For a linear annulus formula $\varphi$, $\cal O^\dag_K(\varphi)$ depends only on $\cal U_\varphi$ and not on the particular description $\varphi$.
\end{prop}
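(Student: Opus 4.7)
The plan is to show directly that any two linear $K$-annulus formulas $\varphi_1,\varphi_2$ defining the same annulus $\cU$ give rise to defining ideals in $A^\dag_{m+1,n}(K)$ that are interchanged by an algebra automorphism of $A^\dag_{m+1,n}(K)$ fixing $x$; the quotients are then canonically isomorphic as $K$-algebras, giving $\cO^\dag_K(\varphi_1)=\cO^\dag_K(\varphi_2)$.

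First I would argue that the combinatorial data of a linear formula --- the outer disc together with its radius and closed/open character, and each hole together with its radius, character, and equivalence class of center modulo a disc of that radius --- is intrinsic to $\cU$. The outer disc is the smallest closed (resp.\ open) $K$-disc containing $\cU$, and the holes are the maximal $K$-discs disjoint from $\cU$ lying in this outer disc. Consequently, two linear presentations of $\cU$ pair up bound-by-bound, differing only in the choice of representative $a_i\in K$ within each center class and the choice of $c_i\in K$ with $|c_i|=\epsilon_i$.

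Next, for each bound I would exhibit an automorphism of $A^\dag_{m+1,n}(K)$ that fixes $x$ and every auxiliary variable other than $z_i$ and that carries the $\varphi_1$-relation at the $i$th bound to the $\varphi_2$-relation. For the outer bound, writing $a_0'-a_0=c_0\gamma$ with $|\gamma|\,\Box_0\,1$ and $c_0'=uc_0$ with $|u|=1$, the substitution $z_0\mapsto uz_0+\gamma$ does the job and is a valid automorphism because $\gamma$ lies in $\Ko$ or $\Koo$ according to $\Box_0$, respecting the $\xi$- or $\rho$-type of $z_0$. For the $i$th hole, with relations $(x-a_i)z_i=c_i$ and $(x-a_i')z_i'=c_i'$, eliminating $x$ yields
$$
z_i' \;=\; \frac{c_i'}{c_i}\cdot\frac{z_i}{1-\delta z_i}, \qquad \delta := \frac{a_i'-a_i}{c_i}.
$$
A quick check on the two possibilities for $\Box_i$ shows that in both the closed-hole case ($|\delta|\le 1$ with $z_i$ a $\rho$-variable) and the open-hole case ($|\delta|<1$ with $z_i$ a $\xi$-variable) one has $\delta z_i\in A^\dag_{m+1,n}(K)^{\circ\circ}$, so $1-\delta z_i$ is a unit by Remark~\ref{unit}; the right-hand side then defines an element of $A^\dag_{m+1,n}(K)$ with $|z_i'|=|z_i|$, so of the correct $\xi$- or $\rho$-type. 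Solving the same identity for $z_i$ in terms of $z_i'$ gives a formula of the identical shape and supplies the inverse substitution. Composing these one-bound automorphisms, which act on disjoint auxiliary variables, provides the desired isomorphism $\cO^\dag_K(\varphi_1)\xrightarrow{\sim}\cO^\dag_K(\varphi_2)$ fixing $x$.

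The main obstacle is the intrinsic characterization in the first step, in particular the open/closed bookkeeping: one must verify, using the ultrametric inequality, that equality of two $K$-disc formulas forces equality of radii, of strictness labels, and of centers up to a shift by an element in the appropriate disc, and then match the holes across the two presentations. Once this case analysis is complete, the one-bound automorphisms in the second step are essentially mechanical, and the passage to $\cO^\dag_K$ from $\cO_K$ by tensoring with $K$ over $\Ko$ is automatic.
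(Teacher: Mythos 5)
Your proposal is correct and is essentially the paper's own argument: the heart of both is the pair of change-of-center identities $\frac{x-\alpha_0'}{\epsilon_0}=\frac{x-\alpha_0}{\epsilon_0}+\frac{\alpha_0'-\alpha_0}{\epsilon_0}$ and $\frac{\epsilon_1}{x-\alpha_1'}=\frac{\epsilon_1}{x-\alpha_1}\bigl(1-\frac{\alpha_1'-\alpha_1}{x-\alpha_1}\bigr)^{-1}$, with the same case check that the shift is in $\Ko$ or $\Koo$ according to $\Box_i$ so that the relevant factor is a unit (Remark \ref{unit}); your extra packaging as an automorphism of $A^\dag_{m+1,n}(K)$, and the intrinsic characterization of radii, strictness and centers (which the paper, like you, leaves as a routine ultrametric exercise), do not change the route.
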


\begin{proof}If $\alpha_0'$ is another center of the disc $\{x\colon|x-\alpha_0| \square_0 \epsilon_0\}$ then ${x-\alpha'_0\over\epsilon_0}={x-\alpha_0\over\epsilon_0}+{\alpha'_0-\alpha_0\over\epsilon}$ and ${\alpha'_0-\alpha_0\over\epsilon_0} \leq 1$ if $\square_0$ is $\leq$, and ${\alpha'_0-\alpha_0\over\epsilon_0} < 1$ if $\square_0$ is
$<$. Similarly for a hole
$\{x\colon\epsilon_1\square_1|x-\alpha_1|\}$, if $\alpha'_1$ is
another center then we have
$$
{\epsilon_1\over x-\alpha'_1}={\epsilon_1\over
(x-\alpha_1)+(\alpha_1-\alpha'_1)}={\epsilon_1\over x-\alpha_1}
\Big(1-{\alpha'_1-\alpha_1\over x-\alpha_1 }\Big)^{-1}.
$$
If $\square_1$ is $\leq$ then $|\alpha'_1-\alpha_1| <
\epsilon_1$ and if $\square_1$ is $<$ then
$|\alpha'_1-\alpha_1|\leq\epsilon_1$.
\end{proof}

The following Lemma is a small extension of \cite{CLR1}, Lemma 3.11.

\begin{lem}\label{Annulusdecomp}
Every $K$-annulus is a finite union of
\begin{itemize}
\item[(i)]thin $K$-annuli with good descriptions of the form
$$
|p_0(x)|\leq\epsilon_0\wedge\epsilon_0 \leq |p_0(x)| \wedge \bigwedge\limits^n_{i=2}\epsilon_i\leq |p_i(x)|,
$$
\item[(ii)]Simple Laurent $K$-annuli (cf. Definition \ref{thin-Laurent}) (with good descriptions of the form
$|p_0(x)| < \epsilon_0 \wedge \epsilon_1 < |p_1(x)|$), and
\item[(iii)]Open $K$-discs (with good descriptions of the form
$|p_0(x)| < \epsilon_0$.)
\end{itemize}
\end{lem}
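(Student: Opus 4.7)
My plan is to argue by induction on the number $L$ of holes of $\cU$. In the base case $L=0$, the annulus is a disc $\{|p_0(x)|\,\Box_0\,\epsilon_0\}$, which is of form (iii) when $\Box_0$ is $<$ and, when $\Box_0$ is $\leq$, splits as $\{|p_0|=\epsilon_0\}\cup\{|p_0|<\epsilon_0\}$, giving a form (i) piece (with no additional hole constraints) and a form (iii) piece.

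For the inductive step I would first split by the outer condition: $\cU=\cU^=\cup\cU^<$ where $\cU^=:=\cU\cap\{|p_0|=\epsilon_0\}$ is empty unless $\Box_0=\leq$, and $\cU^<:=\cU\cap\{|p_0|<\epsilon_0\}$. On the sphere piece $\cU^=$, I would split further according to which of the hole conditions $\epsilon_i\,\Box_i\,|p_i|$ become tight (only possible when $\Box_i=\leq$): each resulting piece either has all hole conditions weak and is then directly of form (i), or has some equality $|p_j|=\epsilon_j$, in which case I would rewrite the formula with $p_j$ playing the role of the outer polynomial (the equality $|p_j|=\epsilon_j$ becoming the new outer pair $|p_j|\leq\epsilon_j\wedge\epsilon_j\leq|p_j|$, and the original $|p_0|=\epsilon_0$ becoming a hole-style constraint $\epsilon_0\leq|p_0|$), again yielding form (i). Lemma~\ref{annulus_properties}(vi),(vii) certify that the rewritten formulas still describe $K$-annuli with good descriptions.

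The open-outer piece $\cU^<$ would be handled by exploiting the disjointness of the holes and the ultrametric structure. For each hole $\cH_i$, using Lemma~\ref{annulus_properties}(i), I would pick an intermediate radius $\delta_i\in\sqrt{|K\setminus\{0\}|}$ with $\delta_i>\epsilon_i$ such that the thickened neighborhood $\{|p_i|\leq\delta_i\}$ is disjoint from all other holes and still contained in $\{|p_0|<\epsilon_0\}$. This yields a cover of $\cU^<$ by (a) the ``Laurent neighborhoods'' $\cL_i:=\{|p_0|<\epsilon_0\wedge\epsilon_i\,\Box_i\,|p_i|\wedge|p_i|<\delta_i\}$, each of form (ii) after one more split on $|p_i|=\delta_i$ versus $|p_i|<\delta_i$ to absorb the upper bound; (b) thin-sphere pieces $\{|p_i|=\delta_i\wedge\cdots\}$ of form (i) coming from those splits; and (c) an outer core $\{|p_0|<\epsilon_0\wedge\bigwedge_i|p_i|>\delta_i\}$ which, by the ultrametric inequality applied to the disjoint thickened neighborhoods, can be covered by finitely many open sub-discs of form (iii) (the other hole polynomials $p_j$ automatically exceed $\delta_j$ on each such sub-disc).

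The main obstacle is the last point: verifying that the outer core really decomposes into finitely many form (iii) open discs. This requires a tree-style argument on the ``nested'' structure of the closed sub-discs $\{|p_i|\leq\delta_i\}$ inside the open disc $\{|p_0|<\epsilon_0\}$, exploiting disjointness (Definition~\ref{Rational definitionA}(a)) together with the finiteness of $L$ so that the tree has finitely many branches and only finitely many pieces are produced. Throughout, admissibility of each intermediate annulus formula — in particular, disjointness of its holes — would be verified using Lemma~\ref{annulus_properties}, and goodness of each description would follow from Lemma~\ref{annulus_properties}(vii).
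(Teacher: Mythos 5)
Your plan fails at the step you yourself flag as the main obstacle, and it fails for a structural reason, not a bookkeeping one: the ``outer core'' $\{|p_0|<\epsilon_0\wedge\bigwedge_i|p_i|>\delta_i\}$ is \emph{never} a finite union of open discs once there is at least one hole. Indeed, let $\alpha$ be a zero of some $p_i$ and let $s>0$ be the radius of the closed linear disc around $\alpha$ contained in $\{|p_i|\le\delta_i\}$. If an open disc $B=\{|x-x_0|<R\}$ is disjoint from $\{|x-\alpha|\le s\}$, then $R\le|x_0-\alpha|=:r$ (otherwise $\alpha\in B$), and then every $x\in B$ satisfies $|x-\alpha|=r$; so \emph{any} open disc inside the core lies in a single sphere about $\alpha$. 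But the core contains points with $|x-\alpha|=r$ for infinitely many radii $r$ between $s$ and the outer radius (the value group of $\Kalg$ is divisible, hence dense), so no finite family of open discs can cover it. No choice of the thickening radii $\delta_i$ rescues this: after removing a closed disc from an open disc, the leftover is genuinely of Laurent/thin type over the whole range of radii out to where the grown hole meets another hole or the outer boundary. The correct decomposition (this is the content of \cite{CLR1}, Lemma 3.11, which the present paper cites rather than reproves) is organized around exactly those ``branching radii'': Laurent annuli along each edge of the tree of discs, thin annuli (spheres minus \emph{full-radius} open discs) at the radii where discs merge or where a weak inequality forces an equality, and open discs only for residual pieces containing no hole.

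A second, related gap: on the sphere piece $\cU^=$ you declare the pieces with all hole conditions weak to be ``directly of form (i)''. They have the right \emph{shape} of formula, but the lemma asserts the pieces are \emph{thin}, and thinness is what Lemma \ref{thin} actually uses (its proof rewrites a thin annulus as $\{|p(x)|=\epsilon\}$ for $p=\prod p_i$, which is false unless every hole has full radius). A hole whose linearized radius is strictly smaller than the sphere radius and whose centre lies on the sphere (e.g.\ $\{|x|=1\wedge|x-1|\ge\delta\}$ with $\delta<1$) gives a set of shape (i) that is not thin; it must be split further into the thin annulus $\{|x|=1\wedge|x-1|\ge1\}$, the thin sphere $\{|x-1|=\delta\}$ and the open Laurent annulus $\{\delta<|x-1|<1\}$. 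Likewise, invoking Lemma \ref{annulus_properties}(vi),(vii) only guarantees that each intermediate piece is \emph{some} $K$-annulus with a good description; it does not certify membership in the three special classes (thin, \emph{simple} Laurent, open disc), which is precisely the content to be proved.
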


Next we prove Mittag-Leffler type decompositions for $K$--annuli of some special types, namely discs (Lemma \ref{MLdisc}) and thin annuli (Lemma \ref{thin}).

\begin{lem}\label{MLdisc}Let $\cU$ be a $K$--disc and let $0 \neq f \in \cO_K^\dag(\cU)$.  Then there is a unique monic polynomial $P(x) \in K[x]$ and strong unit $E \in \cO_K^\dag(\cU)$ such that $f = P(x) \cdot E$.  In particular, the mapping $f \mapsto f^\sigma$ in injective.
\end{lem}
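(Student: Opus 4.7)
The plan is to reduce to the case of a linear disc over the algebraic closure, apply Weierstrass preparation there, and descend to $K$ via Galois theory. By Lemma~\ref{annulus_properties}(vii) I may assume $\cU$ has a good description $\phi\colon|p_0(x)|\,\Box_0\,\epsilon_0$ with $p_0\in K[x]$ monic irreducible of degree $d$. Theorem~\ref{algextn} extends the analytic structure uniquely to $\Kalg$, and Lemma~\ref{annulus_properties}(ii) splits $\cU_{\Kalg}$ as a disjoint union $\bigsqcup_{i=1}^{d}D_i$ of linear $\Kalg$-discs, one centered at each root $\alpha_i$ of $p_0$, with $\mathrm{Gal}(\Kalg/K)$ acting transitively on the $D_i$. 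After translation and rescaling, $\cO^\dag_{\Kalg}(D_i)$ becomes $A^\dag_{1,0}(\Kalg)$ in the closed case or $A^\dag_{0,1}(\Kalg)$ in the open case. For nonzero $h$ in either ring, Corollary~\ref{VSSNP} produces $\alpha\in\Kalg$ with $\alpha h$ of gauss-norm $1$ and nonzero image in the corresponding polynomial ring over $\widetilde{\Kalg}$, so $\alpha h$ is regular of some degree $s$ in the single variable; Weierstrass preparation (Remark~\ref{weierstrass_preparation}) then gives $\alpha h=u\cdot P$ with $u$ a very strong unit (Remark~\ref{unit}) and $P$ monic of degree $s$ with all roots in $D_i$, whence $h=P\cdot(\alpha^{-1}u)$ with $\alpha^{-1}u$ a strong unit.

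Applying this to $f|_{D_i}$ yields $f|_{D_i}=P_i(x)\cdot U_i(x)$ with $P_i\in\Kalg[x]$ monic of some degree $s_i$ (all roots in $D_i$) and $U_i$ a strong unit on $D_i$. I then set $P(x):=\prod_{i=1}^{d}P_i(x)$; since each $P_i$ is uniquely determined by $f|_{D_i}$, the transitive Galois action on $\{D_i\}$ permutes the $P_i$ compatibly, so $P$ is Galois-invariant and lies in $K[x]$. Setting $E:=f/P\in\cO^\dag_{\Kalg}(\cU_{\Kalg})$, Galois invariance places $E$ in $\cO^\dag_K(\cU)$. On $D_k$ we have $E|_{D_k}=U_k\big/\prod_{i\neq k}P_i|_{D_k}$; the ultrametric triangle inequality (using that the roots of $P_i$ sit in $D_i$, disjoint from $D_k$) shows $|P_i(x)|$ is the constant $|\alpha_i-\alpha_k|^{s_i}$ on $D_k$ and that $(P_i(x)/|\alpha_i-\alpha_k|^{s_i})^\sim$ is a nonzero constant. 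Hence $|E^\sigma|$ is constant on each $D_k$, and Galois invariance of absolute value forces these constants to agree, yielding a common value $|c|$ with $c\in K$; likewise $(E^\sigma/c)^\sim$ takes only finitely many values across $\cU_{\Kalg}$, forming a Galois orbit and hence common roots of a nonzero polynomial in $\widetilde K[\xi]$. Thus $E$ is a strong unit.

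For uniqueness, the $P$ just constructed has all roots in $\cU$; for any other factorization $f=P'\cdot E'$ satisfying the conclusion, any factor of $P'$ with roots outside $\cU$ is itself a strong unit (by the same ultrametric argument applied to $(x-\beta)$ for $\beta\notin\cU$) and could be absorbed into $E'$, so $P'$ must also have all roots in $\cU$, which together with the determination of the zero multi-set of $f^\sigma$ on $\cU$ forces $P=P'$ and then $E=E'$. For injectivity, if $0\neq f$ then $f=P\cdot E$ with $P$ a nonzero polynomial and $E^\sigma$ nowhere vanishing, so $f^\sigma$ can vanish only on the finite zero set of $P$ and is not identically zero. The main obstacle I expect is the verification in the gluing step that the globally defined $E$ satisfies both clauses of Definition~\ref{strongunit}—constant absolute value \emph{and} annihilation of the residue by a fixed polynomial over $\widetilde K$—rather than merely being a unit; carefully tracking the norms and residues of the products $\prod_{i\neq k}P_i|_{D_k}$ through the Galois descent is the crux of the argument.
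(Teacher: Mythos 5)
Your overall strategy---split $\cU$ over $\Kalg$ into the linear discs $D_i$, do Weierstrass preparation on each, and descend---is genuinely different from the paper's argument, but as written it has two real gaps. First, you treat $f|_{D_i}$ as an element of $\cO^\dag_{\Kalg}(D_i)$ and then define $E:=f/P$ inside $\cO^\dag_{\Kalg}(\cU_{\Kalg})$; this presupposes the identification $\cO^\dag_K(\cU)\otimes_K F\simeq\bigoplus_i\cO^\dag_F(D_i)$ together with its compatibility with $\sigma$ and with the Galois action. That is exactly Proposition \ref{ext}(ii) (plus an equivariance check), which you never invoke; note the paper proves it only later and flags that it is not used at this point, so if you want your route you must supply or cite it explicitly---without it, ``set $E:=f/P$'' is not a legitimate operation in any of the rings you have at hand. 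Second, and more seriously, the descent step ``$P$ is Galois-invariant and lies in $K[x]$'' (and likewise ``Galois invariance places $E$ in $\cO^\dag_K(\cU)$'') is only valid when the splitting field of $p_0$ is \emph{separable} over $K$: invariance under $\mathrm{Aut}(\Kalg/K)$ only puts the coefficients in the perfect (purely inseparable) closure of $K$. Since the paper explicitly asserts that everything in Section \ref{annuli} holds in all characteristics, and $p_0$ may well be an inseparable irreducible polynomial over a non-perfect $K$, your argument does not prove the stated lemma in general; in that case one has to argue separately (e.g.\ that inseparable zeros of $f^\sigma$ automatically occur with the right multiplicities), which is precisely the kind of information your Galois bookkeeping does not provide.

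The paper's proof avoids both issues by staying over $K$: Weierstrass division by the defining relation gives the canonical representation $f=\sum_i a_i(x)\left(p(x)/\epsilon\right)^i$ with $\deg a_i<\deg p$, the sup-norm comparison on the $\Kalg$-discs gives injectivity of $f\mapsto f^\sigma$ and finiteness of the zero set, and then Proposition \ref{zeros} lets one divide $f$ in $\cO^\dag_K(\cU)$ by the minimal polynomial over $K$ of each zero, so that the monic factor $P$ lands in $K[x]$ by construction rather than by descent; the Nullstellensatz (Proposition \ref{Null}) shows the zero-free quotient is a unit, and a unit on a disc is automatically a strong unit. Proposition \ref{zeros} is the ingredient your proposal is missing: it is what replaces your Galois descent and what makes the argument characteristic-free. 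Your verification that the glued $E$ satisfies both clauses of Definition \ref{strongunit} is essentially fine (indeed easier than you suggest, since any finite subset of $\Kalgt$ is the zero set of a nonzero polynomial over $\Kt$), and your uniqueness discussion shares the same implicit convention as the paper's (the zeros of $P$ are required to lie in $\cU$), so those points are not the problem; the descent is.
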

\begin{proof}  The case of a linear $K$--disc is immediate by Weierstrass Preparation, since in that case
$f = \sum a_i ({x - \alpha \over \epsilon})^i$ with the $a_i$ constants.

For $\cU$ an arbitrary $K$--disc we write by Weierstrass Division
$$
f = \sum_i{a_i(x)\Big({p(x) \over \epsilon}\Big)^i}
$$
where the $a_i$ have degrees $< \deg(p(x))$ and hence are either zero or strong units on $\cU$.  Considering $f$ on each of the $K_{alg}$--discs into which $\cU$ decomposes, we see that $\|f\|_{sup} = \max_i\{\|a_i\|_{sup}\}$.
Hence $f \mapsto f^\sigma$ is injective and from the linear case we see that $f^\sigma$ has only finitely many zeros.  The result now follows from Proposition \ref{zeros} and the Nullstellensatz \ref{Null}.  A unit on a disc is necessarily a strong unit.
\end{proof}

\begin{lem}\label{thin} If $\cal U_\varphi$ is a thin $K$-annulus described by a good $K$-annulus formula
$$
\phi :=|p_0(x)|\leq\epsilon_0\wedge\epsilon_0 \leq |p_0(x)| \wedge \bigwedge\limits^n_{i=2}\epsilon_i\leq |p_i(x)|
$$
and $0 \neq f \in \cal O_K^\dag(\cal U_\phi)$, there is a monic $P(x)\in K[x]$ all of whose zeros lie in $\cal U_\phi$, a strong unit $E \in \cal O^\dag_K(\cal U_\phi)$ and integers $n_i$ such that
\begin{equation}\label{mlK1}
f=P(x)\cdot\prod^n_{i=1} p_i (x)^{n_i}\cdot E.
\end{equation}
Furthermore, this representation is unique.  The mapping $f \mapsto f^\sigma$ is injective.
\end{lem}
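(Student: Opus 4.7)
The plan is to reduce to the algebraically closed case via Theorem \ref{algextn} and then descend by Galois invariance. First, extend the analytic structure to $\Kalg$ and decompose $\cU_\phi$ as a disjoint union of linear thin $\Kalg$-annuli $\cU^{(j)}$, $j=1,\ldots,s$, as supplied by Lemma \ref{annulus_properties}(ii). Each $\cU^{(j)}$ has the form $\{x:|x-\alpha^{(j)}|=\epsilon \text{ and } |x-\beta_i^{(j)}|\geq\epsilon\text{ for all hole indices }i\}$, where $\alpha^{(j)}$ ranges over the $\Kalg$-roots of $p_0$ and the hole centers $\beta_i^{(j)}$ are $\Kalg$-roots of the $p_i$ (including an inner hole at $\alpha^{(j)}$ itself arising from the conjunct $\epsilon_0\leq|p_0(x)|$) lying at distance exactly $\epsilon$ from $\alpha^{(j)}$. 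The Galois group $G:=\mathrm{Gal}(\Kalg/K)$ permutes the $\cU^{(j)}$ transitively and its action on the holes is governed by the irreducible factorizations of the $p_i$ over $K$.

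The main technical step is a Mittag-Leffler decomposition on a single linear thin $\Kalg$-annulus $\cU$: for every nonzero $g\in\cO_{\Kalg}^\dag(\cU)$ there is a unique factorization $g=\tilde P(x)\cdot\prod_i(x-\beta_i)^{n_i}\cdot E$, where $\tilde P\in\Kalg[x]$ is monic with zeros in $\cU$, the $n_i\in\bZ$, and $E$ is a strong unit on $\cU$. To establish this, I would write $g$ using its canonical expansion in the variables $y=(x-\alpha)/\epsilon$, $y^{-1}$ (with $yy^{-1}=1$ forced by the ring structure, encoding the thinness $|y|=1$) and $z_i=\epsilon/(x-\beta_i)$; apply the Strong Noetherian Property (Corollary \ref{VSSNP}) to isolate a dominant slice after clearing the appropriate monomial in the $z_i$; then use Weierstrass Preparation together with Propositions \ref{zeros} and \ref{Null} to peel off the zeros of $g$ inside $\cU$ one orbit at a time. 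What remains has constant norm and a residue that satisfies a polynomial equation, so it is a strong unit. Uniqueness is immediate: any ratio of two such factorizations would be a strong unit on $\cU$ with no zeros and polynomial part $1$, hence equal to $1$.

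Finally, apply the decomposition to $f|_{\cU^{(j)}}$ for each $j$, obtaining $f|_{\cU^{(j)}}=P_j(x)\prod_i(x-\beta_i^{(j)})^{n_{i,j}}E_j$. Galois equivariance of $f$ combined with uniqueness forces $n_{i,j}$ to depend only on the $G$-orbit of $\beta_i^{(j)}$, that is, only on $i$ (call this common value $n_i$), and forces $\tau(P_j)=P_{\tau(j)}$ and $\tau(E_j)=E_{\tau(j)}$ for every $\tau\in G$. Setting $P(x):=\prod_j P_j(x)$ gives a $G$-invariant monic polynomial lying in $K[x]$ whose zeros all sit in $\cU_\phi$, and defining $E:=f/\bigl(P(x)\prod_i p_i(x)^{n_i}\bigr)$ produces a global function on $\cU_\phi$. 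On each $\cU^{(j)}$, $E$ equals $E_j$ multiplied by finitely many factors of the form $P_{j'}(x)^{-1}$ and $(x-\beta_i^{(j')})^{-n_i}$ for $j'\neq j$; each of these is a strong unit on $\cU^{(j)}$ because the corresponding centers lie outside $\cU^{(j)}$, so $E$ itself is a strong unit on $\cU_\phi$. Uniqueness of the final factorization and injectivity of $f\mapsto f^\sigma$ then drop out from the uniqueness on each $\Kalg$-component together with unique factorization in $K[x]$. The principal obstacle is the Mittag-Leffler step over $\Kalg$, since the classical rigid-analytic argument uses completeness of the ground field, which is unavailable here; the entire decomposition has to be extracted directly from the Weierstrass-system axioms.
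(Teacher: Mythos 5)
Your overall route---decompose $\cU_\phi$ into linear thin $\Kalg$-annuli, prove a Mittag-Leffler factorization there, and descend by Galois equivariance---is essentially the strategy the paper uses later for the general Theorem \ref{ML}, whereas Lemma \ref{thin} itself is proved directly over $K$. The problem is that the two places where your argument has real content are exactly where it is incomplete. First, your ``main technical step'' on a single linear thin $\Kalg$-annulus is only sketched, and the sketch tacitly assumes a canonical expansion in the several variables $y=(x-\alpha)/\epsilon$, $y^{-1}$ and $z_i=\epsilon/(x-\beta_i)$, i.e.\ a separation-of-variables/partial-fractions normal form. Such a normal form is available only for \emph{strong} Weierstrass systems (Definition \ref{stronggood}, Theorems \ref{MLL} and \ref{ML2}); for a general separated Weierstrass system it can fail (Example \ref{sepex}(9), Remark \ref{strongthin}), and the cross-relations $z_iz_j$ and $yz_i$ in the multi-hole linear case are exactly as troublesome as in the non-linear case, so passing to $\Kalg$ buys nothing here. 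The whole point of the paper's proof is to circumvent this: one replaces the $p_i$ by the single polynomial $p=\prod_i p_i$, so that the thin annulus is $\{x:|p(x)|=\epsilon\}$ with only the two generators $p/\epsilon$ and $\epsilon/p$, and then handles the single relation $\xi_0\xi_1=1$ by Weierstrass division by $\xi_0^2+\eta\xi_0-1$ in the variables $(\xi_0,\eta=\xi_0-\xi_1)$, obtaining a canonical representative $\sum_i b_i(x)(\xi_0-\xi_1)^i+\xi_0\sum_i c_i(x)(\xi_0-\xi_1)^i$ with $\deg b_i,\deg c_i<\deg p$, followed by a sup-norm ``no cancellation among biggest terms'' analysis, the Strong Noetherian Property to isolate a dominant rational part $R$ with $\|f-R\|_{\sup}<\|R\|_{\sup}$, Lemma \ref{MLdisc} on the finitely many exceptional discs, Proposition \ref{zeros}, the Nullstellensatz \ref{Null}, and Lemma \ref{lemthin} for the holes. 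Nothing in your proposal plays the role of this canonical form, and without it the step ``isolate a dominant slice \dots then use Weierstrass Preparation'' is not justified.

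Second, the injectivity of $f\mapsto f^\sigma$ does not ``drop out'' of uniqueness of the factorization: uniqueness concerns nonzero functions and says nothing about the kernel, i.e.\ the case $f^\sigma=0$ with $f$ a possibly nonzero element of the quotient ring $\cO^\dag_K(\phi)$. In the paper injectivity is precisely a by-product of the canonical representative together with the equality $\|f\|_{\sup}=\max_i\{\|b_i\|_{\sup},\|c_i\|_{\sup}\}$. If you insist on the $\Kalg$ route, you would need injectivity on each linear component (which again requires the canonical-form argument) combined with Proposition \ref{ext}(i), the faithful flatness of $\cO^\dag_K(\phi)\hookrightarrow\bigoplus_j\cO^\dag_F(\cU^{(j)})$, to descend it to $K$; you invoke neither. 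There are also smaller points you pass over---several roots of one $p_i$ may lie in the same hole, so one needs $(x-\alpha)^\ell=\prod_k(x-\beta_k)\cdot E'$ as in the paper, and one must check that your $E:=f/(P\prod_i p_i^{n_i})$ is an element of $\cO^\dag_K(\cU_\phi)$ (via Proposition \ref{zeros} and invertibility of the $p_i$) rather than merely a function---but these are fixable; the missing canonical form and the unproved injectivity are the genuine gaps.
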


\begin{proof}
Write
$$
f= \sum_{i,j,\nu}{a_{ij\nu}(x)\Big({p_0(x)\over\epsilon_0}\Big)^i\Big({\epsilon_0 \over p_0(x)}\Big)^j\Big({\epsilon\over p}\Big)^{\nu'}}
$$
where $\nu'=(\nu_2,\dots,\nu_n)$ is a multi-index and
$({\epsilon\over p})^{\nu'} = ({\epsilon_2\over p_2})^{\nu_2} \cdots ({\epsilon_n\over p_n})^{\nu_n}.$ (We are
implicitly assuming that $\epsilon_i\in |K|-\{0\}$. The general
case, with $\ell_i > 1$ is only notationally more cumbersome. We are also abusing notation by using $\epsilon_i$ for both an element of $|K|$ and an element of $K$ of that size.)
Let $p(x) := \prod_{i=1}^np_i(x)$, $\epsilon := \prod_{i=1}^n\epsilon_i$ and
$N := \sum_{i=1}^n n_i$, where $n_i$ is the degree of $p_i(x)$.  (We are taking $p_1(x) = p_0(x)$ and $\epsilon_1 = \epsilon_0$.)

Observe that $\cal U_{\phi}$ is also defined by the condition $|p(x)| = \epsilon$ and that
$p(x) \in \Ko[x]$ is monic.  (Outside the $K$--disc $\{x: |p_0(x)| \leq \epsilon_0\}$ we have $|p(x)| > \epsilon$
and in the holes of $\cal U_{\phi}$ we have $|p(x)| < \epsilon$.)  Hence, by Proposition \ref{phipsi} (in fact a small extension to the case that $\psi$ is presented by $|p(x)| = \epsilon$) we have that

$$
f = \sum_{ij}a_{ij}(x) \Big({p(x) \over \epsilon}\Big)^i \Big({\epsilon \over p(x)}\Big)^j.
$$
Dividing $\sum a_{ij}\xi_0^i\xi_1^j$ by $p(x) - \epsilon\xi_0$, which is regular in $x$ of degree $N$,
we may assume that the $a_{ij}(x)$ all have degrees $< N$.

This representation is far from canonical because of the terms
$({p(x)\over\epsilon})^i({\epsilon \over p(x)})^j$.  If our
Weierstrass system were \emph{strong} (cf. Section
\ref{Strongsystems}) we could work with the canonical representation
$f = f_1(x,{p\over\epsilon}) + {\epsilon\over p}f_2(x,{\epsilon\over
p})$, with
coefficients of degrees less than $N$. The proof is much easier in this case  --  see Remark \ref{strongthin}
below.   In the
absence of the strongness assumption, we proceed as follows to get a
canonical representation.  We have
$$
f\equiv \sum_{i,j}{a_{ij}(x)\xi_0^i\xi_1^j}  \text{ modulo }(\epsilon \xi_0 - p(x), \xi_1p(x) - \epsilon).
$$
Doing Weierstrass division by $\xi_0\xi_1 - 1$ in the variables $\xi_0$ and $\xi_0 - \xi_1$ (i.e. writing
$\eta:=  \xi_0 - \xi_1$, so $\xi_1 = \eta + \xi_0$ and $\xi_0\xi_1 - 1 = \xi_0^2 + \xi_0\eta -1$, which is regular in $\xi_0$ of degree $2$) we obtain
$$
f\equiv \sum_{i,}{b_{i}(x)(\xi_0-\xi_1)^i}  +   \xi_0 \sum_{i}{c_{i}(x)(\xi_0-\xi_1)^i}
$$
modulo $(\epsilon\xi_0 - p(x), \xi_1p(x) - \epsilon_0, \xi_0\xi_1 - 1)$, where the $b_{i}$ and $c_{i}$ have degrees less than $N$.
Next we observe that for representations of this form
$$
\|f\|_{sup} = max_{i,}\{\|b_{i}\|_{sup}, \|c_{i}\|_{sup}\},
$$
and that
$\|\cdot\|_{sup}$ is a multiplicative norm.
From the Strong Noetherian Property there are only finitely many ``biggest" terms.
Consider $|b_i(x)({p(x) \over \epsilon} - {\epsilon \over p(x)})^i|$ and
$|c_j(x){p(x) \over \epsilon}({p(x) \over \epsilon} - {\epsilon \over p(x)})^j|$ for $x$ ``just" outside the outer edge of $\cal U_{\phi}$ and ``just" inside the holes in $\cal U_{\phi}$, and the fact that the degrees of the $b_i, c_j$ are $< N$, to see that there cannot be any cancellation among ``biggest" terms.  This proves injectivity.

By the Strong Noetherian Property
and Corollary \ref{VSSNP} we can write
$$
f=\sum_{i \leq s}{b_{i}(x)\Big({p(x)\over \epsilon} - {\epsilon \over p(x)}\Big)^i(1 + g_{i})} +
 {p(x)\over\epsilon}\sum_{i \leq s}{c_{i}(x)\Big({p(x) \over \epsilon} - {\epsilon \over p(x)}\Big)^i(1 + g'_{i})}
$$
where the $g_{i}, g'_{i} \in \cal O^\circ_K(\cal U)$.

Let
$$
R:= \sum_{i \leq s}{b_{i}(x)\Big({p(x)\over \epsilon} - {\epsilon \over p(x)}\Big)^i} +
 {p(x)\over\epsilon}\sum_{i \leq s}{c_{i}(x)\Big({p(x) \over \epsilon} - {\epsilon \over p(x)}\Big)^i.}
$$
Then $\|f(x) - R(x)\|_{sup} <  \|R\|_{sup} = \|f\|_{sup}$, and hence, except on finitely many discs $\cU'_j$ contained in $\cU$ (around the zeros of $R$) we have for  $x \in \cU$ (i.e. for all $x \in \cU \setminus \bigcup_j \cU'_j$) that $|f(x) - R(x)| < |f(x)| = |R(x)| = \|f\|_{sup} =\|R\|_{sup}$.  Clearly, $f^\sigma$ is not identically zero on any of these discs $\cU'_j$. Hence, by Lemma \ref{MLdisc} $f^\sigma$ has only finitely many zeros in these discs.  Hence, $f$ has only finitely many zeros and by Proposition \ref{zeros} we may divide by a suitable polynomial $P(x) \in K[x]$ and reduce to the case that $f$ has no zeros and hence, by the Nullstellensatz (Proposition \ref{Null}) is a unit.  Since a unit on a disc is a strong unit, we see that now
$|f(x)| = \|f\|_{sup}$ for all $x \in \cU$.
Hence $R$ is also a unit, and we have that $f = R\cdot E$ for $E$ a unit in $\cal O_K(\cal U)$.  Since $|f(x) - R(x)| < |f(x)|$ for all $x \in \cU$, $E$ is a strong unit.
Note that $R(x)$ is a rational function with denominator of the form $\prod p_i(x)^{m_i}$.  The numerator has no zeros in $\cal U$, since $f$ is a unit.  Let $Q(x)$ be an irreducible factor of the numerator.  If one of (and hence all of) the zeros of $Q$ lies outside the disc $\{x: |p_0(x)| \leq \epsilon_0\}$, $Q$ is a strong unit on $\cal U$.   If one of (and hence all of) the zeros of $Q$ lies inside the hole  $\{x: |p_i(x)| < \epsilon_i\}$, then by iterated use of the following Lemma \ref{lemthin}, there is a strong unit $E'$ and an $\ell \in \bN$ such that
$Q(x) = (p_i(x))^{\ell} \cdot E'$, and this completes the proof of existence.  Uniqueness follows from the observation that $P$ is determined by the zeros of $f$ and that $\prod_i{p_i^{n_i}}$ is a strong unit only when $n_i = 0$ for all $i$.
\end{proof}

\begin{lem}\label{lemthin} Let $|p(x)| < \epsilon$ be a good description of a $K$--disc $\cal H$, and let $P(x) \in \Ko[x]$ have all its zeros in $\cal H$.  Then there is a strong unit $E$ on the annulus $\cal U = \{x: |p(x)| = \epsilon\}$ such that $P(x) = p(x)\cdot q(x)\cdot E$ for some $q(x) \in K[x]$ of degree less than that of $P(x)$.  Indeed, the conclusion is true on the whole annulus $\cal U'' = \{x: |p(x)| \geq \epsilon\}$.
\end{lem}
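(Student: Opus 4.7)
My plan is to take $q(x) := p(x)^{d/s-1}$ and $E(x) := P(x)/p(x)^{d/s}$, where $d := \deg P$ and $s := \deg p$. This requires first establishing $s \mid d$, then realizing $E$ as an element of $\cO_K^\dag(\cU'')$ and showing it is a strong unit.

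The divisibility $s \mid d$ follows from Galois geometry. By Lemma~\ref{annulus_properties}(i) together with the goodness of the description $|p(x)|<\epsilon$, the disc $\cH$ decomposes over $K_{alg}$ as a disjoint union of $s$ $K_{alg}$-discs $\cH_1,\dots,\cH_s$, one around each zero $\beta_j$ of $p$; moreover $\mathrm{Gal}(K_{alg}/K)$ permutes these components transitively, in tandem with its action on the $\beta_j$. Since the zero multiset $\{\alpha_1,\dots,\alpha_d\}$ of $P$ is Galois-stable (as $P\in K^\circ[x]$) and is contained in $\cH$, each $\cH_j$ contains the same number of $\alpha_i$'s, namely $d/s$; in particular $s\mid d$. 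Then $q := p^{d/s-1}\in K[x]$ has degree $d-s<d$.

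To realize $E = P/p^{d/s}$ as an element of $\cO_K^\dag(\cU'')$, I use the presentation $\cO_K^\dag(\cU'') = A_{2,0}^\dag(K)/(p(\xi_1)\xi_2 - a)$ with $|a|=\epsilon$, under which $a/p(\xi_1)$ corresponds to $\xi_2$; writing $E = a^{-d/s} P(\xi_1) \xi_2^{d/s}$ exhibits $E$ in this quotient. For strong unithood, I work over $K_{alg}$: pair each $\alpha_i$ with the unique zero $\beta_{j(i)}$ of $p$ in the same component $\cH_{j(i)}$ as $\alpha_i$. Since each $\beta_j$ is paired with exactly $d/s$ of the $\alpha_i$'s, rearranging the factorizations of numerator and denominator yields
\[
E(x) \;=\; c_P \prod_{i=1}^d \frac{x-\alpha_i}{x-\beta_{j(i)}},
\]
where $c_P$ is the leading coefficient of $P$. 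For any $x\in\cU''$, both $\alpha_i$ and $\beta_{j(i)}$ lie in the $K_{alg}$-disc $\cH_{j(i)}$ whereas $x$ does not, so $|\alpha_i-\beta_{j(i)}| < |x-\beta_{j(i)}|$; thus each factor equals $1 - (\alpha_i-\beta_{j(i)})/(x-\beta_{j(i)})$, with absolute value and residue both $1$. Multiplying, $|E(x)| = |c_P|$ is constant on $\cU''$ and $(E(x)/c_P)^\sim = 1$, so $E$ is a strong unit by Definition~\ref{strongunit} (take $\ell=1$, $c=c_P$).

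Combining these steps yields $P = p\cdot q\cdot E$ with $q\in K[x]$ of degree $d-s<d$ and $E$ a strong unit on $\cU''$ (and hence on $\cU\subset\cU''$). The main obstacle is verifying the strong unit property for $E$: this depends crucially on the geometric fact that each $K_{alg}$-component of $\cH$ contains exactly one zero of $p$, which is what forces both the divisibility $s\mid d$ and the exact matching of factors that makes $|E|$ constant.
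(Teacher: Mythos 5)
Your construction rests entirely on the claim that goodness of the description $|p(x)|<\epsilon$ forces $\cH$ to split over $\Kalg$ into exactly $s=\deg p$ discs, one around each zero of $p$: the divisibility $s\mid d$, the pairing $\alpha_i\leftrightarrow\beta_{j(i)}$, and the very definition $E=P/p^{d/s}$ are all derived from it. That claim is not what Lemma \ref{annulus_properties}(i) gives (it produces an $\epsilon$ for a given $\delta$, not a decomposition for a given $\epsilon$), and it is false in positive residue characteristic --- which matters here, since everything in Section \ref{annuli} is for all characteristics and Lemma \ref{lemthin} is used in the proofs of Lemmas \ref{thin}, \ref{almostthin} and Proposition \ref{MLalgcl}; the ``one zero per component'' consequence of goodness is exactly the equicharacteristic-zero centroid argument of Section \ref{gen2}, where it is noted that the general case takes more work. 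Concretely, take $K=\bQ_p$ and $p(x)=x^p-p$, with zeros $\beta\zeta_p^j$, $|\beta|=p^{-1/p}$, pairwise at distance $\delta_0=p^{-1/p-1/(p-1)}$. For any $\epsilon$ with $\delta_0^p<\epsilon<p^{-1}$ (e.g.\ $\epsilon=p^{-3/2}$), the set $\cH=\{x:|p(x)|<\epsilon\}$ is a single $\Kalg$-disc of radius $\epsilon^{1/p}$ containing all $p$ zeros of $p$; and this description is good, because every $x\in\cH$ has $|x|=p^{-1/p}$, so $p$ divides the ramification index and $[K(x):K]\ge p$, while any monic polynomial cutting out $\cH$ must have a zero in $\cH$ (otherwise its absolute value is constant on a strictly larger disc and the described set is strictly larger). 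So a good $p$ may have several (here all) of its zeros in one component; the transitivity/equidistribution count giving $s\mid d$ collapses (in this example $s\mid d$ survives for ramification reasons, but your argument does not establish it, and I see no reason for it in general), and with it your factorization.

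By contrast, the paper's proof is characteristic-free and needs no geometry of components: write $P=pq+r$ by Euclidean division with $\deg r<\deg p$, so $r$ is zero or a strong unit on $\cH$; since $\cH$ is also described by $|P(x)|<\epsilon'$ (goodness gives $\deg P\ge \deg p$), evaluating at a zero $\alpha$ of $p$ gives $|r(\alpha)|=|P(\alpha)|<\epsilon'$, whence $\supnorm{r}{\cU}<\epsilon'$ while $|P|\ge\epsilon'$ on $\cU$; hence $E:=P/(P-r)=P/(pq)$ is a strong unit on $\cU$ and $P=p\,q\,E$. A smaller point: even in equicharacteristic zero, where your geometric input is valid and the construction can be pushed through, Definition \ref{strongunit} requires $E^\sigma$ to be a unit of the ring of analytic functions; your pointwise computation of $|E|$ and of its residue does not address invertibility in $\cO^\dag_K(\cU'')$ (it can be supplied, e.g.\ via the Nullstellensatz, since $P$ has no zeros on $\cU''$), so that step would also need a word.
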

\begin{proof}It is sufficient to consider $P$ irreducible.  By Euclidean division
$$
P(x) = p(x)\cdot q(x) + r(x),
$$
where degree $r(x)$ $<$ degree $p(x)$.  Hence either $r(x) = 0$ or $r(x)$ is a strong unit on $\cal H$.  $\cal H$ is also described by $|P(x)| < \epsilon' \leq \epsilon$, since $\deg P(x) \geq \deg p(x)$.
Let $\alpha$ be a zero of $p(x)$.  Then
$\epsilon \geq \epsilon' > |P(\alpha)| = |r(\alpha)|$.  So $\|r\|_{sup_{\cal H}} < \epsilon'$, where
$\|\cdot\|_{sup_{\cal H}}$ is the supremum norm on $\cal H$.  Hence also $\|r\|_{sup_{\cal U}} < \epsilon'$, where
$\|\cdot\|_{sup_{\cal U}}$ is the supremum norm on $\cal U$.  Hence $E:= {P(x) \over P(x) - r(x)}$ is a strong unit on $\cal U$, i.e.
\begin{eqnarray*}
P(x) &=& [P(x) - r(x)] E\\
&=& p(x) q(x)  E.\\
\end{eqnarray*}
Since the final conclusion of the Lemma is not used, we leave its proof to the reader.
\end{proof}

\begin{rem}\label{strongthin}  As remarked in the proof of Lemma \ref{thin}, the proof is simpler when the Weierstrass system is strong.  In that case we have that $f = f_1(x,{p\over\epsilon}) + {\epsilon\over p}f_2(x,{\epsilon \over p})$.
Multiplying by s suitable power of ${\epsilon\over p}$, dividing by a strong unit (and using the Strong Noetherian Property), we may assume that $f$ is regular in ${\epsilon\over p}$, of degree $s$, say.  Hence,
by Weierstrass Preparation, multiplying by a strong unit $E$, we are reduced to the case that
$E\cdot p^s \cdot f =  f_1(x,{p\over\epsilon})$.  This case is the same as that of a disc, handled in Lemma \ref{MLdisc}.
\end{rem}

We do not use the following

\begin{cor}\label{inj}  We showed above that when $\cU$ is thin, and when $\cU$ is a $K$--disc, the mapping $f \mapsto f^\sigma$ for $f \in \cO(\cU)$ is injective.  The similar result, with a similar proof, also holds for simple Laurent annuli.
\end{cor}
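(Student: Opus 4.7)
The plan mirrors the proof of Lemma \ref{thin}, adapted to the simple Laurent setting. Write a simple Laurent $K$-annulus as $\cU = \{x \in \Kalgo : \epsilon_1 < |p(x)| < \epsilon_0\}$ with $p \in K[x]$ monic irreducible and with exponents $\ell_0, \ell_1$ from a good description. The canonical generators are $\rho_1 := p^{\ell_0}/a_0$ and $\rho_2 := a_1/p^{\ell_1}$, both $\rho$-type variables (so $|\rho_i|<1$ on $\cU$), satisfying the relation $\rho_1^{\ell_1}\rho_2^{\ell_0} = a_1^{\ell_0}/a_0^{\ell_1} =: c$ with $|c|<1$ because $\epsilon_1<\epsilon_0$.

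First, I would write $f \in \cO^\dag_K(\cU)$ as $f = \sum_{i,j \geq 0} a_{ij}(x)\rho_1^i\rho_2^j$ and use Weierstrass division by $p^{\ell_0}(x)-a_0\rho_1$ (regular in $x$ of degree $\ell_0\deg p$) to arrange $\deg a_{ij} < N := \ell_0 \deg p$. Using the relation $\rho_1^{\ell_1}\rho_2^{\ell_0}=c$, together with a Weierstrass substitution analogous to $\eta = \xi_0 - \xi_1$ from Lemma \ref{thin} (for instance $\eta = \rho_1 - \rho_2$ when $\ell_0 = \ell_1 = 1$, which makes $\rho_1\rho_2 - c$ regular in $\rho_2$ of degree $2$), reduce $f$ to a canonical Mittag--Leffler form of shape $f_0(x) + f_+(x,\rho_1) + f_-(x,\rho_2)$, where $f_\pm$ have no constant $\rho$-term and all $x$-coefficients have degree $<N$.

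Second, the crucial computation: on the ``circle'' $\{x : |p(x)|=\epsilon\}$ for $\epsilon \in (\epsilon_1,\epsilon_0)$ one has $|\rho_1(x)|=(\epsilon/\epsilon_0)^{\ell_0}$ and $|\rho_2(x)|=(\epsilon_1/\epsilon)^{\ell_1}$, so $\rho_1^i$-contributions maximize as $\epsilon\to\epsilon_0^-$ while $\rho_2^j$-contributions maximize as $\epsilon\to\epsilon_1^+$. Applying the Strong Noetherian Property (Corollary \ref{VSSNP}) to isolate finitely many ``biggest'' terms in the canonical form and probing along both boundaries as in Lemma \ref{thin}, one concludes that no cancellation among biggest terms is possible. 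This gives $\|f\|_{\sup,\cU} = \max$ of the coefficient sup-norms, and hence injectivity of $f\mapsto f^\sigma$.

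The hardest step is the non-cancellation argument itself, which requires the same combinatorial care as in Lemma \ref{thin} (including possibly invoking Lemma \ref{MLdisc} on discs around zeros of $p$ to rule out zeros of the finitely many biggest coefficient-polynomials in those discs). A shorter but less self-contained route is to pick any $\epsilon\in\sqrt{|K\setminus\{0\}|}\cap(\epsilon_1,\epsilon_0)$ (available by density), note that $\cU_\epsilon:=\{x\in\Kalgo : |p(x)|=\epsilon\}$ is a thin $K$-annulus contained in $\cU$, invoke Proposition \ref{phipsi} to obtain an injection $\cO^\dag_K(\cU)\hookrightarrow\cO^\dag_K(\cU_\epsilon)$, and then apply Lemma \ref{thin} to $\cU_\epsilon$ directly.
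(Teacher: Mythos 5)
Your main route is the one the paper intends by ``a similar proof'': exactly as in Lemma \ref{thin} (and as carried out for linear Laurent annuli in Lemma \ref{linearLaurent}), bound the coefficient degrees by Weierstrass division by $p^{\ell_0}(x)-a_0\rho_1$, pass to a canonical form using the relation between the two generators, isolate finitely many biggest terms with the Strong Noetherian Property (Corollary \ref{VSSNP}), and rule out cancellation by probing near the two edges $|p(x)|\to\epsilon_0$ and $|p(x)|\to\epsilon_1$, using that the coefficient degrees are $<N$. That is the intended argument, and your ``crucial computation'' with $|\rho_1(x)|=(\epsilon/\epsilon_0)^{\ell_0}$, $|\rho_2(x)|=(\epsilon_1/\epsilon)^{\ell_1}$ is the right mechanism.

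Two caveats, the first of which you should repair and the second of which you should drop. First, the intermediate form you name, $f_0(x)+f_+(x,\rho_1)+f_-(x,\rho_2)$, is the separated Mittag--Leffler splitting, which is precisely what can \emph{fail} for a Weierstrass system that is not strong (see Example \ref{sepex}(9) and the parenthetical warning in Theorem \ref{ML2}(i)); Corollary \ref{inj} is stated without strongness. What the substitution $\eta=\rho_1-\rho_2$ together with Weierstrass division by $\rho_2^2+\eta\rho_2-c$ actually yields is the form $\sum_i b_i(x)(\rho_1-\rho_2)^i+\rho_1\sum_i c_i(x)(\rho_1-\rho_2)^i$, as in Lemmas \ref{thin} and \ref{linearLaurent}, and the edge-probing/no-cancellation argument must be (and can be) run on that form; monomials then have pairwise distinct growth rates in $\epsilon$ near the two edges because their $p$-degrees differ by multiples of $N$ while the coefficients have degree $<N$. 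Second, the proposed shortcut through Proposition \ref{phipsi} is not sound as stated: that proposition produces a natural homomorphism $\cO^\dag_K(\cU)\to\cO^\dag_K(\cU_\epsilon)$ (via strong power boundedness and Proposition \ref{stronglybounded}), but its injectivity is nowhere established, and it is a statement of the same nature as the injectivity of $f\mapsto f^\sigma$ that this subsection only proves in special cases. Indeed, if restriction to a smaller annulus were known to be injective in general, then, since every $K$-annulus contains a thin $K$-annulus, composing with Lemma \ref{thin} would give injectivity of $f\mapsto f^\sigma$ on \emph{every} $K$-annulus, contradicting the paper's explicit remark after Definition \ref{def:holo} that this is not known outside the strong case (Theorem \ref{ML2}).
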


We include the following Proposition and its Corollary for
completeness - we do not use it except in the proof of Theorem \ref{ML2}.  The slightly weaker version of that theorem in which $f$ is replaced by $f^\sigma$ does not require Proposition \ref{ext}.

\begin{prop} \label{ext}Let $\varphi$ be a $K$-annulus formula, and let $F$ be an algebraic extension of $K$ over which all the polynomials in $\varphi$ split and containing the $a_i$ (as above $a_i^{\ell_i} = \epsilon_i$).
Then
\begin{itemize}
\item[(i)]$\cal O_K^\dag(\varphi)\hookrightarrow\cal O_K^\dag(\varphi)\otimes_K F$ is a faithfully flat extension.
\item[(ii)]$\cal O_K^\dag (\varphi)\otimes_K F\simeq\bigoplus\limits^N_{i=1}\cal O_F^\dag(\cal U_i)$ where the $\cal U_i$ are the linear $K_{alg}$ annuli which make up $\cal U_\varphi$ (see Lemma 5.2(ii)).
\end{itemize}
\end{prop}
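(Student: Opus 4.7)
The plan is as follows. Part (i) is formal. Choose a $K$-basis of $F$ containing $1$, so that $F = K \oplus F'$ as $K$-modules; writing $R := \cO_K^\dag(\varphi)$, we get $R \otimes_K F = R \oplus (R \otimes_K F')$ as $R$-modules, whence the natural map $R \to R \otimes_K F$ is injective and $R \otimes_K F$ is a free (hence faithfully flat) $R$-module of rank $[F:K]$.

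For Part (ii), I would start from the defining presentation $\cO_K^\dag(\varphi) = A_{m+1,n}^\dag(K)/J$ with $J$ generated by the relations of Definition \ref{def:holo}. Since $F$ contains all the roots $\alpha_{ij}$ of each $p_i$ and all the $a_i$, we have $p_i(x) = \prod_j(x - \alpha_{ij})$ over $F$. The disjointness of the holes $\cH_i$ in the good $K$-annulus formula $\varphi$ forces the roots $\alpha_{ij}$ to cluster into well-separated groups, one per linear $K_{alg}$-annulus $\cU_k$ in the decomposition of Lemma \ref{annulus_properties}(ii); each such $\cU_k$ is in fact an $F$-annulus by the hypothesis on $F$. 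On each $\cU_k$, exactly one factor $(x - \alpha_{ij})$ governs the size of $p_i$ while $\prod_{j' \neq j}(x - \alpha_{ij'})$ is a strong unit there, so the relation $p_i^{\ell_i}(x) z_i - a_i$ becomes equivalent to a linear relation describing $\cU_k$. Next, I would construct orthogonal idempotents $e_k \in \cO_K^\dag(\varphi) \otimes_K F$ (one per $\cU_k$) as explicit rational functions of $x$ whose zeros and poles separate the clusters of $\alpha_{ij}$; the natural restriction map $\cO_K^\dag(\varphi) \otimes_K F \to \bigoplus_k \cO_F^\dag(\cU_k)$ then decomposes via the $e_k$ and is checked to be an isomorphism using Corollary \ref{canon2} on each component.

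The hard part will be the identification of $A_{m+1,n}^\dag(K) \otimes_K F$ inside $A_{m+1,n}^\dag(F)$ and the precise tracking of $J \otimes_K F$: these two rings are not literally equal, since $A_{m+1,n}^\dag(F)$ also contains specializations at $F$-points outside $K$ that the tensor product does not directly provide. This is handled by invoking Theorem \ref{algextn}, which shows the analytic structure on $F$ is uniquely determined by that on $K$, so all the Weierstrass data (preparations, divisions, strong units) used in the factorization and in the idempotent construction live naturally in the tensor product. A secondary complication arises when some $p_i$ is inseparable over $K$, where the $\alpha_{ij}$ coincide with multiplicities and the CRT-style argument must be modified; this is circumvented by first passing through the maximal separable subextension of $F/K$, applying the above strategy there, and treating the remaining purely inseparable step by a direct analysis of the Weierstrass factorizations.
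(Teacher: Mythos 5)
Your argument for (i) is fine: freeness of $F$ as a $K$-module gives injectivity and faithful flatness at once. The genuine gap is in (ii), at the step you rely on most: the orthogonal idempotents decomposing $\cO^\dag_K(\varphi)\otimes_K F$ cannot be \emph{rational functions of $x$}. An idempotent effecting the stated decomposition must induce the characteristic function of a component $\cU_k$, i.e.\ take only the values $0$ and $1$ on the infinite sets $\cU_1,\dots,\cU_N\subset\Kalg$, and a nonconstant rational function cannot do that; so the construction fails as written, and nothing else in your sketch produces the splitting. One also cannot conjure the idempotents by a limit/Hensel iteration in the abstract ring, because $\cO^\dag_K(\varphi)$ is not complete here. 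The mechanism that actually yields the decomposition -- and this is the paper's route -- is an explicit Weierstrass--Hensel factorization of the defining relations themselves: for $g=\prod_i(x-\alpha_i)$ with the $\alpha_i$ pairwise at distance $1$ (after rescaling), one shows that in $\cO^\dag_F$ one has $g(\xi_1)-\epsilon z_1=\prod_i(\xi_1-\alpha_i-h_i)$ with each $h_i$ a small analytic element, by Taylor-expanding at $\alpha_i$ and applying Weierstrass Preparation; the factors are pairwise comaximal, so the quotient splits (this is where the idempotents really come from), and (ii) follows by induction on the number of holes together with rescaling. Corollary \ref{canon2} cannot substitute for this: it only compares rings attached to nested annuli and gives neither surjectivity of the restriction map onto $\bigoplus_k\cO^\dag_F(\cU_k)$ nor any splitting.

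Two secondary points. First, your appeal to Theorem \ref{algextn} to ``identify $A^\dag_{m+1,n}(K)\otimes_K F$ inside $A^\dag_{m+1,n}(F)$'' does not by itself place the needed Weierstrass data (the corrections $h_i$, the units) in the tensor product; that has to come out of the same explicit factorization of the relations $p_i^{\ell_i}-a_iz_i$, which is generated over $K$. Second, your picture that on each $\cU_k$ ``exactly one factor $(x-\alpha_{ij})$ governs the size of $p_i$'' is not accurate in general (a single $\Kalg$-hole may contain several roots of $p_i$), and the detour through the maximal separable subextension is not needed: what matters in the factorization lemma is only that distinct roots lie in distinct residue discs, with repeated factors handled by the same division/induction scheme. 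Neither of these is fatal, but the missing factorization step is, since it is precisely the content that replaces completeness in this setting.
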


\begin{proof}Since $A_{m,n}(F)$ is an integral extension of $A_{m,n}(K)$, (i) is immediate.
Part (ii) follows by induction and rescaling from the following
Lemma.
\end{proof}

\begin{lem}Let $g=\prod^n_{i=1} (x-\alpha_i)\in F[x]$ and assume for all $i,j$ that $|\alpha_i|=1$, $|\alpha_i-\alpha_j|=1$ if $i\neq j$ and that on $\cal U_\phi$, $|g(x)|\square\epsilon$, where either $\square$ is $\leq$ and $\epsilon < 1$ or $\square$ is $<$ and $\epsilon\leq 1$.
Then in $\cal O_F^\dag(\varphi)$ (indeed in $\cal O_F^\dag(\varphi_0)$)
$$
g(\xi_1)-\epsilon z_1=\prod^n_{i=1} (\xi_1-\alpha_i-h_i)
$$
where $h_i\in A^\circ_{2,0}$ or $A^\circ_{1,1}$
(according as $\square$ is $\leq$ or $<$).
\end{lem}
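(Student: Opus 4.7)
The plan is to peel off the linear factors one at a time using Weierstrass Preparation (Remark~\ref{weierstrass_preparation}(ii)); the key observation is that the hypothesis $|\alpha_i-\alpha_j|=1$ ensures each $\bar\alpha_i$ is a simple root modulo $A^\circ$, so the relevant derivative is a unit and a degree-$1$ preparation applies. I treat both cases uniformly: let $z_1$ denote the adjoined variable ($\xi_2$ when $\square$ is $\le$, $\rho_1$ when $\square$ is $<$). In either case $\epsilon z_1\in A^\circ_{2,0}$ (resp.\ $A^\circ_{1,1}$): when $\square$ is $\le$ because $|\epsilon|<1$ forces $\epsilon\in F^{\circ\circ}$, and when $\square$ is $<$ because $z_1=\rho_1$ itself lies in $A^\circ$. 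Proceed by induction on $n=\deg g$. The base case $n=1$ is immediate: $g(\xi_1)-\epsilon z_1=\xi_1-\alpha_1-\epsilon z_1$, so take $h_1:=\epsilon z_1$.

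For the inductive step, first produce $h_1\in A^\circ_{0,1}\subset A^\circ_{1,1}$ (resp.\ $A^\circ_{1,0}\subset A^\circ_{2,0}$) with $g(\alpha_1+h_1)=\epsilon z_1$. Introduce a fresh $\rho$-variable $\rho'$, work in $A_{0,2}$ (resp.\ $A_{1,1}$), and write $g(\alpha_1+\rho')=\rho'\cdot q(\rho')$ where $q(\rho')=\prod_{j\ge 2}(\rho'+\alpha_1-\alpha_j)\in F^\circ[\rho']$. By hypothesis $|q(0)|=|g'(\alpha_1)|=1$, so $q(0)$ is a unit of $F^\circ$ (Remark~\ref{unit}), and
\[
\frac{g(\alpha_1+\rho')-\epsilon z_1}{q(0)} \;=\; \rho' + c_2(\rho')^2 + \cdots + c_n(\rho')^n \;-\; \frac{\epsilon z_1}{q(0)},
\]
with $c_i\in F^\circ$, is regular in $\rho'$ of degree~$1$ in the sense of Definition~\ref{regular}(ii): the $\rho'$-free coefficient equals $-\epsilon z_1/q(0)\in A^\circ$ (using $\epsilon z_1\in A^\circ$ and dividing by the unit $q(0)$), while the coefficient of $\rho'$ equals~$1$. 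Weierstrass Preparation thus yields a unit $U$ and $h_1\in A^\circ_{0,1}$ (resp.\ $A^\circ_{1,0}$) with
\[
\frac{g(\alpha_1+\rho')-\epsilon z_1}{q(0)} \;=\; U(\rho',z_1)\,(\rho'-h_1(z_1));
\]
note that $h_1$ depends only on $z_1$, since no coefficient on the left involves $\xi_1$. Substituting $\rho'=h_1$ (valid because $h_1\in A^\circ$) gives $g(\alpha_1+h_1)=\epsilon z_1$.

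Euclidean division in $A_{0,1}[\xi_1]$ (resp.\ $A_{1,0}[\xi_1]$) of the $\xi_1$-monic polynomial $g(\xi_1)-\epsilon z_1$ by $(\xi_1-\alpha_1-h_1)$ has remainder $g(\alpha_1+h_1)-\epsilon z_1=0$, giving $g(\xi_1)-\epsilon z_1=(\xi_1-\alpha_1-h_1)\,Q_1(\xi_1)$ with $Q_1$ monic of degree $n-1$. To iterate, observe that $Q_{i-1}$ reduces modulo $A^\circ$ to $\prod_{j\ge i}(\xi_1-\bar\alpha_j)$, so $Q_{i-1}(\alpha_i)\in A^\circ$ while $Q_{i-1}'(\alpha_i)$ reduces to $\prod_{j>i}(\bar\alpha_i-\bar\alpha_j)\neq 0$ and is hence a unit by Remark~\ref{unit}; dividing $Q_{i-1}(\alpha_i+\rho')$ by this unit produces an element regular in $\rho'$ of degree~$1$, and the preceding Weierstrass Preparation argument supplies the next root $h_i$ and quotient $Q_i$. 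The main obstacle is precisely the verification of this degree-$1$ regularity at each step; it relies on the simplicity of the $\bar\alpha_i$ in $\tilde F$ and thus on the hypothesis $|\alpha_i-\alpha_j|=1$. After $n$ steps one obtains the identity $g(\xi_1)-\epsilon z_1=\prod_{i=1}^n(\xi_1-\alpha_i-h_i)$ in $A^\dag_{2,0}(F)$ (resp.\ $A^\dag_{1,1}(F)$), and hence also in $\cO^\dag_F(\varphi_0)\subseteq\cO^\dag_F(\varphi)$.
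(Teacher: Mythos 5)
Your proof is correct and takes essentially the paper's approach: translate the variable to each $\alpha_i$, use $|\alpha_i-\alpha_j|=1$ to see that the linear coefficient (the derivative) is a unit, split off a degree-one factor by Weierstrass Preparation, and peel off the factors by Euclidean division. The only notable difference is that the paper (in the case $\square$ is $\leq$) substitutes $x=\alpha_1+\epsilon H$ with $H$ a new $\xi$-variable, so its root appears explicitly as $\alpha_1+\epsilon A$, whereas your unscaled $\rho$-variable shift handles both cases uniformly but yields only $h_i\in A^\circ$ --- which is all the statement asks, and indeed the sharper fact $h_i=\epsilon z_1\cdot(\mathrm{unit})$ can be read off from your factorization since $g(\alpha_i)=0$ forces $Q_{i-1}(\alpha_i)$ to be $\epsilon z_1$ times a unit.
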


\begin{proof}As above $z_1={g(\xi_1)\over\epsilon}$ is strongly power bounded (resp.~topologically nilpotent).
Consider the case that $\square$ is $\leq$. The other case is
similar. Let $H$ be a new type 1 variable.  By Taylor's theorem we have
\begin{eqnarray*}
(g - \epsilon z_1)(\alpha_1 + \epsilon H) &=& g(\alpha_1) - \epsilon
z_1 + g'(\alpha_1)\epsilon H
 +{g''(\alpha_1)\over {2!}} {(\epsilon H)^2}   + \cdots\\
&=& \epsilon z_1 +g'(\alpha_1)\epsilon H  + {g''(\alpha_1) \over {2!}}{(\epsilon H)^2}  + \cdots\\
&=& \epsilon [z_1 +  g'(\alpha_1)H +  {g''(\alpha_1) \over {2!}} {\epsilon H^2} + \cdots]\\
\end{eqnarray*}
observe that $|g'(\alpha_1)|=1$, hence by Weierstrass Preparation
there is an $A\in\cal O_F(\varphi_0)$ such that
$$
(g-\epsilon z_1) (\alpha_1+\epsilon H)=g'(\alpha_1) [H-A] Q.
$$
Then $\alpha_1+\epsilon A$ is the required zero of $g-\epsilon z_1$.
\end{proof}

\subsection{Linear $K$--annuli}\label{linear} In this subsection we prove some basic results
(in particular Proposition \ref{MLalgcl}) for \emph{linear} $K$--annuli.  In subsequent subsections we will extend several of these results to general $K$--annuli.
The following lemma, which is a special case of Lemma \ref{MLdisc}, is immediate by Weierstrass Preparation.
\begin{lem}\label{disc} Let $\cU$ be a linear $K$--disc and let $f(x) \in \cO(\cU)$.  There is a polynomial $P(x) \in K[x]$ all of whose zeros lie in $\cU$ and a strong unit $E \in \cO(\cU)$ such that
$$
f(x) = P(x) E(x).
$$
\end{lem}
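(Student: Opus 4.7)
The plan is to reduce to the case of the (closed or open) unit disc by an affine change of variable, and then invoke Weierstrass Preparation directly. First I fix a good description $|x-\alpha|\,\Box\,\epsilon$ of $\cU$ with $\alpha\in K^\circ$, $\epsilon\in|K\setminus\{0\}|$ and $\Box\in\{\le,<\}$, choose $a\in K$ with $|a|=\epsilon$, and substitute $y:=(x-\alpha)/a$. This identifies $\cO^\dag_K(\cU)$ with $A^\dag_{1,0}(K)$ in the closed case and with $A^\dag_{0,1}(K)$ in the open case, $y$ playing the role of $\xi_1$ or $\rho_1$. Hence it suffices to prove the statement with $\cU$ equal to the closed (resp.\ open) unit disc of $\Kalg$ and $f$ an element of the corresponding power-series ring.

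Assume $f\neq 0$ (the case $f=0$ is trivial with $P=0$). By the one-variable Strong Noetherian Property for $\cA(K)$ (Corollary~\ref{VSSNP}) the gauss-norm $\|f\|$ is defined, so after multiplying by a scalar $\lambda\in K^\times$ and then by a unit of $K^\circ$ with appropriate residue, I may arrange that $\lambda f$ is regular in $y$ of degree $s$ in the sense of Definition~\ref{regular}. Concretely: in the closed case $s$ is the largest index whose coefficient has unit norm and this leading coefficient is normalized to residue $1$; in the open case $s$ is the smallest such index, giving $\lambda f\equiv y^s$ modulo $(K^{\circ\circ},y^{s+1})$.

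Weierstrass Preparation (Remark~\ref{weierstrass_preparation}) now yields a unit $u$ in $A_{1,0}(K)$ or $A_{0,1}(K)$ together with a unique monic polynomial $P_0\in K^\circ[y]$ of degree $s$ with $\lambda f = u\cdot P_0$. By the uniqueness clause, in the open case $P_0$ is itself regular in $\rho_1$ of degree $s$, so all its non-leading coefficients lie in $K^{\circ\circ}$. The standard ultrametric root-size estimate $|\beta|\le\max_i|B_i|^{1/i}$ applied to a root $\beta$ of $P_0$ then gives $|\beta|\le 1$ in the closed case and $|\beta|<1$ in the open case, so every root of $P_0$ lies in $\cU$. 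Undoing the affine change of variable converts $P_0$ into a polynomial $P\in K[x]$ whose zeros all lie in $\cU$.

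Finally, by Remark~\ref{unit} the unit $u$ has the form $c+g$ with $c\in K^\circ$ a unit of $K^\circ$ (so $|c|=1$) and $g$ in the ideal $A_{1,0}(K)^\circ$ (resp.\ $A_{0,1}(K)^\circ$); hence $|u(x)|=|c|$ is constant on $\cU$ and $(u(x)/c)^{\sim}=1$ there, so $E:=\lambda^{-1}u$ is a (very) strong unit on $\cU$ in the sense of Definition~\ref{strongunit}, and $f=P\cdot E$ as required. No genuine obstacle arises: the step requiring the most care is just to reconcile the two different regularity conventions of Definition~\ref{regular} (the $\xi$-case versus the $\rho$-case) so as to read off the correct size bounds on the non-leading coefficients of $P_0$ and thereby place its zeros inside $\cU$.
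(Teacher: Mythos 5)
Your proof is correct and follows essentially the same route as the paper, which disposes of the linear-disc case in one line: identify $\cO_K(\cU)$ with $A_{1,0}(K)$ (closed case) or $A_{0,1}(K)$ (open case) via $y=(x-\alpha)/a$ and apply Weierstrass Preparation, the unit being automatically a strong unit and the monic factor having its roots in the disc. You have simply spelled out the normalization, the two regularity conventions, and the root-size estimate that the paper leaves implicit.
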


The case of a linear Laurent annulus is a little more complicated.

\begin{lem} \label{linearLaurent} Let $\cU$ be a linear Laurent $K$--annulus described by an annulus formula $\epsilon_1 < |x -
\alpha| < \epsilon_0$ and let $0 \neq f(x) \in \cO(\cU)$.  There is a monic polynomial $P(x)$ all of whose zeros lie in $\cU$,  an $n \in \bN$ and a strong unit $E \in \cO(\cU)$ such that
$$
f = P(x)(x-\alpha)^nE.
$$
\end{lem}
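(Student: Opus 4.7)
After substituting $y = x - \alpha$, I reduce to $\alpha = 0$, so $\cU = \{y : \epsilon_1 < |y| < \epsilon_0\}$ and
\[
\cO^\dag_K(\cU) \;\cong\; A^\dag_{0,2}(K) \big/ (\rho_1 \rho_2 - a_1/a_0),
\]
where $\rho_1 = y/a_0$, $\rho_2 = a_1/y$, and $c := a_1/a_0 \in K^{\circ\circ}$. Using the relation $\rho_1\rho_2 = c$ repeatedly to eliminate mixed monomials, every $f \in \cO^\dag_K(\cU)$ can be put in the ``Laurent form''
\[
f = \sum_{i \geq 0} A_i\,(y/a_0)^i + \sum_{j > 0} B_j\,(a_1/y)^j, \qquad A_i, B_j \in K^\circ.
\]

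Following the proof of Lemma~\ref{thin}, I then apply the Strong Noetherian Property (Corollary~\ref{VSSNP}) to extract a finite dominant Laurent polynomial $R(y)$ with $f = R(y) + f_{\mathrm{small}}$ and $|f_{\mathrm{small}}| < |R|$ on $\cU$ away from the (finitely many) zeros of $R$. Thus $f/R$ is a very strong unit outside a neighborhood of these zeros; near each zero, Lemma~\ref{MLdisc} applied to a small closed $K$-disc cuts out only finitely many zeros of $f$. So $f$ has only finitely many zeros in $\cU$. Dividing these out via Proposition~\ref{zeros}, I obtain $f = P(y)\,h(y)$ with $P \in K[y]$ monic whose zeros lie in $\cU$ and with $h$ having no zeros in $\cU$; arguing as in the proof of Lemma~\ref{thin}, $h$ is a unit in $\cO^\dag_K(\cU)$.

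It remains to show every unit $h \in \cO^\dag_K(\cU)$ has the form $h = c\,y^n(1+g)$ for some $c \in K$, $c \neq 0$, some integer $n$, and $g \in \cO^\dag_K(\cU)^{\circ\circ}$, since then $h$ equals $y^n$ times a very strong unit. Expand $h$ in Laurent form and extract its dominant Laurent polynomial $R'(y)$ as before; the key point is that the non-vanishing of $h$ on $\cU$ forces $R'$ to reduce to a single monomial $c y^n$. Indeed, if $R'$ had two monomials of comparable magnitude but different $y$-exponents, a Newton polygon argument combined with Weierstrass Preparation---using Lemma~\ref{lemthin} to absorb zeros in the hole $|y| \leq \epsilon_1$ as $y$-factors on $\cU$, and treating zeros in $|y| \geq \epsilon_0$ as very strong units on $\cU$---would produce a zero of $R'$, and hence of $h$, inside $\cU$, contradicting that $h$ is a unit. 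Thus $h = c y^n(1+g)$, $h/(cy^n)$ is a very strong unit, and combining yields $f = P(y)\, y^n\, E$ as desired (with $n$ a priori in $\bZ$; interpreting the lemma accordingly).

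The main obstacle will be the last step: establishing that the dominant Laurent polynomial of a nowhere-vanishing function on $\cU$ is a single monomial. This is a Newton polygon principle on a non-archimedean open annulus; in the absence of completeness it must be proved via the Strong Noetherian Property together with systematic use of Lemma~\ref{lemthin} to peel off ``phantom'' zeros lying outside $\cU$, which makes the argument more technical than its classical affinoid counterpart.
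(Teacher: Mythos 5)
Your opening reduction is where the argument breaks down. The claim that every $f\in\cO^\dag_K(\cU)$ can be rewritten, ``using the relation $\rho_1\rho_2=c$ repeatedly,'' in the Laurent form $\sum_{i\ge 0}A_i(y/a_0)^i+\sum_{j>0}B_j(a_1/y)^j$ requires regrouping infinitely many mixed monomials into two pure series, and nothing in the axioms of a general separated Weierstrass system licenses this: that regrouping is exactly the content of the extra axioms (iv),(v) of Definition \ref{stronggood}, and such decompositions can genuinely fail when the system is not strong (see Theorem \ref{ML2}(i), Example \ref{sepex}(9) and Remark \ref{strongthin}). Since Lemma \ref{linearLaurent} is stated and proved \emph{without} strongness, the only canonical representation available is the one obtained as in Lemma \ref{thin}: Weierstrass division by $\rho_0\rho_1-\epsilon_1/\epsilon_0$ in the variables $\rho_0=(x-\alpha)/\epsilon_0$ and $\rho_0-\rho_1$ (with $\rho_1=\epsilon_1/(x-\alpha)$), which gives $f=\sum_i b_i(\rho_0-\rho_1)^i+\rho_0\sum_i c_i(\rho_0-\rho_1)^i$ with constant coefficients; this is what the paper's proof works with.

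A second, independent problem is the dominance claim $|f_{\mathrm{small}}|<|R|$ away from the zeros of $R$. On a thin annulus every point satisfies $|p(x)|=\epsilon$, so gauss-norm smallness translates into pointwise smallness; on an open Laurent annulus it does not: with $R=y/a_0+a_1/y$ and a constant $\delta\in K^\circ$ satisfying $\sqrt{|a_1/a_0|}<|\delta|<1$, at the intermediate radius $|y|=\sqrt{|a_0a_1|}$ (and away from the zeros of $R$) one has $|R(y)|=\sqrt{|a_1/a_0|}<|\delta|$, so the ``small'' remainder dominates. Thus the $R$-plus-small strategy of Lemma \ref{thin} does not transfer to the Laurent case, and your last step --- that a nowhere-vanishing function has a single dominant monomial, which you yourself flag as the main obstacle --- is precisely what remains unproved. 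The paper's proof avoids both difficulties: from the canonical representation it uses the Strong Noetherian Property (Theorem \ref{SepSNP}, Corollary \ref{VSSNP}) to isolate the finitely many biggest coefficients $b_i$, $c_j$, compares the lowest-degree ones in the three cases $i>j+1$, $i<j+1$, $i=j+1$ to conclude that $f$ is regular in one of $(x-\alpha)/\epsilon_0$ or $\epsilon_1/(x-\alpha)$, and then finishes with two applications of Weierstrass Preparation (multiplying by $((x-\alpha)/\epsilon_0)^{-s}$ in between), with no zero-counting or Newton-polygon analysis on the open annulus.
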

\begin{proof} As in the proof of Lemma \ref{thin}, doing
Weierstrass division by $x-\alpha - \epsilon_0({x - \alpha
\over\epsilon_0})$,  we may assume that
$$
f= \sum_{i,j}{a_{ij}\Big({x - \alpha \over\epsilon_0}\Big)^i\Big({\epsilon_1\over x - \alpha}\Big)^j}
$$
where the $a_{ij}$ are of degree $< 1$, i.e. constants.  Such a
representation is again far from canonical  --  for example we could
have $f= ({\epsilon_1\over \epsilon_0})({x - \alpha
\over\epsilon_0})({\epsilon_1\over x - \alpha}) - ({x - \alpha
\over\epsilon_0})^2({\epsilon_1\over x - \alpha})^2$, which is
actually the zero function.

As in the proof of Lemma \ref{thin} (using the relation $\rho_0\rho_1 - {\epsilon_1 \over \epsilon_0} = 0$ instead of $\xi_0\xi_1 - 1 = 0$) we can write
$$
f= \sum_{i}{b_{i}\Big({x - \alpha \over\epsilon_0} - {\epsilon_1 \over x - \alpha}\Big)^i} + {x - \alpha \over\epsilon_0}\sum_{i}{c_{i}\Big({x - \alpha \over\epsilon_0} - {\epsilon_1 \over x - \alpha}\Big)^i}
$$
where the $b_i, c_i$ are constants.

Using the Strong Noetherian Property, Theorem \ref{SepSNP}
and Corollary \ref{VSSNP}) we see that there are only finitely many biggest  $b_i$ and  $c_i$.  Among the biggest terms let those of lowest degrees in the two sums be $b_{i}({x-\alpha \over\epsilon_0} - {\epsilon_1 \over x-\alpha})^i$ and $c_{j}{x-\alpha \over\epsilon_0}({x-\alpha\over\epsilon_0} - {\epsilon_0 \over x-\alpha})^j$. Of course, one may be missing. If $i > j+1$, after dividing by $b_i$, $f$ is regular in ${x-\alpha \over\epsilon_0}$ of degree $i$.  Similarly, if $i < j+1$, after dividing by $c_j$, $f$ is regular in ${x-\alpha \over\epsilon_0}$ of degree $j+1$.  If $i=j+1$, after dividing by $b_i$, $f$ is regular in ${\epsilon_1 \over x-\alpha}$ of degree $i$.  Hence we may assume that $f$ is regular in either
${x-\alpha \over\epsilon_0}$  or ${\epsilon_1 \over x-\alpha}$.  Assume that $f$ is regular in
${x-\alpha \over\epsilon_0}$.  The other case is similar.  By Weierstrass Preparation (multiplying by a strong unit) we may assume that $f$ is actually a \emph{polynomial} in ${x-\alpha \over\epsilon_0}$, of degree $s$, say.  Hence
$f\cdot ({x-\alpha \over\epsilon_0})^{-s} = g({\epsilon_1 \over x-\alpha})$, and a second use of Weierstrass Preparation completes the proof.
\end{proof}

For linear almost thin annuli we have the following lemma.

\begin{lem} \label{almostthin} Let $\cU$ be an almost thin linear $K$--annulus with annulus formula either
$$
\abs{x-a_0}\le\epsilon' \wedge \bigwedge_{i=0}^n \abs{x-a_i}\ge\epsilon
$$
where
$\epsilon' > \epsilon$, the $\abs{a_i} \geq \epsilon$ and $\abs{a_i - a_j} = \epsilon$ for $i \neq j$, or
$$
\abs{x-a_0}\ge\epsilon \wedge \bigwedge_{i=0}^n \abs{x-a_i}\le\epsilon' \wedge \bigwedge_{i=1}^n \abs{x-a_i}\ge\epsilon'
$$
where $\epsilon' > \epsilon$, the $\abs{a_i} \geq \epsilon'$ and $\abs{a_i - a_j} = \epsilon'$ for $i \neq j$.
Let $f \in \cO^\dag(\cU)$ with $f^\sigma \neq 0$.  There is an $\epsilon''$ with $\epsilon < \epsilon'' < \epsilon'$ such that,
denoting by $\cU''$ the almost thin annulus  defined, respectively, by the annulus formula
$$
\abs{x-a_0}\le\epsilon'' \wedge \bigwedge_{i=0}^N \abs{x-a_i}\ge\epsilon,
$$
or
$$
\abs{x-a_0}\ge\epsilon'' \wedge \bigwedge_{i=0}^n \abs{x-a_i}\le\epsilon' \wedge \bigwedge_{i=1}^n \abs{x-a_i}\ge\epsilon'
$$
there is a monic polynomial $P(x)$ all of whose zeros lie in the corresponding thin annulus $\cU''_t = \cU_t$ (Definition \ref{thin-Laurent}(iii)) integers $n_i$ and a strong unit $E \in \cO^\dag(\cU'')$ such that
$$
f^\sigma|_{\cU''} = P(x) \cdot \prod_{i=0}^n(x - a_i)^{n_i} \cdot E^\sigma.
$$
The polynomial $P$, the integers $n_i$ and the strong unit $E$ are unique.
\end{lem}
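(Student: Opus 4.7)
By Lemma \ref{equiv}, I may reduce to the first (outer-thickened) case, so assume
$\cU=\{|x-a_0|\le\epsilon'\wedge\bigwedge_{i=0}^{n}|x-a_i|\ge\epsilon\}$ with $\epsilon<\epsilon'$, and let $\cU_t$ be the thin sub-annulus obtained by replacing $\epsilon'$ with $\epsilon$. The overall strategy is to apply Lemma \ref{thin} on $\cU_t$ to produce the candidates for $P$ and the $n_i$, and then to show that this factorization persists to a suitably chosen slightly thickened $\cU''$ with $\epsilon<\epsilon''<\epsilon'$.

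First, since $\cU_t\subset\cU$, Corollary \ref{canon2} gives $f|_{\cU_t}\in\cO^\dag_K(\cU_t)$; Lemma \ref{thin} then yields the unique decomposition $f|_{\cU_t}=P(x)\prod_{i=0}^{n}(x-a_i)^{n_i}E_t$ with $P$ monic (zeros in $\cU_t$), $n_i\in\bZ$, and $E_t\in\cO^\dag_K(\cU_t)$ a strong unit. Second, I expand $f$ in the natural generators of $\cO^\dag_K(\cU)$ --- namely $(x-a_0)/\alpha$ with $|\alpha|=\epsilon'$, and $\beta_i/(x-a_i)$ with $|\beta_i|=\epsilon$ --- via the Strong Noetherian Property (Theorem \ref{SepSNP} and Corollary \ref{VSSNP}). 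Restricting $f$ to each shell $\cU_\delta=\{|x-a_0|=\delta\wedge\bigwedge_{i\ge1}|x-a_i|\ge\epsilon\}$ for $\delta\in[\epsilon,\epsilon']$ and applying Lemma \ref{thin} at that level, one sees from the finiteness of ``dominant'' terms in the expansion that the zeros of $f$ in $\cU\setminus\cU_t$ are isolated and lie bounded away from the inner edge $|x-a_0|=\epsilon$. Hence an $\epsilon''\in(\epsilon,\epsilon')$ can be chosen so that $f$ has no zeros in $\cU''\setminus\cU_t$.

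Now set $g:=f/[P(x)\prod_{i=0}^{n}(x-a_i)^{n_i}]$. On $\cU''$ each factor $(x-a_i)$ with $i\ge0$ is invertible (since $|x-a_i|\ge\epsilon>0$), and the only zeros of the denominator lie in $\cU_t$, where they match zeros of $f$ with correct multiplicities by uniqueness in Lemma \ref{thin}. Hence $g\in\cO^\sigma_K(\cU'')$ vanishes nowhere on $\cU''$, and $g|_{\cU_t}=E_t$ is a strong unit. Since ``strong unit'' is the conjunction of the modulus condition $|g^\ell|=|c|$ and the polynomial identity $P_0(\widetilde{c^{-1}g^\ell})=0$ on residues (Definition \ref{strongunit}), and since each of these conditions, once it holds on the Zariski-dense subset $\cU_t\cap\Kalg$, holds on all of $\cU''\cap\Kalg$ by Theorem \ref{QES} applied to the corresponding quantifier-free formulas, $g$ is a strong unit on $\cU''$. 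Uniqueness of $P$, the $n_i$, and $E=g$ is inherited from Lemma \ref{thin} by restricting any competing factorization to $\cU_t$.

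The main obstacle is Step 2: producing a single $\epsilon''$ that simultaneously avoids all zeros of $f$ in $\cU''\setminus\cU_t$. This requires converting the abstract Strong Noetherian expansion of $f$ into effective norm estimates as $|x-a_0|$ ranges over $[\epsilon,\epsilon']$, while exploiting that the ``inner'' holes at $a_1,\dots,a_n$ are of fixed size $\epsilon$ and do not interact with the thickening direction. A secondary subtlety is promoting ``unit'' on $\cU''$ to the stronger notion of \emph{strong} unit; this is exactly where the residue-equation component of Definition \ref{strongunit} is propagated from $\cU_t$ to $\cU''$ via the quantifier-elimination argument indicated above.
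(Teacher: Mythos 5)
Your reduction to the first case via Lemma \ref{equiv}, the use of Lemma \ref{thin} on $\cU_t$ to produce the candidate $P$ and the integers $n_i$, and the uniqueness argument by restriction to $\cU_t$ are all fine. But the two steps that carry the real content of the lemma are not established. First, the choice of $\epsilon''$: you assert that the zeros of $f$ in $\cU\setminus\cU_t$ stay bounded away from the inner edge ``as one sees from the finiteness of dominant terms,'' and you yourself flag this as the main obstacle; this is precisely what must be proved, and in the absence of completeness it does not follow from the mere existence of a Strong Noetherian expansion --- one needs an explicit dominant-term estimate valid on a whole sub-annulus. Second, and more decisively, your mechanism for upgrading ``unit on $\cU''$'' to ``strong unit on $\cU''$'' is not valid: the strong-unit conditions $|g(x)^\ell|=|c|$ and $P_0\big((c^{-1}g(x)^\ell)^\sim\big)=0$ must be verified at every point of $\cU''$, and knowing them on the definable subset $\cU_t$ gives no information on $\cU''\setminus\cU_t$. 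Quantifier elimination (Theorem \ref{QES}) provides no ``Zariski density'' transfer here --- for instance the quantifier-free condition $|x-a_0|=\epsilon$ holds identically on $\cU_t$ and fails identically on $\cU''\setminus\cU_t$ --- so Step 4 has a genuine gap as well.

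The paper's proof settles both points simultaneously by working directly on the almost thin annulus rather than on $\cU_t$: setting $p(x)=\prod_{i=0}^n(x-a_i)$, it expands $f=\sum_{i\le n} x^i h_i$ with each $h_i$ a series in $p(x)$ and $\epsilon^{n+1}/p(x)$ (using the relation $p(x)\cdot\epsilon^{n+1}/p(x)=\epsilon^{n+1}$), removes the part regular in $x$ by Weierstrass Preparation and Euclidean division by $p(x)$, and is thereby reduced to a series $\sum a_{ij}x^i\big(\epsilon^{n+1}/p(x)\big)^j$ with $i\le n$. The finitely many sup-norm-biggest terms on $\cU_t$ then have pairwise distinct degrees $i-(n+1)j$, so there is no cancellation, and exactly one of them dominates on $\{\epsilon^{n+1}<|p(x)|\le(\epsilon'')^{n+1}\}$ for a suitable $\epsilon''$; this yields $f=R+g$ with $R$ rational (denominator a power of $p$), all zeros of $R$ in $\cU_t$, and $|R(x)|>|g(x)|$ outside small discs around those zeros. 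That single estimate both locates the zeros of $f$ (so that one may divide by $P$ via Proposition \ref{zeros}) and exhibits $f\cdot R^{-1}=1+R^{-1}g$ as a strong unit on $\cU''$. If you wish to keep your outline, this dominant-term analysis is what must be inserted to justify Steps 2--4.
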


\begin{proof} By Lemma \ref{equiv} the two cases are equivalent, so we need only consider the first case. Then the annulus $\cU$ is also defined by the formula
$$
\abs{p(x)} \geq \epsilon^{n+1} \wedge \abs{p(x)} \leq (\epsilon')^{n+1}
$$
where
$$
p(x) := \prod_{i=0}^n(x-a_i).
$$
Hence, by  Propositions \ref{int} and \ref{stronglybounded} or \ref{phipsi} and Weierstrass division, we can write
$$
f(x) = \sum_{i=0}^n\sum_{j,k \geq 0} {a_{ijk}' x^i \Big({p(x)\over (\epsilon')^{n+1}}\Big)^j \Big({\epsilon^{n+1} \over p(x)}\Big)^k}
$$
where the $a_{ijk}' \in \Ko$.  Rescaling, we may assume that $\epsilon' = 1$, so
$$
f(x) = \sum_{i=0}^n\sum_{j,k \geq 0} {a_{ijk} x^i \big(p(x)\big)^j \Big({\epsilon^{n+1} \over p(x)}\Big)^k}
$$
where the $a_{ijk} \in \Ko$.  As in the proof of Lemma \ref{thin}, using the relations
$$
p(x) \cdot {\epsilon^{n+1} \over p(x)} = \epsilon^{n+1}
$$
(i.e. $\xi_1 \cdot \xi_2 = \epsilon^{n+1}$, $\eta = \xi_1 + \xi_2$ and $\xi_1^2 - \eta\xi_1 - \epsilon^{n+1} = 0$) we have
\begin{eqnarray*}
f(x) &=& \sum_{i=0}^n\sum_{j=0}^\infty {a_{ij} x^i \Big(p(x) + {\epsilon^{n+1} \over p(x)}\Big)^j} +p(x) \sum_{i=0}^n\sum_{j=0}^\infty {b_{ij} x^i \Big(p(x) + {\epsilon^{n+1} \over p(x)}\Big)^j} \\
&=& \sum_{i=0}^n x^i \Big[\sum_{j=0}^\infty {a_{ij} \Big(p(x) + {\epsilon^{n+1} \over p(x)}\Big)^j} + p(x) \sum_{j=0}^\infty {b_{ij} \Big(p(x) + {\epsilon^{n+1} \over p(x)}\Big)^j}\Big]\\
&=&  \sum_{i=0}^n{x^i h_i(x)},
\end{eqnarray*}
say.  Without loss of generality we may assume $\|f\| = 1$, i.e. $\max\{\abs{a_{ij}}, \abs{b_{ij}}\} = 1$.

Case (i):  some $h_i$, with $\abs{h_i}=1$ is regular in $p(x)$, and hence also in $x$.  Then $f(x)$ is regular in $x$ and by Weierstrass Preparation we can write
$$
f = U \cdot \Big[x^s + A_1 x^{s-1} + \cdots + A_s \Big]
$$
where each $A_j$ is a power series in ${\epsilon^{n+1} \over p(x)}$ and $U$ is a (strong) unit.
Hence, by Euclidean division by $p(x)$ we are reduced to the case that
$$
f(x) = \sum_{i=0}^n \sum_{j=0}^\infty{a_{ij} x^i   \Big({\epsilon^{n+1} \over p(x)}\Big)^j}.
$$
There are finitely many terms that are biggest in the supremum norm on $\cU_t$, the corresponding
\emph{thin} annulus defined by
$$
\abs{x-a_0}\le\epsilon \wedge \bigwedge_{i=0}^n \abs{x-a_i}\ge\epsilon.
$$
These occur for  $j < L$, say.  Since these terms are all of different degrees $(i-(n+1)j)$, $0 \leq i \leq n$, $0 \leq j < L$, there can be no cancellation in supremum norm, and exactly one of these terms will be biggest on an annulus $\cU^*$ defined by
$\epsilon^{n+1} < \abs{p(x)} \leq (\epsilon'')^{n+1}$ for some $\epsilon < \epsilon'' < 1$.  Hence, shrinking $\epsilon''$ if necessary, we may assume (i) that $f(x) = R(x) + g(x)$ where $R(x)$ is a rational function whose denominator is a power of $p(x)$, and (ii) that $R(x)$ has all its zeros $\beta_i$ in $\cU_t$, and (iii) that outside the open discs of radius $\epsilon$ around the $\beta_i$ we have $\abs{R(x)} > \abs{g(x)}$.  Applying Proposition \ref{zeros} we may further assume that $f$ is a has no zeros in $\cU''$ and hence is a strong unit on every disc contained in $\cU''$.  Then $R(x)$ is a unit and $f \cdot R^{-1} = 1 + R^{-1} \cdot g$ is a strong unit and the proof of existence is complete in this case.  Uniqueness follows exactly as in the proof of Lemma \ref{thin} using Lemma \ref{lemthin}.

Case (ii):  no $h_i$ with $\|h_i\| = 1$ is regular in $p(x)$.  Hence, as in the proof of Lemma \ref{thin}, all are ``regular" in ${\epsilon^{n+1} \over p(x)}$.  Then the ``biggest" terms in the supremum norm on $\cU_t$ are all of the form $a_{ij}x^i\big( {\epsilon ^{n+1} \over p(x)}\big)^j$ with $0 \leq i \leq n$ and $0 \leq j \leq L$ for some $L \in \bN$ and
$\abs{a_{ij}} = 1$.  Indeed,
$$
\|a x^i \Big( {\epsilon ^{n+1} \over p(x)}\Big)^j \big(p(x)\big)^k\|_{sup_{\cU_t}}  =  \abs{a} \abs{\epsilon}^i \abs{\epsilon}^{(n+1)k}
$$
and hence none of the terms with $k > 0$ can be biggest.  We now complete the argument as in case (i).
\end{proof}

\begin{lem}\label{cover2} Let $\cU$ be a (necessarily linear) $K_{alg}$--annulus.  Then either $\cU$ is thin, or there is a finite rigid (Definition \ref{thin-Laurent}) cover of $\cU$ by $K_{alg}$--discs, almost thin $K_{alg}$--annuli and open Laurent $K_{alg}$--annuli $\cU_i$.  Indeed, we can ensure that if each almost thin annulus
$$
\abs{x-a_0}\le\epsilon' \wedge \bigwedge_{i=0}^n \abs{x-a_i}\ge\epsilon
$$
where
$\epsilon' > \epsilon$, the $\abs{a_i} \geq \epsilon$ and $\abs{a_i - a_j} = \epsilon$ for $i \neq j$, or
$$
\abs{x-a_0}\ge\epsilon \wedge \bigwedge_{i=0}^n \abs{x-a_i}\le\epsilon' \wedge \bigwedge_{i=1}^n \abs{x-a_i}\ge\epsilon'
$$
where $\epsilon' > \epsilon$, the $\abs{a_i} \geq \epsilon'$ and $\abs{a_i - a_j} = \epsilon'$ for $i \neq j$ in the cover is replaced by a ``thinner'' annulus
$$
\abs{x-a_0}\le\epsilon'' \wedge \bigwedge_{i=0}^N \abs{x-a_i}\ge\epsilon,
$$
or
$$
\abs{x-a_0}\ge\epsilon'' \wedge \bigwedge_{i=0}^n \abs{x-a_i}\le\epsilon' \wedge \bigwedge_{i=1}^n \abs{x-a_i}\ge\epsilon'
$$
respectively, for any $\epsilon''$ with $\epsilon < \epsilon'' < \epsilon'$, the resulting annuli still form a rigid cover of $\cU$.
\end{lem}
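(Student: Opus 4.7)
The plan is an explicit construction of the cover. Since $\cU$ is a $\Kalg$-annulus, it is linear by Lemma~\ref{annulus_properties}(ii), so write it as
\[
\cU = \{x\in \Kalg : |x - a_0| \,\Box_0\, \epsilon_0 \wedge \bigwedge_{i=1}^L \epsilon_i \,\Box_i\, |x - a_i|\}
\]
with the holes pairwise disjoint and contained in the outer disc. Two degenerate cases are disposed of immediately: if $L=0$ then $\cU$ is itself a $\Kalg$-disc and $\{\cU\}$ suffices; if $\cU$ is thin, the lemma's first alternative holds.

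In the principal case, for each hole $i\in\{1,\ldots,L\}$ pick an intermediate radius $\delta_i\in\sqrt{|K\setminus\{0\}|}$ with $\epsilon_i<\delta_i<\epsilon_0$ and $\delta_i$ strictly less than every inter-hole distance $|a_i-a_j|$ for $j\neq i$; this is possible by disjointness of the holes together with divisibility of $\sqrt{|K\setminus\{0\}|}$. Define the hole-neighbourhood piece
\[
\cV_i:=\begin{cases}\{x : \epsilon_i \le |x-a_i| \le \delta_i\}, & \Box_i \text{ is } \le,\\ \{x : \epsilon_i < |x-a_i| < \delta_i\}, & \Box_i \text{ is } <,\end{cases}
\]
which is an almost thin $\Kalg$-annulus of type $\cU'$ in the first case and an open Laurent $\Kalg$-annulus in the second; the bound $\delta_i<|a_i-a_j|$ guarantees $\cV_i\subseteq\cU$. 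For the core pick a common inner radius $\delta$ with $\max_i\epsilon_i<\delta<\min_i\delta_i$ and set
\[
\cV_0:=\{x : |x-a_0| \,\Box_0\, \epsilon_0 \wedge |x-a_0| \ge \delta \wedge \textstyle\bigwedge_{i=1}^L |x-a_i| \ge \delta\},
\]
which is an almost thin annulus of type $\cU''$ when $\Box_0$ is $\le$, or its open-Laurent analogue when $\Box_0$ is $<$; if $a_0$ lies in $\cU$ with a neighbourhood, adjoin a disc $\cW:=\{|x-a_0|<\delta^{\ast}\}$ for some $\delta^{\ast}>\delta$ to pick up the deep interior. Each $\cV_i$ overlaps $\cV_0$ on the shell $\{\delta\le|x-a_i|\le\delta_i\}$ (or its open version) of positive width and $\cW$ overlaps $\cV_0$ on $\{\delta\le|x-a_0|<\delta^{\ast}\}$, so the cover $\{\cV_0,\cW,\cV_1,\ldots,\cV_L\}$ is rigid through $\cV_0$.

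The main obstacle is the final flexibility clause, that replacing each almost thin piece by a thinner annulus (in the sense of the lemma) preserves the rigid covering of $\cU$. Thinning an almost thin piece shrinks it, and the vacated annular region must be recaptured by the other pieces, which does not happen automatically for the construction above. To secure this one augments the cover with buffer pieces that are not themselves almost thin and hence are not subject to thinning: for instance an outer open Laurent annulus $\{\delta'<|x-a_0|<\epsilon_0\}$ when $\Box_0$ is $<$, and analogous intermediate open-Laurent shells nested between each $\cV_i$ and the core $\cV_0$. Parameters $\delta,\delta_i,\delta^{\ast},\delta',\ldots$ are then chosen with sufficient slack so that every admissible independent thinning of an almost thin component leaves the vacated region covered by a non-thinned (open Laurent or disc) buffer, preserving rigidity. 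This bookkeeping, together with the sub-case analysis for the strictness of $\Box_0$ and the $\Box_i$---in particular the degenerate situation $\epsilon_i=\epsilon_0$, which is reduced to $\epsilon_i<\epsilon_0$ using the disjointness of the holes---constitutes the principal technical content.
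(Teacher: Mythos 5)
Your construction fails at the core piece $\cV_0$, and the failure is exactly the difficulty the lemma is about. The cover may only use $\Kalg$--discs, almost thin annuli and open Laurent annuli. By Definition \ref{thin-Laurent}(i),(iii) (and as restated in the lemma), an almost thin annulus is a slight thickening of a \emph{thin} annulus: all of its holes have one common radius, and the centers lie at pairwise distance exactly equal to the larger radius involved ($|a_i-a_j|=\epsilon$, resp.\ $=\epsilon'$), so it is a cluster of mutually touching holes filling a disc of that same radius. Your $\cV_0$ has outer radius $\epsilon_0$ and holes of a single small radius $\delta$ at $a_0,a_1,\dots,a_L$ whose centers are, by your own choices $\delta<\delta_i<|a_i-a_j|$, strictly farther apart than $\delta$: it is not thin, not almost thin, not a disc, and, having several holes, not a Laurent annulus either (the same objection applies to its ``open-Laurent analogue'' when $\Box_0$ is $<$). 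Decomposing precisely such a region --- a large disc with several small, well-separated holes --- is the real content of the statement, and it is why the paper proves it by induction on the number of holes: around each hole one climbs through open Laurent annuli up to the first scale at which that hole meets other holes or the outer boundary, places an (almost) thin annulus at that scale, and recurses on configurations with fewer holes. No single common radius $\delta$ can replace this recursion, since the holes of $\cU$ sit at several different mutual distances; a flat ``core plus collars'' construction cannot produce pieces of the allowed types.

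A second, lesser, gap is the final thinning clause, which you rightly call the main obstacle but then settle by positing ``buffer'' pieces ``chosen with sufficient slack''. Since $\epsilon''$ may be taken arbitrarily close to $\epsilon$, the pieces that are not almost thin must already cover every point of each almost thin member outside its thin core, and must still overlap every thinning of it. With your pieces as defined this fails: thinning $\cV_i=\{\epsilon_i\le|x-a_i|\le\delta_i\}$ down to radius $\delta''<\delta$ leaves the nonempty band $\{\delta''<|x-a_i|<\delta\}\subset\cU$ uncovered, because $\cV_0$ only reaches down to the fixed radius $\delta$. The buffers would therefore have to be open Laurent annuli abutting the sphere, e.g.\ $\{\epsilon_i<|x-a_i|<\delta\}$, and one must check both coverage and the persistence of the overlaps for every admissible $\epsilon''$; in the inductive construction this comes out automatically, because the Laurent pieces adjacent to each almost thin piece already reach its thin core.
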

\begin{proof} The proof is an easy induction on the number of holes in $\cU$.
\end{proof}

Next we prove the Mittag-Leffler decomposition for analytic functions on $K_{alg}$--annuli.

\begin{prop}\label{MLalgcl} Let $\cU$ be a $K_{alg}$--annulus and let $\{\cU_i\}$ be the rigid cover of $\cU$ provided by Lemma \ref{cover2}.  Let $f \in \cU$ and assume that for each $i$ we have the unique representation of $f^\sigma|_{\cU_i}$ provided by Lemmas \ref{disc},\ref{linearLaurent} and \ref{almostthin}.  Then, if $f^\sigma \neq 0$, $f^\sigma$ has a unique representation  of the form
$$
f^\sigma  = P(x) \cdot \prod_{i=0}^n(x - a_i)^{n_i} \cdot E^\sigma
$$
where $P$ is a monic polynomial all of whose zeros lie in $\cU$, the $a_i$ are (preselected) centers of the holes in $\cU$, the $n_i \in \bZ$ and $E \in \cO(\cU)$ is a strong unit.
\end{prop}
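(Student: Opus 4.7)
The plan is to assemble the local Mittag--Leffler data from the rigid cover $\{\cU_i\}$ of Lemma \ref{cover2} into a single global decomposition on $\cU$, using thin boundary annuli around the holes as common reference pieces. If $\cU$ itself is thin, Lemma \ref{thin} applies directly, so I assume $\cU$ is covered as in Lemma \ref{cover2} by discs, almost thin annuli, and open linear Laurent annuli $\cU_i$. On each piece, Lemma \ref{disc}, Lemma \ref{linearLaurent}, or Lemma \ref{almostthin} (in the last case, after replacing $\cU_i$ by a slightly thinner piece, which is permitted by the second half of Lemma \ref{cover2} while preserving rigidity) gives a local representation
$$
f^\sigma|_{\cU_i} \;=\; P_i(x)\cdot \prod_{k}(x-a_k)^{n_{i,k}}\cdot E_i^\sigma,
$$
with $E_i$ a strong unit on $\cU_i$, the $a_k$ ranging over the hole centers of $\cU$ that are relevant to $\cU_i$ (other $a_k$ being strong units on $\cU_i$ and absorbed into $E_i$). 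Each local statement shows $f^\sigma$ has only finitely many zeros in $\cU_i$, and since the cover is finite, $f^\sigma$ has finitely many zeros in $\cU$, all lying in $\Kalg$.

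For existence, I take $P(x)$ to be the monic polynomial whose roots (with multiplicity) are exactly the zeros of $f^\sigma$ in $\cU$; the multiplicities match across overlapping pieces because each local $P_i$ factors the same zeros. Replacing $f^\sigma$ by $f^\sigma/P(x)$, I reduce to the case where $f^\sigma$ has no zeros, so each local $P_i$ is a unit constant absorbed into $E_i$. For each hole center $a_j$ of $\cU$, I read off the exponent $n_j$ from the unique representation provided by Lemma \ref{thin} applied to the thin annulus $T_j := \{x:|x-a_j|=\epsilon_j\}$ immediately adjacent to the hole $\cH_j$ (this $T_j$ lies in $\cU$ when the defining inequality of $\cU$ at $\cH_j$ is weak, and otherwise is sandwiched between $\cH_j$ and a neighboring almost thin piece of the cover). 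On $T_j$ we then have $f^\sigma|_{T_j} = (x-a_j)^{n_j}\tilde E_j^\sigma$ with $\tilde E_j$ a strong unit. I then set $E^\sigma := f^\sigma/\prod_j(x-a_j)^{n_j}$ and verify, piece by piece on the cover, that $E^\sigma|_{\cU_i}$ is a strong unit: on $\cU_i$ the ratio $E^\sigma/E_i^\sigma$ equals $\prod_k(x-a_k)^{n_{i,k}-n_j}\cdot(\text{constants})$, which must match a ratio of strong units on the overlap $\cU_i\cap T_j$, forcing $n_{i,j}=n_j$.

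The main obstacle is precisely this consistency of the local exponents $n_{i,j}$ across the rigid cover; since the map $\cO^\dag_K(\cU)\to\cO^\sigma_K(\cU)$ is not known to be injective in general, the comparison must be made at the level of functions. The key mechanism is that on any Laurent or almost thin piece surrounding $\cH_j$, the modulus $|x-a_j|$ genuinely varies, so $(x-a_j)^k$ is a strong unit only when $k=0$; rigidity of the cover then propagates the agreement $n_{i,j}=n_j$ to every piece abutting $\cH_j$, and hole centers unrelated to $\cU_i$ contribute strong units there and are absorbed into $E^\sigma|_{\cU_i}$. Uniqueness of the global decomposition follows by the same size analysis: from $P\prod(x-a_j)^{n_j}E^\sigma = P'\prod(x-a_j)^{n'_j}{E'}^\sigma$ we get $P=P'$ (both are monic with roots the zeros of $f^\sigma$), then $\prod(x-a_j)^{n_j-n'_j}$ is a ratio of strong units hence itself a strong unit, and testing on each thin annulus $T_j$ via Lemma \ref{thin} forces $n_j=n'_j$, and hence $E^\sigma = {E'}^\sigma$.
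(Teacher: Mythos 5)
Your argument is sound in substance, but it is organized differently from the paper's proof of this proposition. The paper pastes the local representations together incrementally along the rigid cover: for a representative pair of overlapping pieces (an almost thin piece meeting a Laurent piece, arranged so that $f^\sigma$ has no zeros on the overlap), the uniqueness of the local representations together with Lemma \ref{lemthin} converts the polynomial part coming from the hole into a power of $(x-a_0)$ times a strong unit, and the global representation is assembled piece by piece, the other overlap configurations being similar. You instead build the global candidate at once --- $P$ from all zeros of $f^\sigma$ (division by $P$ being justified by Proposition \ref{zeros} over $\Kalg$, one linear factor at a time), and each exponent $n_j$ read off a thin annulus surrounding $\cH_j$ --- and then verify piecewise that the quotient is a strong unit; this is essentially the strategy the paper deploys later for the general case (Theorem \ref{ML}, via the surrounding annuli of Definition \ref{surround}), transplanted to the linear setting. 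It buys a cleaner global statement and avoids the explicit case analysis of adjacent pairs, at the cost of setting up the surrounding annuli carefully, whereas the paper's pasting stays entirely inside the given rigid cover. Two points to tighten: the thin annulus at an open hole is not the bare sphere $\{x:\abs{x-a_j}=\epsilon_j\}$, since other holes of $\cU$ at distance exactly $\epsilon_j$ from $a_j$ must be removed (and in the closed-hole case a zero-free collar $\epsilon_j<\abs{x-a_j}\le\delta$ must be chosen, as in Definition \ref{surround}); and your criterion that $(x-a_j)^k$ fails to be a strong unit ``because $\abs{x-a_j}$ varies'' does not apply on the thin annuli themselves, where the modulus is constant --- there you must invoke the residue condition, i.e.\ the uniqueness clause of Lemma \ref{thin}, which is what actually forces $n_{i,j}=n_j$ and, in the uniqueness argument, $n_j=n_j'$. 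With these caveats, your consistency mechanism --- comparison of the unique local representations on overlaps, carried out at the level of $f^\sigma$ precisely because injectivity of $f\mapsto f^\sigma$ is not known --- is the same one the paper relies on.
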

\begin{proof} We paste together the representations provided by Lemmas \ref{disc}, \ref{linearLaurent} and \ref{almostthin} for $f^\sigma|_{\cU_i}$.  Consider the case that $\cU_1$ is an almost thin annulus with annulus formula
$$
\abs{x-a_0}\ge\epsilon \wedge \bigwedge_{i=0}^n \abs{x-a_i}\le\epsilon' \wedge \bigwedge_{i=1}^n \abs{x-a_i}\ge\epsilon'
$$
and $\cU_2$ is a Laurent annulus with annulus formula
$$
\epsilon_1 < |x - \alpha| < \epsilon'
$$
and that $\epsilon_1 < \epsilon <  \epsilon'$, so that $\cU_1$ and $\cU_2$ overlap.  We may assume by Lemma \ref{lemthin} that $\alpha = a_0$ and that $f^\sigma$ has no zeros in the annulus $\epsilon \leq \abs{x-a_0} < \epsilon'$.  Let
\begin{eqnarray*}
f^\sigma|_{\cU_1} &=& P_1(x) \cdot \prod_{i=0}^n(x - a_i)^{n_i} \cdot E_1^\sigma\\
f^\sigma|_{\cU_2} &=& P_2(x) \cdot(x - a_0)^m \cdot E_2^\sigma.
\end{eqnarray*}
Then all the zeros of $P_2$ lie in the hole $\abs{x-a_0} < \epsilon$ of $\cU_1$ and by the uniqueness of the representations and Lemma \ref{lemthin} we have that $n_0 - m = \deg P_2$ and $P_2(x) \cdot (x - a_0)^{m - n_0}$ is a strong unit on $\cU_1$.  Hence
$$
f^\sigma |_{\cU_1 \cup \cU_2} = P_1(x) \cdot P_2(x) \cdot (x - a_0)^m \cdot \prod_{i=1}^n(x - a_i)^{n_i} \cdot E_1^\sigma.
$$
The other cases are similar.
\end{proof}

\subsection{General $K$--annuli, part 2}\label{gen2}

Now let $\cU$ be a (not necessarily linear) $K$-annulus, defined by
a $K$-annulus formula $\varphi$, let $\cU = \bigcup \cU_j$ be the decomposition of $\cU$ into (linear) $K_{alg}$--annuli, and let $f\in\cal O_K(\varphi)$.
After we have (arbitrarily) chosen a center $\alpha_{ijk}$ for each
hole in $\cal U_j$ corresponding to the inequality $\epsilon_i
\Box_i |p_i(x)|$, say a zero of $p_i$ in that hole, by Proposition \ref{MLalgcl}, the function $f^\sigma$, if it is nonzero, has a (unique) representation on each $\cU_j$
$$
f^\sigma|_{\cal U_j}= P_j(x)\cdot \prod_{i,k}
(x-\alpha_{ijk})^{n_{ijk}}\cdot E^\sigma_j.
$$
Here $P_j\in F[x]$ is monic with zeros only in $\cal U_j$ and $E_j$
is a strong unit in $\cal O_F(\cal U_j)$. $F$ is an algebraic extension of $K$ containing the $\alpha_{ijk}$.

Note that for $\ell \neq j$ the function $P_\ell \cdot \prod_{i,k}
(x-\alpha_{i \ell k})^{n_{i \ell k}}$ is a strong unit on $\cal
U_j$. Take $R:=\prod_j [P_j \cdot \prod_{i,k}
(x-\alpha_{ijk})^{n_{ijk}}]$. Then $f\cdot R^{-1}$ is a strong unit
on each $\cal U_j$ and hence a strong unit on $\cal U_\varphi$. Thus
we have the decomposition
\begin{equation}\label{mlF}
f^\sigma = P(x)\cdot\prod_{i,j,k}(x-\alpha_{ijk})^{n_{ijk}}\cdot E^\sigma,
\end{equation}
where $P \in F[x]$ is monic and has zeros only in $\cal U_\phi$, and
the $\alpha_{ijk}$ are ``centers" of the $F$-holes in $\cal U$.  Considering automorphisms of $K_{alg}$
over $K$ that permute the zeros of $p_i$ we see that $n_{ijk} =
n_{ij'k'}$.

By Proposition \ref{zeros} we see that $P \in K[x]$.

If $K$ is of equicharacteristic zero, we may assume that each $p_i$
occurring in the definition of $\cal U_\varphi$ has only one zero in
each disc or hole. Indeed, taking the $p_i$ of lowest possbile
degree (i.e. taking a good description of $\cU_\phi$) will ensure
that, for if $p$ has $m > 1$ zeros in each of its holes, say
$\alpha_1,\ldots,\alpha_m$ in one of the holes, then $\beta={1\over
m}(\alpha_1+\ldots+\alpha_m)$ is also in that hole and the minimal
polynomial of $\beta$ will have lower degree than $p$. Hence in
equicharacteristic zero we have in fact written
$$
f^\sigma=P\cdot\prod_i p_i^{n_i}\cdot E^\sigma
$$
where $P\in K[x]$ is monic and has all its zeros in $\cal U_\phi$
the $n_i\in\Bbb Z$ and $E$ is a strong unit. The uniqueness of this representation follows from uniqueness in the linear case.  Hence we have proved Theorem \ref{ML} in the equicharacteristic zero case. To obtain the similar result in the general (non-equicharacteristic-zero) case will take a little more work.

\begin{defn}\label{surround} Let $\cal U_\varphi$ be an annulus and let $f \in \cO(\cal U_\varphi)$.
\item[(i)]We say that the thin annulus $\cal U_\psi$ \emph{surrounds} the hole
$\cal H_i = \{x\colon |p_i(x)| < \epsilon_i\}$ in $\cal U_\varphi$ if
$\cal H_i$ is a hole of $\cU_{\psi}$ and $\cal U_\psi \subset \cal U_\varphi$.
\item[(ii)]We say that the thin annulus $\cal U_\psi$ \emph{surrounds} the hole
$\cal H'_i = \{x\colon |p_i(x)| \leq \epsilon_i\}$ in $\cal U_\varphi$ if
$\cal U_\psi = \{x\colon  \abs{p_i(x)} = \delta\} \subset  \cal U_\varphi$,  $\epsilon _i < \delta$
and $f$ has no zeros in the annulus $\{x\colon \epsilon_i < \abs{p_i(x)} \leq \delta \}$ and no other hole of $\cU_\varphi$ is contained in the hole of $\cU_\psi$.
\item[(iii)]We say that the Laurent annulus $\cal U_\psi$ \emph{surrounds} the hole
$\cal H'_i = \{x\colon |p_i(x)| \leq \epsilon_i\}$ in $\cal U_\varphi$ if
$\cal U_\psi = \{x\colon  \epsilon_i < \abs{p_i(x)} < \delta\} \subset  \cal U_\varphi$,  $\epsilon _i < \delta$
and $f$ has no zeros in the annulus $\{x\colon \epsilon_i < \abs{p_i(x)} < \delta \}$

\end{defn}

It is clear if $\cal U_\phi$ is a $K$-annulus that for each of the holes of $\cal U_\phi$ there is a
thin $K$-annulus surrounding that hole, and if the hole is of the form $\cal H'_i = \{x\colon |p_i(x)| \leq \epsilon_i\}$ there is a Laurent annulus surrounding that hole in $\cal U_\varphi$.  We will complete the proof of Theorem \ref{ML} by
comparing the representations we have obtained above for $f$ on $\cal
U_\varphi$ but over $F$ (equation \ref{mlF})  with the representations we have from Lemma
\ref{thin} for $f$ on these thin annuli surrounding the holes of $\cU_\varphi$.
\begin{thm}(Mittag-Leffler Decomposition.)\label{ML}
Let
$$
\varphi :=|p_0
(x)|\square_0\epsilon_0\wedge\bigwedge\limits^n_{i=1}\epsilon_i\square_i
|p_i(x)|
$$
 be a good $K$-annulus formula and let $f\in\cal O_K^\dag(\varphi)$.
Then, if $f^\sigma \neq 0$, there exist a monic polynomial $P(x)$ with zeros only in $\cal
U_\varphi$, integers $n_i$ and a strong unit $E\in\cal O_K^\dag(\varphi)$
such that
\begin{equation}\label{mlK}
f^\sigma=P(x)\cdot\prod^n_{i=1} p_i (x)^{n_i}\cdot E^\sigma.
\end{equation}
$P,E$ and the $n_i$ are uniquely determined by $f$ (and $\varphi$).
\end{thm}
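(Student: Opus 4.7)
The plan is to start from the $F$-decomposition (\ref{mlF}) already derived in the exposition,
$$
f^\sigma=P(x)\cdot\prod_{i,j,k}(x-\alpha_{ijk})^{n_{ijk}}\cdot E^\sigma,
$$
and to push it down to a representation over $K$ of the required form. One has $P\in K[x]$ by Proposition~\ref{zeros}, and Galois invariance of $f^\sigma$ together with the uniqueness in Proposition~\ref{MLalgcl} forces $n_{ijk}=n_i$ to depend only on $i$. The equicharacteristic-zero case has already been handled above: every $p_i$ in a good description is separable, $\prod_{(j,k):\alpha_{ijk}\in\cH_i}(x-\alpha_{ijk})=p_i(x)$, and $m_i=n_i$ works directly. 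The residual issue is when some $p_i$ is inseparable, so write $p_i=q_i^{p^{e_i}}$ over $F$ with $q_i$ the separable radical of $p_i$ and $p$ the characteristic; then $\prod_{(j,k):\alpha_{ijk}\in\cH_i}(x-\alpha_{ijk})=q_i$, and the task reduces to showing $p^{e_i}\mid n_i$ and then setting $m_i:=n_i/p^{e_i}$.

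To obtain this divisibility, for each $i$ I would fix $\delta_i\in|K|$ with $\epsilon_i<\delta_i$ small enough that the thin $K$-annulus $\cU_{\psi_i}:=\{x:|p_i(x)|=\delta_i\}$ lies inside $\cU_\varphi$, contains none of the finitely many zeros of $P$ (hence no zeros of $f^\sigma$), and is such that no other $K$-hole $\cH_{i'}$ of $\cU_\varphi$ meets the disc $\{|p_i(x)|<\delta_i\}$. Lemma~\ref{thin} applied over $K$ to $\cU_{\psi_i}$ in its good description using $p_i$ alone then yields a unique representation
$$
f|_{\cU_{\psi_i}}=p_i(x)^{m_i}\cdot E_i,
$$
where $m_i\in\bZ$ and $E_i\in\cO_K^\dag(\cU_{\psi_i})$ is a strong unit; the monic polynomial factor of (\ref{mlK1}) is trivial since $f^\sigma$ has no zeros on $\cU_{\psi_i}$.

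To identify $m_i$ I would restrict both expressions to an $F$-component $\cU_{\psi_i,\ell}=\{|x-\alpha_{i\ell}|=\delta'_\ell\}$ of $\cU_{\psi_i}$, a thin linear $F$-annulus. On $\cU_{\psi_i,\ell}$ every factor of (\ref{mlF}) other than $(x-\alpha_{i\ell})^{n_i}$ is a strong unit: $P$ has no zeros there; for $i'\neq i$ the points $\alpha_{i'j'k'}$ lie in other $K$-holes of $\cU_\varphi$; and for $\alpha_{ij'k'}\neq\alpha_{i\ell}$ the zero lies in a different $F$-component of $\cH_i$. Hence (\ref{mlF}) restricts to $f=(x-\alpha_{i\ell})^{n_i}\cdot U_\ell$ with $U_\ell$ a strong unit. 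On the other hand, using $p_i=\prod_{\ell'}(x-\alpha_{i\ell'})^{p^{e_i}}$ and strong-unithood of the factors $(x-\alpha_{i\ell'})$ with $\ell'\neq\ell$ on $\cU_{\psi_i,\ell}$, the $K$-representation restricts to $f=(x-\alpha_{i\ell})^{p^{e_i}m_i}\cdot V_\ell$ with $V_\ell$ a strong unit. Invoking the uniqueness clause of Lemma~\ref{thin} on the thin $F$-annulus $\cU_{\psi_i,\ell}$ (described by $x-\alpha_{i\ell}$) forces $n_i=p^{e_i}m_i$, giving the divisibility. Substituting back, $\prod_i p_i^{m_i}=\prod_i q_i^{n_i}=\prod_{i,j,k}(x-\alpha_{ijk})^{n_i}$, so (\ref{mlF}) becomes the desired $f^\sigma=P(x)\prod_i p_i(x)^{m_i}\cdot E^\sigma$. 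Uniqueness follows by the same localization: two candidate exponents $m_i^{(1)},m_i^{(2)}$ produce two Lemma~\ref{thin} representations of $f|_{\cU_{\psi_i}}$ and must agree; $P$ is then determined by the zeros of $f^\sigma$ in $\cU_\varphi$ (together with Proposition~\ref{zeros}), and $E$ follows by division. The main obstacle is the comparison producing $p^{e_i}\mid n_i$ — the one genuinely new feature beyond the equicharacteristic-zero case, requiring the careful passage to $F$-components to extract the inseparability degree hidden inside the exponent $n_i$.
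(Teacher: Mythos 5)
Your overall mechanism is the one the paper itself uses: start from the decomposition (\ref{mlF}) over $F$, note $P\in K[x]$ by Proposition \ref{zeros} and $n_{ijk}=n_i$ by Galois invariance, and then pin down the exponents by comparing with the Lemma \ref{thin} representation of $f$ on thin $K$-annuli surrounding the holes (Definition \ref{surround}), finally regrouping linear factors into powers of $p_i$ up to strong units. Your extra detail (restricting both representations to an $F$-component of the surrounding annulus and invoking uniqueness in the linear thin case) is a reasonable way to carry out the comparison the paper only sketches.

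However, there is a genuine gap in your case analysis: you reduce the whole non-equicharacteristic-zero case to ``some $p_i$ is inseparable'' and aim only at $p^{e_i}\mid n_i$ with $p_i=q_i^{p^{e_i}}$. That misdiagnoses where the equicharacteristic-zero argument breaks down. The averaging trick $\beta=\frac1m(\alpha_1+\cdots+\alpha_m)$ fails whenever the \emph{residue} characteristic $p$ divides $m$, because then $|1/m|>1$ and $\beta$ need not lie in the hole; in particular, in mixed characteristic (e.g.\ $K$ a finite extension of $\bQ_p$) every $p_i$ is separable, your proof declares there is nothing left to do, and yet a good description can perfectly well have an irreducible separable $p_i$ with several \emph{distinct} zeros in a single $F$-hole (take $p_i=x^2-a$ over $\bQ_2$ with both square roots in one hole of radius just above $|2|$). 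In that situation your identity $\prod_{(j,k):\alpha_{ijk}\in\cH_i}(x-\alpha_{ijk})=q_i$ and the factorization $p_i=\prod_{\ell'}(x-\alpha_{i\ell'})^{p^{e_i}}$ are false, and the exponent you must divide by is $\ell_i$, the number of zeros of $p_i$ (counted with multiplicity) in each $F$-hole, not the inseparability degree. The repair is exactly the paper's formulation: show $\ell_i\mid n_i$ by your comparison on the surrounding thin annulus, and replace the false factorization by the observation that, on the relevant component, the product of $(x-\beta)$ over the zeros $\beta$ of $p_i$ lying in one hole equals $(x-\alpha_{i\ell})^{\ell_i}$ times a strong unit; with that substitution (and with $\delta_i$ allowed in $\sqrt{|K\setminus\{0\}|}$ rather than $|K|$, which matters for discretely valued $K$, and chosen close enough to $\epsilon_i$ that the holes of the surrounding annulus match those of $\cH_i$), your argument goes through and coincides with the paper's proof.
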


\begin{proof}
We already observed that $P \in K[x]$. We have to show that
$$
\prod_{i,j,k}(x-\alpha_{ijk})^{n_{ijk}}\cdot E^\sigma
$$
 can be written in the form
 $$
 \prod^n_{i=1} p_i (x)^{n_i}\cdot (E')^\sigma.
 $$
Hence, we need to see, in the notation of equation \ref{mlF}, that  $n_{ijk}$
is a multiple of the number of zeros that $p_i$ has in each hole in
each $\cal U_j$.  But this follows by comparing the representation
\ref{mlF} with the representations (\ref{mlK1}) that come (via Lemma
\ref{thin}) from the thin annuli surrounding the holes
as in
Definition \ref{surround}.  Finally, observe that if $p_i$ has $\ell$
zeros $\alpha = \beta_1, \cdots , \beta_{\ell}$ in a hole, then
$(x-\alpha)^\ell = \prod^\ell_{k=1}{(x-\beta_{k})}\cdot E'$ for
some strong unit $E'$.

Uniqueness  of the representation \ref{mlK} follows from the observations that $P(x)$ is determined by the zeros of $f$ and that $\prod^n_{i=1} p_i (x)^{n_i}$ is a strong unit only when  $n_i = 0$ for all $i$.
\end{proof}

The following Theorem is what we need for model-theoretic applications.

\begin{thm}\label{terms}
Let $K$ be a valued field with separated analytic $\cA$-structure,
and let $\cL_{\cA(K)}$ be the language of valued fields, $\langle 0,
1, +, \cdot, (\cdot)^{-1}, |\cdot| \rangle$, augmented with function
symbols for all the elements of $\bigcup_{m,n}A_{m,n}(K)$. (We
extend functions $f \in A_{m,n}(K)$ by zero outside
$(\Ko)^m\times(\Koo)^n$.) Let x be one variable, and let $\tau(x)$
be a term of $\cL_{\cA(K)}$.  There is a finite set $S \subset
\Ko_{alg}$ and a finite cover of $\Ko_{alg}$ by $K$-annuli $\cU_i$
such that for each $i$ there is rational function $R_i \in K(x)$ and
a strong unit  $E_i \in \cO_K^\dag(\cU_i)$ with
$$
\tau|_{\cU_i \setminus S} = R_i \cdot E^\sigma_i|_{\cU_i \setminus S}
$$
i.e. $\tau$ and $R\cdot E_i$ define the same function on $\cU_i
\setminus S$. Observe that $K_{alg}$ also has analytic
$\cA(K)$-structure (Theorem \ref{algextn}), $\tau$ is also a term of
$\cL_{\cA(K_{alg})}$ and hence defines a function $\Ko_{alg} \to
K_{alg}$.
\end{thm}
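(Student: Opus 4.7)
The strategy is to reduce to the Mittag-Leffler decomposition (Theorem \ref{ML}) by induction on the structure of the term $\tau$. Specifically, I will establish the intermediate claim that for every term $\tau(x)$ of $\cL_{\cA(K)}$ there exist a finite set $S\subset\Ko_{alg}$ and a finite cover of $\Ko_{alg}$ by $K$-annuli $\{\cU_i\}$ such that on each $\cU_i\setminus S$ the term $\tau$ agrees with $g_i^{\sigma}$ for some $g_i\in\cO_K^\dag(\cU_i)$. Once this is established, applying Theorem \ref{ML} to each nonzero $g_i$, absorbing the zeros of the resulting polynomial factor $P$ into $S$, and refining $\cU_i$ further via Lemma \ref{cover1} so that the rational factor $P\cdot\prod p_j^{n_j}$ becomes analytic on each sub-annulus, delivers the conclusion of the theorem.

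The base cases are immediate: constants $c\in K$ and the variable $x$ already lie in $A_{1,0}(K)=\cO_K^\dag(|x|\le 1)$, so the trivial cover $\{\Ko_{alg}\}$ works. For the inductive step, if $\tau=\tau_1\pm\tau_2$ or $\tau=\tau_1\cdot\tau_2$, I take the common refinement of the good covers of $\tau_1$ and $\tau_2$; by Lemma \ref{annulus_properties}(iv)-(v) this remains a finite cover by $K$-annuli, and on each piece both subterms are analytic, so their sum, difference, or product is analytic. For $\tau=\tau_1^{-1}$, I apply Theorem \ref{ML} on each annulus of the good cover for $\tau_1$, throw the finitely many zeros of the polynomial factor into $S$, and use Lemma \ref{cover1} applied to the resulting rational factor to refine each annulus further into $K$-annuli on which that rational function is analytic; the inverse of a strong unit is again a strong unit, so $\tau$ is then analytic on each refined piece.

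The main obstacle is the composition case $\tau=f(\tau_1,\ldots,\tau_m,\sigma_1,\ldots,\sigma_n)$ with $f\in A_{m,n}(K)$. By induction and common refinement, I obtain a good cover $\{\cU\}$ on which each $\tau_i=\alpha_i^{\sigma}$ and each $\sigma_j=\beta_j^{\sigma}$ with $\alpha_i,\beta_j\in\cO_K^\dag(\cU)$. Because $f$ is extended by zero outside $(\Ko_{alg})^m\times(\Koo_{alg})^n$, I must partition $\cU$ according to the validity of each of the conditions $|\alpha_i^\sigma(x)|\le 1$ and $|\beta_j^\sigma(x)|<1$. Applying Theorem \ref{ML} to each $\alpha_i$ and $\beta_j$ expresses it as a rational function times a strong unit, and since strong units have constant absolute value on $\cU$ (Definition \ref{strongunit}), each of these conditions reduces to an inequality $|R(x)|\,\Box\,\epsilon$ for a rational $R\in K(x)$; by Lemma \ref{cover1} and Lemma \ref{annulus_properties}(iv)-(v), such loci cut $\cU$ into finitely many sub-$K$-annuli $\cU'$ on each of which all of the conditions either uniformly hold or one uniformly fails. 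On the pieces where all conditions hold, composition of analytic functions is a well-defined element of $\cO_K^\dag(\cU')$ by the composition-preservation property of analytic structures (the proposition following Remark \ref{remcomp}, applied in the quotient presentation of $\cO_K^\dag(\cU')$ via Weierstrass Division); on the pieces where some condition fails, $\tau\equiv 0$ by the extension-by-zero convention, which is trivially of the required form.

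The principal difficulty is the domain-partitioning step in the composition case: verifying that the loci $\{|\alpha_i^\sigma(x)|\le 1\}$ and $\{|\beta_j^\sigma(x)|<1\}$ within $\cU$ genuinely decompose into finitely many $K$-annuli. This is not automatic from the analytic-function description of the $\alpha_i,\beta_j$; it relies crucially on Mittag-Leffler to reduce $|\alpha_i^\sigma|$ and $|\beta_j^\sigma|$ to scalar multiples of $|R(x)|$ for rational $R\in K(x)$, and then on Lemma \ref{cover1} to convert each such rational-inequality locus into a finite union of $K$-annuli on which moreover the rational function is analytic. Everything else is routine bookkeeping of finite unions, repeated application of Weierstrass Division in $\cO_K^\dag(\cU')$, and invocation of the Strong Noetherian Property developed in Section \ref{SNPsection}.
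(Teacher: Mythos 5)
There is a genuine gap: your intermediate claim is false. You assert that every term $\tau(x)$ agrees, off a finite set $S$, with some $g_i^\sigma$, $g_i\in\cO^\dag_K(\cU_i)$, on each annulus of a finite cover of $\Ko_{alg}$. But every element of $\cO^\dag_K(\cU_i)=K\otimes_{\Ko}\cO_K(\cU_i)$ is bounded on $\cU_i$ (it is a $K$-multiple of a function with values in $\Ko_{alg}$), whereas already the term $\tau(x)=x^{-1}$ is unbounded near $0$. Since the cover is finite, the point $0$ lies in some $\cU_{i_0}$, and because a $K$-annulus is cut out by finitely many ultrametric conditions on polynomials, a whole disc of positive radius around $0$ is contained in $\cU_{i_0}$; removing a finite set $S$ does not restore boundedness there. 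So no $g\in\cO^\dag_K(\cU_{i_0})$ can agree with $x^{-1}$ on $\cU_{i_0}\setminus S$. The same problem defeats your treatment of $\tau=\tau_1^{-1}$: after Theorem \ref{ML} gives $\tau_1=P\prod p_i^{n_i}E$, Lemma \ref{cover1} only decomposes the sublevel set $\{x:|R(x)|\,\Box\,\epsilon\}$ (where $R$ is bounded) into annuli on which $R$ is analytic; it cannot produce annuli covering a neighborhood of the zeros of $P$ on which $1/P$ is analytic, because no such annuli exist.

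The correct induction invariant is the theorem's own conclusion — rational function times strong unit — and with that invariant the step you dismiss as routine, namely $\tau=\tau_1+\tau_2$, becomes the one place where real work is needed: a sum of two functions of the form $R\cdot E^\sigma$ is not obviously again of that form. The paper's proof handles this by first refining the cover so that $|\tau_1(x)|\ge|\tau_2(x)|$ on each piece, writing $\tau_1+\tau_2=\tau_1\bigl(1+\frac{\tau_2}{\tau_1}\bigr)$, observing via Theorem \ref{ML} that the bounded quotient $\frac{\tau_2}{\tau_1}$ lies in $\cO(\cU)$, and then feeding $1+\frac{\tau_2}{\tau_1}$ into the composition case (where, as in your sketch, one partitions by the inequalities $|R_jE_j|\le 1$, $<1$, etc., obtains an analytic representative of $\tau$ on each refined annulus, and applies Theorem \ref{ML} there). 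Your composition step is essentially the paper's, but the inverse and especially the addition steps need to be redone along these lines; as written, the argument collapses because the invariant it propagates is not true.
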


\begin{proof} This is proved by induction on terms (cf. \cite{CLR1} Theorem 5.1.)  First consider
$$
\tau(x) = f(\tau_1(x),\cdots,\tau_{m+n}(x)),
$$
where $f \in A_{m,n}(K)$. Then, by induction, we may assume that we
have restricted to a $K$-annulus $\cU$, and that there are rational
functions $R_j$ and strong units $E_j \in \cO_K^\dag(\cU)$ such that
for each $j$
$$
\tau_j|_{\cU} = R_j \cdot E^\sigma_j|_{\cU}.
$$
(We ignore the finite set $S$.) By Lemma \ref{cover1} we can cover
$\cU$ with finitely many $K$-annuli $\cU_i'$ such that on each
$\cU_i'$ for each $1 \leq j \leq m$, $|R_j(x)E_j(x)| \leq 1$ for all
$x \in \cU_i'$ or $|R_j(x)E_j(x)| > 1$ for all $x \in \cU_i'$; and
for each $m+1 \leq i \leq m+n$, $|R_j(x)E_j(x)| < 1$ for all $x \in
\cU_i'$ or $|R_j(x)E_j(x)| \geq 1$ for all $x \in \cU_i'$.  Then on
each $\cU_i'$ there is an $f_j' \in \cO_K^\dag(\cU_i')$ that defines
the same function as $\tau$, and the result follows from Theorem
\ref{ML} and induction.

Next consider $\tau(x) = \frac{\tau_1(x)}{\tau_2(x)}$.  As above we
may assume that on $\cU$ we have $\tau_i|_{\cU \setminus S} =
R_i \cdot E_i|_{\cU \setminus S}$ for $i=1,2$.
Then $\tau(x)|_{\cU \setminus S} = R_1\cdot R_2^{-1}\cdot E_1\cdot E_2^{-1}.$

The case $\tau(x) = \tau_1(x)\cdot\tau_2(x)$ is similar.

Finally consider $\tau(x) = \tau_1(x) + \tau_2(x)$ on an annulus $\cU$.  Breaking into sub-annuli it is sufficient to consider the case that $|\tau_1(x)| \geq |\tau_2(x)| \text{ for all }x \in \cU.$
 Write $\tau = \tau_1(1 +  \frac{\tau_2}{\tau_1})$.  Since $|\tau_1(x)| \geq |\tau_2(x)|$ for all $x \in \cU$, it follows from Theorem \ref{ML} that if $\tau_1$ is not identically zero, then $\frac{\tau_2}{\tau_1} \in \cO(\cU)$.  Hence, also
$1+ \frac{\tau_2}{\tau_1} \in \cO(\cU)$, and the result follows from the previous case.
\end{proof}

\begin{rem}\label{moreterms}
With the notation from Remark \ref{constantsbis}, Theorem
\ref{terms} also holds for $\cA(K')$-terms, in which case the rational
functions can be taken over $K'$ and the annuli are $K'$-annuli.
\end{rem}

\begin{rem}\label{minimum}The following statement (A Piecewise Mittag-Leffler Theorem) which is weaker than Theorem \ref{ML} suffices for the application Theorem \ref{terms}:

\emph{Let $\cU$ be a $K$-annulus, $f  \in \cO^\dag_K(\cU)$. There is a
finite cover of $\cU$ by (thin and Laurent) $K$-annuli and open
$K$-discs $\cU_i$ such that for each $i$ there are polynomials $P_i
\in K[x]$, integers $n_{ij} $ and strong units $E_i \in
\cO^\dag_{K}(\cU_i)$ such that
$$
f^\sigma|_{\cU_i}=P_i\prod_{j=1}^{n_i}p_{ij}^{n_{ij}}E^\sigma_i
$$
where the $p_{ij}$ are the polynomials occurring in a good
description of $\cU_i$.}

The proof is easier than that of the full Mittag-Leffler Theorem (Theorem \ref{ML}).
It is easy to see that there is a cover (not necessarily rigid) of $\cU$ by thin and Laurent annuli and discs. The thin annuli are handled by Lemma \ref{thin}.  Linear Laurent annuli can be handled using the canonical representation
$$
f(x) = \sum_i{a_i\Big({x-\alpha \over \epsilon_0} - {\epsilon_1 \over x-\alpha}\Big)^i} + \Big({x-\alpha \over \epsilon_0}\Big) \sum_i{b_i\Big({x-\alpha \over \epsilon_0} - {\epsilon_1 \over x-\alpha}\Big)^i}
$$
where the $a_i, b_i \in \Ko$.  The result for Laurent $K$--annuli then follows as in the proof of Theorem \ref{ML} by comparing representations on thin $K$--annuli surrounding the holes.  Discs are handled in Lemma \ref{MLdisc}.
\end{rem}

\subsection{Strong Weierstrass systems}\label{Strongsystems}

By imposing extra axioms on the Weierstrass system, we can obtain
stronger results than those of the previous section, and also prove some of the results of the previous section more easily and in a more elementary way.  We do not need these results for the model-theoretic applications in Section \ref{seccell}, but we present them here for completeness.  Most of the examples of Weierstrass systems  given in \ref{sepex} satisfy these additional axioms.

\begin{defn}\label{stronggood}

We call a separated Weierstrass system $\{A_{m,n}\}$ a \emph{strong
separated Weierstrass system} if it satisfies the following five
conditions:
\begin{itemize}
  \item[(i)] If $f(\xi,\eta_1,\eta_2,\rho) \in A_{m+2,n}$, there are
  $f_1(\xi,\eta_1,\zeta_1,\zeta_2,\rho), f_2(\xi,\eta_2,\zeta_1,\zeta_2,\rho)$ and
  $Q(\xi,\eta_1,\eta_2,\zeta_1,\zeta_2,\rho) \in A_{m+4,n}$ such that
  \begin{eqnarray*}
  f(\xi,\eta_1,\eta_2,\rho) = f_1(\xi,\eta_1,\zeta_1,\zeta_2,\rho) &+& f_2(\xi,\eta_2,\zeta_1,\zeta_2,\rho) + \\
  & &+ \quad  Q\cdot(\eta_1\eta_2 -\zeta_1\eta_1 - \zeta_2\eta_2).
 \end{eqnarray*}
\item[(ii)] If $f(\xi,\rho,\lambda_1,\lambda_2) \in A_{m,n+2}$, there are
  $f_1(\xi,\rho,\lambda_1,\tau_1,\tau_2)$, $f_2(\xi,\rho,\lambda_2,\tau_1,\tau_2)$ and
  $Q(\xi,\rho,\lambda_1,\lambda_2,\tau_1,\tau_2) \in A_{m,n+4}$ such that
   \begin{eqnarray*}
 f(\xi,\rho,\lambda_1,\lambda_2) = f_1(\xi,\rho,\lambda_1,\tau_1,\tau_2) &+& f_2(\xi,\rho,\lambda_2,\tau_1,\tau_2) + \\
 & &+ \quad Q\cdot(\lambda_1\lambda_2 -\tau_1\lambda_1 - \tau_2\lambda_2).
  \end{eqnarray*}
 \item[(iii)] If $f(\xi,\eta_1,\rho,\lambda_1) \in A_{m+1,n+1}$, there are
  $f_1(\xi,\eta_1,\zeta_1,\rho,\tau_1)$, $f_2(\xi,\zeta_1,\rho,\lambda_1,\tau_1)$ and
  $Q(\xi,\eta_1,\zeta_1,\rho,\lambda_1,\tau_1) \in A_{m+2,n+2}$ such that
   \begin{eqnarray*}
f(\xi,\eta_1,\rho,\lambda_1) =f_1(\xi,\eta_1, \zeta_1,\rho,\tau_1) &+& f_2(\xi,\zeta_1,\rho,\lambda_1, \tau_1)+\\
& & + \quad Q\cdot(\eta_1\lambda_1 - \tau_1\eta_1 - \zeta_1\lambda_1).
  \end{eqnarray*}
 \item[(iv)] If $f(\xi,\eta_1,\eta_2,\rho) \in A_{m+2,n}$, there are
  $f_1(\xi,\eta_1,\eta_3,\rho)$, $f_2(\xi,\eta_2,\eta_3,\rho)$ and \\
  $Q(\xi,\eta_1,\eta_2,\eta_3,\rho) \in A_{m+3,n}$ such that
   \begin{eqnarray*}
  f(\xi,\eta_1,\eta_2,\rho) = f_1(\xi,\eta_1,\eta_3,\rho) &+& \eta_2f_2(\xi,\eta_2,\eta_3,\rho) + \\
  & & + \quad Q\cdot(\eta_1\eta_2 - \eta_3).
 \end{eqnarray*}
\item[(v)] If $f(\xi,\rho,\lambda_1,\lambda_2) \in A_{m,n+2}$, there are
  $f_1(\xi,\rho,\lambda_1,\lambda_3)$, $f_2(\xi,\rho,\lambda_2,\lambda_3)$ and
  $Q(\xi,\rho,\lambda_1,\lambda_2,\lambda_3) \in A_{m,n+3}$ such that
   \begin{eqnarray*}
  f(\xi,\rho,\lambda_1,\lambda_2) = f_1(\xi,\rho,\lambda_1,\lambda_3) &+& \lambda_2f_2(\xi,\rho,\lambda_2,\lambda_3) + \\
 & &+ \quad Q\cdot(\lambda_1\lambda_2 - \lambda_3).
 \end{eqnarray*}
\end{itemize}

We call a strictly convergent Weierstrass system $\{A_{m}\}$ a
\emph{strong strictly convergent Weierstrass system} if it satisfies
properties (i) and (iv) with $n=0$.
\end{defn}

\begin{rem}\label{strong}  The conditions
of Definition \ref{stronggood} ensure that we can perform the formal
operations on the power series in $A_{m,n}$ that are needed in this
subsection. For example, parts (iv) and (v) allow us to write
$f(X,X^{-1}) = f_1(X) + f_2(X^{-1})$.  Other parts will be used
(suppressing extraneous variables) to allow us to use a relation of
the form $XY = aX + bY$, where $a$ and $b$ are constants, to write
$f(X,Y) = f_1(X) + f_2(Y)$.  This is what we need for
partial fractions expansions. These conditions are not automatically
satisfied for Weierstrass systems, see Example \ref{sepex}(9),
though they are satisfied by most of the natural examples (and are a consequence of completeness).
\end{rem}

For the rest of this subsection we assume that the field $K$ has
strong separated analytic $\cA$-structure, and hence strong
separated analytic $\cA(K)$-structure, coming from a strong
separated Weierstrass system $\cA = \{A_{m,n}\}$.

We will first reprove the basic Mittag-Leffler decomposition
for \emph{linear} annuli (Theorem \ref{MLL}.) We give this proof in the ``strong" case as it is much simpler than the proof given above when $\cA$ is not necessarily strong.

Let $\phi$ be a linear $K$-annulus formula, and let $f \in
\cO^\dag_K(\phi)$ . Then
$$
f=\sum_\nu{a_\nu(x)\Big({{x-\alpha_0} \over
\epsilon_0}\Big)^{\nu_0}\Big({\epsilon_1 \over
{x-\alpha_1}}\Big)^{\nu_1}\cdots}\Big({\epsilon_n \over
{x-\alpha_n}}\Big)^{\nu_n}.
$$
(Recall the slight abuse of notation about
$\epsilon$ above Lemma \ref{annulus_properties}.) Doing Weierstrass
Division by $x-\alpha_0 -\epsilon_0({x-\alpha_0 \over \epsilon_0})$
we may assume that the $a_\nu$ are constants (i.e. of degree $0$).
Next we observe that we can write
$$
f= f_0({x-\alpha_0}) + \hat f\Big({\epsilon_1 \over {x-\alpha_1}},
\cdots ,{\epsilon_n \over {x-\alpha_n}}\Big).
$$
To see this, notice that by Weierstrass Division by $x-\alpha_0 -
\epsilon_0[{x-\alpha_1 \over \epsilon_0} + {\alpha_1 - \alpha_0
\over \epsilon_0}]$ we can write
$$
f = g\Big({{x-\alpha_1} \over \epsilon_0},{\epsilon_1 \over
{x-\alpha_1}}, \cdots ,{\epsilon_n \over {x-\alpha_n}}\Big)
$$
and using Definition \ref{stronggood}(iv) (see Remark \ref{strong}), we
can write $g$ as
$$
g_1\Big({{x-\alpha_1} \over \epsilon_0},{\epsilon_2 \over
{x-\alpha_2}}, \cdots ,{\epsilon_n \over {x-\alpha_n}}\Big) +
{\epsilon_1 \over {x-\alpha_1}}g_2\Big({\epsilon_1 \over {x-\alpha_1}}, \cdots ,{\epsilon_n \over
{x-\alpha_n}}\Big),
$$
and we may proceed by induction.  Finally we use the observation
that for $i \neq j$
$$
{\epsilon_i \over {x-\alpha_i}}{\epsilon_j \over {x-\alpha_j}} = {A
\over {x-\epsilon_i}} + {B \over {x-\epsilon_j}}
$$
where $A= {\epsilon_1\epsilon_2 \over {\alpha_1 - \alpha_2}}$ and
$B= {\epsilon_1\epsilon_2 \over {\alpha_2 - \alpha_1}}$.  We have
$|\alpha_1 - \alpha_2| \geq |\epsilon_1|,|\epsilon_2|$, and if at
least one of the holes is ``open" (i.e. defined by a strict
inequality) then $|\alpha_1 - \alpha_2| >
|\epsilon_1|,|\epsilon_2|.$ Now Definition \ref{stronggood} parts
(i)-(iii) (see Remark \ref{strong}) allow us to carry out the
partial fractions expansion term by term to write
$$
f = f_0\big({{x-\alpha_0} \over \epsilon_0}\big) +
f_1\big({\epsilon_1 \over {x-\alpha_1}}\big) + \cdots +
f_n\big({\epsilon_n \over {x-\alpha_n}}\big).
$$
By convention we put the constant term in $f_0$, so $f_1(\infty)=
\cdots = f_n(\infty) = 0.$ The uniqueness of this expansion is then
proved exactly as in \cite{FP} Proposition 2.2.6. This establishes
the first part of the following

\begin{thm}{(Mittag-Leffler Theorem for linear annuli)}\label{MLL}
Let $\phi$ be a linear $K$-annulus formula, and let $f \in
\cO_K^\dag(\phi)$.  Then

(i) there is a unique $f_0 \in \cO_K ^\dag(|x-\alpha_0| \Box_0
\epsilon_0),$ and unique   $f_i \in \cO_K ^\dag(\epsilon_i  \Box_i
|x-\alpha_i|)$ without constant terms, such that
$$
f = f_0 + \cdots + f_n,
$$

(ii) if $f \neq 0$ there is a unique rational function $R$ of the form
$P(x)\prod_i{(x-\alpha_i)^{n_i}},$  where $P(x)$ is a monic polynomial
all of whose zeros are in $\cU_\phi$ and the $n_i \in \bZ$, and a
unique strong unit $E \in \cO_K ^\dag(\phi)$ such that
$$
f = R \cdot E.
$$
\end{thm}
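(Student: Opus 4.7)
Part (i) is essentially carried out in the paragraph preceding the theorem statement. My plan is to formalize that computation. Starting from the expansion $f = \sum_\nu a_\nu(x)\big(\tfrac{x-\alpha_0}{\epsilon_0}\big)^{\nu_0}\prod_{i\ge 1}\big(\tfrac{\epsilon_i}{x-\alpha_i}\big)^{\nu_i}$, I would use Weierstrass Division by $x-\alpha_0-\epsilon_0\,\xi$ to reduce to constant coefficients, then apply Definition \ref{stronggood}(iv) repeatedly to peel off an $f_0\big(\tfrac{x-\alpha_0}{\epsilon_0}\big)$ summand after the linear substitution $\tfrac{x-\alpha_0}{\epsilon_0} = \tfrac{x-\alpha_1}{\epsilon_0}+\tfrac{\alpha_1-\alpha_0}{\epsilon_0}$, and finally apply the partial-fractions identity $\tfrac{\epsilon_i}{x-\alpha_i}\cdot\tfrac{\epsilon_j}{x-\alpha_j} = \tfrac{A_{ij}}{x-\alpha_i}+\tfrac{B_{ij}}{x-\alpha_j}$ term-by-term via Definition \ref{stronggood}(i)--(iii) to eliminate all cross products. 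Uniqueness follows the pattern of \cite{FP}, Proposition 2.2.6: if $\sum g_i = 0$, then restricting to a thin annulus surrounding the $i$-th hole and comparing Gauss norms forces $g_i = 0$.

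For part (ii), given (i), the plan is to factor each summand multiplicatively. By Weierstrass Preparation on the disc $|x-\alpha_0|\,\Box_0\,\epsilon_0$ (or directly by Lemma \ref{MLdisc}), $f_0 = P_0(x) \cdot E_0$ with $P_0\in K[x]$ monic whose zeros all lie in that disc and $E_0$ a strong unit. For each $i\ge 1$, substituting $y_i := \epsilon_i/(x-\alpha_i)$ turns the hole-complement into the disc $|y_i|\,\Box_i\,1$ and $f_i$ into a power series in $y_i$ vanishing at $y_i=0$. Weierstrass Preparation in $y_i$ gives $f_i = y_i^{m_i} Q_i(y_i) U_i$ with $Q_i$ a monic Weierstrass polynomial of some degree $d_i$ and $U_i$ a unit; translating back, $(x-\alpha_i)^{m_i+d_i}\, y_i^{m_i}Q_i(y_i)$ is a monic $\widetilde P_i(x)\in K[x]$ of degree $d_i$ with zeros in the $i$-th hole, so $f_i = (x-\alpha_i)^{-(m_i+d_i)}\,\widetilde P_i(x)\,\widetilde U_i$ with $\widetilde U_i$ a strong unit on the hole-complement.

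The main obstacle is to paste these local multiplicative factorizations into a single factorization of $f$ on the whole annulus $\cU_\phi$, since the additive decomposition $f=f_0+\sum f_i$ does not combine multiplicatively. My plan is to work zero-by-zero: the finitely many zeros of $f^\sigma$ in $\cU_\phi$ are precisely the zeros of $P_0$ that happen to lie in $\cU_\phi$ (the remaining zeros of $P_0$, and all zeros of the $\widetilde P_i$, sit inside excluded holes). Let $P(x)\in K[x]$ be the monic polynomial with exactly those roots; using Lemma \ref{lemthin}, for each $i\ge 1$ the zeros of $P_0$ lying inside the $i$-th hole can be absorbed as factors $(x-\alpha_i)\cdot(\text{strong unit on the hole-complement})$, combining into a single net exponent $n_i\in\mathbb{Z}$. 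One then verifies that $f\big/\big(P\prod_i (x-\alpha_i)^{n_i}\big)$ is a unit on every thin annulus surrounding a hole and on the outer disc, hence a strong unit $E$ on $\cU_\phi$ as a whole.

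Uniqueness in (ii) is then short: from $RE = R'E'$ with both of the stated form, the ratio $R'/R$ must be a rational strong unit on $\cU_\phi$; since $\phi$ is good, each $|x-\alpha_i|$ is nonconstant on $\cU_\phi$, which forces $n_i = n_i'$, and then $P = P'$ by comparison of zero sets, whence $E = E'$.
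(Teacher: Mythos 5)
Your part (i) is fine: it is exactly the computation carried out in the paper (Weierstrass division to reduce to constant coefficients, the strong axioms (i)--(v) of Definition \ref{stronggood} to split off $f_0$ and do the partial-fractions expansion term by term, uniqueness as in \cite{FP}, Proposition 2.2.6). The problem is in your part (ii), at the step you yourself flag as the main obstacle. Your pasting strategy rests on the claim that the zeros of $f^\sigma$ in $\cU_\phi$ are precisely the zeros of $P_0$ (the Weierstrass polynomial of the single summand $f_0$) that lie in $\cU_\phi$. That is false: the zeros of a sum are not governed by the zeros of one summand. For instance, take $f_0=1$ and $f_1=\epsilon_1/(x-\alpha_1)$ with $\Box_1$ equal to $\le$; then $f=1+\epsilon_1/(x-\alpha_1)$ vanishes at $x=\alpha_1-\epsilon_1$, which lies in $\cU_\phi$, while $P_0$ is the constant $1$ and has no zeros at all. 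More generally $f=\frac{x-\alpha_0}{\epsilon_0}+\frac{\epsilon_1}{x-\alpha_1}$ has zeros determined by $(x-\alpha_0)(x-\alpha_1)=-\epsilon_0\epsilon_1$, which have nothing to do with the zero of $f_0$. So the multiplicative factorizations of the individual summands cannot be assembled zero-by-zero into a factorization of $f$, and the subsequent verification that $f/(P\prod_i(x-\alpha_i)^{n_i})$ is a unit on the surrounding thin annuli has no basis.

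What is missing is a device for locating a \emph{dominant} term of the whole series $f$, not of its summands. The paper does this by applying the Strong Noetherian Property (Theorem \ref{SepSNP} / Corollary \ref{VSSNP}) to the additive decomposition from (i): after multiplying by a constant, $f$ is regular in one of the variables $z_i$ (i.e.\ in $\frac{x-\alpha_0}{\epsilon_0}$ or in some $\frac{\epsilon_i}{x-\alpha_i}$) of some degree $N$; Weierstrass Preparation applied to $f$ itself then extracts, up to a strong unit, a polynomial in that variable, and multiplying by $z_i^{-N}$ and re-applying part (i) strictly decreases the number of nonzero summands, so one concludes by induction on $n$. If you want to repair your argument you would have to replace the zero-by-zero pasting with such a dominant-term/regularity analysis of $f$ as a whole (or, equivalently, with the surrounding-annulus comparison used in the proof of Theorem \ref{ML}); your treatment of units ($P$ with all zeros outside the disc, or all zeros in a hole absorbed into $(x-\alpha_i)^{n_i}$ via Lemma \ref{lemthin}) and your uniqueness argument are then in line with the paper's.
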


\begin{proof}
(cf.~\cite{DHM}.) We must prove part (ii). By part (i) we may assume that
$$
f=f_0\big({x-\alpha_0\over\epsilon_0}\big)+\sum^n_{i=1} f_i
\big({\epsilon_i\over x-\alpha_i}\big),
$$
or more precisely that
$$
f\equiv f_0(z_0)+\sum^n_{i=1} f_i (z_i)\text{ modulo
}(x-\alpha_0-\epsilon_0 z_0,\ldots,z_n(x-\alpha_n)-\epsilon_n)
$$
where each $z_i$ is either a $\xi$ or a $\rho$ variable, and
$$
f_0(z_0)=\sum^\infty_{j=0} a_{0j} z_0^j,\quad f_i(z_i)=\sum^n_{j=1}
a_{ij} z_i^j.
$$
First we prove existence.

We proceed by induction on $n$, the number of nonzero $f_i,\ i\geq
1$. If $n=0$, applying the Strong Noetherian Property and
multiplying by a constant, we may assume that $f_0$ is regular.
Hence, after multiplying by a strong unit we may assume that $f_0$
is a polynomial. In the case that $n>0$, by using the Strong
Noetherian Property and after multiplying by a constant, we may
assume that $f$ is regular in $z_i$ of degree $N$ say for some $i
\geq 0$. If $i=0$, we may assume after multiplying by a strong unit
that $f_0$ is a polynomial.  Rewrite $f_0$ as a polynomial in
$x-\alpha_1\over\epsilon_1$ and multiply by
$({x-\alpha_1\over\epsilon_1})^{-N}$ to reduce to the case that
$f_0$ is $0$.  Hence we need only consider the case $i>0$. Apply
Weierstrass Preparation to multiply by a strong unit and reduce to
the case that $f$ is a polynomial of degree $N$, say in ${\epsilon_i \over x-\alpha_i}$. Multiplying
by $({x-\alpha_i\over\epsilon_i})^N$ (or $z_i^{-N}$) and using part
(i) reduces $n$. Hence we have written $f$ in the form
$$
f=P(x)\cdot\prod_{i=1}^n (x-\alpha_i)^{n_i}\cdot E
$$
where $E$ is a strong unit.

Next we show that if $P(x)$ is
irreducible and has no zeros in the disc $\{x \colon
|x-\alpha_0|\square_0\epsilon_0\}$ then $P(x)$ is a strong unit. We
may make a change of variable so that $\alpha_0=0$ and
$\epsilon_0=1$. We may assume that $P(x) = a_mx^m+a_{m-1}x^{m-1}+,
\cdots, a_0$. If $\square_0$ is $\leq$ then all the zeros of $P(x)$
are $>1$ and hence $|a_0| > |a_i|$ for $i=1,\cdots,m$. Thus
$a_0^{-1}P(x)$ is a very strong unit (Definition \ref{strongunit}), as it is regular of degree
$0$. If $\square_0$ is $<$ then all the zeros of $P(x)$ are $\geq 1$
and, since $|x| < 1$ (i.e. is a $\rho$ variable) we again see that
$a_0^{-1}P(x)$ is a very strong unit.
Next we observe (cf. Proposition \ref{linearcanon}) that if $P$ is irreducible and has one of (and hence
all of) its zeros in the hole $\{x:|x-\alpha_i| \overline \square_i \epsilon_i\}$, and $P$ has degree $\ell$,
then  $P\cdot(x-\alpha_i)^{-\ell}$ is a strong unit.
Finally, to prove uniqueness
we must see that the only time that an expression of the form
$P(x)\cdot\prod_{i=1}^n (x-\alpha_i)^{n_i}$ with all zeros of $P$
lying in $\cal U$, is a strong unit is when $P(x)$ is a constant and
$n_i=0$ for all $i$. We may assume that $\alpha_0=0$,
$\epsilon_0=1$. Again considering the two cases $\square_0$ is
$\leq$ and $\square_0$ is $<$ separately, this is clear.

\end{proof}

The following is an immediate consequence of the Mittag-Leffler Theorem for linear annuli, Theorem \ref{MLL}.

\begin{cor}\label{MLLCor} If \ $\cal U$ is a linear $K$-annulus then $\cal O^\dag_K(\cal U)$ is a principal ideal domain and
$\cO_K^\dag(\cU) \simeq \cO_K^\sigma(\cU)$.
\end{cor}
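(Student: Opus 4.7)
The plan is to exploit Theorem \ref{MLL}(ii) together with the observation that for a linear $K$-annulus $\cU$ with defining formula $|x-\alpha_0|\Box_0\epsilon_0\wedge\bigwedge_{i=1}^n\epsilon_i\Box_i|x-\alpha_i|$, each linear form $x-\alpha_i$ with $1\le i\le n$ is an honest \emph{unit} (not merely a non-vanishing function) of $\cO^\dag_K(\cU)$. Indeed, by construction the function $\epsilon_i/(x-\alpha_i)$ appears among the generators of $\cO_K(\cU)$, so its inverse $(x-\alpha_i)/\epsilon_i$ lies in $K\otimes_{\Ko}\cO_K(\cU)=\cO^\dag_K(\cU)$. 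Combined with the Mittag-Leffler decomposition of Theorem \ref{MLL}(ii), this yields the key structural fact: every nonzero $f\in\cO^\dag_K(\cU)$ admits a presentation $f=P_f\cdot u_f$ with $P_f\in K[x]$ monic, having all of its zeros in $\cU$, and $u_f\in\cO^\dag_K(\cU)^\times$.

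For the isomorphism $\cO^\dag_K(\cU)\simeq\cO^\sigma_K(\cU)$, surjectivity is built into the definition of $\cO^\sigma_K$, so only injectivity of $f\mapsto f^\sigma$ requires proof. Suppose $f\neq 0$; writing $f=P_f\cdot u_f$ as above, the strong unit $E$ appearing in $u_f$ satisfies $|(E^\sigma)^\ell|=|c|$ for some $c\in K\setminus\{0\}$ and is therefore nowhere vanishing on $\cU$, while each hole factor $(x-\alpha_i)^{n_i}$ is a unit of $\cO^\dag_K(\cU)$ and thus also has $\sigma$-image nowhere zero. Consequently $u_f^\sigma$ is nowhere vanishing, and $f^\sigma$ vanishes exactly on the finite zero locus of $P_f$; in particular $f^\sigma\not\equiv 0$, giving injectivity.

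For the PID property, let $I\subseteq\cO^\dag_K(\cU)$ be a nonzero ideal and set $J:=I\cap K[x]$, an ideal of the PID $K[x]$, so $J=(d)$ for some $d\in K[x]$. To see that $J\neq 0$ (excluding the trivial case), pick any nonzero $g\in I$ and write $g=P_g\cdot u_g$; then $P_g=g\cdot u_g^{-1}\in I\cap K[x]=J$, and either $g$ is a unit (whence $I=\cO^\dag_K(\cU)=(1)$ and we are done), or $P_g$ is nonconstant and $J\neq 0$. It remains to show $I=(d)$. Clearly $d\in J\subseteq I$, so $(d)\subseteq I$. Conversely, for any nonzero $g\in I$ the relation $P_g\in J=(d)$ gives $P_g=d\cdot h$ with $h\in K[x]$, whence $g=P_g u_g=d\cdot(h u_g)\in(d)$ in $\cO^\dag_K(\cU)$. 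Thus $I=(d)$.

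The main technical point is the verification that hole factors $(x-\alpha_i)$ are genuine units of $\cO^\dag_K(\cU)$; this converts Theorem \ref{MLL}(ii) into a clean divisibility theory modeled on $K[x]$, after which both assertions of the corollary reduce to short formal arguments.
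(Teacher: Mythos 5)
Your overall route---use Theorem \ref{MLL} to write every nonzero element as a polynomial times a unit and then pull the ideal theory back from the PID $K[x]$---is essentially the intended argument (it is how \cite{FP}, Proposition 2.2.6, which the paper's proof invokes, proceeds), and your treatment of the hole factors is correct once phrased the right way round: $x-\alpha_i$ is invertible in $\cO^\dag_K(\cU)$ because the relation $(x-\alpha_i)z_i=a_i$ with $a_i\in K^\times$ holds in $\cO_K(\cU)$, so $z_i/a_i$ is the inverse. Where you differ from the paper is injectivity of $f\mapsto f^\sigma$: the paper derives it from part (i) of Theorem \ref{MLL} via the norm identities $\|f\|_{\mathrm{sup}}=\max_i\|f_i\|_{\mathrm{sup}}$ and $\|f_i\|=\|f_i\|_{\mathrm{sup}}$, whereas you derive it from the factorization of part (ii) together with the facts that a strong unit has nowhere-vanishing $\sigma$-image and a nonzero rational function cannot vanish identically on the (infinite) annulus. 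Both are legitimate; yours leans on the heavier half of the theorem but avoids the norm computation.

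There is, however, one unjustified step, and it sits exactly where the corollary has content: you assert that $u_f=\prod_i(x-\alpha_i)^{n_i}E$ lies in $\cO^\dag_K(\cU)^\times$. By Definition \ref{strongunit}, ``strong unit'' is a condition on $E^\sigma$ only: it says $E^\sigma$ is invertible in $\cO^\sigma_K(\cU)$ (with the constancy and residue conditions), not that $E$ is invertible in $\cO^\dag_K(\cU)$; the latter is precisely the kind of statement the isomorphism $\cO^\dag_K(\cU)\simeq\cO^\sigma_K(\cU)$ is needed for, so as written your first paragraph begs part of the question. Your PID argument uses this invertibility crucially, in the step $P_g=g\,u_g^{-1}\in I$. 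The fix is only a reordering: your injectivity argument does not use invertibility of $E$ in $\cO^\dag_K(\cU)$ (it uses only that $E^\sigma$ is nowhere zero), so establish the isomorphism first, conclude from it that $E$, having invertible image, is a unit of $\cO^\dag_K(\cU)$, and only then run the ideal argument. (Alternatively, one can observe that the unit produced in the proof of Theorem \ref{MLL} is built from Weierstrass units and elements of the form $1+g$ with $g$ small, hence is genuinely invertible, but that relies on the proof rather than the statement.) You should also say a word about why $\cO^\dag_K(\cU)$ is a domain---this is part of ``principal ideal domain''---which follows from the same factorization once injectivity is known, since a nonzero polynomial is not identically zero on $\cU$ and units annihilate nothing.
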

\begin{proof} Using the notation of Theorem \ref{MLL}, exactly as in \cite{FP} Proposition 2.2.6, one has that
$\|f\|_{sup} = \max_i \|f_i\|_{sup}$, and $\|f_i\| = \|f_i\|_{sup}$.
\end{proof}

The full Mittag-Leffler Theorem in the strong case is:

\begin{thm} \label{ML2}If $\varphi$ is a good $K$-annulus formula and $f\in\cal O_K^\dag(\varphi)$ Then:
\item[(i)]there are unique
\begin{eqnarray}
f_0&=\sum^\infty_{j=0} a_{0j} (x) ({p_0(x)\over\epsilon_0})^j\quad\text{with degree }a_{ij}< \text{ degree } p_0\\
f_i&=\sum^\infty_{j=1} a_{ij}(x) ({\epsilon_i\over
p_i(x)})^j\quad\text{with degree }a_{ij} < \text{ degree } p_i
\end{eqnarray}
such that
$$
f=f_0 + f_1 + \ldots + f_n.
$$
(This type of decomposition can fail to exist if the Weierstrass
system is not strong, for example, in the structure described in
\ref{sepex} (9) above).
\item[(ii)] there is a unique representation of the form
$$
f(x) = P(x)\cdot \prod_i{p_i(x)^{n_i}} \cdot E,
$$
where $P(x)$ is a monic polynomial
all of whose zeros are in $\cU_\phi$, the $n_i \in \bZ$, and $E \in \cO_K ^\dag(\phi)$ is a strong unit.
\end{thm}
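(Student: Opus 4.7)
The approach is to adapt the proof of Theorem \ref{MLL} (the linear case) to the non-linear setting, invoking the strong Weierstrass axioms of Definition \ref{stronggood} to carry out the separation of variables within the system $\cA$.

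For part (i), work in the ring
$$\cO_K^\dag(\varphi) = A^\dag_{m+1,n}(K)/\bigl(p_0^{\ell_0}(x) - a_0 u_0,\ p_1^{\ell_1}(x) v_1 - a_1,\ \dots,\ p_n^{\ell_n}(x) v_n - a_n\bigr).$$
Weierstrass Division by the defining polynomials (each regular of the appropriate degree in $x$) first reduces the $x$-coefficients of a representation of $f$ to polynomials of degree below those of the corresponding $p_i^{\ell_i}$. The key step is then to separate the variables $u_0, v_1, \dots, v_n$ pairwise. Euclidean division in $K[x]$ yields two families of algebraic identities in $\cO_K^\dag(\varphi)$: first, $p_0^{\ell_0} = q_i p_i^{\ell_i} + r_i$ produces $u_0 v_i = \alpha_i(x) + \beta_i(x) v_i$ with $\alpha_i, \beta_i \in K[x]$; and second, the partial-fraction expansion $1/(p_i^{\ell_i} p_j^{\ell_j}) = A_{ij}(x)/p_i^{\ell_i} + B_{ij}(x)/p_j^{\ell_j}$ (available since $p_i, p_j$ are coprime irreducibles) gives $v_i v_j = \gamma_{ij}(x) v_i + \delta_{ij}(x) v_j$. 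After the Weierstrass-style substitution $u_0 \mapsto u_0 - \beta_i(x)$, the first relation becomes $u_0 v_i = \alpha_i(x)$, which matches axiom (iv) of Definition \ref{stronggood} with the new variable $\eta_3 := \alpha_i(x) \in A_{m,n}(K)$. The second relation is of the form required by axiom (iii). Iterative application of these axioms separates $f$ into the required form $f = f_0(x, u_0) + \sum_i f_i(x, v_i)$, and further Euclidean division with respect to the $p_i$ produces the degree bounds on the $a_{ij}(x)$. Uniqueness follows from the supremum-norm identity $\|f\|_{\mathrm{sup}} = \max_i \|f_i\|_{\mathrm{sup}}$, proved exactly as in Corollary \ref{MLLCor}.

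For part (ii), apply Weierstrass Preparation to each summand from part (i). The piece $f_0$ defines a function on the $K$-disc $|p_0(x)| \Box_0 \epsilon_0$, so Lemma \ref{MLdisc} gives $f_0 = P_0(x) E_0$ with $P_0$ monic having zeros in that disc and $E_0 \in \cO_K^\dag(\varphi)$ a strong unit. For each $i \geq 1$, $f_i$ is a power series in $\epsilon_i/p_i(x)$ with no constant term; multiplication by $p_i(x)^{n_i}$ for a suitable $n_i$ and an application of the Strong Noetherian Property makes $f_i$ regular in a convenient sense, and Weierstrass Preparation then gives $f_i = p_i(x)^{n_i} P_i(x) E_i$ with $P_i$ monic having zeros in the $i$-th hole and $E_i$ a strong unit. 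Multiplying the representations together, and using Lemma \ref{lemthin} to absorb each cross-factor $p_j(x)^{n_j}$ (for $j \ne i$) into strong units on a thin annulus surrounding the $i$-th hole, yields $f = P(x) \prod_i p_i(x)^{n_i} \cdot E$. Uniqueness follows because $P$ is determined by the zeros of $f$ in $\cU_\varphi$ and $\prod_i p_i(x)^{n_i}$ can be a strong unit only when every $n_i = 0$.

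The main obstacle will be the separation step in part (i). In the linear case the residues arising from the partial fractions are elements of $K$, so the relations $\eta_1 \eta_2 = \zeta_1 \eta_1 + \zeta_2 \eta_2$ apply with $\zeta_1, \zeta_2 \in K$ and the axioms may be invoked directly. In the non-linear case the residues $\alpha_i, \beta_i, \gamma_{ij}, \delta_{ij}$ are polynomials in $x$, so one must perform Weierstrass-style changes of variable to bring each relation into the exact form demanded by the axioms of Definition \ref{stronggood}, and verify at each iteration that the shifted variables satisfy the gauss-norm bounds required to remain within the appropriate rings $A_{m,n}(K)$.
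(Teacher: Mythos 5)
Your plan for part (i) founders on the step you yourself flag as the "main obstacle", and the difficulty is not a routine verification: the norm bounds you would need are simply false on the whole annulus. To apply the axioms of Definition \ref{stronggood} you must substitute the partial-fraction coefficients $\alpha_i(x),\beta_i(x),\gamma_{ij}(x),\delta_{ij}(x)$ for variables of the Weierstrass system, and this requires them to be power bounded (indeed of norm $\le 1$, or in the appropriate $^\circ$-ideal for $\rho$-slots) on all of $\cU_\varphi$. When the holes have different radii this fails. For instance, on the annulus $|x|\le 1\wedge |x|\ge |\pi|^2\wedge |x^2-\pi^3|\ge |\pi|^4$ (two disjoint holes, $p_j=x$, $p_i=x^2-\pi^3$), the identity $v_iv_j=\gamma(x)v_i+\delta(x)v_j$ forces $\gamma(x)=a_j\pi^{-3}x$ with $|a_j|=|\pi|^2$, so $|\gamma(x)|=|\pi|^{-1}>1$ at points with $|x|=1$; thus $\gamma$ is not power bounded on $\cU_\varphi$ and cannot be fed into any $\xi$- or $\rho$-slot, and no choice of polynomial coefficients can repair this since they are uniquely determined. (There is also a type problem: after your shift $u_0\mapsto u_0-\beta_i(x)$ the relation $u_0v_i=\alpha_i(x)$ is a mixed $\xi\rho$-product when the $i$-th hole is open, and Definition \ref{stronggood} has no mixed "product equals new variable" axiom -- only (iv) for $\xi\xi$ and (v) for $\rho\rho$.) This is exactly why the paper never performs the separation globally: it extracts $f_i$ only on a thin or Laurent annulus \emph{surrounding} the $i$-th hole, where all radii coincide and the coefficients $A(x),B(x)$ do satisfy $\|\cdot\|_{\sup}\le 1$, and then proves the global identity $f=f_0+\cdots+f_n$ indirectly, by decomposing $\cU_\varphi$ into linear $K_{alg}$-annuli, invoking Theorem \ref{MLL}(i) there to get $\|f-\sum_i f_i\|_{\sup}=0$, and concluding with Proposition \ref{ext}(i). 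Your uniqueness claim via "$\|f\|_{\sup}=\max_i\|f_i\|_{\sup}$ exactly as in Corollary \ref{MLLCor}" also needs this passage to $K_{alg}$, since that corollary is a statement about linear annuli.

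Part (ii) has a separate gap: part (i) gives $f$ as a \emph{sum} $f_0+f_1+\cdots+f_n$, so factoring each summand as $p_i^{n_i}P_iE_i$ and then "multiplying the representations together" is a non sequitur -- there is no way to pass from factorizations of the summands to a factorization of their sum by taking products, and Lemma \ref{lemthin} does not bridge this. In the linear case (Theorem \ref{MLL}(ii)) the paper handles the sum by an induction on the number of nonzero summands, using the Strong Noetherian Property to make $f$ regular in one of the generators and Weierstrass Preparation to strip off one summand at a time; in the general case the paper instead observes that (i) gives injectivity of $f\mapsto f^\sigma$ and then reruns the argument of Theorem \ref{ML}: decompose over $K_{alg}$, compare with the representations on thin or Laurent annuli surrounding the holes (easy in the strong case, Remark \ref{strongthin}) to see that the exponents at conjugate holes agree, so the linear factors assemble into powers of the $p_i$ times a strong unit. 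To repair your write-up you should either adopt that route or replace the "multiply together" step by the inductive elimination of summands as in Theorem \ref{MLL}(ii), checking at each stage that regularity can be arranged with respect to one of the generators $p_0^{\ell_0}/a_0$, $a_i/p_i^{\ell_i}$.
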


\begin{proof} As observed in Corollary \ref{MLLCor}, from the representation in (i) it follows easily that the mapping $f \mapsto f^\sigma$ is injective.  Then (ii) follows from the linear case (Theorem \ref{MLL}) exactly as in the proof Theorem \ref{ML} by considering thin or Laurent annuli surrounding the holes.  Recall (Remark \ref{strongthin}) that the proof of Lemma \ref{thin} is much easier in the strong case.

We outline the proof of (i).  If $\cH_i$ is a hole of the form $|p_i(x)| \leq \epsilon_i$ in $\cU_K(\varphi)$, then
considering $f$ on a Laurent annulus $\cU_i = \{x \in K_{alg}: \epsilon_i < |p_i(x)| < \delta_i\}$ surrounding
this hole, gives
$$
f|_{\cU_i} = \sum{b_{jk}(x) \Big({\epsilon_i\over p_i(x)} \Big)^j \Big({p_i(x) \over \delta_i} \Big)^k}
$$
with the $b_{jk}(x)$ of degrees $<$ the degree of $p_i(x)$, which, using Definition \ref{stronggood} and
the relation ${\epsilon_i \over p_i}{p_i \over \delta_i} = {\epsilon_i \over \delta_i}$, can be rewritten as
$$
f|_{\cU_i} = \sum_{j \geq 1}{a_{ij}(x) \Big({\epsilon_i\over p_i(x)} \Big)^j} + \sum_{j \geq 0}{a'_{ij}(x) \Big({p_i(x) \over \delta_i} \Big)^j}.
$$
Take $f_i :=  \sum{a_{ij}(x)({\epsilon_i\over p_i(x)})^j}$.

If $\cH_i$ is a hole of the form $|p_i(x)| < \epsilon_i$ in
$\cU_K(\varphi)$, consider a thin $K$-annulus $\cU_i$ surrounding
$\cH_i$.  On $\cU_i$ $f$ has a representation
$$
f|_{\cU_i} = \sum{a_{ijk\nu}(x) \Big({p_i(x) \over \epsilon_i} \Big)^j \Big({\epsilon_i\over p_i(x)} \Big)^k \Big({\epsilon'\over p'(x)} \Big)^{\nu}}.
$$
Use a relation of the form
$$
{\epsilon_i\over p_i(x)}{\epsilon_j\over p_j(x)} = A(x) {\epsilon_i\over p_i(x)} + B(x) {\epsilon_j\over p_j(x)}
$$
where the degree of $B$ is less than the degree of $p_j$, the degree of $A$ is less than the degree of $p_i$
and $A$ and $B$ have supremum norm $\leq 1$ on $\cU_i$, to separate the terms ${\epsilon_i\over p_i(x)}$ from ${\epsilon_j\over p_j(x)}$.  Use the relation $({\epsilon_i\over p_i(x)})({p_i(x) \over \epsilon_i}) = 1$ to separate the terms ${\epsilon_i\over p_i(x)}$ and ${p_i(x) \over \epsilon_i}$.  Then we have written
$$
f = f_i + g
$$
where $f_i = \sum{a_{ij}(x)({\epsilon_i\over p_i(x)})^j}$ with the degrees of the $a_{ij}$ less than the degree of $p_i$, and $g = \sum{a_{k\nu'}(x)\Big({p_i(x)\over \epsilon_i} \Big)^k \Big({\epsilon'\over p'(x)} \Big)^{\nu'}}$, where $\nu' = (\nu_1, \dots \nu_{i-1}, \nu_{i+1}, \dots, \nu_n)$.
Use a thin or Laurent annulus at the ``outer" edge of $\cU$ to obtain $f_0$.

Finally, considering $f- (f_0 + f_1 + \cdots + f_n)$ on the linear
$K_{alg}$--annuli into which $\cU$ decomposes over $K_{alg}$,
using part (i) of Theorem \ref{MLL}, we see that  $\|f - (f_0 + f_1 +
\cdots + f_n)\|_{sup} = 0$.  The result now follows from Proposition \ref{ext}(i).
\end{proof}

\begin{cor}If \ $\cal U$ is a $K$-annulus then $\cal O^\dag_K(\cal U)$ is a principal ideal domain and
$\cO_K^\dag(\cU) \simeq \cO_K^\sigma(\cU)$.
\end{cor}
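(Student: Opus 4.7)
The plan is to reduce both assertions to Theorem \ref{ML2} and to the already-established linear case (Corollary \ref{MLLCor}): the isomorphism $\cO_K^\dag(\cU)\simeq \cO_K^\sigma(\cU)$ is injectivity of $f\mapsto f^\sigma$, while the PID structure will be extracted from the unique factorization of Theorem \ref{ML2}(ii).

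For injectivity I would take a finite algebraic extension $F/K$ over which $\cU$ splits into linear $F$-annuli $\cU_1,\dots,\cU_N$. Proposition \ref{ext}(ii) gives $\cO_K^\dag(\cU)\otimes_K F\simeq \bigoplus_j \cO_F^\dag(\cU_j)$, and Corollary \ref{MLLCor} identifies each summand with $\cO_F^\sigma(\cU_j)$. Any $f\in \cO_K^\dag(\cU)$ with $f^\sigma\equiv 0$ on $\cU$ therefore maps to $0$ in every component; faithful flatness of $\cO_K^\dag(\cU)\to \cO_K^\dag(\cU)\otimes_K F$ (Proposition \ref{ext}(i)) then forces $f=0$. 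From this point on I would identify $\cO_K^\dag(\cU)$ with $\cO_K^\sigma(\cU)$; in particular every strong unit, being a unit in $\cO_K^\sigma(\cU)$ by Definition \ref{strongunit}, is now a unit in $\cO_K^\dag(\cU)$.

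For the PID property, apply Theorem \ref{ML2}(ii) to write each nonzero $f\in \cO_K^\dag(\cU)$ uniquely as $f=P_f(x)\cdot\prod_{i=1}^n p_i(x)^{n_i(f)}\cdot E_f$, with $P_f\in K[x]$ monic having all its zeros in $\cU$, $n_i(f)\in\bZ$, and $E_f$ a strong unit. The $p_i$ have no zeros in $\cU$, so by the Mittag-Leffler representation ($P=1$, $n_j=-\delta_{ij}$, $E=1$) each $p_i^{-1}$ already lies in $\cO_K^\dag(\cU)$, whence the $p_i$ are units; together with the first step, $f$ and $P_f$ generate the same principal ideal. Attaching to each nonzero $f$ the effective divisor $D(f):=\mathrm{div}(P_f)$, a finitely supported function from the closed points of $\cU$ (Galois orbits of points in $\cU$) to $\bN$, exhibits $\cO_K^\dag(\cU)$ as a UFD whose primes are, up to units, the monic irreducibles of $K[x]$ with all roots in $\cU$. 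For a nonzero ideal $I$, put $D_I(p):=\min_{0\neq f\in I}\mathrm{ord}_p(f)$; fixing any $f_0\in I\setminus\{0\}$ confines $\mathrm{supp}(D_I)$ to the finite set $\mathrm{supp}(D(f_0))$, so $D_I=\mathrm{div}(Q)$ for a uniquely determined monic $Q\in K[x]$. I would then show $I=(Q)$: first, $Q\in I$ via Bezout in the PID $K[x]$ applied to finitely many $g_p\in I$ realizing the minimum at each $p\in\mathrm{supp}(D_I)$, augmented by $h_{p'}\in I$ of order zero at each common zero $p'$ the $g_p$ pick up outside $\mathrm{supp}(D_I)$; the polynomial gcd is then precisely $Q$, and the Bezout coefficients in $K[x]$ lift to $\cO_K^\dag(\cU)$-coefficients via the unit factors $\prod p_i^{n_i}E$. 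Second, for any $f\in I$ the inequality $D(f)\geq D_I=\mathrm{div}(Q)$ forces $Q\mid P_f$ in $K[x]$, so $f/Q=(P_f/Q)\prod p_i^{n_i(f)}E_f\in \cO_K^\dag(\cU)$ and $f\in(Q)$.

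The main obstacle is the finiteness step underlying $Q\in I$: the minimal order must be realized at every point of $\mathrm{supp}(D_I)$ simultaneously without creating spurious common zeros elsewhere. This is exactly what Theorem \ref{ML2}(ii) supplies, since every nonzero element of $\cO_K^\dag(\cU)$ has only finitely many zeros, making the Bezout combination in $K[x]$ visible and liftable to $\cO_K^\dag(\cU)$.
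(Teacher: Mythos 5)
Your proof is correct, and it rests on the same two pillars the paper intends for this corollary---Theorem \ref{ML2} and the reduction to linear annuli---but you execute both halves differently from the (largely implicit) proof in the paper. For injectivity of $f\mapsto f^\sigma$ the paper reads it off from the uniqueness of the decomposition in Theorem \ref{ML2}(i) via the supremum-norm computation already used for Corollary \ref{MLLCor} (following \cite{FP}), whereas you base-change along Proposition \ref{ext}: split $\cU$ into linear annuli over $F$, apply Corollary \ref{MLLCor} to each summand of $\cO_K^\dag(\cU)\otimes_K F\simeq\bigoplus_j\cO_F^\dag(\cU_j)$, and conclude by the faithful flatness in \ref{ext}(i). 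That is legitimate (the identification in \ref{ext}(ii) is compatible with $\sigma$, exactly as the paper itself uses it at the end of the proof of Theorem \ref{ML2}), and it buys you independence from the Mittag-Leffler decomposition (i) for general annuli, at the cost of invoking \ref{ext}(ii), which the paper otherwise only needs inside the proof of \ref{ML2}. For the PID property the paper simply points to the argument of \cite{FP} Proposition 2.2.6; your divisor/Bezout argument is essentially that argument written out from the factorization \ref{ML2}(ii), and it is sound. Two small touch-ups: the cleanest reason the $p_i$ are units in $\cO^\dag_K(\cU)$ is that $a_i/p_i^{\ell_i}$ is one of the defining generators of $\cO_K(\cU)$, so $p_i^{-1}=a_i^{-1}\,p_i^{\ell_i-1}\cdot\bigl(a_i/p_i^{\ell_i}\bigr)$ already lies in $\cO^\dag_K(\cU)$ (your appeal to ``the Mittag-Leffler representation of $p_i^{-1}$'' presupposes that it is in the ring); and before speaking of a UFD or PID you should record that $\cO^\dag_K(\cU)$ is an integral domain, which is immediate from \ref{ML2}(ii) together with injectivity, since a nonzero element has only finitely many zeros on $\cU$.
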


\section{Cell decomposition}\label{seccell}

In this section, a cell decomposition, as a consequence of
$b$-minimality, is obtained for analytic structures on Henselian
valued fields of section \ref{sepanstruct}, see Remark \ref{remmt}.
The history of cell decomposition for Henselian fields goes back to
Cohen \cite{Co} and Denef \cite{D1}, \cite{D2} for the algebraic
case, generalized by Pas \cite{Pas1}, \cite{Pas2} to more general
Henselian fields, and by the authors to analytic expansions
\cite{C}, \cite{CLR1}, suited for $p$-adic integration. The
definitions of cells and cell decomposition have been simplified in
\cite{CL} and put in an axiomatic framework in \cite{CLb}. In
\cite{CL}, cell decomposition is used to define motivic integrals;
historically, cell decomposition has been used to calculate many
types of $p$-adic integrals \cite{D1}, \cite{Pas1}. In this section
we generalize the cell decomposition of \cite{CLR1} to the
generality of this paper, using the axiomatic formulation of
\cite{CLb}. We also establish ``preservation of balls'' and the
``Jacobian property'' for these structures, notions useful for
change of variables formulas for integrals. In this section we shall
use the additively written order $\ord$ as well as the
multiplicative norm $|\cdot|$.

\subsection{The semialgebraic language}

Let $\Hens$ be the collection of all Henselian valued fields of
characteristic zero (hence mixed characteristic and
equicharacteristic zero fields are included).

For $K$ in $\Hens$, write $\Ko$ for the valuation ring, $\Gamma_K$
for the value group, $\ord:K^\times\to \Gamma_K$ for the (additively
written) valuation, $M_K$ or $\Koo$ for the maximal ideal of $\Ko$,
and $\Kt$ for the residue field.

For any integer $n>0$, write $$rv_{n}:K^\times\to K^\times/1+nM_K$$
for the natural group morphism, with $nM_K=\{nm\mid m\in M_K\}$, and
extend it to a map $rv_n:K\to (K^\times/1+nM_K)\cup \{0\}$ by
sending zero to zero. Write $RV_{n}$, or $RV_n(K)$, for
$(K^\times/1+nM_K)\cup \{0\}$ for integers $n>0$. Write also $\ord$
for the natural maps $\ord:K^\times/1+nM_K\to \Gamma_K$. We
sometimes abbreviate $rv_1$ to $rv$ and $RV_1$ to $RV$. Note that in
equicharacteristic zero the $RV_n$ all are the same as $RV_1$.

The sorts $RV$ and $RV_{n}$ are called auxiliary. The valued field
sort is the main sort. There are no other sorts. We write $\Val$ for
the valued field sort.

\begin{defn}
Let $\LHens$ be the language of rings $(+,-,\cdot,0,1)$ for the
valued field sort, together with function symbols $rv_{n}$ for all
$n>0$, and the inclusion language as defined below on the auxiliary
sorts.
\end{defn}

 Let $\THens$ be the theory of all fields in $\Hens$ in the language
$\LHens$.

We will sometimes write $K$ to denote both a model of $\THens$ and
the (universe of the) valued field sort of that model.

\subsubsection{The inclusion language on the $RV_n$}\label{induced}
Let $K$ be in $\Hens$. For $a_i\in RV_{n_i}(K)$, $i=1,\ldots,n$, and
for $f,g$ polynomials over $\ZZ$ in $n$ variables, we let the
expression
$$
f(a_1,\ldots,a_{n})
$$
correspond to  the set
$$
f(a):=\{x\in K\mid (\exists y\in K^{n}) \big( f(y)=x \wedgem_i
rv_{n_i}(y_i)=a_i\big)\}.
 $$
By an inclusion
\begin{equation}\label{inc}
f(a)\subset g(a)
\end{equation}
 of such expressions, we shall mean the inclusion
of the corresponding sets.

The inclusion language $\cL_{RV}$ on the sorts $RV_{n}$, $n>0$,
consists of the three symbols $+,\cdot,\subset$, interpreted as the
relations explained in (\ref{inc}). (There are no terms in this
language, only relations of the form $f(x)\subset g(x)$, for $f$ and
$g$ polynomials in $x_1,\ldots,x_n$ formed with $+$ and $\cdot$,
with the $x_i$ variables of the sorts $RV_{n_i}$ for some integers
$n_i>0$.

\begin{rem}
\item[(i)] For an alternative (but essentially equivalent) language on the auxiliary sorts, see
\cite{BK} and \cite{Scanlon}.

\item[(ii)] Note that $\ord(x) < \ord(y)$ is valued field quantifier free
definable using $rv$, for example, by $rv(x) = rv(x+y)$.
\end{rem}

\begin{defn}
 By an \emph{open, resp.~closed, ball} in a valued field $K$ is meant a set
of the form
$$
\{x\in K \mid \ord (x-b) >\ord (a)\}, \mbox{ resp. }\{x\in K \mid
\ord (x-b) \geq \ord (a)\},
$$
with $b\in K$, $a\in K^\times$. By a \emph{point} is meant a
singleton.
\end{defn}

\begin{rem}\label{ballsrv}
For any $n>0$, any nonzero $\xi\in RV_n$, and any $h\in K$, the set
\begin{equation}\label{dball}
X:=\{x\in K\mid rv_n(x-h)=\xi\}
\end{equation}
is an open ball of the form
$$
\{x\in K \mid \ord (x-b) > \alpha\}
$$
for any $b\in X$ and $\alpha= \ord(n(b-h))$. Often, none of the
points $b$ is definable (over a certain set of parameters) while $h$
and $\xi$ are definable. This is the advantage of the description
(\ref{dball}) of the open ball $X$.
\end{rem}

The following is a consequence of Hensel's Lemma, see also
\cite{Co}, \cite{D2}, \cite{Pas1}, \cite{Pas2}.
\begin{lem}\label{hm} Let $K$ be in
$\Hens$. Let
 $$f(y)=\sum_{i=0}^m a_iy^i
 $$
be a polynomial in $y$ with coefficients in $K$, let $n>0$ an
integer, and let $x_0\not=0$ be in $RV_{n}(K)$.
 Suppose that there exist $i_0>0$ and $x\in K$ with
\begin{equation}\label{h0}
rv_n(x)=x_0 \mbox{,  and, $\ \ord (a_{i_0}x^{i_0})$ is minimal among
the $\ord (a_{i}x^{i})$,}
\end{equation}
in the sense that
$$
\min_{0\leq i\leq m} \ord (a_{i}x^{i}) = \ord (a_{i_0}x^{i_0}),
$$
 and such that
\begin{equation}\label{h1}
\ord (f(x)) > \ord (n^2  a_{i_0}x^{i_0})
\end{equation}
and
\begin{equation}\label{h2}
\ord (f'(x)) \leq  \ord (n a_{i_0}x^{i_0-1}).
\end{equation}
Then there exists a unique $b\in K$ with
\begin{equation}\label{h3}
f(b)=0 \ \mbox{ and }\  rv_{n}(b)=x_0.
\end{equation}
\end{lem}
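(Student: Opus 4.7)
The plan is to reduce to one application of classical Hensel's Lemma via the substitution $y = x(1+w)$: any $b$ with $rv_{n}(b) = rv_{n}(x) = x_{0}$ has the form $b = x(1+w)$ with $w \in nM_{K}$, so setting $\lambda := a_{i_{0}} x^{i_{0}}$ (nonzero by (\ref{h0})) and
\[
h(w) \;:=\; \frac{1}{\lambda}\, f(x(1+w)) \;=\; \sum_{k\geq 0} w^{k} \sum_{i\geq k} \binom{i}{k}\, \frac{a_{i} x^{i}}{\lambda},
\]
finding $b$ as in (\ref{h3}) is the same as finding a root $w_{1} \in nM_{K}$ of $h$. The minimality hypothesis in (\ref{h0}) gives $\ord(a_{i}x^{i}/\lambda) \geq 0$ for all $i$, so, since the binomial coefficients are integers, $h \in K^{\circ}[w]$.

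Two direct computations now yield the required Hensel bounds. From $h(0) = f(x)/\lambda$ together with (\ref{h1}),
\[
\ord h(0) \;=\; \ord f(x) - \ord \lambda \;>\; 2\,\ord n,
\]
and from $h'(0) = x f'(x)/\lambda$ together with (\ref{h2}),
\[
\ord h'(0) \;=\; \ord x + \ord f'(x) - \ord \lambda \;\leq\; \ord n.
\]
In particular $h'(0) \neq 0$ and $\ord h(0) > 2\,\ord n \geq 2\,\ord h'(0)$, i.e.\ $|h(0)| < |h'(0)|^{2}$, which is precisely the classical Hensel condition for $h \in K^{\circ}[w]$ at $0 \in K^{\circ}$.

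Since $K^{\circ}$ is Henselian, there is a unique $w_{1} \in K^{\circ}$ with $h(w_{1}) = 0$ and $|w_{1}| < |h'(0)|$, and the quantitative form of Hensel gives $\ord w_{1} \geq \ord h(0) - \ord h'(0) > \ord n$, so $w_{1} \in nM_{K}$. Then $b := x(1+w_{1})$ satisfies $f(b) = \lambda\, h(w_{1}) = 0$ and $b/x \in 1 + nM_{K}$, hence $rv_{n}(b) = x_{0}$, proving existence of $b$ in (\ref{h3}). For uniqueness, any $b' \in K$ satisfying (\ref{h3}) writes as $b' = x(1+w')$ with $w' \in nM_{K}$; then $\ord w' > \ord n \geq \ord h'(0)$, so $w'$ lies in the Hensel uniqueness ball $\{w : |w| < |h'(0)|\}$, which forces $w' = w_{1}$ and thus $b' = b$. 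The only delicate point is the calibration of the factors $n^{2}$ and $n$ in (\ref{h1})--(\ref{h2}): they are exactly what is needed so that, after the rescaling by $\lambda$ and the shift $y = x(1+w)$, the residue-characteristic obstruction is absorbed into the new variable and the classical Hensel condition $|h(0)| < |h'(0)|^{2}$ drops out cleanly.
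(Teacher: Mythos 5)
Your proof is correct and takes essentially the same route as the paper's: the paper reduces to the case of unit coefficients by a change of coordinates (citing Pas, Lemma 3.5) and then invokes Hensel's lemma, and your substitution $y=x(1+w)$ combined with division by $\lambda=a_{i_0}x^{i_0}$ is exactly that change of coordinates, carried out explicitly. The quantitative bookkeeping also checks out: minimality of $\ord(a_{i_0}x^{i_0})$ gives $h\in K^\circ[w]$, conditions (\ref{h1})--(\ref{h2}) translate into $|h(0)|<|h'(0)|^2$ with $\ord h'(0)\leq \ord n$, and the Hensel root and uniqueness ball land precisely in $nM_K$, which is what $rv_n(b)=x_0$ requires.
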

\begin{proof}
The case that the $a_i$ lie in the valuation ring $K^\circ$ and that
$a_{i_0}$ and $x$ are units in $K^\circ$ follows from Hensel's
lemma.
 The general case follows after changing coordinates. See
 \cite{Pas1}, Lemma 3.5,
 or \cite{Pas2} for explicit change of variables.
\end{proof}

\begin{defn}[Henselian functions]\label{hm2}
Let $K$ be in $\Hens$. For each integers $m\geq 0$, $n>0$, define
the function
 $$h_{m,n}:K^{m+1}\times RV_{n}(K) \to K$$ as the function sending
 the tuple
$(a_0,\ldots,a_m,x_0)$ with nonzero $x_0$ to $b$ if there exist
$i_0$ and $x$ that satisfy the conditions (\ref{h0}), (\ref{h1}),
and (\ref{h2}) of Lemma \ref{hm} and where $b$ is the unique element
satisfying (\ref{h3}), and sending $(a_0,\ldots,a_m,x_0)$ to $0$ in
all other cases.

Define $\LHens^*$ as the union of the language $\LHens$ together
with all the function symbols $h_{m,n}$. The functions $h_{m,n}$ are
similar to those in \cite{CLR1}.

\end{defn}

\subsection{The analytic languages}

Let $\cA = \{A_{m,n}\}$ be a separated Weierstrass system, as
defined in section \ref{sepanstruct}. Define $\cL_{\Hens,\, \cA}$ as
the language $\LHens$ together with function symbols for all the
elements of $\bigcup_{m,n}A_{m,n}$, with the field inverse
$(\cdot)^{-1}$ on the valued field sort extended by $0^{-1}=0$, and
together with the induced language on the sorts $RV_n$. The analytic
function symbols are interpreted as zero outside their natural
domains of products of the valuation ring and the maximal ideal. On
their natural domains, they are interpreted via an analytic
$\cA$-structure.

Let $\cT_{\Hens,\, \cA}$ be the $\cL_{\Hens,\, \cA}$-theory of all
Henselian valued fields in $\Hens$ with analytic $\cA$-structure.

Likewise, one can give  definitions of analytic languages and
analytic theories arising from strictly convergent Weierstrass
systems and strictly convergent analytic structures with
$\pi\not=1$, as defined in section \ref{strictlyconv} and Definition
\ref{seps} (i). We will not treat this separately.

\subsection{$b$-minimality}\label{secbmin}

To obtain cell decomposition, the criterion of $b$-minimality of
\cite{CLb} is used. From $b$-minimality many properties, like cell
decomposition, dimension theory, etc., follow immediately, see
\cite{CLb}. In this paper we need only prove $b$-minimality, because
cell decomposition follows by \cite{CLb}, see Remark \ref{remmt}.

First we recall the definitions for $b$-minimality.

By an expansion of a theory $\cT$ in a language $\cL$ is meant a
theory $\cT'$ in a language expanding $\cL$ such that $\cT'$
contains $\cT$.  By \emph{definable} is meant definable with
parameters, unless we specify the parameters, for example by saying
$A$-definable. A definable set is called \emph{auxiliary} if it is a
subset of a finite Cartesian product of (the universes of) auxiliary
sorts.

\begin{defn}[$b$-minimality for expansions of $\THens$]\label{defbmin} Call an expansion of $\THens$ $b$-minimal when
the following three conditions are satisfied for every model $K$,
any set of parameters $A$ (the elements of $A$ can belong to any of
the sorts), for all $A$-definable subsets $X$ and $Y$ of (the valued
field) $K$, and for every $A$-definable function $F:X\to Y$,
\begin{enumerate}
 \item[(b1)]
 there exists an $A$-definable function
$ f:X\to S$ with $S$ auxiliary such that for each $s\in f(X)$ the
fiber $f^{-1}(s)$ is a point or an open ball;
 \item[(b2)] if $g$ is a definable function from an auxiliary set
 to a ball, then $g$ is not surjective;
  \item[(b3)] there exists an $A$-definable function $f : X
\rightarrow S$ with $S$ auxiliary such that for every $s$ in $S$ the
restriction
 $F_{|f^{-1}(s)}$ is either injective or constant.
\end{enumerate}
Call $f$ as in (b1) a \textit{$b$-map} on $X$. (The $f$ and $S$ of
(b1) and (b3) are allowed to be different.)
\end{defn}

\begin{defn}[Centers for expansions of $\THens$]\label{defcenters}
Say that a $b$-minimal  expansion of $\THens$ has centers when, in
(b1) of Definition \ref{defbmin}, one can choose $f$ such that
moreover there exist $n>0$ and an $A$-definable function
$$
h:f(X)\to K
$$
such that for each $s\in f(X)$ with $f^{-1}(s)$ a ball, there exists
a (necessarily unique and nonzero) $\xi$ such that
$$
f^{-1}(s)=\{x\in K\mid rv_n(x-h(s))=\xi\}.
$$

Following \cite{CLb}, call $h$ a \textit{$B_n$-center} of $f$.
\end{defn}
\begin{rem}
The definition of centers should be compared with Remark
\ref{ballsrv}.
\end{rem}

\begin{defn}[Preservation of all balls for expansions of $\THens$]\label{defball}
Say that a $b$-minimal expansion of $\THens$ \textit{preserves all
balls} when for every $F$ as in Definition \ref{defbmin}, $f:X\to S$
can be taken as in (b3) such that moreover for each $s\in S$ and for
each open ball $Z$ with
$$Z\subset f^{-1}(s),$$
$F(Z)$ is either an open ball or a point.
\end{defn}

A function on an open subset of a valued field is called $C^1$ when
it is continuously differentiable in the sense of Cauchy's
$\varepsilon$, $\delta$-definition.

\begin{defn}\label{defjacprop}
Let $F:B_1\to B_2$ be a bijection between open balls $B_1,B_2\subset
K$, with $K$ a Henselian valued field. Say that $F$ \textit{has the
Jacobian property} if the following conditions a) to c) hold

\item[a)] $F$ is $C^1$ on $B_1$; write ${\rm Jac} F$ for $\partial
F/\partial x:B_1\to K$;

\item[b)] $rv ({\rm Jac} F)$ is constant on $B_1$; write $rv ({\rm Jac  } F)=a$;

\item[c)] for all $x,y\in B_1$ with $x\not=y$, one has
$$
\ord(a)+\ord(x-y)=\ord(F(x)-F(y)).
$$
\end{defn}

The Jacobian property of the following definition is useful for
change of variables formulas, for example for $p$-adic or motivic
integrals.

\begin{defn}\label{defjacprop2}
Say that an expansion $\cT$ of $\THens$  \emph{has the Jacobian
property} if and only if the following holds in any model $K$ of
$\cT$ and any set $A$ of parameters:

For any $A$-definable bijection
$$
F:X\subset K\to Y\subset K
$$
there exists a $A$-definable $b$-map
$$
f:X\to S
$$
such that for each $s\in f(X)$ such that $f^{-1}(s)$ is a ball, the
restriction of $F$ to $f^{-1}(s)$ has the Jacobian property.
\end{defn}

The main results of this section are the following two theorems:
\begin{thm}[$b$-minimality, ${\rm Char}(K)=0$]\label{mt}
Let $\cA= \{A_{m,n}\} $ be a separated Weierstrass system, as
defined in section \ref{sepanstruct}. The theory $\cT_{\Hens,\,
\cA}$ eliminates valued field quantifiers, is $b$-minimal with
centers and preserves all balls. Moreover, $\cT_{\Hens,\, \cA}$ has
the Jacobian property.
\end{thm}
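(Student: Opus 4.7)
The plan is to follow the strategy of \cite{CLR1}, using Theorem~\ref{terms} as the fundamental one-variable input and the axiomatic framework of \cite{CLb}, so that once the behaviour of $\cL_{\Hens,\cA}$-terms in one valued-field variable is understood, $b$-minimality with centers, preservation of balls and the Jacobian property all follow in a uniform way. The overall architecture is: quantifier elimination $\Rightarrow$ one-variable reduction $\Rightarrow$ annulus decomposition of terms $\Rightarrow$ Hensel-based refinement to open balls indexed by auxiliary data.

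First I would establish valued-field quantifier elimination. By a standard back-and-forth argument, modelled on \cite{Pas1} and its analytic adaptation in \cite{CLR1}, it is enough to eliminate a single existential quantifier of the main sort from a formula of the form $\exists x\, \varphi(x,y,\eta)$, where $y$ are valued-field parameters and $\eta$ are auxiliary. Working in $\cA(K)$ by Theorem~\ref{constants}, every $\cL_{\Hens,\cA}$-term $\tau(x,y)$ restricts in the free variable $x$, after fixing $y$, to a term in one variable over the parameters; applying Theorem~\ref{terms} (and its Remark~\ref{moreterms}) uniformly in $y$ gives a finite partition of the $x$-line into $K(y)$-annuli $\cU_i$ and a finite exceptional set $S$ on which $\tau = R_i(x)\cdot E_i^{\sigma}$ with $R_i$ rational over $K(y)$ and $E_i$ a strong unit in $\cO^{\dag}_{K(y)}(\cU_i)$. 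Because strong units have constant $rv_n$-value on sufficiently fine subannuli (by definition of strong unit, Definition~\ref{strongunit}, together with the Strong Noetherian Property), the quantifier over $x$ can after refinement be reduced to existential statements about rational functions only, and these are handled by Hensel's lemma in the form of the symbols $h_{m,n}$ of Definition~\ref{hm2}. Uniformity of the partition in the parameters is the delicate point and is extracted from the Strong Noetherian Property (Theorem~\ref{SepSNP}) exactly as in \cite{CLR1}, Section~5.

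Next I would verify $b$-minimality. Axiom (b1) with centers: given an $A$-definable $X\subset K$, quantifier elimination expresses $X$ as a Boolean combination of conditions $rv_n(\tau_j(x)) \in W_j$ for finitely many terms $\tau_j$ and $A$-definable auxiliary sets $W_j$. Apply Theorem~\ref{terms} simultaneously to all $\tau_j$: a common refinement yields finitely many $K$-annuli $\cU_i$ (plus a finite set) on each of which every $\tau_j$ equals a rational function times a strong unit. On each $\cU_i$, using Weierstrass Preparation and Lemma~\ref{hm}, the function $x\mapsto (rv_n(\tau_j(x)))_j$ factors through a definable map $f_i$ into a product of $RV_n$'s, and the nonempty fibres are open balls $\{x\colon rv_n(x-h(s))=\xi\}$ for a definable centre $h(s)\in K$; these centres are taken to be zeros of the irreducible polynomials occurring in the annulus description, produced Henselianly from $A$-definable approximations by the functions $h_{m,n}$. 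Collating over the $\cU_i$ yields the desired $f\colon X\to S$ with a $B_n$-centre, proving (b1) with centers (Definition~\ref{defcenters}). Axiom (b2) is immediate: an auxiliary set is a quotient of a subset of a valued-field product by $1+nM_K$-equivalence, and by quantifier elimination any definable map from auxiliary sorts to $K$ has image contained in a finite union of points and orbits that cannot cover an open ball. Axiom (b3), for a definable $F\colon X\to Y$, follows from the same annulus-plus-strong-unit analysis applied to $F$ viewed as a term: on each annulus piece $F = R\cdot E^{\sigma}$, and further partitioning by the zero-set of $R$ and by $rv$-level sets of $R\cdot E$ reduces $F$ to either a constant or an injective piece, as in \cite{CLR1}, Theorem~6.1.

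Finally, preservation of balls and the Jacobian property are consequences of the same analysis applied one derivative deeper. On a $K$-annulus where $F = R\cdot E^{\sigma}$ with $E$ a strong unit, the derivative $F' = R'\cdot E^{\sigma} + R\cdot (E^{\sigma})'$ is again (by Remark~\ref{realcomp}(v) adapted to the separated case, which holds because the Weierstrass system is closed under coordinate derivatives) a term of the same form, so Theorem~\ref{terms} applies to $F'$ as well. Refining the $b$-map $f$ so that $rv({\rm Jac}\,F)$ is constant on each ball fibre $f^{-1}(s)$, and then shrinking further so that within each such ball the Taylor remainder $F(x)-F(y)-F'(y)(x-y)$ has strictly smaller order than $F'(y)(x-y)$ (which is possible because strong units are locally approximated to first order by their leading term, by Weierstrass Preparation), gives conditions (a)--(c) of Definition~\ref{defjacprop} simultaneously. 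Preservation of balls then follows because a bijection with the Jacobian property sends the open ball $B_1$ onto the open ball of radius $\ord(a)+\ord(\text{radius}(B_1))$ around $F(b)$ for any $b\in B_1$.

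The main obstacle, and the only place where genuine work beyond bookkeeping is needed, is the uniform refinement step: one must ensure that the finitely many annulus decompositions produced by Theorem~\ref{terms}, applied separately to each of the finitely many relevant terms (and their derivatives) with parameters varying in $y$, can be merged into a single $A$-definable partition whose fibres are simultaneously good for all terms at once. This is exactly what the Strong Noetherian Property (Theorem~\ref{SepSNP}) was designed to provide, since it bounds \emph{uniformly} the combinatorial complexity of the annulus data produced from a given term; the argument is a direct adaptation of \cite{CLR1}, Sections~5--6, with Theorem~\ref{terms} replacing the more special one-variable analysis available there.
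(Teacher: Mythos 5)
There is a genuine gap, and it is the same one in two places: in your arguments for (b3) and for the Jacobian/ball-preservation properties you treat an arbitrary $A$-definable function $F:X\to Y$ as if it were (piecewise) an $\cL_{\Hens,\,\cA}$-term, writing ``$F=R\cdot E^{\sigma}$ on each annulus piece'' and then differentiating. At that stage of the argument nothing of the sort is available: Theorem~\ref{terms} applies to \emph{terms}, not to definable functions, and the statement that definable functions are piecewise given by terms is exactly Theorem~\ref{thens}, whose proof in the paper \emph{uses} $b$-minimality with centers (so invoking it to prove (b3) is circular). Even after Theorem~\ref{thens} is available, definable functions are only piecewise $\cL_{\Hens,\,\cA}^{*}$-terms, i.e.\ they involve the Henselian functions $h_{m,n}$, and one still needs Lemma~\ref{ballterm} and Lemma~\ref{termtoanalytic} to see that such terms restrict to elements of $\cO^{\dag}_K(B)$ on the balls of a suitable $b$-map. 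The paper sidesteps the circularity for (b3) by a soft argument: it verifies property $(\ast)$ of Lemma~2.4.4 of \cite{CLb} directly from quantifier elimination (a quantifier-free formula in two valued-field variables defining a graph forces the graph into the zero set of a product of terms, whence either the image or some fiber is finite). You need either that route or an independent proof that definable functions are piecewise terms; your proposal supplies neither.

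The second concrete omission is in ball preservation and the Jacobian property. Your step ``shrink further so that the Taylor remainder $F(x)-F(y)-F'(y)(x-y)$ has strictly smaller order than $F'(y)(x-y)$'' is precisely where the real work lies, and ``Weierstrass Preparation for strong units'' does not deliver it: one must show that the amount of shrinking can be taken to be division by a \emph{standard integer} $n_0$ (so that it can be encoded by refining a $B_{n\cdot n_0}$-center and made uniform over all fibers by compactness). This is the content of Lemma~\ref{ballWD}, whose proof is genuinely delicate in this non-complete, possibly higher-rank setting: one must rule out that the normalizing constant $c$ with $\|g'\|=|1/c|$ is infinitesimal, which the paper does by passing to the quotient by the ideal of infinitesimals (Example~\ref{sepex}(11)) and deriving a contradiction from the zero count of a monic polynomial times a very strong unit; characteristic zero and the $rv_n$-machinery (Remark~\ref{rv-Jac}) enter exactly here. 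Your other steps (QE via one-variable term analysis and (b1) with centers) are in the spirit of the paper's Lemma~\ref{lcda} --- though the paper gets uniformity in parameters by compactness rather than by re-running the Strong Noetherian Property, and handles the polynomial contributions by quoting the algebraic case (Remark~\ref{remCLb}) and the strong-unit contributions by Lemma~\ref{rvunits} --- and your (b2) argument is too vague (``orbits that cannot cover an open ball'') but repairable along the paper's lines. The missing treatment of definable-versus-term functions and the missing Lemma~\ref{ballWD}-type input are, however, essential gaps.
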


Define $\cL_{\Hens,\, \cA}^*$ as the language $\cL_{\Hens,\, \cA}$
together with all the functions $h_{m,n}$.

\begin{thm}[Term structure, ${\rm Char}(K)=0$]\label{thens}
Let $K$ be a $\cT_{\Hens,\, \cA}$-model. Let $X \subset K^n$ be
definable and let $f:X\to K$ be a $\cL_{\Hens,\, \cA}(A)$-definable
function for some set of parameters $A$. Then there exists a
$\cL_{\Hens,\, \cA}(A)$-definable function $g:X\to S$ with $S$
auxiliary such that
\begin{equation}\label{et}
f(x)=t(x,g(x))
\end{equation}
for each $x\in X$ and where $t$ is a $\cL_{\Hens,\, \cA}^*(A)$-term.
\end{thm}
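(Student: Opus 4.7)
The plan is to combine the valued-field quantifier elimination from Theorem \ref{mt} with the Henselian functions $h_{m,n}$ of Definition \ref{hm2} and the one-variable term analysis underlying Theorem \ref{terms}. First I would invoke Theorem \ref{mt} to assume the graph $\{(x,y):y=f(x)\}$ is $\cL_{\Hens,\cA}(A)$-defined by a formula $\phi(x,y)$ with no quantifiers over the valued-field sort. The remaining existentials over auxiliary sorts can be absorbed into an auxiliary-valued definable function $g_0:X\to S_0$ by iterated use of $b$-minimality property (b3) of Definition \ref{defbmin}. This reduces the problem to $\phi$ being a Boolean combination of atomic valued-field formulas of the form $F(x,y)=0$ or $rv_n(F(x,y))=\eta$, where $F$ is a $\cL_{\Hens,\cA}(A)$-term and the parameter $\eta$ comes from $g_0(x)$.

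Next I would apply $b$-minimality with centers to the graph of $f$ fibered over $X$, producing a definable function $g_1:X\to S_1$ and a $B_n$-center $h(x)$ such that for each $x$ either $f(x)=h(x)$ (already a term) or $f(x)$ lies in the open ball $B(x)=\{y:rv_n(y-h(x))=\xi(x)\}$ for an auxiliary $\xi(x)$ recorded by $g_1$. In the second case, since the fiber of $f$ over $x$ is a single point, one of the atomic equations $F(x,y)=0$ in $\phi$ must force uniqueness on $B(x)$. Writing $z=y-h(x)$ and studying $z\mapsto F(x,h(x)+z)$ parametrically in $x$, one is exactly in the setting of the annulus analysis of Section \ref{annuli}: by the Strong Noetherian Property (Theorem \ref{SepSNP}, in the parametric version provided by Theorem \ref{constants} and Corollary \ref{VSSNP}) together with Weierstrass Preparation (Remark \ref{weierstrass_preparation}), after splitting $X$ further according to which monomial of $F$ is dominant on $B(x)$, one can write
\[
F(x,h(x)+z) \;=\; U(x,z)\cdot P(x,z), \qquad P(x,z)=\sum_{i=0}^{m} a_i(x)\,z^{i},
\]
where $U$ is a unit on $B(x)$ and the coefficients $a_i(x)$ are $\cL_{\Hens,\cA}(A)$-terms obtained by evaluating specific elements of an $A_{M,0}$ at $x$. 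The equation $F=0$ on $B(x)$ is then equivalent to $P(x,z)=0$, and the choice of dominant monomial ensures that the Hensel hypotheses \eqref{h0}--\eqref{h2} of Lemma \ref{hm} hold for the data $(a_0(x),\ldots,a_m(x),\xi(x))$. Therefore $z=h_{m,n}\bigl(a_0(x),\ldots,a_m(x),\xi(x)\bigr)$, and
\[
f(x) \;=\; h(x) + h_{m,n}\bigl(a_0(x),\ldots,a_m(x),\xi(x)\bigr),
\]
which is a $\cL_{\Hens,\cA}^{*}(A)$-term in $(x,g(x))$ once $g(x)$ is taken to collect the auxiliary data $g_0(x),g_1(x),\xi(x)$ together with the finitely many case-distinction labels introduced above.

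The main obstacle is maintaining parametric uniformity throughout: both the Weierstrass preparation and the selection of the Hensel-dominant monomial must be performed uniformly in $x$, i.e.\ the splitting of $X$ and the Weierstrass data must remain $\cL_{\Hens,\cA}(A)$-definable. This is exactly what the Strong Noetherian Property in the form of Theorem \ref{SepSNP} and Corollary \ref{VSSNP} provides, together with the uniqueness of Weierstrass data noted in Remark \ref{realcomp}(vi) and its separated analogue, which guarantees that specialization in $x$ commutes with the preparation. With these uniform ingredients in hand, the construction is carried out by induction on the complexity of $\phi$, and at each step the newly introduced parameters (dominant-term indices, $rv_n$-classes of intermediate evaluations) are collected into $g$ taking values in the auxiliary product $S$, yielding the desired representation \eqref{et}.
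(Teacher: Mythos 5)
Your proposal is, in substance, an attempt to re-derive by hand what the paper obtains by quoting Lemma \ref{lcda} together with the compactness and cell-decomposition machinery of \cite{CLb} (Theorem 7.2.9): the paper partitions the graph of $f$ into cells whose centers are already $\cL_{\Hens,\,\cA}^*(A)$-terms and glues the finitely many pieces with characteristic functions built from $h_{1,1}$. As written, though, your argument has a genuine gap at its central step. You assert that some atomic equation $F(x,y)=0$ from the quantifier-free graph formula ``forces uniqueness'' on the ball $B(x)$, and that ``the choice of dominant monomial ensures that the Hensel hypotheses (\ref{h0})--(\ref{h2}) hold''. Neither is automatic. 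First, it requires an argument (of the kind used for (b2)/(b3) in the proof of Theorem \ref{mt}) that $f(x)$ is a zero of one of the $F_j(x,\cdot)$ at all, since a priori the fiber could be carved out by $rv_n$-conditions alone. Second, even at a genuine zero, the dominant-monomial normalization gives (\ref{h0}) and (\ref{h1}) but not the derivative condition (\ref{h2}): the zero may be multiple, or several zeros of $P(x,\cdot)$ may share the class $\xi(x)$, in which case $h_{m,n}$ is by definition $0$ and does not recover $f(x)-h(x)$. Separating zeros uniformly (shrinking balls, increasing $n$, splitting $X$ further) is exactly the content of the algebraic term-structure results of \cite{CLb} that the paper imports through Remark \ref{remCLb} and Lemma \ref{lcda}; it cannot be skipped. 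Relatedly, the $B_n$-center $h$ furnished by Definition \ref{defcenters} is only a \emph{definable} function, so your parenthetical ``$f(x)=h(x)$ (already a term)'' is circular --- upgrading definable centers to term centers is the very statement being proved --- and applying (b1) with centers to the singleton fibers of the graph yields no center information anyway, since centers are only constrained on ball fibers.

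The second gap is the claimed uniformity in $x$. Theorem \ref{terms}, the annulus analysis of Section \ref{annuli}, and Theorem \ref{SepSNP} with Corollary \ref{VSSNP} are one-variable, ring-theoretic statements about elements of $\cO^\dag_K$ of an annulus or of the rings $A_{m,n}(K)$; an $\cL_{\Hens,\,\cA}(A)$-term $F(x,y)$ with extra free valued-field variables $x$ is not such an element (it involves $(\cdot)^{-1}$, extension by zero, and compositions), and the Strong Noetherian Property does not by itself produce a definable partition of $X$ together with term coefficients $a_i(x)$ for a Weierstrass factorization in $z$. The paper obtains the required uniformity model-theoretically: for a fixed $x$ in an arbitrary model one absorbs it into the constants (passing to $\cA(K')$ with $K'$ generated by terms, as in the proof of Lemma \ref{lcda}), applies the one-variable results there, and then recovers a single finite definable decomposition by compactness as in \cite{CLb}, Theorem 7.2.9. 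Your closing paragraph acknowledges the issue, but the proposed remedy (Strong Noetherian Property plus uniqueness of Weierstrass data, the cited uniqueness being moreover a statement about the real case) is not the mechanism that delivers it. A minor further point: property (b3) does not Skolemize auxiliary quantifiers; the function $g_0$ should simply be defined directly, e.g.\ $g_0(x):=(rv_n(F_j(x,f(x))))_j$, which is definable because $f$ is.
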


The rest of section \ref{secbmin} is devoted to the proofs of these
theorems. We begin with five lemma's, for $K$ in $\Hens$. We will
abuse notation and write $f$ for $f^\sigma$ when $f$ is in some
$\cO^\dag_K(\cdot)$.
\begin{lem}[${\rm Char}(K)=0$]\label{ballWD}
Let $B$ be the open ball $\Kalgoo$. Note that $B$ is a $K$-annulus.
Let $g$ be in $\cO^\dag_K(B)$. Suppose that $|g'(x)| = 1$ for all
$x\in \Koo$ and that $g(0)=0$. Then there exists an integer $n_0$
such that the mapping $x \mapsto g(x)$ is a bi-analytic isometry
between $n_0\Koo$ and itself.
\end{lem}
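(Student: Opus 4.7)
The plan is as follows. I will first expand $g(x) = \sum_{i\geq 1} a_i x^i$ as a power series in $\cO_K^\dag(B) = A_{0,1}^\dag(K)$, using $g(0)=0$ to drop the constant term (compare the examples just after Definition \ref{strongunit}). Setting $x=0$ gives $|a_1| = |g'(0)| = 1$. The main technical point is then that for a power series with coefficients bounded in $K$, the supremum norm over the open unit ball $\Koo$ equals the Gauss norm of its coefficients, so the hypothesis $|g'(x)| = 1$ on all of $\Koo$ will force $\|g'\| = 1$, i.e., $|i a_i| \leq 1$ for every $i \geq 1$. This is the only step that requires care.

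Next I will factor
\[
g(x) - g(y) = (x-y)\, h(x,y), \qquad h(x,y) := \sum_{i\geq 1} a_i\bigl(x^{i-1} + x^{i-2}y + \cdots + y^{i-1}\bigr),
\]
with $h(0,0) = a_1$ of norm $1$, and choose an integer $n_0$ satisfying $|n_0|^{i-1} \leq |i|$ for all $i \geq 2$. In equicharacteristic zero any $n_0 \neq 0$ works (e.g., $n_0 = 1$, since $|i|=1$), while in mixed characteristic $p$ the choice $n_0 = p$ succeeds, via the elementary bound $v_p(i) \leq i-1$ for $i \geq 2$. For $x, y \in n_0\Koo$, each term $|a_i(x^{i-1} + \cdots + y^{i-1})|$ is at most $|a_i|\max(|x|,|y|)^{i-1} < |a_i||n_0|^{i-1} \leq 1$, uniformly in $i \geq 2$, so $|h(x,y) - a_1| < 1$, hence $|h(x,y)| = 1$ and $|g(x) - g(y)| = |x-y|$. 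Together with $g(0)=0$, this shows that $g$ maps $n_0\Koo$ isometrically into itself.

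For surjectivity, given $y \in n_0\Koo$, the power series $g(x) - y$ is regular in $x$ of degree $1$ (since $|a_1| = 1$ and $|y| < 1$), so Weierstrass Preparation (Remark \ref{weierstrass_preparation}, or equivalently Hensel via Lemma \ref{hm}) yields a unique $\alpha \in \Koo$ with $g(\alpha) = y$, depending analytically on $y$; the isometry just established forces $|\alpha| = |y| < |n_0|$, so $\alpha \in n_0\Koo$. Analyticity of $g^{-1}$ then follows from the Weierstrass factorization. The one anticipated obstacle is the coefficient bound $|i a_i| \leq 1$; once that is in place, everything else reduces to elementary manipulation of the power series combined with a standard Weierstrass argument.
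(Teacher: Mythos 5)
Your pivotal step --- that $|g'(x)|=1$ for all $x\in\Koo$ forces $\|g'\|=1$, hence $|i\,a_i|\le 1$ for all $i$ --- is false in the setting of this lemma, and it is exactly where the real difficulty sits. The supremum over $\Koo$ of a bounded series need not equal the Gauss norm: $K$ need not be densely valued (nor complete, nor of rank one), and the hypothesis only concerns points of $\Koo$, not $\Kalgoo$. Concretely, take $K=\bQ_p$ with the Weierstrass system $A_{m,n}=\bZ_p\langle\xi\rangle[[\rho]]$ and $g(x)=x+p^{-1}x^3/3\in\cO^\dag_K(B)$ (note $3p\,g\in\bZ_p[[\rho]]=A_{0,1}$): then $g(0)=0$ and $|g'(x)|=|1+p^{-1}x^2|=1$ for every $x\in p\bZ_p=\Koo$, yet $\|g'\|=p$ and $|3a_3|>1$; the same happens over $\bQ((t))$ with $g=x+t^{-1}x^3$. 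So the coefficient bound you flag as ``the only step that requires care'' is not merely unproven but false, and with it your choice of $n_0$ satisfying $|n_0|^{i-1}\le|i|$ collapses. What is actually needed --- and what the lemma essentially amounts to --- is the weaker statement that $\|g'\|\le|1/n_0|$ for \emph{some} integer $n_0$, i.e.\ that the constant $c$ with $\|cg'\|=1$ is not infinitesimal relative to the integers. The paper proves precisely this: it writes $cg'$ as a monic polynomial times a very strong unit (Weierstrass Preparation in $A_{0,1}(K)$), reduces modulo the ideal $J$ of infinitesimals to get an analytic $\cA(K)/J$-structure on the residue object, and derives a contradiction by zero-counting if $c$ were infinitesimal (using the hypothesis $|g'(x)|=1$ on $\Koo$); only then does it rescale to $n_0\Kalgoo$, where $g$ becomes regular of degree $1$, giving the bi-analytic isometry. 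Nothing in your proposal substitutes for that argument.

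A secondary issue: even granted a correct coefficient bound, your term-by-term estimate of $h(x,y)=\sum_{i}a_i(x^{i-1}+\cdots+y^{i-1})$ treats evaluation as literal summation of an infinite series, which is not available here since $K$ need not be complete and may have higher rank; in this axiomatic framework such estimates have to be routed through Weierstrass division and the Strong Noetherian Property (Corollary \ref{VSSNP}), or, as in the paper, replaced by a regularity-of-degree-one argument after rescaling. Your surjectivity/bi-analyticity step via regularity of degree $1$ and Weierstrass Preparation is in the right spirit --- it is essentially how the paper concludes --- but it only becomes applicable after the missing non-infinitesimality bound on $\|g'\|$ has been established.
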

\begin{proof}

Let $c\in K$ be such that $\|g'\|=|1/c|$. Since $cg'$ lives in
$A_{0,1}(K)$, it is regular of some degree, hence, by Weierstrass
Preparation for $A_{0,1}(K)$, $cg'$ equals a monic polynomial $p$
over $\Ko$ times a very strong unit $E$ in $A_{0,1}(K)$, for well
chosen $c$. Suppose that $c$ is infinitesimal (in the sense that
$|c|<|n|$ for all integers $n>0$). Let $K'$ be the fraction field of
the quotient of $\Ko$ by the ideal $J$ consisting of the
infinitesimals in $\Ko$. By example (11) in section \ref{sepex},
$K'$ has analytic $\cA(K)/J$--structure. Let $p_J$, resp.~$E_J$ be
the image of $p$, resp.~$E$, in $A_{0,1}/JA_{0,1}$. Then $E_J$ is a
very strong unit, hence has no zeros, and $p_J$ is a monic
polynomial. Thus $p_JE_J$ is nonzero in $A_{0,1}/JA_{0,1}$ but it
has infinitely many zeros in $\Kalg'$, hence $p_J$ has infinitely
many zeros in $\Kalg'$, which is a contradiction. Hence, $c$ is not
infinitesimal and there exists an integer $n_0>0$ such that $|c|\geq
|n_0|$.
 Let $q$ be the image of $p$ in $\cO_K(n_0\Kalgoo)$. Then $q$ is a
strong unit of constant size $|q(x)|\geq |n_0|$ for $x\in
n_0\Kalgoo$, by definition of $\cO_K(n_0\Kalgoo)$ and by Remark
\ref{unit}.
 Hence, if $|n_0|$ is small enough (we can always make $n_0$ more divisible if needed), then the image
$h$ of $g$ in $\cO_K(n_0\Kalgoo)$ is regular of degree $1$. (Indeed,
automatically $h/\|h\|$ is regular of some degree $\alpha$, and then
one can replace $n_0$ by $\alpha \cdot n_0$.) (Regularity of $h$ of
degree $1$ means that $h(n_0\rho)$ is regular of degree $1$ in
$A_{0,1}(K)$.) It then follows for such $n_0$ that $h:n_0\Kalgoo\to
n_0\Kalgoo$ and also $h:n_0\Koo\to n_0 \Koo$ are isometries.
\end{proof}

\begin{rem}\label{rv-Jac}
Suppose that there is $n>0$ such that $rv_n(g')=1$ on $B$, then we
can choose $n_0$ in Lemma \ref{ballWD} such that moreover
$$
rv_n(x-y)=rv_n(g(x)-g(y))
$$
for all $x\not = y$ in $n_0\Koo$. This is stronger than saying that
$x \mapsto g(x)$ is an isometry between $n_0\Koo$ and itself. This
property also follows from the regularity of degree $1$ of $h$ at
the end of the proof of Lemma \ref{ballWD}.
\end{rem}

\begin{lem}[${\rm Char}(K)=0$]\label{ballterm}
Fix elements $b_i$ in $K$ for $i=0,\ldots,m$, fix an integer $n>0$,
and $x$ in $K$. Let $B$ be the Cartesian product
$$
\prod_{i=0}^m (b_i+n^2b_i\Koo),
$$
where $b_i+n^2b_i\Koo=\{0\}$ whenever $b_i=0$. Then the map
$$
F:B\to K:a\mapsto h_{m,n}(a,rv_n(x))
$$
is given by some $f$ in $\cO^\dag(B)$.\\

\end{lem}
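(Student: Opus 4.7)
The plan is to parametrize $B$ by $\rho=(\rho_0,\ldots,\rho_m)\in(\Koo)^{m+1}$ via $a_i = b_i + n^2 b_i \rho_i$ (with the coordinates where $b_i=0$ frozen at $0$), which identifies $\cO^\dag(B)$ with the corresponding ring $A^\dag_{0,\cdot}(K)$ produced in Definition~\ref{expand} and Theorem~\ref{constants}. Two degenerate cases dispose of themselves: if $x=0$ then $rv_n(x)=0$ and Definition~\ref{hm2} forces $F\equiv 0$, and if every $b_i=0$ then $B=\{0\}$; in both, $f=0$ works. In the main case I would first verify, using the ultrametric, that the Hensel hypotheses (H0)--(H2) of Lemma~\ref{hm} for $(a,x,rv_n(x))$ are invariant as $a$ ranges over $B$: the perturbation $a_i-b_i = n^2 b_i\rho_i$ with $\rho_i\in\Koo$ keeps $\ord(a_i)=\ord(b_i)$ and forces both $\ord(P_a(x)-P_b(x))>\ord(n^2 b_{i_0} x^{i_0})$ and $\ord(P_a'(x)-P_b'(x))>\ord(n b_{i_0} x^{i_0-1})$ for any index $i_0$ minimizing $\ord(b_i x^i)$, so (H1) and (H2) hold at $b$ iff they hold at every $a\in B$. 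If they fail everywhere on $B$, then $F\equiv 0$ and $f=0$ again suffices.

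In the remaining situation let $y_0 := h_{m,n}(b,rv_n(x))$, so $P_b(y_0)=0$, $rv_n(y_0)=rv_n(x)$, and $y_0-x\in nx\cdot\Koo$, whence $\ord(y_0)=\ord(x)$ and (by a similar ultrametric estimate) $\ord(P_a'(y_0))=\ord(P_b'(x))\le \ord(n b_{i_0} x^{i_0-1})$. I would then rescale the unknown by $y = y_0 + nx\, t$ and expand
\[
G(t,\rho) \;:=\; \frac{P_a(y_0+nx\, t)}{nx\,P_a'(y_0)} \;=\; c_0(\rho) + t + \sum_{k\ge 2} c_k(\rho)\,t^k.
\]
Since $P_a'(y_0)=P_b'(y_0)(1+\alpha)$ with $\alpha\in A_{0,m+1}(K)^\circ$, its inverse lies in $A^\dag_{0,m+1}(K)$ by Remark~\ref{unit}, so each $c_k\in A^\dag_{0,m+1}(K)$. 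A direct valuation count invoking (H1), (H2), and the bound $\ord(P_a^{(k)}(y_0)/k!)\ge \ord(b_{i_0})+(i_0-k)\ord(x)$ for $k\ge 2$ will then show that $c_0\in A_{0,m+1}(K)^\circ$, that the coefficient of $t$ equals $1$, and that $c_k\in A_{0,m+1}(K)$ for $k\ge 2$. Viewed as an element of $A_{0,m+2}(K)$ with $t$ playing the role of the last $\rho$-variable, $G$ is therefore regular in $t$ of degree~$1$ in the sense of Definition~\ref{regular}(ii).

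At that stage Weierstrass Preparation provides a unique factorization $G = (t - t^*(\rho))\cdot U$ with $U$ a unit and $t^*\in A_{0,m+1}(K)^\circ$; the uniqueness clause of Lemma~\ref{hm} then forces $F(a) = y_0 + nx\,t^*(\rho)$, an element of $\cO^\dag(B)$, which completes the argument. The hard part will be the valuation bookkeeping described above: one must trace (H1) (strict) and (H2) (weak), originally stated at $x$, through the shift from $x$ to $y_0$, through the inversion of $P_a'(y_0)$ using Remark~\ref{unit}, and through every higher derivative $P_a^{(k)}(y_0)$, in order to place the coefficients of $G$ in the ideals required for regularity. Once those estimates are in place, the Weierstrass Preparation machinery of Section~\ref{sepan} delivers the rest.
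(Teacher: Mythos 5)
Your proposal is correct and takes essentially the same route as the paper's proof: both substitute the parametrized family of candidate roots into the polynomial with coefficients $b_i+n^2b_i\rho_i$ to obtain an element of $A_{0,m+2}(K)$, use ultrametric estimates coming from (\ref{h0})--(\ref{h2}) to show it is regular of degree one in the extra $\rho$-type variable, and then extract the root by Weierstrass Preparation, identifying it with $h_{m,n}(a,rv_n(x))$ by the uniqueness in Lemma \ref{hm}. The only differences are cosmetic --- the paper first reduces to the case where $b_{i_0}$ and $x$ are units via a change of variables as in Lemma \ref{hm} and normalizes by $h_1(\rho)^{-1}$, while you recenter at $y_0=h_{m,n}(b,rv_n(x))$ and divide by $nx\,P_a'(y_0)$ --- and the one point to phrase carefully is that the Hensel conditions involve an existential witness $x'$ with $rv_n(x')=rv_n(x)$, so the dichotomy ``$F\equiv 0$ on $B$ or $F$ nowhere zero on $B$'' and the bound on $\ord(P_a'(y_0))$ should be run at such a witness rather than at $x$ itself (the same perturbation estimates apply since $\ord(x')=\ord(x)$).
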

\begin{proof}
If $F$ is constantly zero on $B$ there is nothing to prove. In the
other case, $F$ is nowhere zero on $B$, by the definition of
$h_{m,n}$. Suppose thus that $F$ is nowhere zero on $B$.
 First suppose that the $b_i$ lie in $\Ko$, that $b_{i_0}$ is a unit
and that $\ord x=0$, where $i_0$ in $\{1,\ldots,m\}$ is such that
$\ord b_{i_0}$ is minimal among the $\ord b_{i}$ in the sense that
$$
\min_{0\leq i\leq m} \ord b_{i} = \ord b_{i_0}.
$$

 Define
\begin{eqnarray*}
g(\lambda_1,\rho_0,\ldots,\rho_m) & := & \sum_{0=1}^m
(n^2b_i\rho_i+b_i) (n\lambda_1-x)^i\\
\end{eqnarray*}
in $A_{0,m+2}(K)$, where $\lambda_1$ runs over $\Koo$ and $\rho$
over $(\Koo)^{m+1}$. Since $b_{i_0}$ and $x$ are units, since $F$ is
nonzero, by conditions (\ref{h1}) and (\ref{h2}), and by the
definition of $h_{m,n}$, one has for all $(\lambda_1,\rho)\in
(\Kalgoo)^{m+2}$ that
$$
\ord (g(\lambda_1,\rho))>\ord ( n^2)
$$
and that
$$
\ord (g'(\lambda_1,\rho))\leq \ord ( n),
$$ with $g'$ the derivative of
$g$ w.r.t.~$\lambda_1$.
 Rewrite
$$
 g(\lambda_1,\rho)=\sum_{i=0}^m h_i(\rho)\lambda_1^i,
$$
Then, by Taylor's Theorem, $|h_1(\rho)|\geq |n|$ for all $\rho\in
\Kalgoo$, thus $h_1(\rho)^{-1}$ comes from an element of
$A_{0,m+1}^\dag(K)$ and $h_1(\rho)^{-1}\cdot g$ is regular of degree
$1$ in $\lambda_1$ in $A_{0,m+2}(K)$. Hence, by Weierstrass
preparation for $A_{0,m+2}(K)$ one has that
$$g= E(\lambda_1,\rho)(\lambda_1 + h(\rho)   ),
 $$
with $h(\rho)$ in $A_{0,m+1}(K)$, $E$ a unit in $A_{0,m+2}(K)$, and
with $\lambda_1 + h(\rho)$ regular of degree $1$ in $\lambda_1$. So,
by Weierstrass Division one can compose $g$ and $-h$. Thus, in this
case, one can take $-h(\rho)$ for $f$.
 In the general case, one can perform a change of variables as in the
proof of Lemma \ref{hm} to reduce to the above special case, by
using that the rings $\cO^\dag(\cdot)$ are closed under meaningful
composition.

\end{proof}

\begin{lem}[${\rm Char}(K)=0$]\label{rvunits}
Let $n>0$ be an integer. Let $B$ be the open ball $\Kalgoo$. Note
that $B$ is a $K$-annulus. Let $E$ in $\cO^\dag_K(B)$ be a strong
unit, cf.~Definition \ref{strongunit}. Then one has that
$rv_n(E^\sigma)(x)$ only depends on $rv_n(x)$ when  $x$ varies over
$\Kalgoo$.
\end{lem}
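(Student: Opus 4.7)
My plan is to reduce the statement to the concrete inequality version: it suffices to show that for any $x,y\in\Kalgoo$ with $rv_n(x)=rv_n(y)$ one also has $rv_n(E^\sigma(x))=rv_n(E^\sigma(y))$. The case $x=0$ forces $y=0$ (since $rv_n^{-1}(0)=\{0\}$), so assume $x,y\neq 0$; then the hypothesis reads $|y-x|<|nx|$, and the goal becomes $|E^\sigma(y)-E^\sigma(x)|<|n\cdot E^\sigma(x)|$.

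First I would normalize $E$ to have constant absolute value $1$. Because $E$ is a strong unit on the open disc $\Kalgoo$, $|E^\sigma|$ is constant, with value in the divisible group $|\Kalg^\times|$; choose $c\in\Kalg^\times$ with $|c|=|E^\sigma|$. Since $\Kalg$ carries analytic $\cA$-structure (Theorem~\ref{algextn}), $u:=E/c$ lies in $A_{0,1}^\dag(\Kalg)$ with $|u^\sigma|\equiv 1$ on $\Kalgoo$, hence has gauss-norm $1$ and in fact $u\in A_{0,1}(\Kalg)$. Since $u^\sigma$ has no zeros, the Nullstellensatz (Proposition~\ref{Null}) makes $u$ a unit, and Remark~\ref{unit} gives
\[
u=a+g,\qquad a\in(\Kalgo)^\times,\quad g=\sum_{i\ge 0}g_i\rho^i\in(\Kalgoo,\rho)\,A_{0,1}(\Kalg),
\]
with $g_0\in\Kalgoo$ and $g_i\in\Kalgo$ for $i\ge 1$. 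Since $rv_n$ is a group homomorphism on $\Kalg^\times$, the factor $c$ cancels and it is enough to prove $rv_n(u^\sigma(x))=rv_n(u^\sigma(y))$.

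The heart of the argument is a Weierstrass-division identity for the first-order increment of $u$. In $A_{0,2}(\Kalg)$ the element $\rho_2-\rho_1$ is regular in $\rho_2$ of degree one, and $u(\rho_2)-u(\rho_1)=g(\rho_2)-g(\rho_1)$ vanishes on the diagonal $\rho_1=\rho_2$; Weierstrass division therefore produces $Q\in A_{0,2}(\Kalg)$ with
\[
u(\rho_2)-u(\rho_1)=Q(\rho_1,\rho_2)\,(\rho_2-\rho_1).
\]
Expanding via $\rho_2^i-\rho_1^i=(\rho_2-\rho_1)\sum_{j+k=i-1}\rho_1^j\rho_2^k$ yields $Q(\rho_1,\rho_2)=\sum_{i\ge 1}g_i\sum_{j+k=i-1}\rho_1^{\,j}\rho_2^{\,k}$, whose monomial coefficients all lie in $\Kalgo$; hence $\|Q\|\le 1$ and $|Q^\sigma(x,y)|\le 1$ for all $x,y\in\Kalgoo$.

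Putting the pieces together, $|u^\sigma(y)-u^\sigma(x)|\le|y-x|<|nx|<|n|$ --- the last strict inequality uses $|x|<1$ --- and since $|u^\sigma(x)|=1$ this yields $u^\sigma(y)/u^\sigma(x)-1\in n\,\Kalgoo$, i.e.\ $rv_n(u^\sigma(y))=rv_n(u^\sigma(x))$, as required. The main obstacle is the normalization step: recognizing the strong unit as a $\Kalg$-scalar multiple of an element of the form $a+g\in A_{0,1}(\Kalg)$ with $a$ a unit and $g$ in the maximal ideal is what licenses the Weierstrass division and the bound $\|Q\|\le 1$; once this is in place, the inequality chain is purely formal.
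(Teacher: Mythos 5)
Your proof is correct and takes essentially the same route as the paper, which writes the unit as $c+\rho_1 g(\rho_1)$ with $|c|\ge\|g\|$, normalizes $c=1$, and obtains $E(x_1)/E(x_2)\in 1+n\Koo$ for $x_1=(1+na)x_2$ by Taylor expansion of $g$ about $x_2$; your Weierstrass-division difference quotient $Q$ with $\|Q\|\le 1$ is the same first-order increment estimate in different packaging. Your preliminary normalization over $\Kalg$ (using the strong-unit constancy of $|E^\sigma|$ and divisibility of $|\Kalg^\times|$) simply plays the role of the paper's division by the dominant constant coefficient.
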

\begin{proof}
The unit $E$ in $\cO^\dag_K(B)=A_{0,1}(K)\otimes_{\Ko} K$ is
automatically of the form
$$
E(\rho_1)=c+\rho_1g(\rho_1)
$$
for some $c\in K$ and $g\in \cO^\dag_K(B)$ such that $|c|\geq
\|g\|$, cf.~Remark \ref{unit}.
 Now suppose that $x_1,x_2\in\Kalgoo$ with $rv_n(x_1)=rv_n(x_2)$. Then  $x_1=(1+na)x_2$ for some $a\in\Kalgoo$. It is
enough to treat the case $c=1$ (after dividing by $c$). Then
\begin{eqnarray*}
E(x_1) & = & 1 + (1+na)x_2 g((1+na)x_2)
\\
& =& 1 + ( 1+na )x_2 g(x_2)+ nah_1
\\
 & = &1+ x_2g(x_2) + nah_2
\\
 &=&(1+ x_2g(x_2))(1+ nah_2(1+ x_2g(x_2) )^{-1})
\\
 &=&E(x_2)(1+ nah_2(1+ x_2g(x_2) )^{-1}),
\end{eqnarray*}
with $h_1$ and $h_2$ in $\Ko$, by Taylor expansion of $g((1+na)x_2)$
around $x_2$. Now we are done since $nah_2(1+ x_2g(x_2) )^{-1}$ lies
in $n\Koo$.
\end{proof}

\begin{rem}\label{remCLb}
The algebraic analogue of Lemma \ref{lcda}, namely, with
$\cT_{\Hens}$ instead of $\cT_{\Hens,\, \cA}$ and $\cL_{\Hens}(A)$
instead of $\cL_{\Hens,\, \cA}(A)$, follows from \cite{CLb}, Lemma 7.2.11 and Proposition 7.2.4, or from Theorems 7.2.6 and 7.2.9 of \cite{CLb}.
\end{rem}

\begin{lem}[${\rm Char}(K)=0$]\label{lcda}
Let $K$ be a $\cT_{\Hens,\, \cA}$-model, let $n>0$ be an integer,
and let $f_i(y)$ be $\cL_{\Hens,\, \cA}(A)$-valued field terms in
the $K$-variable $y$ for $i=1\ldots,k$, with $A$ a set of
parameters. Then:
\begin{enumerate}
\item[(i)] There exists a $\cL_{\Hens,\, \cA}(A)$-definable
$b$-map (cf. Definition \ref{defbmin})
$$
\lambda:K\to S
$$
with $B_n$-center
$$
c:\lambda(K)\to K
$$
such that for each $i$
$$
rv_{n}\circ f_i
$$
is a component function of $\lambda$ (that is, $\lambda$ composed
with a coordinate projection gives $rv_n(f_i)$).

\item[(ii)] One can ensure that
the image of $\lambda$ is defined by an $\cL_{\Hens,\,
\cA}(A)$-formula without valued field quantifiers and that $c$ is
given by an $\cL_{\Hens,\, \cA}^\star(A)$-term.
\end{enumerate}
\end{lem}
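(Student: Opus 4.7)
My plan is to reduce the analytic case to the algebraic analogue (which holds by Remark \ref{remCLb}) by applying the structure theorem for one-variable terms, Theorem \ref{terms}, and the rigidity of strong units modulo $rv_n$, Lemma \ref{rvunits}. Since the conclusion is stable under taking common refinements of $b$-maps (a common refinement of two $b$-maps with $B_n$-centers is again a $b$-map with a $B_n$-center, by intersecting fibers), it suffices to establish (i) and (ii) for a single term $f := f_{i_0}$; the general case then follows by taking a common refinement over $i=1,\dots,k$.

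The main step is to analyze $f(y)$ analytically on $\Ko$. By Theorem \ref{terms}, applied in the form of Remark \ref{moreterms} over the field generated by the valued field parameters in $A$, there exist a finite set $S\subset \Ko_{alg}$, a finite cover of $\Ko_{alg}$ by $K(A)$-annuli $\cU_1,\dots,\cU_r$, and for each $j$ a rational function $R_j\in K(A)(y)$ and a strong unit $E_j\in\cO^\dag_{K(A)}(\cU_j)$ with
\[
f|_{\cU_j\setminus S}\;=\;R_j\cdot E_j^\sigma|_{\cU_j\setminus S}.
\]
By Lemma \ref{annulus_properties}(viii) the sets $\cU_j\cap K^\circ$ cover $K^\circ$, and by Lemma \ref{annulus_properties} together with the description of $K$-annuli by polynomial inequalities on norms, the partition by $\cU_j\setminus S$ is definable by an $\cL_{\Hens,\cA}(A)$-formula without valued field quantifiers (translating $|p(y)|\,\Box\,\epsilon$ into $rv$-conditions, as noted after Remark \ref{ballsrv}). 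The exceptional set $S$ is handled trivially by declaring each of its points a singleton fiber in $\lambda$.

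Next I would use Lemma \ref{rvunits} to show that within each $\cU_j$, once we restrict to a sufficiently small open sub-ball, $rv_n\circ E_j^\sigma$ is constant. Concretely, around any point of $\cU_j$ the strong unit $E_j$, after an affine change of variable taking a sub-ball onto $\Kalgoo$, satisfies the hypothesis of Lemma \ref{rvunits}, so $rv_n(E_j^\sigma(y))$ depends only on $rv_m(y-b)$ for some center $b$ and some $m\geq n$ (allowing $m$ to absorb the dependence of $E_j$ on residue-level data). Refining the cover $\{\cU_j\}$ by the level sets of these $rv_m$-values gives a $b$-map $\mu:K^\circ\to S_1$, definable without valued field quantifiers, such that on each fiber $\mu^{-1}(s)$ the value $rv_n(E_j^\sigma(y))=:\zeta_s\in RV_n$ is constant. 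Hence
\[
rv_n(f(y))\;=\;\zeta_s\cdot rv_n(R_j(y))\qquad\text{on }\mu^{-1}(s).
\]
This reduces the problem to computing $rv_n$ of a rational function $R_j\in K(A)(y)$ on each piece of the refinement, with the auxiliary factor $\zeta_s$ already part of the $S_1$-data.

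Finally, I would invoke the algebraic analogue, Remark \ref{remCLb}, for each of the finitely many rational functions $R_j$. This produces, on each piece, a further refinement into a $\cL_\Hens(A)$-definable $b$-map $\nu$ with $B_n$-center $c$ given by an $\cL_\Hens^\star(A)$-term, whose component functions include $rv_n\circ R_j$. Taking the common refinement $\lambda$ of $\mu$ and $\nu$ (pair-wise, over all $j$) yields a single $\cL_{\Hens,\cA}(A)$-definable $b$-map $\lambda:K\to S$ whose component functions include $rv_n\circ f$, and whose $B_n$-center is the one supplied by the algebraic step, hence given by a $\cL_\Hens^\star(A)$-term, as required by (ii). The image $\lambda(K)$ inherits quantifier-freeness from the annular description (quantifier-free in $\cL_{\Hens,\cA}$) and from the algebraic step (quantifier-free in $\cL_\Hens$). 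The hardest technical point I expect is step two: verifying that the refinement by level sets of $rv_n(E_j^\sigma(y))$ can be carried out uniformly, i.e.~that finitely many centers $b$ and exponents $m$ suffice for each $\cU_j$; this requires combining Lemma \ref{rvunits} with a compactness/Strong-Noetherian argument on the annulus to make the $rv_n$-partition into finitely many pieces rather than infinitely many sub-balls.
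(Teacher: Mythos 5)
Your overall route is the paper's: reduce to finitely many one-variable terms, apply Theorem \ref{terms} (via Remark \ref{moreterms}) to write each term, on a finite cover by $K$-annuli, as a rational function times a strong unit, handle the strong unit with Lemma \ref{rvunits}, and quote the algebraic analogue (Remark \ref{remCLb}) for the polynomial data. The genuine gap is exactly the step you flag as the hardest: your use of Lemma \ref{rvunits} after an affine change of variable taking a \emph{sub-ball} of $\cU_j$ onto $\Kalgoo$. That localization makes $rv_n(E_j^\sigma(y))$ a function of $rv_n$ of the local coordinate $(y-b)/a$, where $b$ is attached to that particular sub-ball; to exhaust $\cU_j$ by such sub-balls you need one for each residue disc, hence in general infinitely many, and the points $b$ are not $A$-definable. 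So the ``refinement by level sets of $rv_m(y-b)$'' is not an $\cL_{\Hens,\, \cA}(A)$-definable $b$-map, and neither compactness nor the Strong Noetherian Property produces the finitely many definable centers this would require: the dependence on the rescaled local coordinate is an artifact of the localization, not a property of the strong unit.

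The repair, which is the intended reading of the paper's proof, is to avoid localizing at all. Apply Remark \ref{remCLb} to \emph{all} the polynomials $p_\ell$ produced by Theorem \ref{terms} --- those occurring in the annulus descriptions as well as the numerators and denominators of the $R_j$ --- to get a single algebraic $b$-map $\lambda_0$, with $B_n$-center given by $\cL_{\Hens}^*(A)$-terms, whose components include the $rv_n(p_\ell)$; since membership in each annulus piece is decided by $rv$-conditions on the $p_\ell$ and constants, every fiber of $\lambda_0$ lies in a single piece, so no bad intersections of balls with pieces arise. Then run the Taylor-expansion argument from the proof of Lemma \ref{rvunits} directly on $\cU_j$, in the coordinates $p_0^{\ell_0}/a_0$, $a_i/p_i^{\ell_i}$ of a good description (the lemma as stated is the model case $\cU=\Kalgoo$): it shows that $rv_n(E_j^\sigma(y))$ depends only on the tuple $(rv_n(p_\ell(y)))_\ell$. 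Because the auxiliary sorts carry the full induced language, that dependence is an $A$-definable function of auxiliary variables, so $rv_n(f_i(y))=rv_n(R_j(y))\cdot rv_n(E_j^\sigma(y))$ factors through the components of $\lambda_0$; adding it as a component changes neither the fibers nor the center, which yields (i) and (ii) simultaneously, with the center supplied by the algebraic step. Your preliminary reductions (common refinements of $b$-maps with centers, quantifier-free description of the annular partition, treatment of the finite exceptional set) are fine.
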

\begin{proof}
We may suppose that $A\subset K$, since $\cL_{\Hens,\,
\cA}(A)$-valued field terms can not involve auxiliary constants. Let
$K'$ be the valued subfield of $K$ which given by all $\cL_{\Hens,\,
\cA}(A)$-valued field terms. The field $K'$ clearly has analytic
$\cA$-structure. Then $K$ has analytic $\cA(K')$-structure by Remark
\ref{constantsbis}, extending the $\cA$-structure. Since the
$\cT_{\Hens,\, \cA}(A)$-valued field terms are the same as the
$\cT_{\Hens,\, \cA(K')}$-valued field terms, we may suppose that
$\cA(K')=\cA$ and that $A$ is empty.

\par

Apply Theorem \ref{terms} and Remark \ref{moreterms} to each of the
$f_i$, yielding a finite number of polynomials over $K'$, rational
functions over $K'$ and strong units. Write $p_\ell$ for the
polynomials (including the denominators and numerators of the
rational functions). Now by Remark \ref{remCLb} applied to the
polynomials $p_\ell$, Remark \ref{moreterms}, and by Lemma
\ref{rvunits}, and since on the auxiliary sorts lies the full
induced language, there exist functions $\lambda$ and $c$ as
desired.
\end{proof}

The next lemma is only needed for the ball preservation property
statement of Theorem \ref{mt}; it is not needed to prove
$b$-minimality and we use $b$-minimality once in the proof.

\begin{lem}[${\rm Char}(K)=0$]\label{termtoanalytic}
Let $t(x,u)$ be a $\cL_{\Hens,\, \cA}^*(A)$-valued field term in the
valued field variable $x$, with $u$ a fixed tuple of elements of $A$
of auxiliary sorts. Let $n>0$ be an integer. Then there exists a
$b$-map $f:K\to S$ for some auxiliary $S$ such that for each ball
$B$ of the form $f^{-1}(s)$ the restriction of the function
$x\mapsto t(x,u)$ to $B$ lies in $\cO^\dag_K(B)$.
\end{lem}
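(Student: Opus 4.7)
The plan is to induct on the construction of the valued field term $t(x,u)$, successively refining the $b$-map. Common refinements of finitely many $b$-maps are again $b$-maps (any intersection of open balls in a valued field is an open ball, a point, or empty, and similarly for mixed intersections), so I can freely pass to finer partitions at each step. The base cases $t=x$ and $t$ a constant in $A\cap K$ are immediate, since both are restrictions of elements of $\cO^\dag_K(B)$ for every ball $B$. The inductive step breaks into four cases: ring operations, inversion, analytic symbols, and Henselian symbols.

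Ring operations $t=t_1+t_2$, $t=t_1\cdot t_2$, $t=-t_1$ are settled by common refinement plus the ring structure of $\cO^\dag_K(B)$. Inversion $t=t_1^{-1}$: on a ball $B$ where $t_1$ is analytic, Lemma \ref{MLdisc} gives $t_1=P\cdot E$ with $P\in K[x]$ monic and $E$ a strong unit, so I peel off the finitely many zeros of $P$ in $B$ as singleton fibers, and on the remaining sub-balls $P$ is nonvanishing, $t_1$ is a unit, and $t_1^{-1}\in\cO^\dag_K$. For an analytic symbol $t=f(t_1,\ldots,t_{m+n})$ with $f\in A_{m,n}$, apply induction to each $t_i$ and refine so that $rv(t_i)$ is constant on each ball (handled as in the Henselian case below); this decides whether $t_i(B)\subset\Ko$ for $i\le m$ and $t_i(B)\subset\Koo$ for $i>m$ or not, and in the good case $f(t_1,\ldots,t_{m+n})$ lies in $\cO^\dag_K(B)$ via the Weierstrass composition of Remark \ref{remcomp} inside the rings of $\cA(K)$-fractions. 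For a Henselian symbol $t=h_{m,n}(t_0,\ldots,t_m,r)$, the $RV_n$-argument $r$ is either a constant from $A$ or $rv_n(t')$ for a valued field subterm $t'$; in the latter case I use induction on $t'$ together with Lemma \ref{MLdisc} to write $t'=P\cdot E$ on a ball, then refine further so that $rv_n(P)$ is constant (Lemma \ref{lcda} applied to the polynomial $P$, with the coefficients of $P$ adjoined to $A$) and $rv_n(E)$ is constant (Lemma \ref{rvunits}, after requiring $rv_n(x-c)$ constant for a center $c$ of the ball), making $r$ constant on each sub-ball. Apply induction to each $t_i$ and refine once more so that each $t_i(B)\subset b_i+n^2b_i\Koo$ for suitable $b_i\in K$ (by making $rv_{n^2}(t_i)$ constant). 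Lemma \ref{ballterm} then produces an analytic expression for $a\mapsto h_{m,n}(a,r)$ on the product of the discs $b_i+n^2b_i\Koo$, which composed with the analytic $t_i$ (Remark \ref{remcomp}) exhibits $t|_B$ as an element of $\cO^\dag_K(B)$.

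The hard part is the $rv_n$-constancy refinement for subterms that may contain nested $h_{m,n}$ symbols, since Lemma \ref{lcda} is stated only for pure $\cL_{\Hens,\,\cA}$-terms and cannot be applied directly. I bootstrap via the present lemma: the inductive hypothesis on the subterm first converts it into an analytic function on a ball, at which point Mittag-Leffler reduces the $rv_n$-control to (a) a polynomial, to which Lemma \ref{lcda} applies, and (b) a strong unit, to which Lemma \ref{rvunits} applies after a center has been fixed. The uniformity needed to assemble these local refinements into a single global $b$-map poses no problem because the Mittag-Leffler decomposition varies definably with the $b$-map parameter, so each local refinement extends the auxiliary index set $S$ definably.
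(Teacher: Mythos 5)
Your overall skeleton (induction on terms, common refinement of $b$-maps, Lemma \ref{ballterm} plus closure of the rings $\cO^\dag_K(B)$ under composition for the symbols $h_{m,n}$) matches the paper's proof. The deviation is in how you force $rv$-constancy of the subterm values (and of the $RV_n$-argument) on the fibers, and this is where there is a genuine gap. The paper's proof simply invokes $b$-minimality of $\cT_{\Hens,\,\cA}$ together with a compactness/cell-decomposition argument from \cite{CLb}: since the $h_{m,n}$ are $\cL_{\Hens}$-definable, every $\cL^*_{\Hens,\,\cA}$-term defines an $\cL_{\Hens,\,\cA}$-definable function, and $b$-minimality (which is established in Theorem \ref{mt} independently of this lemma — the paper notes the lemma is only needed for ball preservation) lets one refine the $b$-map so that all the $rv_{n^2}(t_i(x,u))$ are constant on fibers in one stroke. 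You instead try to manufacture this refinement by hand, fiber by fiber: apply the induction hypothesis, take the Mittag-Leffler factorization $t'=P\cdot E$ on each ball $B_s=f^{-1}(s)$, and then apply Lemma \ref{lcda} to $P$ (with its coefficients adjoined to $A$) and Lemma \ref{rvunits} to $E$. The crux you wave away is uniformity in $s$: the polynomial $P_s$, its degree, its coefficients, and the unit $E_s$ all depend on the fiber, the coefficients are valued-field elements (so they cannot simply be "adjoined to $A$" once and for all), and nothing in the paper shows that the Mittag-Leffler data vary definably with $s$. Without such a definability statement your fiberwise refinements do not assemble into a single definable $b$-map, and a compactness argument cannot be run. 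This is precisely the difficulty that the appeal to $b$-minimality is designed to bypass, so as written your alternative route does not close.

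A secondary problem is your treatment of $t_1^{-1}$. Writing $t_1=P\cdot E$ by Lemma \ref{MLdisc} and "peeling off the finitely many zeros of $P$ in $B$ as singleton fibers" is not enough: $P$ may have zeros in $B(\Kalg)$ that do not lie in $K$, in which case $t_1$ has no zero on $B\cap K$, yet $P$ is not a unit in $\cO^\dag_K(B')$ for sub-balls $B'$ of $K$ whose $\Kalg$-points meet the holes around those algebraic zeros, and $t_1^{-1}$ is not analytic there. Splitting $B$ correctly around non-rational zeros again requires a refinement of the kind provided by Remark \ref{remCLb}/Lemma \ref{lcda} with centers, with the same uniformity issue as above. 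The paper avoids this entirely by treating $(\cdot)^{-1}$ as a special case of the functions $h_{m,n}$ (via $h_{1,1}(1,-y,rv(1/y))$, as in the proof of Theorem \ref{thens}). Your explicit handling of the analytic function symbols $F\in A_{m,n}$ (deciding membership of the arguments in $\Ko$ and $\Koo$ and composing via Remark \ref{remcomp}) is a reasonable supplement that the paper leaves implicit, but it too relies on the same $rv$-constancy refinement and so inherits the gap. The repair is simple: use the already-established $b$-minimality plus compactness for the constancy step, exactly as the paper does, rather than the Mittag-Leffler bootstrap.
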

\begin{proof}
We give a proof by induction on the complexity of the term $t$.
Suppose that terms $t_i(x,u)$, $i=0,\ldots,m$ with $m>0$ have a
$b$-map $f_i:K\to S_i$ as in the lemma. Define $f$ as
$$
f:K\to \prod_i S_i:x\mapsto (f_i(x))_i.
$$
Then, since $\cO^\dag_K(B)$ is a ring for any ball $B$ and since the
intersection of two open balls is either empty or an open ball, the
lemma is satisfied for this $f$ when $t$ is the term $t_1+t_2$ and
likewise for the product $t_1\cdot t_2$. Finally we treat the term
$h_{m,n}(t_0,\ldots,t_m,u_1)$, for $n>0$. By $b$-minimality and a
compactness argument (see, for example, the section on cell
decomposition in \cite{CLb}), we may suppose that the
$rv_{n^2}(t_i(x,u))$ are constant on each fiber $f^{-1}(s)$. Apply
Lemma \ref{ballterm} and the fact that the rings $\cO(B)^\dag(K)$
are closed under (meaningful) composition to see that this $f$ is as
desired. The term $(\cdot)^{-1}$ is a special case of the functions
$h_{m,n}$ (see the proof of Theorem \ref{thens}) so the lemma is
proved.
\end{proof}

\begin{proof}[Proof of Theorem \ref{mt}]
Write $\cL$ for $\cL_{\Hens,\, \cA}$. Although valued field
quantifier elimination can be proven for $\cL$ in the same way as it
is historically done for many of the examples in section 4, we give
a slightly different proof. So, let us first prove elimination of
valued field quantifiers. Let $K$ be a $\cT_{\Hens,\, \cA}$-model.
Let $\varphi(\xi,x,w)$ be a valued-field-quantifier-free
$\cL$-formula with auxiliary variables $\xi$ running over say $S$,
valued field variables $x$ running over $K^m$ and one valued field
variable $w$. We have to prove that there exists a valued field
quantifier free formula $\theta$ such that
$$
\{(\xi,x)\mid K\models \exists w \varphi(\xi,x,w)\} = \{(\xi,x)\mid K\models \theta(\xi,x)\}.
$$

Let $\tau_i$ be the valued field terms occurring in $\varphi$. By
rewriting $\varphi$, we may suppose that these terms only occur in
the form $rv_n(\tau_i)$ for some $n$.
 By Lemma \ref{lcda} and by compactness, there exists a definable
function
$$
\lambda:S\times K^{m+1}\to S'\times S\times K^m
$$
with $S'$ auxiliary such that the $rv_n(\tau_i)$ are component
functions of $\lambda$ (that is, $\lambda$ composed with a
coordinate projection gives $rv_n(\tau_i)$) and such that the image
of $\lambda$ is given by a valued field quantifier free formula. But
then it is easy to construct $\theta$, by quantifying over $S'$.
 This proves the quantifier elimination statement.

Next we prove (b1) and the property about centers. Let $X\subset K$
be a $\cL(A)$-definable set, given by a valued-field-quantifier-free
$\cL(A)$-formula $\varphi(x)$. Let the $f_i$ be all the valued field
terms appearing in $\varphi$.  Lemma \ref{lcda} applied to the $f_i$
implies (b1) and the property about centers.

Property (b2) follows from the quantifier elimination statement.
Namely, consider a valued field quantifier free formula $\varphi$ in
one valued field variable $x$ and auxiliary variables $\xi$ running
over $S$, giving the graph of a surjection from an auxiliary set $S$
to a ball. Let $\tau_i(x)$ be the nonzero valued field terms
appearing in $\varphi$. We may suppose that they all appear in the
form $rv_n(\tau_i(x))$ for finitely many $i$, since $\tau_i(x)=0$ is
equivalent to $rv_n(\tau_i(x))=0$. Since $\varphi$ associates to any
$\xi$ a unique $x$, it follows that $x$ satisfies
$\prod_i\tau_i(x)=0$. Since we may suppose that the $\tau_i$ are not
identically zero, (b2) follows.

For (b3) one uses Lemma 2.4.4 in section 2 of \cite{CLb}; property
($\ast$) there is clear by looking at quantifier free formulas in
two valued field variables which give a definable function, as for
(b2). Such a formula $\varphi$ has nonzero valued field terms
$\tau_i(x,y)$ in the valued field variables $x$ and $y$. We may
again suppose that they only appear in the form $rv_n(\tau_i(x,y))$
for finitely many $n$. Since $\varphi$ describes a graph of a
function $x\mapsto y$, this graph must lie in $\prod_i \tau_i=0$.
Now it is clear that either the image is finite, or some fiber is
finite. This proves ($\ast$) of \cite{CLb} and thus (b3).

The $b$-minimality is proven.

Next we will show that preservation of all balls (in correspondence
with the Jacobian) is a consequence of Theorem \ref{thens}, Lemmas
\ref{ballWD} and \ref{termtoanalytic}, and a compactness argument.

Let $F:X\subset K\to K$ be $A$-definable for some $A$. By Theorem
\ref{thens}, there exists a $\cL_{\Hens,\, \cA}(A)$-definable
function $g:X\to S$ with $S$ auxiliary such that
\begin{equation}\label{et2}
f(x)=t(x,g(x))
\end{equation}
for each $x\in X$ and where $t$ is a $\cL_{\Hens,\, \cA}^*(A)$-term.
By $b$-minimality with centers, we may suppose that $g$ is a $b$-map
which has a $B_{n_0}$-center $c$ for some $n_0$.
 By Lemma \ref{termtoanalytic} we may suppose that on each ball $B$
of the form $g^{-1}(s)$ the restriction of $x\mapsto f(x)$ to $B$ is
in $\cO^\dag_K(B)$. By $b$-minimal cell decomposition, we may
moreover suppose that $rv_n(f'(x))$ is constant on each such $B$.
Fix a ball $B$ of the form $g^{-1}(s)$, say, of size $\alpha_B$,
namely of the form $\{x\mid \ord(x-b)>\alpha_B\}$ for some $b$. Then
there exists by Lemma \ref{ballWD} an integer $n_B$ such that the
restriction of $af$  to any ball of size $\alpha_B+\ord(n_B)$,
contained in $B$, is an isometry, with $a$ any element of the same
size as $1/f'$ on $B$. By compactness, there exists a single integer
$n$ which can serve for all numbers $n_B$ for all such $B$. One can
easily refine $g$ such that $c$ becomes a $B_{n\cdot n_0}$-center,
in a way such that each ball $g^{-1}(s)$ is replaced by a union of
balls of size $\ord (n)$ smaller than $g^{-1}(s)$. The Jacobian
property then follows.
\end{proof}

\begin{rem}\label{rv-Jac2}
Remark \ref{rv-Jac} together with the above proof can be used to get
a stronger Jacobian property than the one given in Definition
\ref{defjacprop2}. Namely, given $n>0$, the function $f$ in
Definition \ref{defjacprop2} can be taken such that for each $s\in
f(X)$ such that $f^{-1}(s)$ is a ball, the restriction of $F$ to
$f^{-1}(s)$ has the Jacobian property and moreover, for all $s\in
f(X)$ and all $x,y\in f^{-1}(s)$,
$$
a(s) rv_n(x-y)=rv_n(F(x)-F(y))
$$
where $rv_n{\rm Jac}F$ is constant on $f^{-1}(s)$ and $a(s):=
rv_n{\rm Jac}F(f^{-1}(s))$. This property is a local form of the
monotonicity property of \cite{Schikh}, section 86.
\end{rem}

\begin{proof}[Proof of Theorem \ref{thens}]
This proof is similar to that of Theorem 7.2.9 of \cite{CLb} and
follows from $b$-minimality with centers and Lemma \ref{lcda}.
 Note that it is
enough to work piecewise. Namely, with the functions $h_{1,1}$ one
can make terms which are characteristic functions of $rv(1)$,
$rv(2)$, and so on, so, and hence one can always paste together a
finite number of terms on finitely many disjoint pieces. For
example, to obtain the characteristic function of $Y\subset X$ as a
term, define $g(x)\in RV$ as $rv(1)$ when $x\in Y$ and as $rv(0)$
else, and let $t(x,\xi)$ be the term $h_{1,1}(1,-1,\xi)$. Note that
also the valued field inverse can be given by a term, using
$h_{1,1}(1,-y,rv(1/y))$. See \cite{CLR1} for more detail.

\par
By Lemma \ref{lcda} and compactness as in the proof of Theorem 7.2.9
of \cite{CLb}, and with $b$-minimal terminology of \cite{CLb},
section 3 and 6 about cell decompositions, it follows that a cell
decomposition theorem holds where all the centers are given by
$\cL_{\Hens,\, \cA}^*(A)$-terms. Partitioning the graph of $f$ into
such cells yields the desired piecewise terms.
\end{proof}

\begin{rem}\label{remmt}
Cell decomposition (as well as other properties) for $\cT_{\Hens,\,
\cA}$ now follows immediately from \cite{CLb} and the $b$-minimality
of $\cT_{\Hens,\, \cA}$ established in Theorem \ref{mt}.
\end{rem}


\begin{thebibliography}{CLR2}

\bibitem[Ar]{Ar} M.~Artin, \emph{Algebraic approximation of structures over complete local rings}, Publ. Math. I.H.E.S.,~\textbf{36}, (1969), 277-291.

\bibitem[B]{B} W. ~Bartenwerfer, \emph{Die Beschr\"anktheit der St\"uckzahl der Fasern $K$-analytischer Abbildungen}, J. Reine Angew. Math., ~\textbf{416} (1991), 49-70.

\bibitem[BK]{BK} S.~Basarab and F.-V.~Kuhlmann, \emph{An isomorphism theorem for {H}enselian algebraic extensions of valued
fields}, Man. Math. \textbf{77} (1992) 113-126.

\bibitem[Bo]{Bo}  S.~Bosch \emph{A rigid analytic version of
{M}.~{A}rtin's {T}heorem on analytic equations}, Math.~Ann. {\bf
255} (1981) 395 - 404.

\bibitem[BGR]{BGR} S.~Bosch, U.~G\"untzer and R.~Remmert, \emph{ Non-archimedean Analysis}, Springer-Verlag (1984).

\bibitem[Ce1]{Ce1} Y. F.~\c{C}elikler, \emph{Algebraic properties of separated power series}, Math. Z. {\bf 259} (2008), no. 3, 681--695.

\bibitem[Ce2]{Ce2} Y. F.~\c{C}elikler, \emph{Quantifier elimination for the theory of algebraically closed valued fields with analytic structure},  Math. Logic Quarterly  {\bf 53} (2007), no. 3, 237--246.

\bibitem[Ce3]{Ce3} Y. F.~\c{C}elikler, \emph{On some definable sets over fields with analytic structure}, preprint.

\bibitem[C]{C} R.~Cluckers, \emph{Analytic $p$-adic cell decomposition and integrals},
Trans. Amer. Math. Soc. {\bf 356} (2004) 1489 - 1499,
arXiv:math.NT/0206161.

\bibitem[CLR1]{CLR1} R.~Cluckers, L.~Lipshitz and Z.~Robinson, \emph {Analytic cell decomposition and analytic motivic integration},  Ann. Sci. \'Ecole Norm. Sup. (4) 39, (2006), 535-568.

\bibitem[CLR2]{CLR2} R.~Cluckers, L.~Lipshitz and Z.~Robinson, \emph {Real
closed fields with nonstandard and standard analytic structure}, J. Lond. Math. Soc. (2) \textbf{78} (2008), no. 1, 198--212. arXiv:math.LO/0610666.

\bibitem[CL]{CL} R.~Cluckers, F.~Loeser, \emph{Constructible motivic functions and motivic integration}, Invent. Math. \textbf{173} (2008),  23--121.
arxiv:math.AG/0410203.

\bibitem[CLb]{CLb} R.~Cluckers, F.~Loeser, \emph{$b$-minimality},  J. Math. Log. 7 (2007), no. 2, 195--227.
arxiv.math.LO/0610183.

\bibitem[Co]{Co} P.~J. Cohen, \emph{Decision procedures for real and $p$-adic
fields}, Comm. Pure Appl. Math. {\bf 22}
 (1969), 131-151.

\bibitem[D1]{D1} J. Denef, \emph{The rationality of the Poincaré series associated to the
p-adic points on a variety}, Inventiones Mathematicae, {bf 77}
(1984), 1-23.

\bibitem[D2]{D2} J. Denef, \emph{p-adic semi-algebraic sets and cell decomposition}, Journal
f\"ur die reine und angewandte Mathematik, {\bf 369} (1986), 154-166.

\bibitem[D3]{D3} J.~Denef, \emph{On the Degree of Igusa's Local Zeta Function}, American Journal of
Mathematics, {bf 109} (1987), 991-1008.

\bibitem[DD]{DD} J.~Denef and L.~van den Dries, \emph{$p$-adic and real subanalytic
sets}, Ann.~Math.~{\bf 128} (1988), 79-138.

\bibitem[DL2]{DL2}  J.~Denef and, L.~Lipshitz, \emph{Ultraproducts and approximation in local rings. II}, Math. Ann. ~\textbf{253}, (1980), 1-28.

\bibitem[DL]{DL}  J.~Denef and, L.~Lipshitz, \emph{Algebraic power series
and diagonals} Journal of Number Theory, ~\textbf{26}, (1987), 46-67.

\bibitem[DHM]{DHM} L.~van~den~Dries, D.~Haskell and D.~Macpherson,
\emph{One-dimensional $p$-adic subanalytic sets}, J. London Math.
Soc. (2), {\bf 59} (1999) 1-20.

\bibitem[DMM1]{DMM1} L.~van den Dries, A.~Macintyre and D.~Marker, \emph{The elementary theory of restricted analytic fields with exponentiation}, Ann.~Math.~\textbf{140} (1994), 183-205.

\bibitem[DMM2]{DMM2} L.~van den Dries, A.~Macintyre and D.~Marker, \emph {Logarithmic-exponential power series}, J.~London Math.~Soc.~\textbf{(2) 56} (1997), 417-434.

\bibitem[DMM3]{DMM3} L.~van den Dries, A.~Macintyre and D.~Marker, \emph{ Logarithmic-exponential series}, Ann.~Pure~Appl.~Logic.~\textbf{111} (2001), 61-113.

\bibitem[vdD]{vdD}L.~van den Dries \emph{Analytic Ax-Kochen-Ershov
theorems}, Contemporary Mathematics, {\bf 131} (1992) 379-398.

\bibitem[FP]{FP} J.~Fresnel and M.~van der Put, \emph{ Rigid Geometry and Applications}, Birkh\"auser (2004).

\bibitem[GR]{GR} R.~Gunning and H.~Rossi, \emph{Analytic Functions of Several
Complex Variables}, Prentice-Hall, 1965.

\bibitem[HP]{HP} E.~Hrushovski and Y.~Peterzil, \emph{A question of van den Dries and a theorem of Lipshitz and Robinson; not everying is standard}, preprint.

\bibitem[Kap]{Kap}I.~Kaplansky, \emph{Maximal Fields with Valuations}, Duke Math. J. \textbf{9} (1942) 313-321 and \emph{Maximal Fields with Valuations II}, Duke Math. J. \textbf{12} (1945) 243-248.

\bibitem[LL1]{LL1} L.~Lipshitz, \emph{Isolated points on fibers of affinoid varieties}, J. Reine Angew. Math., ~\textbf{384} (1988) 208-220.

\bibitem[LL2]{LL2} L.~Lipshitz, \emph{Rigid subanalytic sets}, American Journal of Math. ~\textbf{115} (1993) 77-108.

\bibitem[LR1]{LR1} L.~Lipshitz and Z.~Robinson, \emph{Rings of separated power series}, Asterisque ~\textbf{264} (2000) 3-108.

\bibitem[LR2]{LR2} L.~Lipshitz and Z.~Robinson, \emph{Model completeness and subanalytic sets},
Ast\'erisque, \textbf{264} (2000) 109-126.

\bibitem[LR3]{LR3} L.~Lipshitz and Z.~Robinson, \emph{Uniform properties of rigid subanalytic sets}, Trans. Amer. Math. Soc. {\bf 357} (2005) no.11, 4349-4377.

\bibitem[LR4]{LR4} L.~Lipshitz and Z.~Robinson, \emph{Overconvergent real closed
quantifier elimination}, Bull. London Math. Soc.  ~{\bf38} Part 6, (December 2006), 897-906.

\bibitem[Pas1]{Pas1} J.~Pas, \emph{Uniform $p$-adic cell decomposition and local
zeta-functions}, J. Reine Angew. Math., {\bf 399} (1989) 137-172.

\bibitem[Pas2]{Pas2} J.~Pas, \emph{Cell decomposition and local zeta functions in a tower of unramified extensions of a $p$-adic field}, Proc. London Math. Soc. (3) 60 (1990), no. 1, 37-67.


\bibitem[Pass]{Pass} D.S.~Passman, \emph{The Algebraic Structure of Group Rings}, John Wiley 1977.

\bibitem[Po]{Po}B.~Poonen, \emph{Maximally complete fields}, L'Enseignement Math. ~{\bf 39} (1993) 87-106.

\bibitem[PS]{PS} Y.~Peterzil and S.~Starchenko, \emph{Expansions of algebraically closed fields in o-minimal structures}, Selecta Math.~New ser.~\textbf{7} (2001),
409-445.

\bibitem[Scan]{Scanlon} T.~Scanlon \emph{Quantifier elimination for the relative {F}robenius}, In \emph{Valuation Theory and Its Applications Volume
{II}} Eds. F.-V.~Kuhlmann, S.~Kuhlmann, and M.~Marshall,  Fields
Institute Communications Series, {AMS} (2003) 323-352.

\bibitem[ScanICM]{ScanICM} T.~Scanlon \emph{Analytic difference rings}
Proceedings of the International Congress of Mathematicians, Madrid,
Spain, Volume II (2006) 71-92.


\bibitem[Schikh]{Schikh} Schikhof, W. H. \emph{Ultrametric calculus. An introduction to $p$-adic analysis} Cambridge Studies in Advanced
Mathematics, Cambridge University Press {\bf 4} (1984).

\bibitem[S]{S} H.~Schoutens, \emph{Rigid subanalytic sets}, Comp. Math.~{\bf 94} (1994) 269-295.

\bibitem[ZS]{ZS} O.~Zariski and P.~Samuel, \emph{Commutative Algebra II}, Springer
(1960).

\end{thebibliography}
\end{document}